\documentclass[11pt,oneside,reqno]{amsart}
\usepackage[latin9]{inputenc}
\usepackage{geometry}
\geometry{verbose,bmargin=3.2cm,lmargin=2.7cm,rmargin=2.7cm,headsep=1cm,footskip=1cm}
\setcounter{secnumdepth}{5}
\setcounter{tocdepth}{1}
\synctex=-1
\usepackage{mathrsfs}
\usepackage{amstext}
\usepackage{amsthm}
\usepackage{amssymb}
\usepackage{graphicx}
\usepackage{setspace}
\setstretch{1.2}
\usepackage[unicode=true,
 bookmarks=false,
 breaklinks=false,pdfborder={0 0 1},backref=section,colorlinks=false]
 {hyperref}

\makeatletter

\newcommand{\lyxdot}{.}

\numberwithin{equation}{section}
\numberwithin{figure}{section}
\theoremstyle{plain}
\newtheorem{thm}{\protect\theoremname}[section]
\theoremstyle{plain}
\newtheorem*{assumption*}{\protect\assumptionname}
\theoremstyle{remark}
\newtheorem{rem}[thm]{\protect\remarkname}
\theoremstyle{remark}
\newtheorem{notation}[thm]{\protect\notationname}
\theoremstyle{plain}
\newtheorem{lem}[thm]{\protect\lemmaname}
\theoremstyle{definition}
\newtheorem{defn}[thm]{\protect\definitionname}
\theoremstyle{plain}
\newtheorem{prop}[thm]{\protect\propositionname}
\newenvironment{lyxlist}[1]
	{\begin{list}{}
		{\settowidth{\labelwidth}{#1}
		 \setlength{\leftmargin}{\labelwidth}
		 \addtolength{\leftmargin}{\labelsep}
		 }}
	{\end{list}}
\theoremstyle{remark}
\newtheorem*{acknowledgement*}{\protect\acknowledgementname}

\usepackage{amsthm}\usepackage{mathrsfs}\usepackage{nameref}\usepackage{cite}\usepackage{upgreek}
\usepackage{stmaryrd}
\usepackage{etoolbox}

\DeclareFontFamily{U}{matha}{\hyphenchar\font45}
\DeclareFontShape{U}{matha}{m}{n}{
      <5> <6> <7> <8> <9> <10> gen * matha
      <10.95> matha10 <12> <14.4> <17.28> <20.74> <24.88> matha12
      }{}
\DeclareSymbolFont{matha}{U}{matha}{m}{n}
\DeclareFontSubstitution{U}{matha}{m}{n}

\DeclareFontFamily{U}{mathx}{\hyphenchar\font45}
\DeclareFontShape{U}{mathx}{m}{n}{
      <5> <6> <7> <8> <9> <10>
      <10.95> <12> <14.4> <17.28> <20.74> <24.88>
      mathx10
      }{}
\DeclareSymbolFont{mathx}{U}{mathx}{m}{n}
\DeclareFontSubstitution{U}{mathx}{m}{n}

\DeclareMathDelimiter{\vvvert}{0}{matha}{"7E}{mathx}{"17}

\DeclareMathAlphabet{\scal}{U}{dutchcal}{m}{n}

\numberwithin{equation}{section}

\def\th@plain{\thm@notefont{}\itshape}
\def\th@definition{\thm@notefont{}\normalfont}


\makeatother

\providecommand{\acknowledgementname}{Acknowledgement}
\providecommand{\assumptionname}{Assumption}
\providecommand{\definitionname}{Definition}
\providecommand{\lemmaname}{Lemma}
\providecommand{\notationname}{Notation}
\providecommand{\propositionname}{Proposition}
\providecommand{\remarkname}{Remark}
\providecommand{\theoremname}{Theorem}

\begin{document}
\title[Metastability of Ising and Potts Models in Large Volumes]{Metastability of Ising and Potts Models without External Fields in
Large Volumes at Low Temperatures}
\author{Seonwoo Kim and Insuk Seo}
\begin{abstract}
In this article, we investigate the energy landscape and metastable
behavior of the Ising and Potts models on two-dimensional square or
hexagonal lattices in the low temperature regime, especially in the
absence of an external magnetic field. The energy landscape of these
models without an external field is known to have a huge and complex
saddle structure between ground states. In the small volume regime
where the lattice is finite and fixed, the aforementioned complicated
saddle structure has been successfully analyzed in \cite{KS2} for
two or three dimensional square lattices when the inverse temperature
tends to infinity. In this article, we consider the\textit{ large
volume regime} where the size of the lattice grows to infinity. We
first establish an asymptotically sharp threshold such that the ground
states are metastable if and only if the inverse temperature is larger
than the threshold in a suitable sense. Then, we carry out a detailed
analysis of the energy landscape and rigorously establish the Eyring--Kramers
formula when the inverse temperature is sufficiently larger than the
previously mentioned sharp threshold. The proof relies on detailed
characterization of dead-ends appearing in the vicinity of optimal
transitions between ground states and on combinatorial estimation
of the number of configurations lying on a certain energy level.
\end{abstract}

\maketitle

\section{\label{sec1_Intro}Introduction}

Metastability is a ubiquitous phenomenon that occurs when a stochastic
system has multiple locally stable sets. It occurs in a wide class
of models in statistical mechanics such as interacting particle systems
\cite{BL ZRP,BDG}, spin systems \cite{BA-C,B-M,N-S Ising1}, small
random perturbations of dynamical systems \cite{FW}, and models in
numerical simulations such as the kinetic Monte Carlo \cite{DG-L-LP-N}
and stochastic gradient descent method \cite{GMZ}. We refer to the
bibliography of the listed references for more comprehensive literature.
We also refer to monographs such as \cite{BdH,O-V} and references
therein.

\subsubsection*{Ising and Potts model}

In this article, we are interested in the Ising/Potts model defined
on either a square or hexagonal lattice $\Lambda_{L}$ with side length
$L\in\mathbb{N}$ under the periodic boundary condition\footnote{We refer to Figure \ref{fig21} for rigorous definition of the finite
hexagonal lattice with periodic boundary conditions.}. For $q\ge2$, denote by $\{1,\,2,\,\dots,\,q\}$ the set of spins
so that we can get a \textit{spin configuration} by distributing spins
at the vertices of lattice $\Lambda_{L}$. Then, the Ising/Potts model
refers to the Gibbs measure on the space of spin configurations associated
with a certain form of Hamiltonian function (cf. \eqref{e_Ham}) at
inverse temperature $\beta$. In particular, the models with $q=2$
and $q\ge3$ are called the Ising and Potts models, respectively.
Henceforth, we assume that there is no external field acting on our
Ising/Potts model.

\subsubsection*{Ground states and metastability}

For $a\in\{1,\,\dots,\,q\}$, we denote by $\mathbf{a}$ the monochromatic
spin configuration consisting only of spin $a$. Then, we can readily
verify that the set of ground states associated with the Ising/Potts
Hamiltonian is $\mathcal{S}=\{\mathbf{1},\,\dots,\,\mathbf{q}\}$
and hence the Gibbs measure is concentrated on the set $\mathcal{S}$
as the inverse temperature $\beta$ tends to infinity (i.e., as the
temperature goes to $0$). Thus, we can expect that the associated
heat-bath Glauber dynamics exhibits metastability when $\beta$ is
sufficiently large, in the sense that the single-flip Glauber dynamics
starting at a ground state $\mathbf{a}\in\mathcal{S}$ spends a very
long time in a neighborhood of $\mathbf{a}$ before making a transition
to another ground state. Such a \textit{metastable transition} is
one of the primary concerns of the study of metastability and we are
specifically interested in the accurate quantification of the mean
of the metastable transition time. Such a precise estimate of the
mean transition time is called the \textit{Eyring--Kramers formula}
and establishing it requires deep understanding of the energy landscape,
e.g., detailed saddle structure between ground states, associated
with the Hamiltonian. The major difficulty confronted in the current
article lies on the fact that the saddle structure includes a huge
plateau with a large amount of dead-ends. 

We remark that other important problems in the analysis of metastability
include characterizing the typical transition paths and estimating
the spectral gap or mixing time. We shall not pursue these questions
in the current article and leave as future research program.

\subsubsection*{Metastability in small volumes at low temperatures}

The energy landscape of the Ising/Potts model on two-dimensional square
lattices in the small volume regime, i.e, when the side length $L$
of the lattice is large but fixed, was first analyzed in \cite{NZ},
where the energy barrier between ground states was exactly computed
under periodic and open boundary conditions. Based on this result,
the large deviation-type analysis of metastability in the low temperature
regime, i.e., $\beta\rightarrow\infty$ regime, has been carried out
via a robust pathwise approach-type method developed in \cite{NZB}.
This analysis has been extended in \cite{BGN1} where the authors
provide more refined characterization of the optimal transition paths
between ground states.

Finally, in the paper \cite{KS2} by the authors of the present work,
a complete characterization of the entire saddle structure has been
carried out on fixed two or three dimensional square lattices\footnote{Indeed, more generally, rectangular lattices were considered.}
with periodic or open boundary conditions. This level of detailed
understanding of the energy landscape enables us to deduce the Eyring\textit{--}Kramers
formula in the very low temperature regime. By adopting the methodology
developed in that article, metastability of the Blume--Capel model
with zero external field and zero chemical potential has also been
analyzed in \cite{Kim2}.

\subsubsection*{Main results}

In this article, we consider the Ising/Potts model in the \textit{large
volume regime}, i.e., the case when the side length $L$ grows to
infinity. We analyze, at a highly accurate level, the energy landscape
of the Ising/Potts model on the two-dimensional square or hexagonal
lattice of side length $L$ with periodic boundary conditions.

Note that the Gibbs measure is concentrated on the set $\mathcal{S}$
of ground states if $L$ is fixed and $\beta$ is sufficiently large,
since the entropy effect can be neglected in this regime and the energy
is the only dominating factor. However, if we assume that $L$ and
$\beta$ are both sufficiently large, we must consider the entropy
effect and the competition between energy and entropy should be carefully
quantified to determine whether the Gibbs measure is still concentrated
on $\mathcal{S}$. This precise quantification of the entropy-energy
competition is done in Theorem \ref{t_Gibbs1} where we establish
a zero-one law-type result. More precisely, we demonstrate that there
exists a constant\footnote{This constant $\gamma_{0}$ is $1/2$ and $2/3$ for square and hexagonal
lattices, respectively.} $\gamma_{0}$ such that 
\[
\begin{cases}
\text{if }\beta\ge\gamma\log L\text{ for }\gamma>\gamma_{0}\text{ then the Gibbs measure is concentrated on }\mathcal{S}\text{ and}\\
\text{if }\beta\le\gamma\log L\text{ for }\gamma<\gamma_{0}\text{ then the Gibbs measure is concentrated on }\mathcal{S}^{c}
\end{cases}
\]
as $L\rightarrow\infty$, where $\gamma$ denotes a constant independent
of $L$. Therefore, we find an interesting \textit{\emph{phase transition}}
at the critical inverse temperature $\beta_{0}(L)=\gamma_{0}\log L$. 

In view of the previous result, the sharp estimation of the mean of
transition time between ground states provides the Eyring\textit{--}Kramers
formula only when we asymptotically have $\beta\ge\gamma\log L$ for
some $\gamma>\gamma_{0}$. Indeed, we carry out the analysis of the
energy landscape and establish the Eyring\textit{--}Kramers formula
under $\beta\ge\gamma\log L$, $\gamma>\gamma_{1}$ for some constant\footnote{This constant $\gamma_{1}$ is $3$ and $10$ for square and hexagonal
lattices, respectively.} $\gamma_{1}$ which is larger than $\gamma_{0}$ due to technical
reasons.

\subsubsection*{Challenges in the proof}

For both square and hexagonal lattices with side length $L$ under
periodic boundary conditions, it can be shown (cf. \cite{NZ} for
the square lattice and Theorem \ref{t_Ebarrier} of the present work
for the hexagonal lattice) that the energy barrier between ground
states is $2L+2$. In the small volume regime considered in \cite{BGN1,KS2,NZ},
we can neglect all the configurations of energy larger than $2L+2$
since the number of such configurations is determined solely by $L$
(and hence fixed) and therefore, as $\beta\rightarrow\infty$, these
configurations have exponentially negligible mass with respect to
the Gibbs measure, compared to the configurations with energy less
than or equal to $2L+2$ along which typical metastable transitions
take place. However, for the models in the large volume regime, we
are no longer able to neglect these configurations, as the number
of such configurations also grows to infinity and hence the entropy
plays a role. This is the first primary difficulty in the study of
models in large volume when compared to the previous works; indeed,
a subtle combinatorial estimation on the number of configurations
on each energy level is required to overcome this difficulty.

We remark that the saddle structure for the Ising/Potts model without
external field forms a huge and complex plateau. The saddle plateau
consists of canonical configurations providing the main road in the
course of metastable transition, and a large amount of dead-ends attached
there. The analysis of canonical configurations is rather straightforward,
but we need to fully understand the complex structure of the dead-ends
in order to obtain quantitative results such as the Eyring--Kramers
formula. In the two-dimensional square lattice, it is observed in
\cite{KS2} that the dead-ends are attached only at the edge part
of the saddle plateau thanks to its special local geometry, and this
feature allows us to avoid serious difficulties arising from the analysis
of dead-ends. However, for other forms of general lattices, this miracle
does not occur and the dead-ends are attached along the whole saddle
plateau, so that they form a highly complicated maze structure.

We believe that our methodology for proving the Eyring--Kramers formula
is robust against this dead-end structure, and to highlight this robustness
we focus our proof on the hexagonal lattice, in which the dead-ends
structure is indeed complex and emerges in the entire part of the
saddle plateau. The dead-end analysis of the hexagonal lattice relies
on the characterization of all the relevant configurations with energy
around $2L+2$ (cf. Section \ref{Sec6_EB}) and a significant effort
of the current article is devoted to completely understand this dead-end
structure of the hexagonal lattice.

\subsubsection*{Remarks on the Ising/Potts model with non-zero external field}

We conclude the introduction with some remarks on the Ising/Potts
model with \textit{non-zero} external field at very low temperatures.
The metastability of this model has been thoroughly investigated during
the last few decades, and it is interesting that the results are completely
different from the zero external field models considered in \cite{BGN1,KS2,NZ}
and the current article.

For the non-zero external field Ising model in small volume, the saddle
structure of the metastable transition is characterized by appearance
of the specific form of a critical droplet, and hence has a very sharp
saddle structure, in contrast to the fact that the zero external field
model has a huge saddle plateau. Such a characterization has been
carried out in \cite{N-S Ising1,N-S Ising2} for the two-dimensional
square lattice, in \cite{BA-C} for the three-dimensional square lattice,
and in \cite{AJNT} for the two-dimensional hexagonal lattice. In
the last work, it was also been observed that the local geometry of
the hexagonal lattice induces additional difficulty in the analysis
of dead-ends in the vicinity of the critical droplet. These results
also imply the Eyring\textit{--}Kramers formula via potential-theoretic
arguments developed in \cite{B-M}. The same result has been obtained
recently in \cite{BGN2,BGN3} for the Potts model when the external
field acts only on a single spin. An interesting open question is
to analyze the energy landscape and verify the Eyring\textit{--}Kramers
formula when the external field acts on \textit{all} $q$ spins. The
main difficulty of the Potts model compared to the Ising model is
the lack of monotonicity, which was crucially used in \cite{N-S Ising1,N-S Ising2}
to analyze the typical transition path via a grand coupling.

For the model in large volume, it is verified in \cite{BdHS} that
the formation of a critical droplet is still crucial in the transition
in the Ising case. In that article, three regimes of metastable transitions
were studied: formation of a critical droplet, formation of a supercritical
droplet, and evolution of the droplet to a larger size. Typical trajectories
for the metastable transition and the saddle structure have not been
fully characterized yet, and it even remains unknown whether the saddle
configuration contains either one or multiple critical droplets. Hence,
the Eyring--Kramers formula remains an open question for this model. 

\section{\label{sec2_Model}Model}

Before stating the main result of the current article, we rigorously
introduce the model in the current section. 

\subsection{\label{sec21}Spin systems}

\subsubsection*{Lattices}

In this article, we consider spin systems on large, finite two-dimensional
lattices. 

Fix a large positive integer $L\in\mathbb{N}$ and denote by $\Lambda_{L}^{\textup{sq}}$
and $\Lambda_{L}^{\textup{hex}}$ the square and hexagonal lattices
(cf. Figure \ref{fig21}) of size $L$ with periodic boundary conditions.
There is no ambiguity in the definition of $\Lambda_{L}^{\textup{sq}}$,
but further explanation of $\Lambda_{L}^{\textup{hex}}$ is required.
To define $\Lambda_{L}^{\textup{hex}}$, we first select $2L^{2}$
vertices from infinite hexagonal lattice as in Figure \ref{fig21}-(left).
Then, we identify the points at the boundary naturally as illustrated
in the figure. This setting will become intuitively clear when we
introduce the dual lattice in the sequel.

\begin{figure}
\includegraphics[width=14cm]{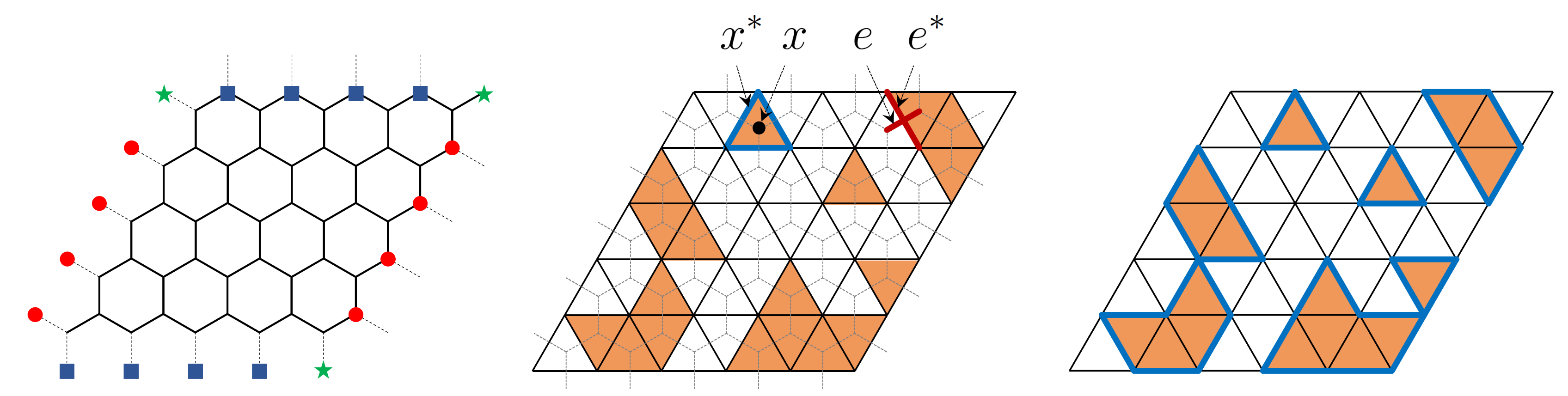}

\caption{\label{fig21}\textbf{(Left)} A hexagonal lattice $\Lambda=\Lambda_{5}^{\textup{hex}}$
with $2\times5^{2}=50$ vertices which are the end points of bold
edges. Under the periodic boundary condition, each vertex at the boundary
highlighted by red circle (resp. blue square) is identified with another
one with red circle (resp. blue square) at the same horizontal level
(resp. same diagonal line with slope $\pi/3$).\textbf{ (Middle) }The
dual lattice $\Lambda^{*}$ of the hexagonal lattice which is a triangular
lattice. The vertex $x$ of $\Lambda$ is identified with the triangular
face $x^{*}$ (the one highlighted by blue bold boundary) in $\Lambda^{*}$.
The edge $e$ of $\Lambda$ is identified with its dual edge $e^{*}$
of $\Lambda^{*}$. In this and the right figure, if the spin at a
certain vertex is $1$ (resp. $2$), we paint the corresponding triangular
face by white (resp. orange). Namely, in this figure we consider the
Ising case $q=2$ and we have $\sigma(x)=2$. \textbf{(Right)} Edges
in $\mathfrak{A}^{*}(\sigma)$ are denoted by blue bold edges, and
hence $\mathfrak{A}^{*}(\sigma)$ is a collection of edges at the
boundaries of the monochromatic clusters. Since the Hamiltonian of
$\sigma$ can be computed as $|\mathfrak{A}^{*}(\sigma)|$, as we
observed in \eqref{e_dualHam1}, $H(\sigma)$ is just the sum of perimeters
of the orange (or white, equivalently) clusters. }
\end{figure}

\subsubsection*{Spin configuration}

We henceforth let $\Lambda=\Lambda_{L}^{\textup{sq}}$ or $\Lambda_{L}^{\textup{hex}}$.
For an integer $q\ge2$, define the set of spins as 
\begin{equation}
\Omega=\Omega_{q}=\{1,\,2,\,\dots,\,q\}\label{e_Omega}
\end{equation}
and we assign a spin from $\Omega$ at each site (vertex) $x$ of
$\Lambda$. The resulting object belonging to the space $\Omega^{\Lambda}$
is called a \textit{(spin) configuration}. We write 
\begin{equation}
\mathcal{X}=\mathcal{X}_{L}=\Omega^{\Lambda}\label{e_Xdef}
\end{equation}
the space of spin configurations\footnote{As in \eqref{e_Xdef}, we omit subscripts or superscripts $L$ highlighting
the dependency of the corresponding object to $L$, as soon as there
is no risk of confusion by doing so.}. We use the notation $\sigma=(\sigma(x))_{x\in\Lambda}$ to denote
a spin configuration, i.e., an element of $\mathcal{X}$, where $\sigma(x)$
represents the spin at site $x\in\Lambda$. 

\subsubsection*{Visualization via dual lattice}

To visualize spin configurations, it is convenient to consider the
dual lattice $\Lambda^{*}$ of $\Lambda$. If $\Lambda=\Lambda_{L}^{\textup{sq}}$,
the dual lattice $\Lambda^{*}$ is again a periodic square lattice
of side length $L$. On the other hand, if $\Lambda=\Lambda_{L}^{\textup{hex}}$,
the dual lattice $\Lambda^{*}$ is a rhombus-shaped periodic triangular
lattice with side length $L$ as in Figure \ref{fig21}-(middle).
Note that the periodic boundary condition of the triangular lattice
inherited from that of the hexagonal lattice simply identifies four
boundaries of the rhombus in a routine way (as in $\mathbb{Z}_{L}^{2}$). 

Since we can identify a site of $\Lambda$ with a face of $\Lambda^{*}$
containing it, we can regard the spins assigned at the sites of $\Lambda$
as those assigned to the faces of $\Lambda^{*}$. Thus, by assigning
different colors to each set of spins, we can readily visualize the
spin configurations on the dual lattice. For instance, in Figure \ref{fig21}-(middle,
right), the triangles with white and orange colors correspond to the
vertices of spins $1$ and $2$, respectively. This visualization
is conceptually more convenient in analyzing the energy of spin configurations,
as we explain in the next subsection. 

\subsection{\label{sec22}Ising and Potts models}

The Ising and Potts models are defined through a suitable probability
distribution on the space $\mathcal{X}$ of spin configurations (cf.
\eqref{e_Xdef}). To define this, let us first define the Ising/Potts
Hamiltonian $H:\mathcal{X}\rightarrow\mathbb{R}$ by 
\begin{equation}
H(\sigma)=\sum_{x\sim y}\mathbf{1}\{\sigma(x)\ne\sigma(y)\}\;\;\;\;;\;\sigma\in\mathcal{X}\;,\label{e_Ham}
\end{equation}
where $x\sim y$ if and only if $x$ and $y$ are connected by an
edge of the lattice $\Lambda$. We emphasize here that this definition
of $H$ indicates that there is no external field acting on the Hamiltonian.
Denote by $\mu_{\beta}(\cdot)$ the Gibbs measure on $\mathcal{X}$
associated with the Hamiltonian $H(\cdot)$ at the inverse temperature
$\beta>0$, i.e., 
\begin{equation}
\mu_{\beta}(\sigma)=\frac{1}{Z_{\beta}}e^{-\beta H(\sigma)}\text{ for }\sigma\in\mathcal{X}\;,\;\;\;\text{where}\;\;\;\;Z_{\beta}=\sum_{\zeta\in\mathcal{X}}e^{-\beta H(\zeta)}\;.\label{e_muZ}
\end{equation}
The random spin configuration associated with the probability measure
$\mu_{\beta}(\cdot)$ is called the \textit{Ising model} if $q=2$
and the \textit{Potts model} if $q\ge3$.

\subsubsection*{Computing the Hamiltonian via dual lattice representation}

We can identify an edge $e$ in $\Lambda$ with the unique edge $e^{*}$
in the dual lattice $\Lambda^{*}$ intersecting with $e$ (cf. Figure
\ref{fig21}-(middle)), and we can identify each vertex $x$ in $\Lambda$
with the unique face $x^{*}$ in the dual lattice $\Lambda^{*}$ containing
$x$. As we have mentioned in Section \ref{sec21}, we regard each
spin in $\Omega=\{1,\,2,\,\dots,\,q\}$ as a color, so that each face
$x^{*}$ in the dual lattice is painted by the color corresponding
to the spin at $x$. Thus, we can identify $\sigma\in\mathcal{X}$
with a $q$-coloring on the faces of the dual lattice $\Lambda^{*}$.
In this coloring representation of $\sigma$, each maximal monochromatic
connected\footnote{Of course, two faces sharing only a vertex are not connected.}
component is called a (monochromatic)\textit{ cluster} of $\sigma$.

We now explain a convenient formulation to understand the Hamiltonian
of a spin configuration $\sigma\in\mathcal{X}$ with the setting explained
above. We refer to Figure \ref{fig21}-(right) for an illustration.
Define $\mathfrak{A}(\sigma)$ as the collection of edges $e=\{x,\,y\}$
in $\Lambda$ such that $\sigma(x)\neq\sigma(y)$. Then, define
\begin{equation}
\mathfrak{A}^{*}(\sigma)=\{e^{*}:e\in\mathfrak{A}(\sigma)\}\;,\label{e_Astar}
\end{equation}
so that by the definition of the Hamiltonian, we have
\begin{equation}
H(\sigma)=|\mathfrak{A}(\sigma)|=|\mathfrak{A}^{*}(\sigma)|\;.\label{e_dualHam1}
\end{equation}
The crucial observation is that the dual edge $e^{*}$ belongs to
$\mathfrak{A}^{*}(\sigma)$ if and only if $e^{*}$ belongs to the
boundary of a cluster of $\sigma$. Hence, as in Figure Figure \ref{fig21}-(right),
we can readily compute the energy $H(\sigma)$ as 
\begin{equation}
H(\sigma)=\frac{1}{2}\sum_{A^{*}:\text{ cluster of }\sigma}(\textup{perimeter of }A^{*})\;,\label{e_dualHam}
\end{equation}
where the factor $\frac{1}{2}$ appears since each dual edge $e^{*}\in\mathfrak{A}^{*}(\sigma)$
belongs to the perimeter of exactly two clusters. 

\subsection{\label{sec23}Heat-bath Glauber dynamics}

We next introduce a heat-bath Glauber dynamics associated with the
Gibbs measure $\mu_{\beta}(\cdot)$. We consider herein the continuous-time
Metropolis--Hastings dynamics $\{\sigma_{\beta}(t)\}_{t\ge0}$ on
$\mathcal{X}$, whose jump rate from $\sigma\in\mathcal{X}$ to $\zeta\in\mathcal{X}$
is given by
\begin{equation}
c_{\beta}(\sigma,\,\zeta)=\begin{cases}
e^{-\beta\max\{H(\zeta)-H(\sigma),\,0\}} & \text{if }\zeta=\sigma^{x,\,a}\ne\sigma\text{ for some }x\in\Lambda\text{ and }a\in\Omega\;,\\
0 & \text{otherwise},
\end{cases}\label{e_cbeta}
\end{equation}
where $\sigma^{x,\,a}\in\mathcal{X}$ denotes the configuration obtained
from $\sigma$ by flipping the spin at site $x$ to $a$. This dynamics
is standard in the study of metastability of the Ising/Potts model
on lattices; see, e.g., \cite{BdH,N-S Ising1,N-S Ising2} and references
therein. For $\sigma,\,\zeta\in\mathcal{X}$, we write
\begin{equation}
\sigma\sim\zeta\;\;\;\;\text{if and only if}\;\;\;\;c_{\beta}(\sigma,\,\zeta)>0\;.\label{e_conn}
\end{equation}
Note that the jump of dynamics $\sigma_{\beta}(\cdot)$ is available
only through a single-spin flip. We will write $\mathbb{P}_{\sigma}^{\beta}$
the law of the Markov process $\sigma_{\beta}(\cdot)$ starting at
$\sigma\in\mathcal{X}$, and write $\mathbb{E}_{\sigma}^{\beta}$
the corresponding expectation.

From the definitions of $\mu_{\beta}(\cdot)$ and $c_{\beta}(\cdot,\,\cdot)$,
we can directly check the following so-called detailed balance condition:
\begin{equation}
\mu_{\beta}(\sigma)c_{\beta}(\sigma,\,\zeta)=\mu_{\beta}(\zeta)c_{\beta}(\zeta,\,\sigma)=\begin{cases}
\min\{\mu_{\beta}(\sigma),\,\mu_{\beta}(\zeta)\} & \text{if }\sigma\sim\zeta\;,\\
0 & \text{otherwise.}
\end{cases}\label{e_detbal}
\end{equation}
Hence, the Markov process $\sigma_{\beta}(\cdot)$ is reversible with
respect to its invariant measure $\mu_{\beta}(\cdot)$. 

\section{\label{sec3_MR}Main Result}

In this section, we explain the main results obtained in this article
for the Ising/Potts model explained in the previous section. We assume
hereafter that $L\ge8$ to avoid unnecessary technical difficulties. 

\subsection{Hamiltonian and energy barrier}

We first explain some results regarding the Hamiltonian $H(\cdot)$
of the Ising/Potts model. 

\subsubsection*{Ground states}

For each $a\in\Omega$, we denote by $\mathbf{a}\in\mathcal{X}$ the
configuration of which all the spins are $a$, i.e., $\mathbf{a}(x)=a$
for all $x\in\Lambda$. Write 
\[
\mathcal{S}=\{\mathbf{1},\,\mathbf{2},\,\dots,\,\mathbf{q}\}\;\;\;\text{and}\;\;\;\mathcal{S}(A)=\{\mathbf{a}\in\mathcal{S}:a\in A\}
\]
for each $A\subseteq\Omega$. Then, it is immediate from the definition
that the Hamiltonian achieves its minimum $0$ exactly at the configurations
belonging to $\mathcal{S}$, and therefore the set $\mathcal{S}$
denotes the collection of all \textit{ground states} of the Ising/Potts
model without external field.

\subsubsection*{Energy barrier}

We first concern on the measurement of the energy barrier between
the ground states. The energy barrier is a fundamental quantity in
the investigation of the saddle structure between ground states. To
define this, let a sequence of configurations $(\omega_{n})_{n=0}^{N}$
in $\mathcal{X}$ be a \textit{path of length $N$} if\footnote{For integers $a$ and $b$, we write $\llbracket a,\,b\rrbracket=[a,\,b]\cap\mathbb{Z}$.}
(cf. \eqref{e_conn}), 
\begin{equation}
\omega_{n}\sim\omega_{n+1}\;\;\;\;\text{for all }n\in\llbracket0,\,N-1\rrbracket\;.\label{e_path}
\end{equation}
The path $(\omega_{n})_{n=0}^{N}$ is said to connect $\sigma$ and
$\zeta$ if $\omega_{0}=\sigma$ and $\omega_{N}=\zeta$ or vice versa.
The \textit{communication height} between two configurations $\sigma,\,\zeta\in\mathcal{X}$
is defined as 
\[
\Phi(\sigma,\,\zeta)=\min_{(\omega_{n})_{n=0}^{N}\text{ connects }\sigma\text{ and }\zeta}\,\max_{n\in\llbracket0,\,N\rrbracket}H(\omega_{n})\;.
\]
Then, the \textit{energy barrier} between ground states is defined
as, for $\mathbf{a},\,\mathbf{b}\in\mathcal{S}$, 
\begin{equation}
\Gamma=\Gamma_{\Lambda}=\Phi(\mathbf{a},\,\mathbf{b})\;,\label{e_Gamma}
\end{equation}
where the value of $\Gamma$ is independent of the selection of $\mathbf{a},\,\mathbf{b}$
by the symmetry of the model.
\begin{thm}
\label{t_Ebarrier}For both $\Lambda=\Lambda_{L}^{\textup{sq}}$ and
$\Lambda_{L}^{\textup{hex}}$, we have $\Gamma=2L+2$. Moreover, there
is no valley of depth larger than $\Gamma$ in the sense that
\[
\min_{\mathbf{s}\in\mathcal{S}}\Phi(\sigma,\,\mathbf{s})-H(\sigma)<\Gamma\;\;\;\;\text{for all }\sigma\in\mathcal{X}\setminus\mathcal{S}\;.
\]
\end{thm}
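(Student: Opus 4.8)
The plan is to fix the value of $\Gamma$ lattice by lattice and then argue the depth bound uniformly. For the value of $\Gamma$ one may first reduce to the Ising case $q=2$: given any path realising $\Phi(\mathbf{1},\mathbf{2})$, collapse every spin different from $1$ and $2$ onto $1$; an edge disagreeing after the collapse already disagreed before, so $H$ does not increase along the way, and after deleting repetitions one obtains a path inside $\{1,2\}^{\Lambda}$ joining $\mathbf{1}$ to $\mathbf{2}$. Hence $\Gamma$ coincides with its Ising value, and for $\Lambda=\Lambda_{L}^{\textup{sq}}$ the equality $\Gamma=2L+2$ is precisely \cite{NZ}. It thus remains to treat $\Lambda_{L}^{\textup{hex}}$, which I would do in the dual triangular-lattice picture of Section \ref{sec21}, where a configuration is a $2$-colouring of the $2L^{2}$ triangular faces and, by \eqref{e_dualHam1}, $H(\sigma)$ is the number of dual edges separating the two colours.

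For the upper bound $\Gamma\le 2L+2$ on $\Lambda_{L}^{\textup{hex}}$ I would write down an explicit reference path. Starting from $\mathbf{1}$, grow a straight row of colour-$2$ faces one triangle at a time; a connected alternating strip of $k$ triangles has exactly $k+2$ boundary edges, so this stage climbs monotonically to energy $2L+1$ when the row is one triangle short of closing up around a shortest period of the torus. Inserting that last triangle closes the row into a colour-$2$ winding strip of width one, of energy $2L$. One then thickens this strip band by band: while a new band is being filled the number of separating edges oscillates between $2L+1$ and $2L+2$ and returns to $2L$ once the band is complete, and after $L-1$ bands the colour-$1$ region has shrunk to a width-one winding strip, which is finally erased at energies at most $2L+1$. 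The path therefore never exceeds height $2L+2$.

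The genuine obstacle is the matching lower bound $\Gamma\ge 2L+2$ on $\Lambda_{L}^{\textup{hex}}$, i.e.\ that every path in $\{1,2\}^{\Lambda}$ from $\mathbf{1}$ to $\mathbf{2}$ visits a configuration of energy at least $2L+2$. A soft first step is topological: $\mathfrak{A}^{*}(\sigma)$ (cf.\ \eqref{e_Astar}) is an even subgraph of the triangular lattice, hence an edge-disjoint union of cycles, and the ``background colour'' of $\sigma$ is well defined and locally constant along the path as long as all these cycles are contractible; since this background must change from $1$ to $2$, at some time $\mathfrak{A}^{*}(\sigma)$ contains a non-contractible cycle, and because the boundary of the colour-$2$ region is null-homologous this forces at least two non-contractible cycles in opposite homology classes, each of length at least $L$ (the systole of the torus), whence $H\ge 2L$ there. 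The hard step is to exclude $H\in\{2L,2L+1\}$ along the \emph{whole} path, equivalently to show that $\mathbf{1}$ and $\mathbf{2}$ lie in distinct connected components of the sublevel set $\{H\le 2L+1\}$. This rests on the classification, carried out in Section \ref{Sec6_EB}, of all configurations of energy close to $2L+2$: they turn out to be the thin straight winding strips of each width and each lattice direction, decorated by a short list of local defects, and one must then check that such a winding strip cannot be widened or translated without the number of separating edges passing through $2L+2$ (an incompletely filled band has two ends, each contributing an extra separating edge, and a short argument rules out any detour), so that each strip forms its own component of $\{H\le 2L+1\}$ while $\mathbf{1}$ (resp.\ $\mathbf{2}$) is connected within that set only to the width-one strip (resp.\ its complement) and to configurations whose colour-$2$ (resp.\ colour-$1$) region does not wind around. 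This combinatorial-topological analysis is where essentially all the difficulty lies, and it is the same classification that also underlies the dead-end analysis later in the paper.

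For the absence of deep valleys I would prove the sharper statement that from any $\sigma\notin\mathcal{S}$ there is a path to some ground state along which the energy exceeds $H(\sigma)$ by at most a universal constant, which for $L\ge 8$ is smaller than $\Gamma=2L+2$. The path removes the monochromatic clusters of $\sigma$ one at a time. If $\sigma$ has a cluster that is a topological disk, pick an innermost one and dismantle it into a neighbouring cluster, face by face, sweeping from one portion of its boundary inward; flipping a face to a spin already carried by $j$ of its neighbours changes $H$ by $\deg-2j$, and with the sweeping order this keeps $H$ within a bounded amount of its value at the start of the dismantling and returns it strictly below once the cluster is absorbed. Iterating, one reaches a configuration all of whose clusters wind around the torus — necessarily a stack of parallel winding strips, since clusters winding in non-parallel directions would have to intersect — and these are then merged one into the other, and finally a single remaining strip is erased, each operation again costing at most a bounded increase of $H$. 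Since every such operation lowers $H$ in net, the excursions do not accumulate, and one obtains $\min_{\mathbf{s}\in\mathcal{S}}\Phi(\sigma,\mathbf{s})-H(\sigma)<\Gamma$. The same recipe applies verbatim when $q\ge 3$, since each cluster is monochromatic.
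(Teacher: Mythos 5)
Your proposal has genuinely different ideas from the paper's proof in three places, but the two load-bearing steps are sketched rather than proved; here is a detailed comparison.

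\emph{Ising reduction.} Your observation that collapsing all spins $\ne 1,2$ onto $1$ does not increase $H$ along any path, so that $\Phi(\mathbf{1},\mathbf{2})$ is the same for all $q\ge 2$, is a valid and tidy shortcut not used in the paper (the paper's bridge-counting argument works directly for all $q$). Your upper-bound path is, up to cosmetics, the paper's canonical path, so nothing to say there.

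\emph{Lower bound.} This is where your approach and the paper's diverge most. The paper (Proposition \ref{p_EBlb}) tracks the number $u(n)$ of $b$-bridges along a hypothetical $(2L+1)$-path, pinpoints the first time $u\ge 2$, shows the configuration there is cross-free, and then contradicts the minimality of that time using only the classification of cross-free configurations of energy $\le 2L$ and $= 2L+1$ (Propositions \ref{p_E<=00003D2L}, \ref{p_E=00003D2L+1}) plus the good-flip lemmas \ref{l_reg}, \ref{l_odd}. Your route is topological: decompose $\mathfrak{A}^{*}(\sigma)$ into cycles, track a ``background colour'', and extract $H\ge 2L$ at the first appearance of non-contractible cycles; then show that $\mathbf{1}$ and $\mathbf{2}$ lie in distinct components of $\{H\le 2L+1\}$. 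The topological warm-up is a nice alternative framing, but it is not strictly necessary (the sublevel-set statement alone is the lower bound). The crux --- that the regular configurations $\mathcal{R}_{n}^{a,b}$ for $n\in\llbracket 2,L-2\rrbracket$ have $\mathcal{N}$-neighbourhoods that are separate components of $\{H\le 2L+1\}$, sandwiched between $\mathcal{N}(\mathbf{a})$ and $\mathcal{N}(\mathbf{b})$ --- is exactly where the real work is, and your proposal defers to ``the classification carried out in Section \ref{Sec6_EB}'' without reproducing it or even specifying precisely what is needed. In particular, the phrase ``an incompletely filled band has two ends, each contributing an extra separating edge, and a short argument rules out any detour'' is hiding both the classification of cross-free $H=2L+1$ configurations and the good-flip analysis that shows protuberances of even size are unavoidable. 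So the lower bound in your proposal is not complete: the ``hard step'' is named but not discharged.

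\emph{Depth bound.} Here your proposal genuinely deviates from the paper, which simply cites the inductive argument of \cite[Lemma 6.11]{KS2}. You instead claim the much stronger statement that from any $\sigma\notin\mathcal{S}$ one can reach a ground state with an excursion above $H(\sigma)$ bounded by a \emph{universal} constant, via a ``shelling'' of innermost disk clusters followed by merging parallel winding strips. This is plausible but unproven, and the critical lemma --- that a simply-connected cluster in the triangular lattice can always be removed face by face with a sweeping order that keeps the perimeter within $O(1)$ of its initial value --- is nontrivial and must handle boundary faces with only one outside-neighbour, which temporarily increase the energy. You assert this ``with the sweeping order'' without giving the order or the bound. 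The merging of parallel strips and the claim that all non-disk clusters of a cross-free configuration are parallel winding strips also want more care (you gesture at the homological argument but do not prove that a cluster containing two non-homotopic curves forces everything else to be a disk). The weaker statement the theorem actually asserts ($<\Gamma=2L+2$, not $<$ constant) is all that is needed, and it is what \cite{KS2} supplies; if you want to keep the stronger universal-constant claim, it needs a genuine proof, not a sketch.

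In short: the Ising collapse is a correct and welcome addition; the topological reframing of the lower bound is a valid alternative to the paper's bridge-counting argument but the key component-separation step is only named, not proved, and ultimately uses the same classification machinery the paper develops; the depth-bound argument is new but rests on an unproven and unnecessarily strong $O(1)$-excursion claim.
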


This theorem has been proven for square lattice in \cite{NZ}, and
we prove this theorem for the hexagonal lattice in Section \ref{Sec6_EB}.

\subsection{\label{sec32}Concentration of Gibbs measure}

We next investigate the Gibbs measure $\mu_{\beta}(\cdot)$. We can
readily observe from definition that if $L$ is fixed and $\beta\rightarrow\infty$,
the Gibbs measure $\mu_{\beta}(\cdot)$ is concentrated on the ground
set $\mathcal{S}$. However, if we consider the large volume regime
for which both $L$ and $\beta$ tends to $\infty$ together, the
non-ground states can have non-negligible masses because of the entropy
effect, that is, there are sufficiently many configurations with high
energy that can dominate the mass of the ground states. By a careful
combinatorial analysis carried out in Section \ref{sec4_Gibbs}, we
can accurately quantify this competition between energy and entropy;
consequently, we establish the zero-one law type result by finding
a sharp threshold determining whether the Gibbs measure $\mu_{\beta}$
is concentrated on $\mathcal{S}$. Before explaining this result,
we explicitly declare the regime that we consider.
\begin{assumption*}
The inverse temperature $\beta=\beta_{L}$ depends on $L$ and we
consider the large-volume, low-temperature regime, in the sense that
$\beta_{L}\rightarrow\infty$ as $L\rightarrow\infty$. 
\end{assumption*}
For sequences $(a_{L})_{L=1}^{\infty}$ and $(b_{L})_{L=1}^{\infty}$,
we write $a_{L}\ll b_{L}$ if $\lim_{L\rightarrow\infty}a_{L}/b_{L}=0$
and write $a_{L}=o_{L}(1)$ if $\lim_{L\rightarrow\infty}a_{L}=0$.
The following theorem will be proven in Section \ref{sec4_Gibbs}.
\begin{thm}
\label{t_Gibbs1}Let us define a constant $\gamma_{0}$ by 
\begin{equation}
\gamma_{0}=\begin{cases}
1/2 & \text{for the square lattice},\\
2/3 & \text{for the hexagonal lattice}.
\end{cases}\label{e_keyconst}
\end{equation}
Then, the following estimates hold.
\begin{enumerate}
\item Suppose that $L^{\gamma_{0}}\ll e^{\beta}$. Then, we have (cf. \eqref{e_muZ})
$Z_{\beta}=q+o_{L}(1)$ and 
\[
\mu_{\beta}(\mathcal{S})=1-o_{L}(1)\;.
\]
\item On the other hand, suppose that $e^{\beta}\ll L^{\gamma_{0}}$. Then,
we have 
\[
\mu_{\beta}(\mathcal{S})=o_{L}(1)\;.
\]
\end{enumerate}
\end{thm}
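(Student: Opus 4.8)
The plan is to estimate the partition function $Z_\beta = \sum_{\sigma} e^{-\beta H(\sigma)}$ by organizing the sum according to the energy level $H(\sigma) = k$, writing $Z_\beta = \sum_{k \ge 0} N_k e^{-\beta k}$ where $N_k = |\{\sigma : H(\sigma) = k\}|$. We have $N_0 = q$ (the ground states). The whole argument reduces to sufficiently precise control on $N_k$: an \emph{upper} bound on $\sum_{k \ge 1} N_k e^{-\beta k}$ for part (1), and a \emph{lower} bound on the contribution of some well-chosen energy level for part (2). By the dual-lattice representation \eqref{e_dualHam}, a configuration of energy $k$ corresponds to a collection of dual edges of total length $2k$ forming the (disjoint) boundaries of monochromatic clusters, so bounding $N_k$ is a combinatorial counting problem about self-avoiding-type closed contours on $\Lambda^*$ of total length $2k$, decorated with spin labels.

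For part (1), the first nonzero contribution to $Z_\beta - q$ comes from the smallest energy $k$ at which a nontrivial configuration exists. On the square lattice, flipping one site costs energy $4$ and there are $q(q-1)\cdot L^2$ such configurations, so the leading correction term is of order $L^2 e^{-4\beta}$; on the hexagonal lattice a single flipped site costs energy $3$ (degree $3$), giving a correction of order $L^2 e^{-3\beta}$. In general I would show $N_k \le C(q)\, L^2\, \mu^k$ for all $k \ge 1$ and some lattice-dependent constant $\mu$: the factor $L^2$ accounts for choosing a "base point" (e.g. the lowest-leftmost dual vertex touched by a contour), and $\mu^k$ accounts for the contour being a union of closed paths of total dual-length $2k$ emanating from that region, using the standard Peierls-type bound that the number of such contours grows at most exponentially in their length, together with at most $q^{(\text{number of clusters})} \le q^{O(k)}$ ways to assign spins. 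Hence $Z_\beta - q \le C(q) L^2 \sum_{k\ge1}\mu^k e^{-\beta k}$, which, once $\beta$ is large enough that $\mu e^{-\beta} < 1/2$, is bounded by $2C(q) L^2 \mu\, e^{-\beta}$. This tends to $0$ precisely when $L^2 e^{-\beta} \to 0$, i.e.\ $e^\beta \gg L^2$ — but that is \emph{not} good enough: the claimed threshold is $e^\beta \gg L^{\gamma_0}$ with $\gamma_0 = 1/2$ or $2/3$, far smaller. So the crude bound $N_k \le C L^2 \mu^k$ must be sharpened: the point is that a configuration of small energy $k$ cannot spread its clusters over the whole lattice freely, and in fact the dominant count is governed not by $L^2$ but by the number of ways to place \emph{small local excitations}, of which the cheapest is a single flipped vertex. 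The right statement is something like $N_k \le C(q)\, L^{2} \mu^k$ only for $k$ up to $O(L)$, but more importantly $\sum_{k} N_k e^{-\beta k}$ is dominated by $k$ equal to the minimal excitation energy $e_{\min}$ ($=4$ for square, $=3$ for hexagonal), and $N_{e_{\min}} = q(q-1) L^2 / (\text{symmetry})$ — wait, this still gives $L^2 e^{-4\beta}$. Let me reconsider: the correct reading of the theorem is that $\mu_\beta(\mathcal{S}) \to 1$ as soon as $\beta$ beats $\gamma_0 \log L$, and since $\mu_\beta(\mathcal{S}^c) \le (Z_\beta - q)/q \asymp L^2 e^{-e_{\min}\beta}$, this goes to zero whenever $\beta > (2/e_{\min})\log L + \omega(1)$, i.e.\ $\gamma > 2/e_{\min} = 1/2$ (square) or $2/3$ (hexagonal). \textbf{This matches $\gamma_0$ exactly.} So part (1) is: establish the Peierls bound $N_k \le C(q) L^2 \mu^{k-e_{\min}} \cdot e^{-e_{\min}(\cdot)}$... more precisely $N_k \le C(q) L^2 \rho^k$ with $\rho = \mu$, but additionally $N_k = 0$ for $1 \le k < e_{\min}$ and $N_{e_{\min}} \le C(q) L^2$, and conclude $Z_\beta = q + O(L^2 e^{-e_{\min}\beta})$, which is $q + o_L(1)$ exactly under $e^\beta \gg L^{2/e_{\min}} = L^{\gamma_0}$. (That $N_k=0$ below $e_{\min}$ and the geometric tail bound together handle all $k$.)

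For part (2), the strategy is the reverse: exhibit explicitly a family of configurations at the minimal excitation energy $e_{\min}$ — the single-flipped-site configurations — of which there are exactly $q(q-1)L^2$ on the square lattice and $q(q-1) \cdot 2L^2$ on the hexagonal lattice (recall $|\Lambda_L^{\mathrm{hex}}| = 2L^2$). Each has Gibbs weight $e^{-e_{\min}\beta}/Z_\beta$, and since $Z_\beta \ge q$ always while also $Z_\beta = q + o_L(1) \cdot$(something) — actually in this regime $Z_\beta$ may itself blow up, so I would instead lower-bound $\mu_\beta(\mathcal{S}^c) \ge \mu_\beta(\{\text{single flips}\}) = c(q) L^2 e^{-e_{\min}\beta} / Z_\beta$ and simultaneously upper-bound $Z_\beta$. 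For the upper bound on $Z_\beta$ one again uses the Peierls bound: $Z_\beta \le q + C(q) L^2 \sum_{k \ge e_{\min}} \rho^k e^{-\beta k}$. In the regime $e^\beta \ll L^{\gamma_0}$, i.e.\ $L^2 e^{-e_{\min}\beta} \to \infty$, the sum $\sum_{k\ge e_{\min}} \rho^k e^{-\beta k}$ is still a convergent geometric series dominated by its first term provided $\beta$ is at least bounded below (which holds since $\beta_L \to \infty$), so $Z_\beta \le q + C'(q) L^2 e^{-e_{\min}\beta} = C'(q) L^2 e^{-e_{\min}\beta}(1 + o(1))$ using $L^2 e^{-e_{\min}\beta}\to\infty$. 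Therefore $\mu_\beta(\mathcal{S}) \le q / Z_\beta \le q / (c(q) L^2 e^{-e_{\min}\beta}) = O(e^{e_{\min}\beta}/L^2) \to 0$, which is exactly the condition $e^\beta \ll L^{\gamma_0}$.

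The main obstacle, and where the real work lies, is proving the uniform Peierls-type bound $N_k \le C(q)\, L^2 \rho^k$ for \emph{all} $k \ge 1$, with a constant $C(q)$ and growth rate $\rho$ independent of $L$. The subtlety is twofold: (i) on a torus the contours need not be contractible — there can be "winding" contours wrapping around $\Lambda$, which appear only at energy $k \ge L$ (a winding loop on the dual lattice has length $\ge L$, hence $2k \ge 2L$... actually the cheapest topologically nontrivial configuration is a "stripe" of width one wrapping the torus, with energy $\asymp 2L$), so they are suppressed by $e^{-2\beta L}$ and contribute negligibly as long as $e^\beta \gg 1$ which is guaranteed; I would handle these separately by a crude count. (ii) For contractible contour collections one can use the standard combinatorial bound (number of self-avoiding polygons of length $\ell$ through a fixed edge is at most $3^\ell$ or similar on a bounded-degree lattice), times a factor of $|\Lambda| = O(L^2)$ for translating, times $q^{O(k)}$ for the coloring — but one must be careful that the number of \emph{clusters} is at most $O(k)$ (true, since each cluster has nonzero perimeter), and that multiple clusters do not produce an extra $L^2$ per cluster: here one anchors \emph{all} of $\mathfrak{A}^*(\sigma)$ relative to a single global base point (say the lexicographically smallest dual vertex it meets) and then counts the connected components as successive choices, each contributing only $\mu^{(\text{its length})}$ and not another $L^2$. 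Getting this bookkeeping right — so that the final bound has exactly one factor of $L^2$ — is the crux. Once that bound is in hand, both parts of the theorem follow by the elementary geometric-series manipulations sketched above, and the precise value $\gamma_0 = 2/e_{\min}$ emerges as $2$ over the coordination number ($4$ for $\mathbb{Z}^2$, $3$ for the hexagonal lattice), which is exactly $1/2$ and $2/3$.
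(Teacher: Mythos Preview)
Your identification $\gamma_0 = 2/e_{\min}$ (with $e_{\min}$ the lattice degree, i.e.\ the cost of a single flip) is exactly right, and for part~(2) your final inequality chain works and is simpler than the paper's route: the trivial lower bound $Z_\beta \ge q(q-1)|\Lambda|\,e^{-e_{\min}\beta}$ from single-flip configurations already gives $\mu_\beta(\mathcal{S}) = q/Z_\beta \to 0$ once $L^2 e^{-e_{\min}\beta}\to\infty$, and your detour through an upper bound on $Z_\beta$ is unnecessary. (The paper instead invokes a more general concentration statement, Lemma~\ref{l_muprop}, because it is reused for Theorem~\ref{t_Gibbs2}.)

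The genuine gap is in part~(1): the pointwise bound $N_k \le C(q)\,L^2\,\rho^k$ you aim for is \emph{false}, and the anchoring trick you sketch cannot save it. Take $j\ge 2$ pairwise non-adjacent flipped sites on a single ground state: the energy is $k = j\,e_{\min}$ and the count is at least $c\binom{|\Lambda|}{j}\gtrsim L^{2j}/j!$, so already $N_{2e_{\min}}\gtrsim L^{4}$ exceeds any $C\,L^{2}\rho^{2e_{\min}}$ for large $L$. Anchoring the lexicographically smallest dual vertex of $\mathfrak{A}^*(\sigma)$ pins only one connected component; each remaining component still ranges freely over $\Lambda^*$ and contributes an honest factor $|\Lambda|$. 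What a correct Peierls count gives is $\sum_{k\ge1} N_k e^{-\beta k} \le q\big(\exp(C L^{2}e^{-e_{\min}\beta})-1\big)$ rather than $O(L^{2}e^{-e_{\min}\beta})$; both vanish under $L^{\gamma_0}\ll e^{\beta}$, so your conclusion is right, but the intermediate bound is not. The paper's implementation of this is to decompose $\mathfrak{A}^*(\sigma)$ into connected pieces of size in $\{3,4,5,6\}$ (Lemma~\ref{l_graph}), obtain the multinomial upper bound of Lemma~\ref{l_Xi}(1), and then collapse the sum via the Vandermonde identity to $\sum_{i\ge1}\binom{4\theta L^{2}}{i}(qe^{-\beta})^{3i}\le x\,e^{x}$ with $x=O(L^{2}e^{-3\beta})$.
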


Henceforth, the constant $\gamma_{0}$ always refers to the one defined
in \eqref{e_keyconst}. This theorem implies that a drastic change
in the valley structure of the Gibbs measure $\mu_{\beta}$ occurs
at $\beta/\log L=\gamma_{0}$. Namely, if $\beta/\log L\ge\gamma$
for some $\gamma>\gamma_{0}$, the ground states themselves form metastable
sets, while if $\beta/\log L\le\gamma$ for $\gamma<\gamma_{0}$,
ground states are no longer metastable. 

The second regime can be investigated further. Define, for each $i\ge0$,
\begin{equation}
\mathcal{X}_{i}=\mathcal{X}_{i,\,L}=\{\sigma\in\mathcal{X}:H(\sigma)=i\}\;,\label{e_Xidef}
\end{equation}
so that $\mathcal{X}_{0}=\mathcal{S}$ denotes the set of ground states.
For any interval $I\subseteq\mathbb{R}$, we write
\[
\mathcal{X}_{I}=\bigcup_{i\in I\cap\mathbb{Z}}\mathcal{X}_{i}\;.
\]
Then, we have the following refinement of case (2) of Theorem \ref{t_Gibbs1}
which will be proven in Section \ref{sec4_Gibbs} as well. 
\begin{thm}
\label{t_Gibbs2}Suppose that $e^{\beta}\ll L^{\gamma_{0}}$ and fix
a constant $\alpha\in(0,\,1)$. Then, the following statements hold.
\begin{enumerate}
\item Suppose that $L^{\gamma_{0}(1-\alpha)}\ll e^{\beta}$. Then, for every
$c>0$, we have
\[
\mu_{\beta}\big(\,\mathcal{X}_{[0,\,cL^{2\alpha}]}\,\big)=1-o_{L}(1)\;.
\]
\item Suppose that $L^{\gamma_{0}(1-\alpha)}\gg e^{\beta}$. Then, for every
$c>0$, we have
\[
\mu_{\beta}\big(\,\mathcal{X}_{[0,\,cL^{2\alpha}]}\,\big)=o_{L}(1)\;.
\]
\end{enumerate}
\end{thm}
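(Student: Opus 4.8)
The plan is to derive both parts from two combinatorial inputs on the layers $\mathcal{X}_{i}$ of \eqref{e_Xidef}, both established in this section by the combinatorial analysis also underlying Theorem \ref{t_Gibbs1}. Throughout, write $\kappa=2/\gamma_{0}$, so that $\kappa=4$ for the square lattice and $\kappa=3$ for the hexagonal lattice; this is precisely the minimal perimeter in the dual lattice of a single cell, i.e.\ the energy cost of one isolated droplet of a non-background spin. Set $\lambda_{0}=L^{2}e^{-\kappa\beta}$. The first input is a uniform upper bound of the form
\[
|\mathcal{X}_{i}|\,e^{-\beta i}\;\le\;\Big(\frac{C_{1}\lambda_{0}}{i}\Big)^{i/\kappa}\qquad\text{for all }i\ge1,
\]
for a constant $C_{1}$ depending only on $q$ and the lattice, coming from a Peierls-type enumeration of interface configurations: the dominant entropic contribution to $\mathcal{X}_{i}$ is produced by roughly $i/\kappa$ mutually non-adjacent single-cell droplets, which is what yields the factor $(C_{1}\lambda_{0}/i)^{i/\kappa}$. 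The second input is the matching lower bound
\[
Z_{\beta}\;\ge\;|\mathcal{X}_{\kappa k}|\,e^{-\kappa k\beta}\;\ge\;\frac{(c_{1}\lambda_{0})^{k}}{k!}\qquad\text{for all }1\le k\le\varepsilon L^{2},
\]
for constants $c_{1}\ge\tfrac12$ and $\varepsilon>0$, obtained by simply placing $k$ pairwise non-adjacent single-cell droplets on a monochromatic background; I shall also use the trivial bound $Z_{\beta}\ge q$ from $\mathcal{S}$. Since $\beta_{L}\to\infty$ we have $\lambda_{0}=o(L^{2})$, and because $\kappa\gamma_{0}=2$ the standing hypothesis $e^{\beta}\ll L^{\gamma_{0}}$ is equivalent to $\lambda_{0}\to\infty$, the hypothesis $L^{\gamma_{0}(1-\alpha)}\ll e^{\beta}$ of case (1) is equivalent to $\lambda_{0}\ll L^{2\alpha}$, and the hypothesis $L^{\gamma_{0}(1-\alpha)}\gg e^{\beta}$ of case (2) to $\lambda_{0}\gg L^{2\alpha}$.

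For case (1), assume $\lambda_{0}\ll L^{2\alpha}$ and fix $c>0$. For every $i>cL^{2\alpha}$ one has $C_{1}\lambda_{0}/i\le C_{1}\lambda_{0}/(cL^{2\alpha})\to0$, hence $C_{1}\lambda_{0}/i\le\tfrac12$ once $L$ is large, and therefore
\[
\mu_{\beta}\big(\mathcal{X}_{(cL^{2\alpha},\,\infty)}\big)\;\le\;\frac1q\sum_{i>cL^{2\alpha}}|\mathcal{X}_{i}|\,e^{-\beta i}\;\le\;\frac1q\sum_{i>cL^{2\alpha}}2^{-i/\kappa}\;\longrightarrow\;0 ,
\]
which is exactly the claim $\mu_{\beta}(\mathcal{X}_{[0,\,cL^{2\alpha}]})=1-o_{L}(1)$.

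For case (2), assume $\lambda_{0}\gg L^{2\alpha}$ and put $r:=\lambda_{0}/L^{2\alpha}\to\infty$. On the range $1\le i\le cL^{2\alpha}$ we have $i\ll\lambda_{0}$, so $i\mapsto(C_{1}\lambda_{0}/i)^{i/\kappa}$ is increasing there, whence the numerator obeys $\sum_{0\le i\le cL^{2\alpha}}|\mathcal{X}_{i}|e^{-\beta i}\le q+cL^{2\alpha}\big(C_{1}r/c\big)^{cL^{2\alpha}/\kappa}$. For the denominator, take $k=\lceil\lambda_{0}\rceil$, which is $\le\varepsilon L^{2}$ because $\lambda_{0}=o(L^{2})$; Stirling's formula together with $\delta:=\log(c_{1}e)>0$ then gives $Z_{\beta}\ge (c_{1}\lambda_{0})^{k}/k!\ge e^{\delta\lambda_{0}}/(C_{2}\sqrt{\lambda_{0}})$. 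Dividing and taking logarithms, $\log\mu_{\beta}\big(\mathcal{X}_{[0,\,cL^{2\alpha}]}\big)$ is bounded above by
\[
-\delta\lambda_{0}+\frac{cL^{2\alpha}}{\kappa}\log\frac{C_{1}r}{c}+O\big(\log(rL^{2\alpha})\big)\;=\;L^{2\alpha}\Big(-\delta r+\frac{c}{\kappa}\log\frac{C_{1}r}{c}\Big)+O\big(\log(rL^{2\alpha})\big),
\]
and since $r\to\infty$ forces $-\delta r+\tfrac{c}{\kappa}\log(C_{1}r/c)\to-\infty$ while the error term is of smaller order than $L^{2\alpha}$, the right-hand side tends to $-\infty$. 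Hence $\mu_{\beta}(\mathcal{X}_{[0,\,cL^{2\alpha}]})=o_{L}(1)$.

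The only genuinely delicate ingredient is the uniform upper bound $|\mathcal{X}_{i}|e^{-\beta i}\le(C_{1}\lambda_{0}/i)^{i/\kappa}$: it must hold for \emph{every} $i$, including $i$ comparable to the maximal energy, which is of order $L^{2}$; it has to accommodate the fact that for $q\ge3$ the interface $\mathfrak{A}^{*}(\sigma)$ is not merely a disjoint family of simple contours; and it requires controlling the number of placements of a family of contours of prescribed total length. This is precisely the combinatorial core carried out in Section \ref{sec4_Gibbs}; granting it together with the elementary lower bounds above, the remainder of the proof is the short real-analytic computation outlined here.
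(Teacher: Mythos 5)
Your proposal is correct and follows the same route as the paper: both rest on the combinatorial bounds of Lemma \ref{l_Xi} for $|\mathcal{X}_{i}|$, both locate the bulk of the Gibbs mass at energy of order $\lambda_{0}=L^{2}e^{-\kappa\beta}$ (with $\kappa=2/\gamma_{0}$), and both conclude by comparing masses above and below that scale. The paper packages the comparison into Lemma \ref{l_muprop} and then invokes it with $g_{1}$ or $g_{2}$ equal to $cL^{2\alpha}$, whereas you inline the same argument via the per-level bound $|\mathcal{X}_{i}|e^{-\beta i}\le(C_{1}\lambda_{0}/i)^{i/\kappa}$ (derivable from Lemma \ref{l_Xi}(1) after absorbing the polynomial prefactor into the constant) and the factorial-type lower bound on $Z_{\beta}$ from Lemma \ref{l_Xi}(2). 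These are organizational differences, not a different method.
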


We can infer from Theorem \ref{t_Ebarrier} that the energetic valley
$\mathcal{V}_{\mathbf{a}}$ containing each $\mathbf{a}\in\mathcal{S}$
should be the connected component of 
\[
\{\sigma\in\mathcal{X}:H(\sigma)<2L+2\}
\]
containing $\mathbf{a}$, where the connectedness of a set $\mathcal{A}\subseteq\mathcal{X}$
here refers to the path-connectedness (cf. \eqref{e_path}). Since
Theorem \ref{t_Gibbs2} implies that
\[
\mu_{\beta}\big(\,\mathcal{X}_{[0,\,2L+1]}\,\big)=\begin{cases}
1-o_{L}(1) & \text{if }L^{\gamma_{0}/2}\ll e^{\beta}\;,\\
o_{L}(1) & \text{if }L^{\gamma_{0}/2}\gg e^{\beta}\;,
\end{cases}
\]
we can conclude that the energetic valleys $\mathcal{V}_{\mathbf{a}}$
are indeed metastable valleys if $\beta/\log L\ge\gamma$ for some
$\gamma>\frac{\gamma_{0}}{2}$.\textbf{ }In contrast, if $\beta/\log L\le\gamma$
for some $\gamma<\frac{\gamma_{0}}{2}$, the Gibbs measure is concentrated
on the \emph{complement} of these energetic valleys. Hence, in the
latter regime (as long as $\beta$ is bigger than the critical temperature
$\beta_{c}(q)=\log(1+\sqrt{q})$ of the Ising/Potts model \cite{B-DC}),
we deduce that the metastable set must lie upon the configurations
with higher energy; this is the onset in which the entropy starts
to play a significant role. 

\subsection{\label{sec33}Eyring--Kramers formula}

We next concern on the dynamical metastable behavior exhibited by
the Metropolis dynamics $\sigma_{\beta}(\cdot)$ defined in Section
\ref{sec23}. If the invariant measure $\mu_{\beta}(\cdot)$ is concentrated
on the set $\mathcal{S}$, we can expect that the process $\sigma_{\beta}(\cdot)$
starting at some $\mathbf{a}\in\mathcal{S}$ spends a sufficiently
long time around $\mathbf{a}$ before making a transition to another
ground state. This type of behavior is the signature of metastability
of the process $\sigma_{\beta}(\cdot)$, and we are interested in
its quantification. To explain this in more detail, we first define
the hitting time of the set $\mathcal{A}\subseteq\mathcal{X}$ as
\[
\tau_{\mathcal{A}}=\inf\{t\ge0:\sigma_{\beta}(t)\in\mathcal{A}\}\;,
\]
and simply write $\tau_{\{\sigma\}}=\tau_{\sigma}$. Then, we are
primarily interested in the \textit{mean transition time} of the form
$\mathbb{E}_{\mathbf{a}}^{\beta}[\tau_{\mathcal{S}\setminus\{\mathbf{a}\}}]$
or $\mathbb{E}_{\mathbf{a}}^{\beta}[\tau_{\mathbf{b}}]$ denoting
the expectation of the metastable transition from a ground state to
another one, where $\mathbb{E}_{\mathbf{a}}^{\beta}$ is defined right
after \eqref{e_conn}. These quantities are significant in the study
of the metastable behavior because they are key notions explaining
the amount of time required to observe a metastable transition and
are closely related to the mixing time or spectral gap of the dynamics.
The precise estimation of the mean transition time is called the \textit{Eyring--Kramers
formula}. The next main result of the current article is the following
Eyring--Kramers formula for the Metropolis dynamics. Define a constant
$\kappa_{0}$ by 
\begin{equation}
\kappa_{0}=\begin{cases}
1/8 & \text{for the square lattice},\\
1/12 & \text{for the hexagonal lattice}.
\end{cases}\label{e_kappa0}
\end{equation}

\begin{thm}[Eyring--Kramers formula]
\label{t_EK} Suppose that $\beta=\beta_{L}$ satisfies $L^{3}\ll e^{\beta}$
for the square lattice and $L^{10}\ll e^{\beta}$ for the hexagonal
lattice. Then, for all $\mathbf{a},\,\mathbf{b}\in\mathcal{S}$, we
have
\begin{equation}
\mathbb{E}_{\mathbf{a}}^{\beta}\big[\,\tau_{\mathcal{S}\setminus\{\mathbf{a}\}}\,\big]=\frac{\kappa_{0}+o_{L}(1)}{q-1}e^{\Gamma\beta}\;\;\;\;\text{and}\;\;\;\;\mathbb{E}_{\mathbf{a}}^{\beta}\big[\,\tau_{\mathbf{b}}\,\big]=(\kappa_{0}+o_{L}(1))e^{\Gamma\beta}\;,\label{e_EK}
\end{equation}
where $\Gamma=2L+2$ is the energy barrier obtained in Theorem \ref{t_Ebarrier}. 
\end{thm}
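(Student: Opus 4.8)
The plan is to follow the potential-theoretic approach to metastability. For the reversible dynamics $\sigma_{\beta}(\cdot)$ and any $\mathbf{a}\notin\mathcal{B}\subseteq\mathcal{X}$ one has the exact identity
\[
\mathbb{E}_{\mathbf{a}}^{\beta}[\tau_{\mathcal{B}}]=\frac{1}{\mathrm{cap}_{\beta}(\mathbf{a},\,\mathcal{B})}\sum_{\sigma\in\mathcal{X}}\mu_{\beta}(\sigma)\,h_{\mathbf{a},\,\mathcal{B}}(\sigma)\;,
\]
where $h_{\mathbf{a},\,\mathcal{B}}$ is the equilibrium potential (harmonic off $\{\mathbf{a}\}\cup\mathcal{B}$, equal to $1$ at $\mathbf{a}$ and $0$ on $\mathcal{B}$) and $\mathrm{cap}_{\beta}(\mathbf{a},\,\mathcal{B})=\tfrac12\sum_{\sigma\sim\eta}\mu_{\beta}(\sigma)c_{\beta}(\sigma,\eta)\big(h_{\mathbf{a},\,\mathcal{B}}(\sigma)-h_{\mathbf{a},\,\mathcal{B}}(\eta)\big)^{2}$. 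By \eqref{e_detbal}, $\mu_{\beta}(\sigma)c_{\beta}(\sigma,\eta)=Z_{\beta}^{-1}e^{-\beta\max\{H(\sigma),\,H(\eta)\}}$ for $\sigma\sim\eta$, so a capacity is, up to the factor $Z_{\beta}^{-1}$, a weighted sum over the edges of $\mathcal{X}$ controlled by the energies of their endpoints. Taking $\mathcal{B}=\mathcal{S}\setminus\{\mathbf{a}\}$ and $\mathcal{B}=\{\mathbf{b}\}$, proving \eqref{e_EK} reduces to (i) evaluating the two numerators and (ii) obtaining estimates for $\mathrm{cap}_{\beta}(\mathbf{a},\,\mathcal{S}\setminus\{\mathbf{a}\})$ and $\mathrm{cap}_{\beta}(\mathbf{a},\,\mathbf{b})$ accurate up to a factor $1+o_{L}(1)$.

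The numerators are the routine part. Since $L^{3}\ll e^{\beta}$ in the square case and $L^{10}\ll e^{\beta}$ in the hexagonal case, the hypothesis $L^{\gamma_{0}}\ll e^{\beta}$ of Theorem \ref{t_Gibbs1} holds, whence $Z_{\beta}=q+o_{L}(1)$, $\mu_{\beta}(\mathbf{a})=\tfrac1q(1+o_{L}(1))$ for every $\mathbf{a}\in\mathcal{S}$, and $\mu_{\beta}(\mathcal{X}\setminus\mathcal{S})=o_{L}(1)$. Splitting $\sum_{\sigma}\mu_{\beta}(\sigma)h_{\mathbf{a},\,\mathcal{B}}(\sigma)$ over $\{\mathbf{a}\}$, $\mathcal{S}\setminus\{\mathbf{a}\}$ and $\mathcal{X}\setminus\mathcal{S}$ and using $0\le h_{\mathbf{a},\,\mathcal{B}}\le1$: for $\mathcal{B}=\mathcal{S}\setminus\{\mathbf{a}\}$ the numerator equals $\mu_{\beta}(\mathbf{a})+o_{L}(1)=\tfrac1q+o_{L}(1)$; for $\mathcal{B}=\{\mathbf{b}\}$, the relabeling symmetry exchanging the spins $a$ and $b$ fixes each $\mathbf{c}\in\mathcal{S}\setminus\{\mathbf{a},\mathbf{b}\}$ and therefore forces $h_{\mathbf{a},\,\mathbf{b}}(\mathbf{c})=\tfrac12$, so the numerator equals $\tfrac1q+\tfrac{q-2}{2q}+o_{L}(1)=\tfrac12+o_{L}(1)$.

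The heart of the proof, and where I expect the genuine work, is the sharp two-sided estimate of the capacities. By Theorem \ref{t_Ebarrier}, the energetic valley $\mathcal{V}_{\mathbf{a}}$ is the connected component of $\{\sigma:H(\sigma)<2L+2\}$ containing $\mathbf{a}$ (consisting of $\mathbf{a}$, small droplets, and minimal-width wrapping strips), and any transition from $\mathbf{a}$ to another ground state $\mathbf{b}$ must leave $\mathcal{V}_{\mathbf{a}}$ over the level $2L+2$ and then traverse a chain of shallow valleys (strips of $b$ in a sea of $a$ of increasing width, each of depth $<\Gamma$) before reaching $\mathcal{V}_{\mathbf{b}}$, the chains associated with distinct targets being disjoint apart from their endpoints. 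Thus each of $\mathrm{cap}_{\beta}(\mathbf{a},\,\mathcal{S}\setminus\{\mathbf{a}\})$ and $\mathrm{cap}_{\beta}(\mathbf{a},\,\mathbf{b})$ is the effective conductance of a finite resistor network whose links are the crossings between consecutive shallow valleys, and I would show there is an elementary conductance $g_{\beta}=\frac{1}{q\kappa_{0}}e^{-\Gamma\beta}(1+o_{L}(1))$ with
\[
\mathrm{cap}_{\beta}(\mathbf{a},\,\mathcal{S}\setminus\{\mathbf{a}\})=(q-1)\,g_{\beta}\,(1+o_{L}(1))\;,\qquad\mathrm{cap}_{\beta}(\mathbf{a},\,\mathbf{b})=\tfrac{q}{2}\,g_{\beta}\,(1+o_{L}(1))\;,
\]
which together with the numerators gives \eqref{e_EK}. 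The upper bounds come from the Dirichlet variational principle: I would build an explicit test function equal to $1$ on $\mathcal{V}_{\mathbf{a}}$, equal to $0$ on the remaining valleys, interpolating monotonically through the saddle configurations of energy $2L+2$ along the canonical channels, and locally constant on every dead-end so that dead-ends do not contribute to its Dirichlet form. The lower bounds come from the Thomson (flow) principle: I would exhibit an explicit unit flow supported on the canonical channels whose Dirichlet energy matches. For both directions the indispensable input is the complete classification of the configurations of energy near $2L+2$ --- provided by \cite{NZ} for the square lattice and carried out in Section \ref{Sec6_EB} for the hexagonal lattice, using also the statement in Theorem \ref{t_Ebarrier} that no valley has depth exceeding $\Gamma$ --- together with the combinatorial estimate on $|\mathcal{X}_{i}|$ for $i\ge2L+2$; this estimate guarantees that configurations above the saddle level and the exponentially numerous dead-ends attached at level $2L+2$ perturb each link conductance only by a factor $1+o_{L}(1)$, and it is precisely here that the hypotheses $L^{3}\ll e^{\beta}$ and $L^{10}\ll e^{\beta}$ are needed. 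The value of $\kappa_{0}$ then emerges after summing the resistances of the roughly $L$ links of each chain --- each link having conductance of order $L\,e^{-\Gamma\beta}/Z_{\beta}$ because the relevant protuberance admits order $L$ positions, so the two powers of $L$ cancel --- together with the one-dimensional birth-and-death mechanism for protuberance growth and the count of admissible orientations and spin pairs.

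The main obstacle is this last step for the hexagonal lattice, where the dead-ends are attached all along the saddle plateau and form a complicated maze, so one must verify that, despite their number, their aggregate effect on the test function and on the test flow is negligible; this rests on the detailed landscape analysis of Section \ref{Sec6_EB} and on the energy-level counting carried out in full. As a final remark, the second identity in \eqref{e_EK} can alternatively be obtained from the first without separately computing $\mathrm{cap}_{\beta}(\mathbf{a},\,\mathbf{b})$, by observing that the trace of $\sigma_{\beta}(\cdot)$ on $\mathcal{S}$ behaves to leading order like the continuous-time random walk on the complete graph $K_{q}$ with equal jump rates, for which the mean hitting time of a prescribed vertex is $(q-1)$ times the mean exit time from a single vertex.
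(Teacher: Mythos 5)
Your approach is essentially the same as the paper's: both start from the mean-hitting-time identity $\mathbb{E}_{\mathbf{a}}^{\beta}[\tau_{\mathcal{B}}]=\big(\sum_{\sigma}\mu_{\beta}(\sigma)h_{\mathbf{a},\mathcal{B}}^{\beta}(\sigma)\big)/\mathrm{Cap}_{\beta}(\mathbf{a},\mathcal{B})$, evaluate the numerator by the Gibbs concentration of Theorem \ref{t_Gibbs1}, and reduce the whole matter to a sharp two-sided capacity estimate obtained from the Dirichlet and generalized Thomson principles (which is exactly the content of Theorem \ref{t_cap} and Sections \ref{sec9_UB}--\ref{sec10_LB}, including the role of the dead-end and $|\mathcal{X}_{i}|$ counting in forcing $L^{10}\ll e^{\beta}$ resp. $L^{3}\ll e^{\beta}$). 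The one genuine difference is in the second identity of \eqref{e_EK}: your primary route estimates $\mathrm{Cap}_{\beta}(\mathbf{a},\mathbf{b})$ directly, with numerator $\tfrac12+o_{L}(1)$ obtained from the spin-exchange symmetry forcing $h_{\mathbf{a},\mathbf{b}}^{\beta}(\mathbf{c})=\tfrac12$ for $\mathbf{c}\in\mathcal{S}\setminus\{\mathbf{a},\mathbf{b}\}$, whereas the paper derives the second identity from the first via the symmetry $\mathbb{P}_{\mathbf{a}}^{\beta}[\sigma_{\beta}(\tau_{\mathcal{S}\setminus\{\mathbf{a}\}})=\mathbf{b}]=1/(q-1)$ together with a strong-Markov/geometric-random-variable argument. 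Your main route would require a capacity estimate for the singleton pair $(\{\mathbf{a}\},\{\mathbf{b}\})$, which for $q\ge3$ is \emph{not} covered by Theorem \ref{t_cap} (since $(\{a\},\{b\})$ is not a proper partition), so the test function would need a $\tfrac12$-plateau on each of the $q-2$ remaining valleys. That is feasible, and your conductance heuristic on $K_{q}$ gives the consistent value $\mathrm{Cap}_{\beta}(\mathbf{a},\mathbf{b})=\tfrac{q}{2}g_{\beta}(1+o_{L}(1))$, but the paper's geometric argument sidesteps any additional capacity estimate; your closing remark about the trace process on $K_{q}$ is essentially the route the paper actually takes.
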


The proof of this theorem is given in Sections \ref{sec8_PT} through
\ref{sec10_LB} based on the comprehensive analysis of the saddle
structure carried out in Sections \ref{Sec6_EB} and \ref{sec7_Saddle}.
\begin{rem}
We conjecture that this result holds for all $\beta=\beta_{L}$ satisfying
$L^{\gamma_{0}/2}\ll e^{\beta}$, under which the invariant measure
is concentrated on the energetic valleys around ground states. The
sub-optimality of the lower bound (of constant order) on $\beta$
owes to several technical issues arising in the proof (cf. Sections
\ref{sec9_UB} and \ref{sec10_LB}), and we guess that additional
innovative ideas are required to get the optimal bound.
\end{rem}

\begin{rem}
The condition on $\beta$ is relatively tight ($L^{3}\ll e^{\beta}$)
for the square lattice, whereas the condition for the hexagonal lattice
is slightly loose ($L^{10}\ll e^{\beta}$). This is because the dead-end
analysis is much more complicated for the hexagonal lattice owing
to its complicated local geometry. It will be highlighted in Sections
\ref{sec62} and \ref{sec63}.
\end{rem}

\begin{rem}
One can also obtain the Markov chain convergence of the so-called
trace process (cf. \cite{BL1}) of the accelerated process $\{\sigma_{\beta}(e^{\Gamma\beta}t)\}_{t\ge0}$
on the set $\mathcal{S}$ to the Markov process on $\mathcal{S}$
with uniform rate $r(\mathbf{a},\,\mathbf{b})=\frac{1}{\kappa_{0}}$
for all $\mathbf{a},\,\mathbf{b}\in\mathcal{S}$. Such a Markov chain
model reduction of the metastable behavior is an alternative method
of investigating the metastability (cf. \cite{BL1,BL2,Lan}). The
proof of this result using Theorem \ref{t_cap} is identical to that
of \cite[Theorem 2.11]{KS2} and is not repeated here.
\end{rem}

\subsection{Outlook of remainder of article}

\textit{In the remainder of the article, we explain the proof of the
theorems explained above in detail only for the hexagonal lattice},
since the proof for the square lattice is similar to that for the
hexagonal lattice and in fact much simpler; the geometry of the hexagonal
lattice is far more complex and needs careful consideration with additional
complicated arguments. Moreover, the analysis of square lattice can
be helped a lot by the computations carried out in \cite{KS2} which
considered the small-volume regime ($L$ is fixed and $\beta$ tends
to infinity). 

The remainder of the article is organized as follows. In Section \ref{sec4_Gibbs},
we analyze the Gibbs measure $\mu_{\beta}(\cdot)$ to prove Theorems
\ref{t_Gibbs1} and \ref{t_Gibbs2}. In Section \ref{Sec5_Pre}, we
provide some preliminary observations to investigate the energy landscape
in a more detailed manner. We then analyze the energy landscape of
the Hamiltonian in detail in Sections \ref{Sec6_EB} and \ref{sec7_Saddle}.
As a by-product of our deep analysis, Theorem \ref{t_Ebarrier} will
be proven at the end of Section \ref{Sec6_EB}. Then, we finally prove
the Eyring-Kramers formula, i.e., Theorem \ref{t_EK} in remaining
sections.

\section{\label{sec4_Gibbs}Sharp Threshold for Gibbs Measure}

In this section, we prove Theorems \ref{t_Gibbs1} and \ref{t_Gibbs2}.
We remark that we will now implicitly assume that the underlying lattice
is the hexagonal one, unless otherwise specified. We shall briefly
discuss the square lattice in Section \ref{sec45}.

\subsection{Lemma on graph decomposition}

We begin with a lemma on graph decomposition which is crucially used
in estimating the number of configurations having a specific energy. 
\begin{notation}
\label{n_graph}For a graph $G=(V,\,E)$ and a set $E_{0}\subseteq E$
of edges, we denote by $G[E_{0}]=(V[E_{0}],\,E_{0})$ the subgraph
induced by the edge set $E_{0}$ where the vertex set $V[E_{0}]$
is the collection of end points of the edges in $E_{0}$. The edge
set $E_{0}\subseteq E$ is said to be connected if the induced graph
$G[E_{0}]$ is a connected graph.
\end{notation}

\begin{lem}
\label{l_graph}$G=(V,\,E)$ be a graph such that every connected
component has at least three edges. Then, we can decompose 
\[
E=E_{1}\cup\cdots\cup E_{n}
\]
such that $E_{i}$ is connected and $|E_{i}|\in\{3,\,4,\,5,\,6\}$
for all $i=1,\,\dots,\,n$.
\end{lem}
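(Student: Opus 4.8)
The plan is to reduce to the case of a single connected component, since the decomposition of $E$ can be carried out on each component separately, and then to argue by induction on the number of edges $m = |E|$ of a connected graph with $m \geq 3$. The base cases $m \in \{3,4,5,6\}$ are trivial: take $n=1$ and $E_1 = E$. So assume $m \geq 7$ and that the claim holds for all connected graphs with fewer edges (but at least $3$). The goal is to peel off a connected edge set $E_n$ with $|E_n| \in \{3,4,5,6\}$ such that the remaining graph $G[E \setminus E_n]$ has the property that every connected component has at least three edges; then the inductive hypothesis applied to each such component finishes the proof.

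First I would fix a spanning tree $T$ of $G$ (it has $m$ edges if $G$ is a tree, or fewer otherwise, but in any case one can work with a spanning tree and add back the non-tree edges at the end, or simply work directly with $T$ and assign each non-tree edge to whichever block contains one of its endpoints' incident tree edges). Root $T$ and pick a deepest leaf $v$; walk up from $v$ along $T$ and let $P = (e_1, e_2, \dots)$ be the edges encountered. Because $v$ is a deepest leaf, the subtree hanging below any vertex on this path that we have not yet traversed is controlled. I would cut off the first $k$ edges $e_1, \dots, e_k$ of this ascending path where $k$ is chosen in $\{3,4,5,6\}$ so that the remainder stays valid. The key quantitative point: when we remove a connected edge set $E_n$ from a connected graph, the leftover graph $G[E \setminus E_n]$ may fragment, but each fragment that is "too small" (one or two edges) must be adjacent to $E_n$; by choosing $E_n$ to be a path segment of length between $3$ and $6$ and absorbing any stray small fragments (there are boundedly many, and each has at most $2$ edges) into $E_n$, we keep $|E_n| \leq 6$ while guaranteeing that what remains splits into components each with $\geq 3$ edges. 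Since $m - |E_n| \geq 7 - 6 = 1$ might be too small, I instead ensure $|E_n| \leq 6$ and $m - |E_n| \geq 3$, which is possible precisely because $m \geq 7$: if $m \leq 9$ take $E_n$ of size $m - 3 \in \{4,5,6\}$ consisting of a connected subgraph whose removal leaves a connected $3$-edge piece (e.g. remove a pendant path or pendant subtree of the right size from the spanning tree), and if $m \geq 10$ a length-$3$ pendant path works after absorbing small debris.

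The cleanest way to make "peel a small pendant piece" rigorous, and what I would actually write, is: in the spanning tree $T$, repeatedly take a deepest leaf and look at the maximal "bamboo" (path of degree-$\leq 2$ vertices) rising from it; this bamboo, together with the subtree below its top vertex, can always be trimmed to yield a connected block of size in $\{3,4,5,6\}$ whose removal either disconnects nothing problematic or leaves pieces we merge upward. I expect the \textbf{main obstacle} to be the bookkeeping around small leftover fragments: a naive pendant-path removal can leave a stub of $1$ or $2$ edges dangling at the cut point, violating the "$\geq 3$ edges per component" requirement needed to apply induction. The fix — enlarging $E_n$ to swallow such stubs while keeping $|E_n| \leq 6$ — is where the constant $6$ (rather than, say, $3$) is genuinely used, and getting the case analysis on the small residual sizes ($m - |E_n| \in \{3,4,5,6\}$ versus $\geq 7$) exhaustive and correct is the delicate part. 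Everything else is routine induction.
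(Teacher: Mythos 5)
Your outline (reduce to a connected component, base case $m \le 6$, induct for $m \ge 7$) matches the paper's, but your inductive step diverges from the paper's and is not actually established. The paper only has to exhibit one split of $E$ into two connected pieces $E'$ and $E \setminus E'$, each of size $\ge 3$, with \emph{no upper bound} on $|E'|$; the induction hypothesis then handles both halves. You instead want to peel a connected $E_n$ of bounded size, $|E_n| \in \{3,4,5,6\}$, so that every component of the remainder already has $\ge 3$ edges. This is a strictly stronger local claim, and your sketch does not prove it.

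Two concrete places where the sketch breaks. First, the absorption step is not size-safe: if the pendant path $e_1, e_2, e_3$ from a deepest leaf has interior vertices $v_1$ and $v_2$ each carrying a $2$-edge pendant, absorbing both forces $|E_n| = 7$. Second, for $7 \le m \le 9$ you simply assert that a connected $E_n$ of size $m-3$ exists whose removal leaves a connected $3$-edge block; but that is precisely the kind of statement that needs the case analysis the paper actually carries out (the decomposition $D_1,\dots,D_m$ at a vertex of degree $\ge 3$, and the separate cycle case). Handling cycles via a parenthetical about spanning trees and non-tree edges hides the real work --- the paper's Case 2 uses the cycle constructively (e.g.\ taking $E' = D_1$ together with the two cycle edges through $v_1$ when $D_1$ has only one or two edges), and that choice cannot be recovered by assigning chords to nearby tree blocks. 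The paper's two-way split avoids the absorption bookkeeping entirely because $|E'|$ is never required to be small; that is what keeps its proof short. If you keep the peeling formulation you would need a case analysis at least as involved as the paper's, plus an explicit bound on the total size of absorbed fragments, so you gain nothing --- switch to the binary split.
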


\begin{proof}
It suffices to prove the lemma for a connected graph $G$ with at
least three edges, since we can apply this result to each connected
component to complete the proof for general case. Hence, we from now
on assume that $G$ is a connected graph with at least three edges.
Then, the proof is proceeded by induction on the cardinality $|E|$.

First, there is nothing to prove if $|E|\le6$ since we can take $n=1$
and $E_{1}=E$. Next, let us fix $k\ge7$ and assume that the lemma
holds if $3\le|E|\le k-1$. Let $G=(V,\,E)$ be a connected graph
with $|E|=k$. We will find $E'\subseteq E$ such that 
\begin{equation}
|E'|,\,|E\setminus E'|\ge3\text{ and both }E'\text{ and }E\setminus E'\text{ are connected}.\label{e_condec}
\end{equation}
Once finding such an $E'$, it suffices to apply the induction hypothesis
to the sets $E'$ and $E\setminus E'$ to complete the proof.

\begin{figure}[h]
\includegraphics[width=12.5cm]{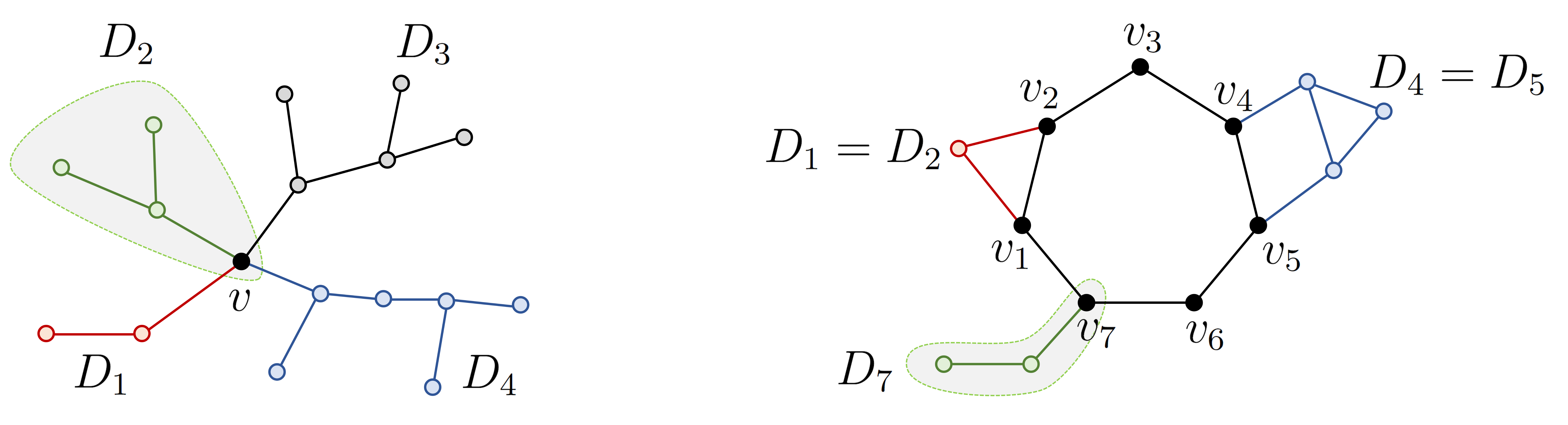}\caption{\label{fig41}The left and right figures illustrate \textbf{(Case
1)} and \textbf{(Case 2)} in the proof of Lemma \ref{l_graph}, respectively.
Note that we have $D_{3}=D_{6}=\emptyset$ at the right figure.}
\end{figure}

\noindent \textbf{(Case 1: $G$ does not have a cycle, i.e., $G$
is a tree) }If every vertex of \textbf{$G$} has degree at most $2$,
then \textbf{$G$} is a line graph, and thus we can easily divide
$E$ into two connected subsets $E'$ and $E\setminus E'$ satisfying
\eqref{e_condec}.

Next, we suppose that a vertex $v\in V$ has degree at least $3$.
Since $G$ is a tree, we can decompose $E$ into connected $D_{1},\,D_{2},\,\dots,\,D_{m}$
with $m=\deg(v)\ge3$, such that the edges in $D_{i}$ and $D_{j}$
($i\neq j)$ possibly intersect only at $v$ (cf. Figure \ref{fig41}-(left)).
We impose the condition $|D_{1}|\le\cdots\le|D_{m}|$ for convenience.

If $|D_{k}|\ge3$ for $k=1$ or $2$, it suffices to take $E'=D_{k}$.
If $|D_{1}|,\,|D_{2}|\le2$ but $|D_{1}|+|D_{2}|\ge3$, we take $E'=D_{1}\cup D_{2}$.
Finally, if $|D_{1}|=|D_{2}|=1$, we take
\[
E'=\begin{cases}
D_{1}\cup D_{2}\cup\{\text{the edge in }D_{3}\text{ having }v\text{ as an end point}\} & \text{if }m=3\;,\\
D_{1}\cup D_{2}\cup D_{3} & \text{if }m\ge4\;.
\end{cases}
\]

\noindent \textbf{(Case 2: $G$ has a cycle)} Suppose that $(v_{1},\,v_{2},\,\dots,\,v_{n})$
is a cycle in $G$ in the sense that $\{v_{i},\,v_{i+1}\}\in E$ for
all $i\in\llbracket1,\,n\rrbracket$ with the convention $v_{n+1}=v_{1}$.
We denote by $E_{0}$ the edges belonging to this cycle, i.e., 
\[
E_{0}=\big\{\,\{v_{i},\,v_{i+1}\}:i\in\llbracket1,\,n\rrbracket\,\big\}\;.
\]
If $E=E_{0}$, i.e., $G$ is a ring graph, we can easily divide $E$
into two connected subsets $E'$ and $E\setminus E'$ satisfying \eqref{e_condec}
and hence suppose that $E\setminus E_{0}\ne\emptyset$. For each $i\in\llbracket1,\,n\rrbracket$,
we denote by $D_{i}$ the connected component of $E\setminus E_{0}$
containing the vertex $v_{i}$ so that we have as in Figure \ref{fig41}-(right)
so that 
\[
E=E_{0}\cup\Big(\,\bigcup_{i=1}^{n}D_{i}\,\Big)\;.
\]
Note that we may have $D_{i}=D_{j}$ for some $i\neq j$. Since we
assumed $E\setminus E_{0}\ne\emptyset$, we can assume without loss
of generality that $D_{1}\ne\emptyset$. If $|D_{1}|\ge3$, we take
$E'=D_{1}$. Otherwise, we take $E'=D_{1}\cup\{\{v_{1},\,v_{2}\},\,\{v_{1},\,v_{n}\}\}$.

This completes the proof of \eqref{e_condec} and we are done. 
\end{proof}
\begin{rem}
We remark that the set $\{3,\,4,\,5,\,6\}$ appeared in the previous
lemma cannot be replaced with $\{3,\,4,\,5\}$. For example, in \textbf{(Case
1)} of the proof (cf. Figure \ref{fig41}-(left)), the graph with
$m=3$ and $|E_{1}|=|E_{2}|=|E_{3}|=2$ provides such a counterexample.
\end{rem}

\subsection{Counting of configurations with fixed energy}

The crucial lemma in the analysis of the Gibbs measure is the following
upper and lower bounds for the number of configurations belonging
to the set $\mathcal{X}_{i}$, which denotes the collection of configurations
with energy $i$ (cf. \eqref{e_Xidef}).
\begin{lem}
\label{l_Xi}There exists $\theta>1$ such that the following estimates
hold. 
\begin{enumerate}
\item (Upper bound) For all $i\in\mathbb{N}$, we have
\[
|\mathcal{X}_{i}|\le q^{i+1}\times\sum_{\substack{n_{3},\,n_{4},\,n_{5},\,n_{6}\ge0:\\
3n_{3}+4n_{4}+5n_{5}+6n_{6}=i
}
}{\theta L^{2} \choose n_{3}}{\theta L^{2} \choose n_{4}}{\theta L^{2} \choose n_{5}}{\theta L^{2} \choose n_{6}}\;.
\]
\item (Lower bound) For all $1\le j<\lfloor\frac{L^{2}}{2}\rfloor$, we
have
\[
|\mathcal{X}_{3j}|\ge4^{j}{\lfloor\frac{L^{2}}{2}\rfloor \choose j}\;.
\]
\end{enumerate}
\end{lem}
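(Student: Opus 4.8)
The plan is to pass through the dual-lattice representation and exploit the formula $H(\sigma)=|\mathfrak{A}^*(\sigma)|$ from \eqref{e_dualHam1}. A configuration $\sigma$ with $H(\sigma)=i$ determines a subgraph $(\Lambda^*)[\mathfrak{A}^*(\sigma)]$ of the dual lattice with exactly $i$ edges, namely the boundary edges of the monochromatic clusters. The key structural fact is that every connected component of this boundary subgraph has at least three edges: a single dual edge cannot separate two faces of different colors while leaving a consistent coloring (the boundary of any cluster is a union of closed curves in the dual, and on a triangular lattice the shortest such curve has length $3$). Granting this, I would first fix the edge set $E_0=\mathfrak{A}^*(\sigma)$ and count the number of colorings compatible with a given $E_0$: once $E_0$ is fixed, $\sigma$ is determined by its value on one face of each connected region of $\Lambda^*\setminus E_0$, and the number of such regions is at most $i+1$ (adding each of the $i$ edges increases the number of regions by at most one, starting from one region), giving the factor $q^{i+1}$.

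For the upper bound, it then remains to bound the number of admissible edge sets $E_0$ with $|E_0|=i$. Here is where Lemma \ref{l_graph} enters: since every component of $(\Lambda^*)[E_0]$ has at least three edges, we can decompose $E_0=E_1\cup\cdots\cup E_n$ with each $E_j$ connected and $|E_j|\in\{3,4,5,6\}$. Grouping the pieces by size, write $n_k=\#\{j:|E_j|=k\}$ for $k=3,4,5,6$, so that $3n_3+4n_4+5n_5+6n_6=i$. A connected edge set of size $k$ in $\Lambda^*$ is determined by choosing one of its edges (a ``root'') and then the connected shape; since $\Lambda^*$ has $O(L^2)$ edges and each has bounded degree, the number of connected edge sets of size $k\le 6$ is at most $\theta L^2$ for a suitable constant $\theta>1$ depending only on the lattice (not on $L$). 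Choosing the $n_k$ pieces of each size (with repetition allowed, but crudely bounded by $\binom{\theta L^2}{n_k}$ after absorbing overcounting into $\theta$) and summing over all admissible $(n_3,n_4,n_5,n_6)$ yields exactly the claimed bound $|\mathcal{X}_i|\le q^{i+1}\sum \binom{\theta L^2}{n_3}\binom{\theta L^2}{n_4}\binom{\theta L^2}{n_5}\binom{\theta L^2}{n_6}$. One must be slightly careful that this decomposition is not unique, so the same $E_0$ may be counted several times, but each $E_0$ is still counted \emph{at least} once, which is all the upper bound needs.

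For the lower bound, I would exhibit an explicit family of configurations with energy $3j$: take $j$ disjoint single-face clusters (a lone triangle of some color $a\neq$ background in the triangular dual lattice $\Lambda^*$), each contributing a boundary of exactly $3$ dual edges, arranged so that no two are adjacent (so the boundaries do not merge and energies add). On a background of spin $1$, a single flipped triangular face has $H=3$, and $j$ such well-separated flips give $H=3j$ provided they remain mutually non-adjacent; since each face has only a bounded number of neighbors and there are $2L^2$ faces, one can place $j$ such faces as long as $j<\lfloor L^2/2\rfloor$ (a safe margin). For each of the $j$ chosen faces there are $q-1\ge\ldots$ choices of spin, but to match the stated bound it suffices to use $4$: actually on the hexagonal/triangular geometry each isolated triangle can be one of $q-1$ colors — I would simply use the crude count that the number of ways to choose the locations is at least $\binom{\lfloor L^2/2\rfloor}{j}$ (choosing from a sublattice of non-adjacent faces of size $\ge\lfloor L^2/2\rfloor$) and each location carries $\ge 4$ independent coloring choices when $q\ge 2$ — here the ``$4$'' comes from the two orientations of elementary triangles times $q-1\ge 1$, or more simply from a direct geometric enumeration; in any case $|\mathcal{X}_{3j}|\ge 4^j\binom{\lfloor L^2/2\rfloor}{j}$.

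The main obstacle I anticipate is the clean justification of two points: (i) that every connected component of the boundary subgraph $\mathfrak{A}^*(\sigma)$ genuinely has at least three edges, so that Lemma \ref{l_graph} applies — this requires the observation that cluster boundaries are unions of cycles (or, under periodic boundary conditions, possibly non-contractible closed curves) in the triangular lattice and that the minimal length of such a curve is $3$; and (ii) the bound ``number of connected edge sets of bounded size $\le\theta L^2$'' with a \emph{uniform} constant, which is routine but needs the bounded-degree structure of $\Lambda^*$ stated carefully. Everything else is bookkeeping with binomial coefficients.
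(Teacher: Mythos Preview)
Your approach to part (1) is correct and essentially identical to the paper's: pass to the dual edge set $\mathfrak{A}^*(\sigma)$, observe that its connected components have at least three edges, apply Lemma~\ref{l_graph}, bound the number of connected edge sets of size at most $6$ by $\theta L^2$, and bound the fiber $|(\mathfrak{A}^*)^{-1}(\eta)|$ by $q^{i+1}$ via the region count. One small point: the paper's justification of ``at least three edges per component'' is cleaner than your closed-curve argument. It simply observes that no vertex of $(\Lambda^*)[\mathfrak{A}^*(\sigma)]$ can have degree $1$ (the six faces around such a vertex admit no consistent coloring), so every vertex has degree $\ge 2$, hence every component contains a cycle. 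This sidesteps any worry about non-contractible curves on the torus.

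Your part (2) has a genuine gap: the factor $4^j$ is \emph{not} a coloring multiplicity, and your attempted justifications (``$q-1$ colors'', ``two orientations times $q-1\ge 1$'') fail already for $q=2$, where there is exactly one choice of flipped spin per site. In the paper the $4^j$ arises purely from counting independent sets of size $j$ in $\Lambda$. Each chosen vertex forbids itself and its three neighbors (four vertices total), so a greedy count gives at least
\[
\frac{2L^{2}(2L^{2}-4)\cdots(2L^{2}-4j+4)}{j!}\;\ge\;4^{j}\binom{\lfloor L^{2}/2\rfloor}{j}
\]
independent $j$-sets, each yielding a configuration of energy $3j$ using only \emph{two} spins. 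Your sublattice idea gives at best $\binom{L^{2}}{j}$ placements (the hexagonal lattice is bipartite with classes of size $L^{2}$), but already for $j=1$ one has $L^{2}<4\lfloor L^{2}/2\rfloor$, so $\binom{L^{2}}{j}\ge 4^{j}\binom{\lfloor L^{2}/2\rfloor}{j}$ fails and you genuinely need the greedy count over all of $\Lambda$.
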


\begin{proof}
(1) As the assertion is obvious for $i=0$ where $|\mathcal{X}_{0}|=q$,
we assume $i\ne0$ so that $i\ge3$ (since $\mathcal{X}_{1}$ and
$\mathcal{X}_{2}$ are empty). Denote by $E(\Lambda^{*})$ the collection
of edges of the dual lattice $\Lambda^{*}$ and let $\mathcal{E}_{i}$
be the collection of $E_{0}\subseteq E(\Lambda^{*})$ such that $|E_{0}|=i$.
Then, by the arguments given in Section \ref{sec22}, we can regard
$\mathfrak{A}^{*}(\cdot)$ defined in \eqref{e_Astar} as a map from
$\mathcal{X}_{i}$ to $\mathcal{E}_{i}$. 

For $\sigma\in\mathcal{X}$, it is immediate that the graph $G[\mathfrak{A}^{*}(\sigma)]$
(cf. Notation \ref{n_graph}) has no vertex of degree $1$, since
if there exists such a vertex, then there is no possible coloring
on the six faces of $\Lambda^{*}$ surrounding the vertex which realizes
$\mathfrak{A}^{*}(\sigma)$. Therefore, each vertex of $G[\mathfrak{A}^{*}(\sigma)]$
has degree at least two. This implies that each connected component
of $G[\mathfrak{A}^{*}(\sigma)]$ has a cycle and hence has at least
three edges. Thus, by Lemma \ref{l_graph}, we can decompose an element
of $\mathfrak{A}^{*}(\mathcal{X}_{i})$ by connected components of
sizes $3$, $4$, $5$, or $6$. Note that there exists a fixed integer
$\theta>1$ such that there are at most $\theta L^{2}$ connected
subgraphs of $\Lambda^{*}$ with at most $6$ edges (for all $L$).
Combining the observations above concludes that
\begin{equation}
|\mathfrak{A}^{*}(\mathcal{X}_{i})|\le\sum_{\substack{n_{3},\,n_{4},\,n_{5},\,n_{6}\ge0:\\
3n_{3}+4n_{4}+5n_{5}+6n_{6}=i
}
}{\theta L^{2} \choose n_{3}}{\theta L^{2} \choose n_{4}}{\theta L^{2} \choose n_{5}}{\theta L^{2} \choose n_{6}}\;.\label{e_bdAx}
\end{equation}
Next, we will show that
\begin{equation}
|(\mathfrak{A}^{*})^{-1}(\eta)|\le q^{i+1}\;\;\;\;\text{for all }\eta\in\mathfrak{A}^{*}(\mathcal{X}_{i})\;.\label{e_bdAx2}
\end{equation}
Indeed, since $\eta\in\mathfrak{A}^{*}(\mathcal{X}_{i})$ has $i$
edges, it divides (the faces of) $\Lambda^{*}$ into at most $i+1$
connected components, where each component must be a monochromatic
cluster in each $\sigma\in(\mathfrak{A}^{*})^{-1}(\eta)$. Therefore,
there are at most $q^{i+1}$ (indeed, $q\times(q-1)^{i}$) ways to
paint these monochromatic clusters and we get \eqref{e_bdAx2}. Part
(1) follows directly from \eqref{e_bdAx} and \eqref{e_bdAx2}.

\noindent (2) If we take an independent set\footnote{Here, a set is called \textit{independent} if it consists of lattice
vertices among which any two vertices are not connected by a lattice
edge.} $A$ of size $j$ from $\Lambda$ (i.e., we take $j$ mutually disconnected
triangle faces in $\Lambda^{*}$), and assign spins $1$ and $2$
on $A$ and $\Lambda\setminus A$, respectively, then the energy of
the corresponding configuration is $3j$ by \eqref{e_dualHam}. If
we select such $j$ vertices one by one, then each selection of a
vertex reduces at most four possibilities of the next choice (namely,
the selected one and the three adjacent vertices). Since the selection
does not depend on the order, there are at least
\[
\frac{2L^{2}(2L^{2}-4)\cdots(2L^{2}-4j+4)}{j!}\ge4^{j}{\lfloor\frac{L^{2}}{2}\rfloor \choose j}
\]
ways of selecting such an independent set of size $j$. This concludes
the proof of part (2).
\end{proof}

\subsection{Lemma on concentration}

In this subsection, we establish a counting lemma which is useful
in the proof of Theorems \ref{t_Gibbs1} and \ref{t_Gibbs2}. Remark
that we regard $\beta=\beta_{L}$ to be dependent of $L$. 
\begin{lem}
\label{l_muprop}Suppose that $e^{\beta}\ll L^{2/3}$ and moreover
two sequences $(g_{1}(L))_{L\in\mathbb{N}}$ and $(g_{2}(L))_{L\in\mathbb{N}}$
satisfy
\[
1\ll g_{1}(L)\ll L^{2}e^{-3\beta}\ll g_{2}(L)\;.
\]
Then, we have
\[
\mu_{\beta}\big(\,\mathcal{X}_{[g_{1}(L),\,g_{2}(L)]}\,\big)=1-o_{L}(1)\;.
\]
\end{lem}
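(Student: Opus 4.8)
The plan is to estimate the Gibbs mass of the complementary "bad" region $\mathcal{X}_{[0,\,g_{1}(L)]\cup[g_{2}(L),\,\infty)}$ and show it is $o_L(1)$, while simultaneously showing the good region has nonvanishing mass so that the normalization $Z_\beta$ is controlled. Since $\mu_\beta(\sigma) = Z_\beta^{-1}e^{-\beta H(\sigma)}$, we have
\[
\mu_{\beta}\big(\,\mathcal{X}_{[g_{1}(L),\,g_{2}(L)]}\,\big)=\frac{\sum_{g_1(L)\le i\le g_2(L)}|\mathcal{X}_i|\,e^{-\beta i}}{\sum_{i\ge 0}|\mathcal{X}_i|\,e^{-\beta i}}\;,
\]
so it suffices to prove that $\sum_{i< g_1(L)}|\mathcal{X}_i|e^{-\beta i}$ and $\sum_{i>g_2(L)}|\mathcal{X}_i|e^{-\beta i}$ are both negligible compared to $\sum_{g_1(L)\le i\le g_2(L)}|\mathcal{X}_i|e^{-\beta i}$. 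The key technical input is the counting Lemma \ref{l_Xi}, which controls $|\mathcal{X}_i|$ from above and $|\mathcal{X}_{3j}|$ from below.

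First I would set up a convenient scalar quantity. Using the upper bound in Lemma \ref{l_Xi}(1) and the elementary bound $\binom{\theta L^2}{n}\le (\theta L^2)^n/n!$, one gets
\[
\sum_{i\ge 0}|\mathcal{X}_i|\,e^{-\beta i}\le q\sum_{n_3,n_4,n_5,n_6\ge 0}\prod_{k=3}^{6}\frac{(q\,\theta L^2 e^{-\beta k})^{n_k}}{n_k!}\cdot q\cdot q^{\,?}
\]
— more precisely, pulling out $q^{i+1}=q\cdot q^{3n_3+\cdots+6n_6}$ and summing each $n_k$ freely yields an upper bound of the form $q\exp\!\big(c\,L^2 e^{-3\beta}\big)$ for a constant $c=c(q,\theta)$, because the dominant term is the one with $k=3$ and the others contribute lower-order corrections (here $L^2e^{-3\beta}\to\infty$ under our hypotheses, so $L^2e^{-4\beta}\ll L^2e^{-3\beta}$, etc.). This shows $\log Z_\beta = O(L^2e^{-3\beta})$, with the main term of order $L^2 e^{-3\beta}$. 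Correspondingly, the lower bound in Lemma \ref{l_Xi}(2) gives, upon Stirling's approximation applied at $j$ near the maximizer of $4^j\binom{\lfloor L^2/2\rfloor}{j}e^{-3\beta j}$, namely $j^\ast \asymp L^2 e^{-3\beta}$ (which lies strictly between $g_1(L)$ and $g_2(L)$ by hypothesis, and below $\lfloor L^2/2\rfloor$ since $e^\beta\ll L^{2/3}$ forces $e^{-3\beta}\gg L^{-2}$... wait, one needs $j^\ast<L^2/2$, i.e. $e^{-3\beta}\ll 1$, which holds since $\beta\to\infty$), a lower bound $\sum_{i}|\mathcal{X}_i|e^{-\beta i}\ge \exp\!\big(c' L^2 e^{-3\beta}\big)$ for some $c'>0$. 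Thus both $Z_\beta$ and the good-region sum have logarithm of exact order $L^2e^{-3\beta}\to\infty$.

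Next I would bound the two tails. For the lower tail $i<g_1(L)$: each term is at most $|\mathcal{X}_i|\le$ (the upper-bound expression), and since $g_1(L)\ll L^2e^{-3\beta}$, the partial sum over $i<g_1(L)$ is dominated by $\exp(o(L^2e^{-3\beta}))$ times a polynomial — concretely, restricting the multinomial sum to total weight $i<g_1(L)$ caps each $n_k<g_1(L)$, so the sum is at most $q\,g_1(L)^4\,(q\theta L^2 e^{-3\beta})^{g_1(L)}$ or so, whose logarithm is $O(g_1(L)\log(L^2e^{-3\beta})) = o(L^2e^{-3\beta})$ because $g_1(L)\ll L^2e^{-3\beta}$ and $\log(L^2e^{-3\beta})=O(\log L)$. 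Comparing with the $\exp(c'L^2e^{-3\beta})$ lower bound on the denominator, the lower-tail contribution is $o_L(1)$. For the upper tail $i>g_2(L)$: here I would use that the function $i\mapsto |\mathcal{X}_i|e^{-\beta i}$ has its logarithm maximized near $i^\ast\asymp L^2 e^{-3\beta}\ll g_2(L)$, and exhibit geometric decay of the upper-bound expression beyond $g_2(L)$. The cleanest route: from Lemma \ref{l_Xi}(1), $|\mathcal{X}_i|e^{-\beta i}\le q\cdot (\text{multinomial sum})$, and one checks the ratio of consecutive multinomial coefficients shows the expression $\prod\binom{\theta L^2}{n_k}e^{-\beta\sum k n_k}$ is, for $\sum k n_k = i > g_2(L) \gg L^2 e^{-3\beta}$, bounded by its value at $i\approx g_2(L)$ times a factor decaying at least geometrically with ratio $\le e^{-\beta}\theta L^2 / (g_2(L)/6)\to 0$ (since $g_2(L)\gg L^2 e^{-3\beta}\ge L^2 e^{-\beta}$ eventually — using $\beta\to\infty$). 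Summing the geometric tail and again dividing by $\exp(c'L^2e^{-3\beta})$, this is $o_L(1)$.

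The main obstacle will be the bookkeeping in the upper-tail estimate: unlike the lower tail (where the constraint $i$ small automatically truncates everything), the sum over $i>g_2(L)$ ranges to the maximal energy $\asymp L^2$, and one must verify that the multinomial expression $\sum_{3n_3+\cdots+6n_6=i}\prod\binom{\theta L^2}{n_k}$ genuinely decays once $i$ passes $g_2(L)$ — this requires showing the maximizer of the multinomial sum over all configurations with $\sum k n_k = i$ behaves well and locating it, then proving log-concavity (or at least eventual monotonic decay) of $i\mapsto \max_{\sum kn_k=i}\prod\binom{\theta L^2}{n_k}e^{-\beta i}$. A clean way to sidestep delicate log-concavity is to bound the full sum $\sum_{i>g_2(L)}|\mathcal{X}_i|e^{-\beta i}$ by completing it to the unrestricted multinomial sum but with each summand multiplied by the indicator-replacing factor $e^{-\lambda(i - g_2(L))}$ for a well-chosen $\lambda>0$ (a Chernoff/tilting argument): choosing $\lambda$ so that $e^{-(\beta+\lambda)3}\theta q L^2 <1$ makes the tilted sum converge to $\exp(O(L^2 e^{-3(\beta+\lambda)}))\,e^{\lambda g_2(L)}$, and optimizing $\lambda$ against $g_2(L)$ gives the required $o_L(1)$ after division by the denominator. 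I expect the clean exponential-tilting argument to handle both tails uniformly, reducing the whole proof to the two asymptotic comparisons $g_1(L)\log L \ll L^2 e^{-3\beta}$ and $g_2(L)\gg L^2e^{-3\beta}$, which are exactly the hypotheses.
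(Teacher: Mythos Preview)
Your overall strategy is the same as the paper's: split the complement into a lower tail $i<g_1(L)$ and an upper tail $i>g_2(L)$, control each using Lemma~\ref{l_Xi}, and compare against a lower bound coming from Lemma~\ref{l_Xi}(2). However, the execution of both tails contains gaps.

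For the lower tail, you drop the factorial when bounding the multinomial sum and arrive at a bound whose logarithm is $O\big(g_1(L)\log(L^2e^{-3\beta})\big)$; you then claim this is $o(L^2e^{-3\beta})$, and at the end summarize the needed input as ``$g_1(L)\log L\ll L^2e^{-3\beta}$, which is exactly the hypothesis''. It is not: the lemma only assumes $g_1(L)\ll L^2e^{-3\beta}$. For instance with $\beta=\log\log L$ one has $M:=L^2e^{-3\beta}=L^2/(\log L)^3$, and $g_1(L)=M/\log L$ satisfies $g_1(L)\ll M$ but $g_1(L)\log L=M$. The fix is to keep the Stirling factor, which sharpens the lower-tail bound to $qg_1(L)\big(CL^2e^{-3\beta}/g_1(L)\big)^{g_1(L)/3}$ as in \eqref{e_pf2}; then the comparison with $Z_\beta\ge\exp(c'L^2e^{-3\beta})$ does go through under the stated hypothesis, since $g_1(L)\log(M/g_1(L))\ll g_1(L)\cdot(M/g_1(L))=M$. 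The paper instead compares the lower-tail sum directly to the single term $|\mathcal{X}_{3\widetilde g_1}|e^{-3\beta\widetilde g_1}$ at the intermediate scale $\widetilde g_1(L)=\lfloor g_1(L)^{2/3}(L^2e^{-3\beta})^{1/3}\rfloor$, which avoids estimating $Z_\beta$ globally.

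For the upper tail, your Chernoff tilt has the wrong sign: replacing $\mathbf{1}\{i>g_2\}$ by $e^{-\lambda(i-g_2)}$ with $\lambda>0$ gives a \emph{lower} bound on the sum, not an upper bound, so the inequality you write cannot hold. The correct direction is $\mathbf{1}\{i>g_2\}\le e^{+\lambda(i-g_2)}$, shifting $\beta\mapsto\beta-\lambda$. The paper achieves the same effect via the Taylor remainder estimate $\sum_{j>M}x^j/j!\le e^{x}x^{M+1}/(M+1)!$ applied to the exponential generating function, and shows the upper-tail sum tends to zero in absolute terms; hence the trivial $Z_\beta\ge q$ already suffices there, and no exponential lower bound on $Z_\beta$ is needed for that half.
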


\begin{proof}
It is enough to show that 
\begin{equation}
\mu_{\beta}\Big(\,\bigcup_{i<g_{1}(L)}\mathcal{X}_{i}\,\Big)=o_{L}(1)\;\;\;\;\text{and}\;\;\;\;\mu_{\beta}\Big(\,\bigcup_{i>g_{2}(L)}\mathcal{X}_{i}\,\Big)=o_{L}(1)\;.\label{e_muprop}
\end{equation}
To prove the first one, it suffices to prove that
\begin{equation}
\sum_{i<g_{1}(L)}|\mathcal{X}_{i}|e^{-\beta i}\ll\sum_{i\ge g_{1}(L)}|\mathcal{X}_{i}|e^{-\beta i}\;.\label{e_pf1}
\end{equation}
By part (1) of Lemma \ref{l_Xi}, we have
\[
\sum_{i<g_{1}(L)}|\mathcal{X}_{i}|e^{-\beta i}\le q\times\sum_{\substack{n_{3},\,n_{4},\,n_{5},\,n_{6}\ge0:\\
3n_{3}+4n_{4}+5n_{5}+6n_{6}<g_{1}(L)
}
}{\theta L^{2} \choose n_{3}}{\theta L^{2} \choose n_{4}}{\theta L^{2} \choose n_{5}}{\theta L^{2} \choose n_{6}}(qe^{-\beta})^{3n_{3}+4n_{4}+5n_{5}+6n_{6}}\;.
\]
Let $L$ be large enough so that $qe^{-\beta}<1$. Then, the summation
at the right-hand side is bounded from above by
\begin{align}
 & \sum_{\substack{n_{3},\,n_{4},\,n_{5},\,n_{6}\ge0:\\
3n_{3}+3n_{4}+3n_{5}+3n_{6}<g_{1}(L)
}
}{\theta L^{2} \choose n_{3}}{\theta L^{2} \choose n_{4}}{\theta L^{2} \choose n_{5}}{\theta L^{2} \choose n_{6}}(qe^{-\beta})^{3n_{3}+3n_{4}+3n_{5}+3n_{6}}\nonumber \\
 & =\sum_{i<\frac{g_{1}(L)}{3}}\sum_{\substack{n_{3},\,n_{4},\,n_{5},\,n_{6}\ge0:\\
n_{3}+n_{4}+n_{5}+n_{6}=i
}
}{\theta L^{2} \choose n_{3}}{\theta L^{2} \choose n_{4}}{\theta L^{2} \choose n_{5}}{\theta L^{2} \choose n_{6}}(qe^{-\beta})^{3i}=\sum_{i<\frac{g_{1}(L)}{3}}{4\theta L^{2} \choose i}(qe^{-\beta})^{3i}\;,\label{e_pf1.5}
\end{align}
where at the last equality we used a combinatorial identity of the
form 
\begin{equation}
\sum_{x+y+z+w=k}{a \choose x}{b \choose y}{c \choose z}{d \choose w}={a+b+c+d \choose k}\;.\label{e_combid}
\end{equation}
We can further bound the last summation in \eqref{e_pf1.5} from above
by 
\[
\sum_{i<\frac{g_{1}(L)}{3}}\frac{(4\theta L^{2})^{i}}{i!}(qe^{-\beta})^{3i}\le\frac{g_{1}(L)+3}{3}\cdot\frac{(4\theta L^{2})^{\frac{g_{1}(L)}{3}}\cdot(qe^{-\beta})^{g_{1}(L)}}{\lfloor\frac{g_{1}(L)}{3}\rfloor!}\le g_{1}(L)\cdot\Big(\frac{CL^{2}e^{-3\beta}}{g_{1}(L)}\Big)^{\frac{g_{1}(L)}{3}}
\]
using $g_{1}(L)\ll L^{2}e^{-3\beta}$ and an elementary bound $n!\ge n^{n}/e^{n}$.
Summing up, we get 
\begin{equation}
\sum_{i<g_{1}(L)}|\mathcal{X}_{i}|e^{-\beta i}\le qg_{1}(L)\cdot\Big(\frac{CL^{2}e^{-3\beta}}{g_{1}(L)}\Big)^{\frac{g_{1}(L)}{3}}\;.\label{e_pf2}
\end{equation}

Next, let $\widetilde{g}_{1}(L)=\lfloor g_{1}(L)^{2/3}(L^{2}e^{-3\beta})^{1/3}\rfloor$
so that we have $g_{1}(L)\ll\widetilde{g}_{1}(L)\ll L^{2}e^{-3\beta}$.
Then, by part (2) of Lemma \ref{l_Xi}, we have
\[
\sum_{i\ge g_{1}(L)}|\mathcal{X}_{i}|e^{-\beta i}\ge|\mathcal{X}_{3\widetilde{g}_{1}(L)}|e^{-3\beta\widetilde{g}_{1}(L)}\ge4^{\widetilde{g}_{1}(L)}{\lfloor\frac{L^{2}}{2}\rfloor \choose \widetilde{g}_{1}(L)}\times e^{-3\beta\widetilde{g}_{1}(L)}\;.
\]
By Stirling's formula and $\widetilde{g}_{1}(L)\ll L^{2}e^{-3\beta}\ll L^{2}$,
this is bounded from below by, for all large enough $L$, 
\begin{equation}
\frac{1}{2}(4e)^{\widetilde{g}_{1}(L)}\frac{(\frac{L^{2}}{3})^{\widetilde{g}_{1}(L)}}{\widetilde{g}_{1}(L)^{\widetilde{g}_{1}(L)}\sqrt{2\pi\widetilde{g}_{1}(L)}}\cdot e^{-3\beta\widetilde{g}_{1}(L)}\ge\Big(\frac{L^{2}e^{-3\beta}}{\widetilde{g}_{1}(L)}\Big)^{\widetilde{g}_{1}(L)}\gg\Big(\frac{L^{2}e^{-3\beta}}{\widetilde{g}_{1}(L)}\Big)^{g_{1}(L)}\;.\label{e_pf3}
\end{equation}
Therefore by \eqref{e_pf2} and \eqref{e_pf3}, we can reduce the
proof of \eqref{e_pf1} into 
\[
\frac{L^{2}e^{-3\beta}}{\widetilde{g}_{1}(L)}\gg\Big(\frac{L^{2}e^{-3\beta}}{g_{1}(L)}\Big)^{1/3}\;.
\]
This follows from the definition of $\widetilde{g}_{1}(L)$ and the
fact that $g_{1}(L)\ll L^{2}e^{-3\beta}$. This proves the first statement
in \eqref{e_muprop}. 

Next, to prove the second estimate of \eqref{e_muprop}, it suffices
to prove
\[
\sum_{i>g_{2}(L)}|\mathcal{X}_{i}|e^{-\beta i}\ll1
\]
since the partition function $Z_{\beta}$ has a trivial lower bound
$Z_{\beta}\ge q$ (by only considering the ground states). By a similar
computation leading to \eqref{e_pf2}, we get
\begin{equation}
\sum_{i>g_{2}(L)}|\mathcal{X}_{i}|e^{-\beta i}\le q\sum_{i>\frac{g_{2}(L)}{6}}\frac{(4\theta L^{2})^{i}}{i!}(qe^{-\beta})^{3i}\;.\label{e_pf4}
\end{equation}
Here, Taylor's theorem on the function $x\mapsto e^{x}$ implies that
for $x>0$ and $M\in\mathbb{N}$,
\begin{equation}
\sum_{i>M}\frac{x^{i}}{i!}\le\max_{t\in[0,\,x]}|e^{t}|\times\frac{x^{M+1}}{(M+1)!}=\frac{e^{x}x^{M+1}}{(M+1)!}\;.\label{e_pf5}
\end{equation}
Therefore, the right-hand side of \eqref{e_pf4} is bounded from above
by
\begin{align*}
e^{CL^{2}e^{-3\beta}}\times\frac{(CL^{2}e^{-3\beta})^{\frac{g_{2}(L)}{6}}}{(\frac{g_{2}(L)}{6})^{\frac{g_{2}(L)}{6}}} & \le\Big[\,6C(e^{6C})^{\frac{L^{2}e^{-3\beta}}{g_{2}(L)}}\times\frac{L^{2}e^{-3\beta}}{g_{2}(L)}\,\Big]^{\frac{g_{2}(L)}{6}}\;.
\end{align*}
As $L^{2}e^{-3\beta}\ll g_{2}(L)$, this expression vanishes as $L\rightarrow\infty$.
This concludes the proof.
\end{proof}

\subsection{Proof of Theorems \ref{t_Gibbs1} and \ref{t_Gibbs2}}

Now, we are ready to prove Theorems \ref{t_Gibbs1} and \ref{t_Gibbs2}.
Remark that the constant $\gamma_{0}$ is $\frac{2}{3}$ since we
consider the hexagonal lattice.
\begin{proof}[Proof of Theorem \ref{t_Gibbs1}]
 (1) It suffices to prove that, for some constant $C>0$, 
\begin{equation}
\sum_{\sigma\in\mathcal{X}\setminus\mathcal{S}}e^{-\beta H(\sigma)}=\sum_{i=3}^{3L^{2}}|\mathcal{X}_{i}|e^{-\beta i}\ll1\;,\label{ebdd1}
\end{equation}
where the identity follows from the observation that the minimum non-zero
value of the Hamiltonian is $3$ and the maximum is $3L^{2}$. By
part (1) of Lemma \ref{l_Xi}, we have (for $qe^{-\beta}<1$) 
\begin{align*}
\sum_{i=3}^{3L^{2}}|\mathcal{X}_{i}|e^{-\beta i} & \le q\times\sum_{i=3}^{3L^{2}}\sum_{\substack{n_{3},\,n_{4},\,n_{5},\,n_{6}\ge0:\\
3n_{3}+4n_{4}+5n_{5}+6n_{6}=i
}
}{\theta L^{2} \choose n_{3}}{\theta L^{2} \choose n_{4}}{\theta L^{2} \choose n_{5}}{\theta L^{2} \choose n_{6}}(qe^{-\beta})^{3n_{3}+4n_{4}+5n_{5}+6n_{6}}\\
 & \le q\times\sum_{\substack{n_{3},\,n_{4},\,n_{5},\,n_{6}\ge0:\\
n_{3}+n_{4}+n_{5}+n_{6}\ge1
}
}{\theta L^{2} \choose n_{3}}{\theta L^{2} \choose n_{4}}{\theta L^{2} \choose n_{5}}{\theta L^{2} \choose n_{6}}(qe^{-\beta})^{3n_{3}+3n_{4}+3n_{5}+3n_{6}}\;.
\end{align*}
Summing up and applying \eqref{e_combid}, we get
\[
\sum_{\sigma\in\mathcal{X}\setminus\mathcal{S}}e^{-\beta H(\sigma)}\le q\sum_{i=1}^{\infty}{4\theta L^{2} \choose i}(qe^{-\beta})^{3i}\le q\sum_{i=1}^{\infty}\frac{(4\theta q^{3}L^{2}e^{-3\beta})^{i}}{i!}\;.
\]
Again applying Taylor's theorem on the function $x\mapsto e^{x}$
(cf. \eqref{e_pf5}) for $x=4\theta q^{3}L^{2}e^{-3\beta}$, the last
summation is bounded by
\[
e^{4\theta q^{3}L^{2}e^{-3\beta}}\times(4\theta q^{3}L^{2}e^{-3\beta})\;.
\]
This completes the proof of \eqref{ebdd1} since we have $L^{2}e^{-3\beta}\ll1$
by assumption.\medskip{}

\noindent (2) We have $e^{\beta}\ll L^{2/3}$ and therefore as in
Lemma \ref{l_muprop} we can take two sequences $(g_{1}(L))_{L\in\mathbb{N}}$
and $(g_{2}(L))_{L\in\mathbb{N}}$ satisfying 
\[
1\ll g_{1}(L)\ll L^{2}e^{-3\beta}\ll g_{2}(L)\;.
\]
Then, by Lemma \ref{l_muprop}, the measure $\mu_{\beta}$ is concentrated
on $\mathcal{X}_{[g_{1}(L),\,g_{2}(L)]}$ and therefore $\mu_{\beta}(\mathcal{S})=o_{L}(1)$.
\end{proof}
\begin{proof}[Proof of Theorem \ref{t_Gibbs2}]
 (1) Since $L^{\frac{2}{3}(1-\alpha)}\ll e^{\beta}$, we have $L^{2}e^{-3\beta}\ll cL^{2\alpha}$
for any $c>0$. Thus, we can complete the proof by recalling Lemma
\ref{l_muprop} with any $g_{1}(L)$ such that $1\ll g_{1}(L)\ll L^{2}e^{-3\beta}$
(which is possible since we assumed that $e^{\beta}\ll L^{\gamma_{0}}$)
and $g_{2}(L)=cL^{2\alpha}$.\medskip{}

\noindent (2) We can take $g_{1}(L)=cL^{2\alpha}$ (and any $g_{2}(L)$
such that $L^{2}e^{-3\beta}\ll g_{2}(L)$) to get $\mu_{\beta}(\mathcal{X}_{[cL^{2\alpha},\,g_{2}(L)]})=1-o_{L}(1)$.
This completes the proof.
\end{proof}

\subsection{\label{sec45}Remarks on square lattice case}

For the square lattice case, a slightly different version of Lemma
\ref{l_graph} is required. More precisely, we need a version which
is obtained from Lemma \ref{l_graph} by replacing the set $\{3,\,4,\,5,\,6\}$
with $\{4,\,5,\,\dots,\,9\}$. This modification comes from the fact
that the minimal cycle in the dual graph $\Lambda^{*}$ has three
edges in the hexagonal lattice but has \textit{four} edges in the
square lattice case (cf. proof of Lemma \ref{l_Xi}). The proof of
this lemma is similar to that of Lemma \ref{l_graph} and we will
not repeat the proof. As a consequence of this modification, the upper
and lower bounds appeared in Lemma \ref{l_Xi} should be replaced
with 
\[
|\mathcal{X}_{i}|\le q^{i+1}\times\sum_{\substack{n_{4},\,n_{5},\,\dots,\,n_{9}\ge0:\\
4n_{4}+5n_{5}+\cdots+9n_{9}=i
}
}{\theta L^{2} \choose n_{4}}{\theta L^{2} \choose n_{5}}\cdots{\theta L^{2} \choose n_{9}}
\]
and $|\mathcal{X}_{4j}|\ge5^{j}{\lfloor\frac{L^{2}}{5}\rfloor \choose j}$
for $1\le j<\lfloor\frac{L^{2}}{5}\rfloor$, respectively. The constant
$\gamma_{0}$ for the square lattice is different to that for the
hexagonal one because of this modification.

\section{\label{Sec5_Pre}Preliminaries for Energy Landscape}

In this section, we introduce several preliminary notation and results
which are useful in the subsequent analysis of the energy landscape. 

\subsection{\label{sec51}Strip, bridge and cross. }

In this subsection, we provide some crucial notation regarding the
structure of the dual lattice $\Lambda^{*}$. We refer to Figure \ref{fig51}
for illustrations of the notation defined below and we consistently
refer to this figure.

\begin{figure}[h]
\includegraphics[width=11cm]{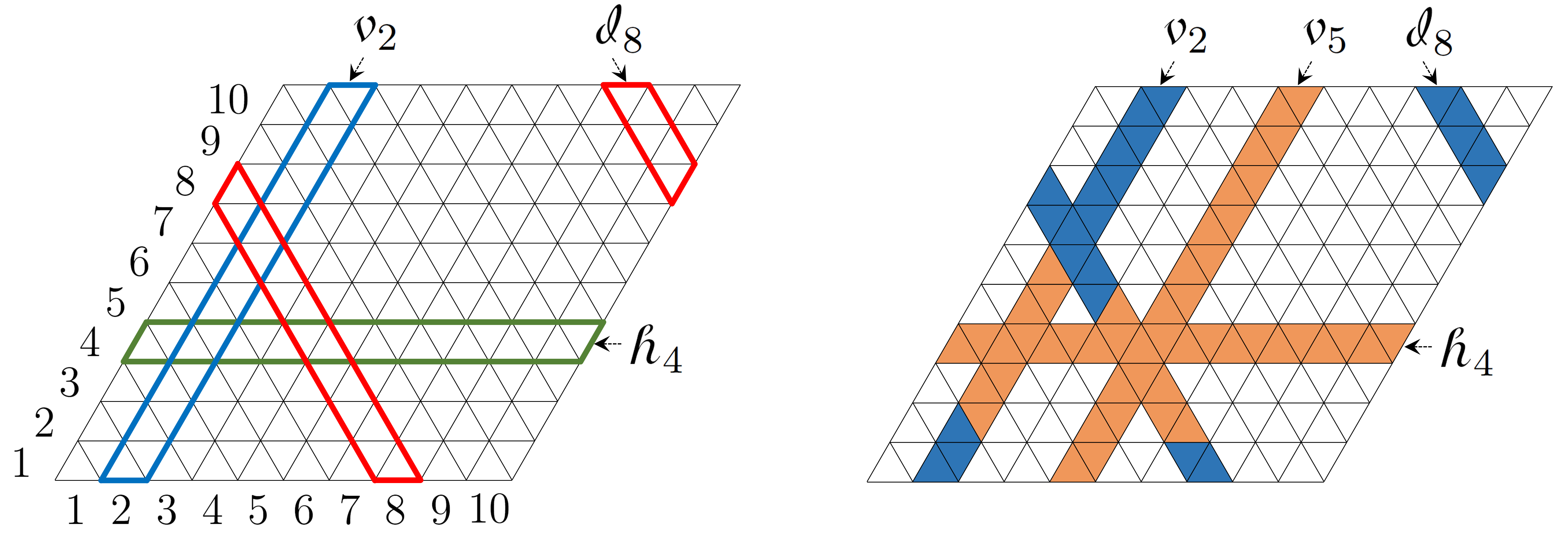}\caption{\label{fig51}\textbf{(Left)} Strips $\scal{h}_{4}$, $\scal{v}_{2}$,
and $\scal{d}_{8}$.\textbf{ (Right)} \textit{Here and in the following
figures, white, orange, and blue colors represent spins $a,\,b,\,c$,
respectively.}\textbf{ }Strips $\scal{h}_{4}$ and $\scal{v}_{5}$
are $b$-bridges and thus they form a $b$-cross. Strips $\scal{v}_{2}$
and $\scal{d}_{8}$ are $\{b,\,c\}$-semibridges.}
\end{figure}

\begin{defn}[Strip, bridge, cross and semibridge]
\label{d_strip} We define the crucial concepts here.
\begin{enumerate}
\item We denote by a \textit{strip} the $2L$ consecutive triangles in $\Lambda^{*}$
as illustrated in Figure \ref{fig51}-(left).\textbf{ }We may regard
each strip as a discrete torus $\mathbb{T}_{2L}$ via the obvious
manner.
\item There are three possible directions for strips. We call these three
directions as \textit{horizontal}, \textit{vertical}, and \textit{diagonal},
and these are highlighted by black, blue, and red lines in Figure
\ref{fig51}-(left), respectively. For each $\ell\in\mathbb{T}_{L}=\{1,\,2,\,\dots,\,L\}$,
the $\ell$-th strip of horizontal, vertical, and diagonal directions
are denoted by $\scal{h}_{\ell}$, $\scal{v}_{\ell}$, and $\scal{d}_{\ell}$,
respectively, as in Figure \ref{fig51}-(left).
\item A strip $\scal{s}$ is called a \textit{bridge} of $\sigma\in\mathcal{X}$
if all the spins of $\sigma$ in $\scal{s}$ are the same. If this
spin is $a$, we call $\scal{s}$ an \textit{$a$-bridge} of $\sigma$.
Furthermore, we can specify the direction of a bridge by calling it
a \textit{horizontal, vertical, or diagonal bridge} of $\sigma$.
Finally, the union of two bridges of different directions (of spin
$a$) is called a \textit{cross} (an \textit{$a$-cross}). We refer
to Figure \ref{fig51}-(right).
\item A strip $\scal{s}$ is called a \textit{semibridge }of $\sigma\in\mathcal{X}$,
if the strip $\scal{s}$ in $\sigma$ consists of exactly two spins,
and moreover the sites in $\scal{s}$ with either of these spins are
consecutive. If a semibridge consists of two spins $a$ and $b$,
we say that it is an $\{a,\,b\}$-semibridge. We refer to Figure \ref{fig51}-(right). 
\end{enumerate}
\end{defn}

\subsection{\label{sec52}Low-dimensional decomposition of energy}

For each strip $\scal{s}$, the energy of a configuration $\sigma$
on the strip $\scal{s}$ is defined as
\[
\Delta H_{\scal{s}}(\sigma)=\sum_{x,\,y\in\scal{s}:\,x\sim y}\mathbf{1}\{\sigma(x)\ne\sigma(y)\}
\]
so that by the definition of the Hamiltonian $H$, we have the following
decomposition 
\begin{equation}
H(\sigma)=\frac{1}{2}\sum_{\ell\in\mathbb{T}_{L}}\big[\,\Delta H_{\scal{h}_{\ell}}(\sigma)+\Delta H_{\scal{v}_{\ell}}(\sigma)+\Delta H_{\scal{d}_{\ell}}(\sigma)\,\big]\;,\label{e_Edec}
\end{equation}
where the term $1/2$ appears since each edge is counted twice. The
following simple fact is worth mentioning explicitly.
\begin{lem}
\label{l_strip}Suppose that a strip $\scal{s}$ is not a bridge of
$\sigma$. Then, we have 
\[
\Delta H_{\scal{s}}(\sigma)\ge2\;,
\]
and furthermore $\Delta H_{\scal{s}}(\sigma)=2$ if and only if $\scal{s}$
is a semibridge of $\sigma$.
\end{lem}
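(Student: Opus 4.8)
The plan is to reduce the statement to an elementary fact about cyclic words. Recall from Definition~\ref{d_strip} that a strip $\scal{s}$ is naturally identified with the discrete torus $\mathbb{T}_{2L}$; under this identification, the restriction of $\sigma$ to $\scal{s}$ becomes a cyclic sequence $(\sigma_i)_{i\in\mathbb{T}_{2L}}$ with values in $\Omega$. The first step is to observe that the only edges of $\Lambda$ joining two sites inside $\scal{s}$ are the $2L$ edges joining triangles that are consecutive along the strip (this is where the local geometry of the triangular dual lattice is used). Consequently,
\[
\Delta H_{\scal{s}}(\sigma)=\big|\{\,i\in\mathbb{T}_{2L}:\sigma_i\ne\sigma_{i+1}\,\}\big|,
\]
so $\Delta H_{\scal{s}}(\sigma)$ is precisely the number of color changes encountered when traversing the strip once around. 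By definition, $\scal{s}$ is a bridge of $\sigma$ exactly when this number is $0$, i.e., when the cyclic sequence is constant.

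For the lower bound, I would set $k=\Delta H_{\scal{s}}(\sigma)$ and call the indices $i$ with $\sigma_i\ne\sigma_{i+1}$ the \emph{boundary indices}. If $k\ge1$, the boundary indices partition $\mathbb{T}_{2L}$ into exactly $k$ maximal monochromatic arcs, with any two cyclically adjacent arcs carrying distinct colors. Since $\scal{s}$ is not a bridge, the sequence is non-constant, so $k\ge1$; but $k=1$ would give a single arc covering the whole torus, forcing the sequence to be constant, a contradiction. Hence $k\ge2$, which is the desired inequality.

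It remains to characterize the equality case. If $k=2$, then there are exactly two maximal monochromatic arcs, carrying two distinct colors $a\ne b$ (distinct because the two arcs are cyclically adjacent to each other); thus the strip $\scal{s}$ in $\sigma$ consists of exactly the two spins $a$ and $b$, and the sites carrying either of these spins form a single arc, i.e., are consecutive. By the definition of a semibridge (Definition~\ref{d_strip}), this says exactly that $\scal{s}$ is a semibridge of $\sigma$. Conversely, if $\scal{s}$ is a semibridge with spins $a$ and $b$, then the $a$-sites form one arc and the $b$-sites form another, so there are exactly two boundary indices, i.e., $k=2$.

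I do not expect a serious obstacle here: the three steps are all elementary, and the only point that needs any attention is the very first one, namely verifying that, within a single strip, the adjacency structure inherited from $\Lambda$ is exactly that of the cycle on the $2L$ consecutive triangles, with no extra or missing adjacencies. Once that is in hand, the argument is the standard observation that a non-constant function on a cycle has at least two jumps, together with a direct unpacking of the semibridge definition.
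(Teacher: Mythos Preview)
Your proposal is correct and follows exactly the approach the paper indicates: the paper's proof is a one-liner stating that the result is straightforward once the strip is identified with $\mathbb{T}_{2L}$, and your argument simply spells out that identification and the elementary cycle-counting that follows.
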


\begin{proof}
The proof is straightforward by identifying a strip $\scal{s}$ with
$\mathbb{T}_{2L}$ as in Definition \ref{d_strip}-(1).
\end{proof}
The next lemma provides an elementary lower bound on the number of
bridges based on the energy of configurations. Let us denote by $B_{a}(\sigma)$
the number of $a$-bridges in $\sigma\in\mathcal{X}$. 
\begin{lem}
\label{l_Elb}For $\sigma\in\mathcal{X}$, there are at least $3L-H(\sigma)$
bridges. Moreover, if $\sigma$ has exactly $3L-H(\sigma)$ bridges
then all strips are either bridges or semibridges.
\end{lem}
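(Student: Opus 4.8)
The plan is to read the bound straight off the low-dimensional energy decomposition \eqref{e_Edec} combined with Lemma \ref{l_strip}. Write $B(\sigma)=\sum_{a\in\Omega}B_{a}(\sigma)$ for the total number of bridges of $\sigma$ (a bridge is monochromatic, so it has a well-defined color and is counted once), and recall that $\Lambda^{*}$ carries exactly $3L$ strips, namely $L$ in each of the horizontal, vertical, and diagonal directions. By \eqref{e_Edec},
\[
2H(\sigma)=\sum_{\scal{s}}\Delta H_{\scal{s}}(\sigma)\;,
\]
where the sum runs over all $3L$ strips.

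First I would split this sum according to whether $\scal{s}$ is a bridge of $\sigma$. If $\scal{s}$ is a bridge, then all spins of $\sigma$ on $\scal{s}$ coincide, so $\Delta H_{\scal{s}}(\sigma)=0$; if $\scal{s}$ is not a bridge, then $\Delta H_{\scal{s}}(\sigma)\ge2$ by Lemma \ref{l_strip}. Since the number of non-bridge strips is $3L-B(\sigma)$, this yields
\[
2H(\sigma)=\sum_{\scal{s}\,:\,\text{not a bridge}}\Delta H_{\scal{s}}(\sigma)\ge2\big(3L-B(\sigma)\big)\;,
\]
hence $B(\sigma)\ge3L-H(\sigma)$, which is the first assertion.

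For the equality statement, suppose $B(\sigma)=3L-H(\sigma)$. Then the displayed inequality must be an equality, which forces $\Delta H_{\scal{s}}(\sigma)=2$ for every non-bridge strip $\scal{s}$. By the second part of Lemma \ref{l_strip}, each such strip is then a semibridge of $\sigma$; together with the bridges this shows that every strip of $\sigma$ is either a bridge or a semibridge, as claimed.

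I do not expect any genuine obstacle here: the argument is purely a matter of bookkeeping, and the only points requiring a little care are that the total number of strips is exactly $3L$, that a bridge contributes exactly $0$ to the sum in \eqref{e_Edec}, and the factor $\tfrac{1}{2}$ in \eqref{e_Edec} (each lattice edge lies in exactly two strips and is thus counted twice).
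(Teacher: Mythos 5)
Your argument is correct and matches the paper's proof: both apply the decomposition \eqref{e_Edec} together with the two-part statement of Lemma \ref{l_strip}, observing that each of the $3L-\sum_{a}B_{a}(\sigma)$ non-bridge strips contributes at least $2$, and that equality forces every non-bridge strip to be a semibridge. No substantive difference.
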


\begin{proof}
By \eqref{e_Edec} and Lemma \ref{l_strip}, we have
\begin{equation}
H(\sigma)\ge\frac{1}{2}\times2\times\Big[\,3L-\sum_{a\in\Omega}B_{a}(\sigma)\,\Big]=3L-\sum_{a\in\Omega}B_{a}(\sigma)\;.\label{e_Elb}
\end{equation}
This proves that there are at least $3L-H(\sigma)$ bridges. Moreover,
by Lemma \ref{l_strip}, a strip which is not a bridge should be a
semibridge in order to have the equality in the bound \eqref{e_Elb}.
This completes the proof.
\end{proof}

\subsection{\label{sec53}Neighborhoods}

Recall the notion of paths from \eqref{e_path}. We say that a path
$(\omega_{n})_{n=0}^{N}$ is in $\mathcal{A}\subseteq\mathcal{X}$
if $\omega_{n}\in\mathcal{A}$ for all $n\in\llbracket0,\,N\rrbracket$.
For $t\in\mathbb{R}$, we say that a path $(\omega_{n})_{n=0}^{N}$
is a \textit{$t$-path} if $H(\omega_{n})\le t$ for all $n\in\llbracket0,\,N\rrbracket$.
\begin{defn}
\label{d_nbd}We define two types of neighborhoods.
\begin{enumerate}
\item For $\sigma\in\mathcal{X}$, the neighborhoods $\mathcal{N}(\sigma)$
and $\widehat{\mathcal{N}}(\sigma)$ are defined as
\begin{align*}
\mathcal{N}(\sigma) & =\{\zeta\in\mathcal{X}:\exists\text{a }(2L+1)\text{-path connecting }\sigma\text{ and }\zeta\}\;,\\
\widehat{\mathcal{N}}(\sigma) & =\{\zeta\in\mathcal{X}:\exists\text{a }(2L+2)\text{-path connecting }\sigma\text{ and }\zeta\}\;.
\end{align*}
Then for $\mathcal{A}\subseteq\mathcal{X}$, we define 
\[
\mathcal{N}(\mathcal{A})=\bigcup_{\sigma\in\mathcal{A}}\mathcal{N}(\sigma)\;\;\;\;\text{and}\;\;\;\;\widehat{\mathcal{N}}(\mathcal{A})=\bigcup_{\sigma\in\mathcal{A}}\mathcal{\widehat{N}}(\sigma)\;.
\]
We sometimes refer these as $\mathcal{N}$- and $\widehat{\mathcal{N}}$-neighborhoods,
respectively.
\item Let $\mathcal{B}\subseteq\mathcal{X}$. For $\sigma\in\mathcal{X}\setminus\mathcal{B}$,
we define
\[
\widehat{\mathcal{N}}(\sigma;\,\mathcal{B})=\{\zeta\in\mathcal{X}:\exists\text{a }(2L+2)\text{-path in }\mathcal{X}\setminus\mathcal{B}\text{ connecting }\sigma\text{ and }\zeta\}\;.
\]
Then for $\mathcal{A}\subseteq\mathcal{X}$ disjoint with $\mathcal{B}$,
we define 
\[
\widehat{\mathcal{N}}(\mathcal{A};\,\mathcal{B})=\bigcup_{\sigma\in\mathcal{A}}\widehat{\mathcal{N}}(\sigma;\,\mathcal{B})\;.
\]
\end{enumerate}
We remark that the numbers $2L+1$ and $2L+2$ appear in the definition
since it will be shown that $2L+2$ is the energy barrier $\Gamma$.
\end{defn}

\section{\label{Sec6_EB}Energy Barrier}

This section provides the first level of investigation of the energy
landscape which suffices to prove Theorem \ref{t_Ebarrier}, that
is, the energy barrier between ground states is $2L+2$. A deeper
analysis of the energy landscape required to prove the Eyring--Kramers
formula will be carried out in Section \ref{sec7_Saddle}. 

We collect here notation heavily used in the remainder of the article.
\begin{notation}
\label{not61}Here, the alphabets $\scal{h},\,\scal{v},\,\scal{d}$
stand for \textit{horizontal}, \textit{vertical}, and \textit{diagonal},
respectively.
\begin{enumerate}
\item We say that $(A,\,B)$ is a \textit{proper partition} (of $\Omega$)
if $A,\,B\ne\emptyset$, $A\cup B=\Omega$, and $A\cap B=\emptyset$.
\item Let $L\ge2$ and denote by $\mathfrak{S}_{L}$ the collection of connected
subsets of $\mathbb{T}_{L}$. For example, we have $\emptyset,\,\{2\},\,\{2,\,3,\,4,\,5\},\,\{6,\,1,\,2\}\in\mathfrak{S}_{6}$
(since $6$ and $1$ are neighboring in $\mathbb{T}_{6}$). 
\begin{enumerate}
\item For $P,\,P'\in\mathfrak{S}_{L}$, we write $P\prec P'$ if $P\subseteq P'$
and $|P'|=|P|+1$.
\item For each $P\in\mathfrak{S}_{L}$, we write
\[
\scal{h}(P)=\bigcup_{\ell\in P}\scal{h}_{\ell}\;,\;\;\;\;\scal{v}(P)=\bigcup_{\ell\in P}\scal{v}_{\ell}\;,\;\;\;\;\text{and}\;\;\;\;\scal{d}(P)=\bigcup_{\ell\in P}\scal{d}_{\ell}\;.
\]
\end{enumerate}
\item We regard the dual lattice $\Lambda^{*}$ as the collection of triangles
(corresponding to the sites, or vertices of $\Lambda$) and hence
we say that $U$ is a subset of $\Lambda^{*}$ (i.e., $U\subseteq\Lambda^{*}$)
if $U$ is a collection of triangles in $\Lambda^{*}$. For example,
a strip is a subset of $\Lambda^{*}$ consisting of $2L$ triangles.
\item For each $U\subseteq\Lambda^{*}$ and $a,\,b\in\Omega$, we write
$\xi_{U}^{a,\,b}\in\mathcal{X}$ the configuration whose spins are
$b$ on the sites corresponding to the triangles in $U$ and $a$
on the remainder.
\end{enumerate}
\end{notation}

\subsection{\label{sec61}Canonical configurations}

In this subsection, we define the canonical configurations between
ground states. These canonical configurations provide the backbone
of the saddle structure. We shall see in the sequel that the saddle
structure is completed by attaching dead-end structures or bypasses
at this backbone. We define canonical configurations in several steps.
The first step is devoted to define the regular configurations which
are indeed special forms of canonical configurations. 

\begin{figure}
\includegraphics[width=13cm]{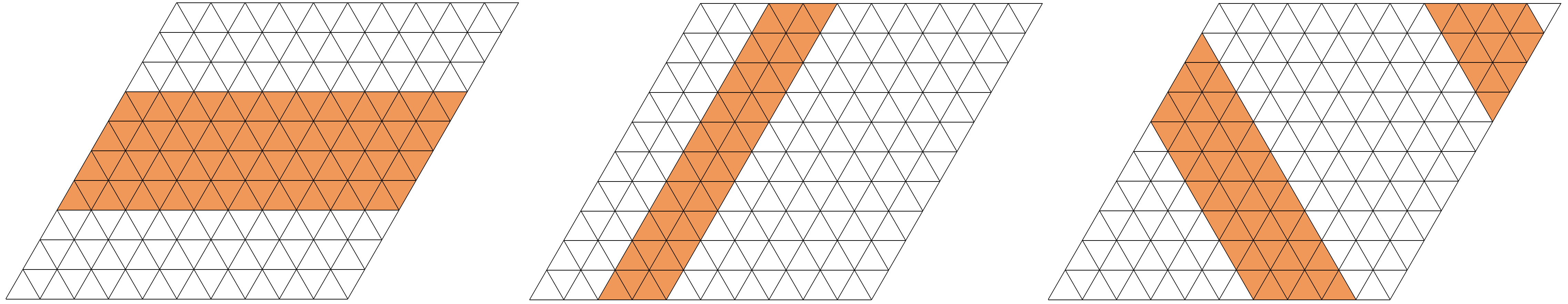}\caption{\label{fig61}Example of regular configurations: $\xi_{\scal{h}(\llbracket4,\,7\rrbracket)}^{a,\,b}$
(left), $\xi_{\scal{v}(\llbracket3,\,4\rrbracket)}^{a,\,b}$ (middle),
and $\xi_{\scal{d}(\llbracket7,\,9\rrbracket)}^{a,\,b}$ (right).}
\end{figure}

\begin{defn}[Regular configurations]
\label{d_reg} Fix $a,\,b\in\Omega$. We recall Notation \ref{not61}.
\begin{itemize}
\item A configuration of the form $\xi_{\scal{h}(P)}^{a,\,b}$, $\xi_{\scal{v}(P)}^{a,\,b}$,
or $\xi_{\scal{d}(P)}^{a,\,b}$ for some $P\in\mathfrak{S}_{L}$ is
called a \textit{horizontal, vertical, or diagonal regular configuration}
between $\mathbf{a}$ and $\mathbf{b}$, respectively. We refer to
Figure \ref{fig61} for illustrations.
\item For $n\in\llbracket0,\,L\rrbracket$, we define
\[
\mathcal{R}_{n}^{a,\,b}=\bigcup_{P\in\mathfrak{S}_{L}:\,|P|=n}\big\{\,\xi_{\scal{h}(P)}^{a,\,b},\,\xi_{\scal{v}(P)}^{a,\,b},\,\xi_{\scal{d}(P)}^{a,\,b}\,\big\}\;\;\;\;\text{and}\;\;\;\;\mathcal{R}^{a,\,b}=\bigcup_{n=0}^{L}\mathcal{R}_{n}^{a,\,b}\;.
\]
Then for a proper partition $(A,\,B)$, we write 
\[
\mathcal{R}_{n}^{A,\,B}=\bigcup_{a\in A}\bigcup_{b\in B}\mathcal{R}_{n}^{a,\,b}\;\;\;\;\text{and}\;\;\;\;\mathcal{R}^{A,\,B}=\bigcup_{a\in A}\bigcup_{b\in B}\mathcal{R}^{a,\,b}\;.
\]
\end{itemize}
\end{defn}

Canonical configurations are now defined as the ones obtained by adding
suitable protuberances at a monochromatic cluster of a regular configuration.
To carry this out rigorously, we first define canonical sets. 
\begin{defn}[One-dimensional canonical sets]
\label{d_ocs} We say that $U\subseteq\scal{s}$ for some strip $\scal{s}$
is an \textit{one-dimensional canonical set} if $U\ne\emptyset,\,\scal{s}$
and either $U$ is connected (we remark again that two triangles sharing
only a vertex are not connected) as in the two left figures below,
or $|U|$ is even and $U$ can be decomposed into two disjoint, connected
components $U_{1}$ and $U_{2}$ such that $|U_{2}|=1$ and that $U_{1}$
and $U_{2}$ share a vertex in $\Lambda^{*}$ as in the rightmost
figure below. 
\end{defn}

\noindent \begin{center}
\includegraphics[width=11cm]{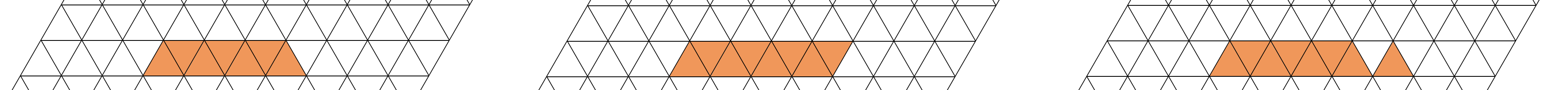}
\par\end{center}

We now define the general canonical sets.

\begin{figure}
\includegraphics[width=15cm]{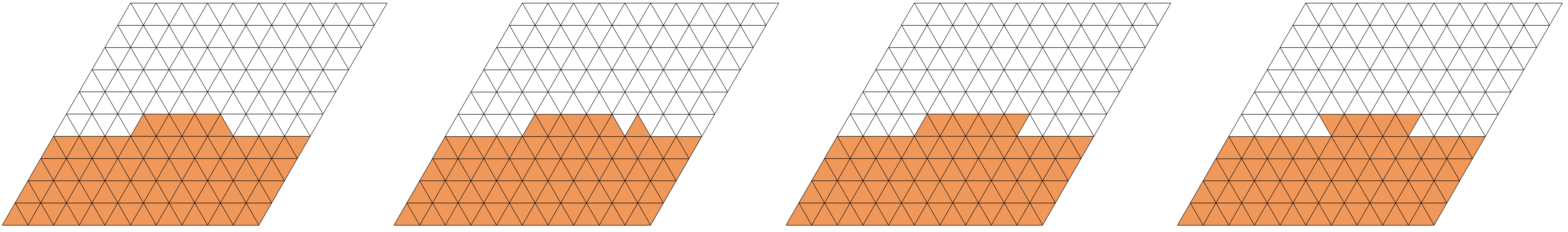}\caption{\label{fig62}Canonical sets and configurations. For the first three
figures, the sets of orange triangles are canonical sets between $\scal{h}(P)$
and $\scal{h}(P')$ where $P=\llbracket1,\,4\rrbracket$ and $P'=\llbracket1,\,5\rrbracket$.
If we assign spins $a$ and $b$ at white and orange triangles, respectively,
then the configurations corresponding to the first, second, and third
figures belong to $\mathcal{C}_{\scal{h}(P,\,P'),\,\scal{o}}^{a,\,b}$,
$\mathcal{C}_{\scal{h}(P,\,P'),\,\scal{e}}^{a,\,b}$, and $\mathcal{C}_{\scal{h}(P,\,P'),\,\scal{e}}^{a,\,b}$,
respectively. \emph{Note that the set of orange triangles at the rightmost
figure is not a canonical set since the condition \eqref{cond_prot}
is violated.}}
\end{figure}

\begin{defn}[Canonical sets]
\label{d_cs} Fix $a,\,b\in\Omega$, $\scal{s}\in\{\scal{h},\,\scal{v},\,\scal{d}\}$,
and $P,\,P'\in\mathfrak{S}_{L}$ such that $P\prec P'$. Let $P'\setminus P=\{\ell\}$.
We now define the canonical sets between $\scal{s}(P)$ and $\scal{s}(P')$.
We refer to Figure \ref{fig62}.
\begin{enumerate}
\item A set $\mathfrak{p}\subseteq\scal{s}_{\ell}$ is called a \textit{protuberance}
attached to $\scal{s}(P)$ if $\mathfrak{p}$ is an one-dimensional
canonical set and moreover, for $|P|\in\llbracket1,\,L-2\rrbracket$,
it holds that 
\begin{equation}
\big|\,\{x\in\mathfrak{p}:x\text{ shares a side with some }y\in\scal{s}(P)\}\,\big|\ge\frac{|\mathfrak{p}|}{2}\;.\label{cond_prot}
\end{equation}
\item The set $\scal{s}(P)\cup\mathfrak{p}$, where $\mathfrak{p}$ is a
protuberance attached to $\scal{s}(P)$, is called a \textit{canonical
set} between $\scal{s}(P)$ and $\scal{s}(P')$.
\end{enumerate}
\end{defn}

We are now finally able to define the canonical configurations. In
the following definition, the alphabets $\scal{o}$ and $\scal{e}$
in the subscripts stand for \textit{odd} and \textit{even}, respectively.
\begin{defn}[Canonical configurations]
\label{d_can} We define the canonical configurations (we refer to
Figure \ref{fig62} for an illustrations). 
\begin{enumerate}
\item Fix $a,\,b\in\Omega$, $\scal{s}\in\{\scal{h},\,\scal{v},\,\scal{d}\}$
and $P,\,P'\in\mathfrak{S}_{L}$ with $P\prec P'$. We say that a
configuration $\sigma\in\mathcal{X}$ is a canonical configuration
between two regular configurations $\xi_{\scal{s}(P)}^{a,\,b}$ and
$\xi_{\scal{s}(P')}^{a,\,b}$ if 
\[
\sigma=\xi_{A}^{a,\,b}\text{ for some canonical set }A\text{ between }\scal{s}(P)\text{ and }\scal{s}(P')\;.
\]
We denote by $\widetilde{\mathcal{C}}_{\scal{s}(P,\,P')}^{a,\,b}$
the collection of canonical configurations between $\xi_{\scal{s}(P)}^{a,\,b}$
and $\xi_{\scal{s}(P')}^{a,\,b}$.
\begin{enumerate}
\item For each $\sigma=\xi_{A}^{a,\,b}\in\widetilde{\mathcal{C}}_{\scal{s}(P,\,P')}^{a,\,b}$,
we can decompose $A$ into $\scal{s}(P)$ and the protuberance attached
to it (cf. Definition \ref{d_cs}). We denote this protuberance by
$\mathfrak{p}^{a,\,b}(\sigma)$\footnote{Note that if $\sigma\in\widetilde{\mathcal{C}}_{\scal{s}(P,\,P')}^{a,\,b}$,
then we also have $\sigma\in\widetilde{\mathcal{C}}_{\scal{s}(\mathbb{T}_{L}\setminus P',\,\mathbb{T}_{L}\setminus P)}^{b,\,a}$
and moreover $\mathfrak{p}^{b,\,a}(\sigma)=\scal{s}_{\ell}\setminus\mathfrak{p}^{a,\,b}(\sigma)$.}.
\item We write
\begin{align*}
\mathcal{C}_{\scal{s}(P,\,P')}^{a,\,b} & =\widetilde{\mathcal{C}}_{\scal{s}(P,\,P')}^{a,\,b}\cup\big\{\,\xi_{\scal{s}(P)}^{a,\,b},\,\xi_{\scal{s}(P')}^{a,\,b}\,\big\}\;,\\
\mathcal{C}_{\scal{s}(P,\,P'),\,\scal{o}}^{a,\,b} & =\big\{\,\sigma\in\widetilde{\mathcal{C}}_{\scal{s}(P,\,P')}^{a,\,b}:|\mathfrak{p}^{a,\,b}(\sigma)|\text{ is odd}\,\big\}\;,\\
\mathcal{C}_{\scal{s}(P,\,P'),\,\scal{e}}^{a,\,b} & =\big\{\,\sigma\in\widetilde{\mathcal{C}}_{\scal{s}(P,\,P')}^{a,\,b}:|\mathfrak{p}^{a,\,b}(\sigma)|\text{ is even}\,\big\}\;.
\end{align*}
\end{enumerate}
\item For $n\in\llbracket0,\,L-1\rrbracket$ and $a,\,b\in\Omega$, we define
\[
\mathcal{C}_{n}^{a,\,b}=\bigcup_{\scal{s}\in\{\scal{h},\,\scal{v},\,\scal{d}\}}\bigcup_{P\prec P':\,|P|=n}\mathcal{C}_{\scal{s}(P,\,P')}^{a,\,b}\;,
\]
and define $\mathcal{C}_{n,\,\scal{o}}^{a,\,b}$ and $\mathcal{C}_{n,\,\scal{e}}^{a,\,b}$
in the same manner. The configurations belonging to $\mathcal{C}_{n}^{a,\,b}$
for some $n\in\llbracket0,\,L-1\rrbracket$ are called \textit{canonical
configurations} between $\mathbf{a}$ and $\mathbf{b}$. 
\item For each proper partition $(A,\,B)$ (cf. Notation \ref{not61}),
we write
\[
\mathcal{C}_{n,\,\scal{o}}^{A,\,B}=\bigcup_{a\in A}\bigcup_{b\in B}\mathcal{C}_{n,\,\scal{o}}^{a,\,b}\;\;\;\;\text{and}\;\;\;\;\mathcal{C}_{n,\,\scal{e}}^{A,\,B}=\bigcup_{a\in A}\bigcup_{b\in B}\mathcal{C}_{n,\,\scal{e}}^{a,\,b}\;.
\]
\end{enumerate}
\end{defn}

\begin{rem}[Energy of canonical configurations]
\label{r_can} The following properties of regular and canonical
configurations are straightforward from the definitions. In particular,
the discussion on Section \ref{sec22} or \eqref{e_Edec} can be used,
and we omit the detail of the proof. Let $a,\,b\in\Omega$.
\begin{enumerate}
\item For $n\in\llbracket1,\,L-2\rrbracket$, we can decompose 
\[
\mathcal{C}_{n}^{a,\,b}=\mathcal{R}_{n}^{a,\,b}\cup\mathcal{R}_{n+1}^{a,\,b}\cup\mathcal{C}_{n,\,\scal{o}}^{a,\,b}\cup\mathcal{C}_{n,\,\scal{e}}^{a,\,b}
\]
and we have 
\[
H(\sigma)=\begin{cases}
2L & \text{if }\sigma\in\mathcal{R}_{n}^{a,\,b}\cup\mathcal{R}_{n+1}^{a,\,b}\;,\\
2L+1 & \text{if }\sigma\in\mathcal{C}_{n,\,\scal{o}}^{a,\,b}\;,\\
2L+2 & \text{if }\sigma\in\mathcal{C}_{n,\,\scal{e}}^{a,\,b}\;.
\end{cases}
\]
\item If $\sigma\in\mathcal{C}_{n}^{a,\,b}$ for $n=0$ or $L-1$, we have
$H(\sigma)\le2L+1$.
\end{enumerate}
In conclusion, we have $H(\sigma)\le2L+2$ for all canonical configurations
$\sigma$.
\end{rem}

\begin{rem}[Canonical paths]
\label{r_canpath} Fix $a,\,b\in\Omega$, $\scal{s}\in\{\scal{h},\,\scal{v},\,\scal{d}\}$,
and $P,\,P'\in\mathfrak{S}_{L}$ with $P\prec P'$. Then, it is clear
by definition that there are natural paths in $\mathcal{C}_{\scal{s}(P,\,P')}^{a,\,b}$
from $\xi_{\scal{s}(P)}^{a,\,b}$ to $\xi_{\scal{s}(P')}^{a,\,b}$
as in the following figure.

\begin{figure}[h]
\includegraphics[width=15cm]{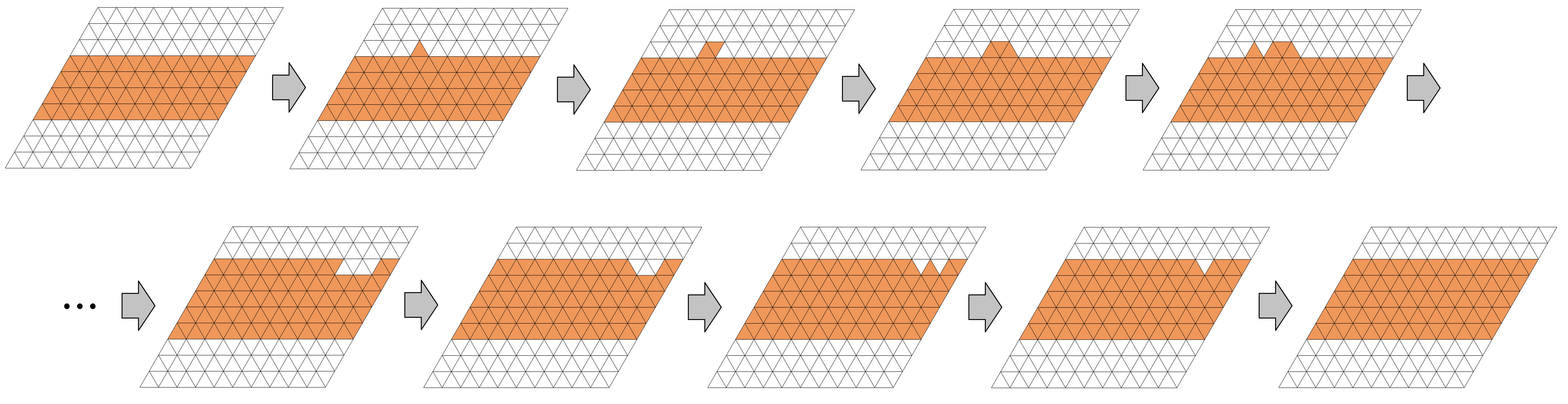}\caption{\label{fig63}Canonical path from $\xi_{\scal{h}(\llbracket4,\,7\rrbracket)}^{a,\,b}$
to $\xi_{\scal{h}(\llbracket4,\,8\rrbracket)}^{a,\,b}$.}
\end{figure}

\noindent These paths are called \textit{canonical paths} between
$\xi_{\scal{s}(P)}^{a,\,b}$ and $\xi_{\scal{s}(P')}^{a,\,b}$. By
attaching the canonical paths consecutively, one can obtain a path
between $\mathbf{a}$ and $\mathbf{b}$. This path is called a \textit{canonical
path} between $\mathbf{a}$ and $\mathbf{b}$. Note that there are
numerous possible canonical paths between $\mathbf{a}$ and $\mathbf{b}$,
and that each canonical path is a $(2L+2)$-path (cf. Section \ref{sec53})
by Remark \ref{r_can} above.
\end{rem}

\subsection{\label{sec62}Configurations with low energy}

Since the energy barrier between ground states is $2L+2$ (as will
be proved in this section), the saddle structure between ground states
is essentially the $\widehat{\mathcal{N}}$-neighborhood (cf. Definition
\ref{d_nbd}) of canonical configurations. Therefore, to understand
the saddle structure, it is crucial to characterize the configurations
with energy exactly $2L+2$. This characterization is relatively simple
for the square lattice (cf. \cite[Proposition 6.8 and Lemma 7.2]{KS2}),
as dead-ends are attached only at the very end of the canonical paths.
However, this characterization is highly non-trivial for the hexagonal
lattice, as we shall see that a complicated dead-end structure is
attached at each regular configuration. This and the next subsections
are devoted to study this structure.

A configuration $\sigma$ is called \textit{cross-free} if it does
not have a cross (cf. Definition \ref{d_strip}-(3)). The purpose
of the current subsection is to characterize all the cross-free configurations
$\sigma$ such that $H(\sigma)\le2L+2$. We first prove that a cross-free
configuration $\sigma$ has energy at least $2L$, and moreover the
energy is exactly $2L$ if and only if $\sigma$ is a regular configuration
(cf. Definition \ref{d_reg}).
\begin{prop}
\label{p_E<=00003D2L}Suppose that a cross-free configuration $\sigma\in\mathcal{X}$
satisfies $H(\sigma)\le2L$. Then, $\sigma$ is a regular configuration,
i.e., $\sigma\in\mathcal{R}_{n}^{a,\,b}$ for some $a,\,b\in\Omega$
and $n\in\llbracket1,\,L-1\rrbracket$. In particular, we have $H(\sigma)=2L$.
\end{prop}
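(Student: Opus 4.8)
The plan is to leverage the one-dimensional decomposition of energy \eqref{e_Edec} together with Lemma \ref{l_Elb}. Since $H(\sigma) \le 2L$, Lemma \ref{l_Elb} gives that $\sigma$ has at least $3L - 2L = L$ bridges distributed among the three directions. I would first argue that $\sigma$ must have bridges in at least one direction in a \emph{large} quantity, and then use the cross-free hypothesis to control the other directions. Concretely, let $b_{\scal h}, b_{\scal v}, b_{\scal d}$ denote the numbers of horizontal, vertical, and diagonal bridges of $\sigma$; then $b_{\scal h} + b_{\scal v} + b_{\scal d} \ge L$. Because $\sigma$ is cross-free, bridges of two distinct directions that both have a fixed spin $a$ cannot coexist unless... — more carefully, a horizontal $a$-bridge and a vertical $a$-bridge together form an $a$-cross, which is forbidden. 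So for each direction pair, the sets of spins appearing as bridges in those two directions must be disjoint; since $|\Omega| = q$ is fixed (and in fact this works for all $q \ge 2$), this forces essentially all bridges to lie in a single direction — say horizontal, after relabeling.

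Next I would show that the horizontal bridges of $\sigma$ occupy a \emph{contiguous} block $\scal h(P)$ with $P \in \mathfrak S_L$, and that all share a common spin. The key is a refined look at the inequality \eqref{e_Elb}: if there are exactly $3L - H(\sigma)$ bridges then \emph{every} strip is a bridge or a semibridge, and in particular every vertical and every diagonal strip is a bridge or semibridge. A vertical strip that is a semibridge crosses each horizontal strip exactly once (in the dual triangular lattice each pair of strips of different directions meets in exactly one triangle, or a bounded number); tracking the two spins of a vertical semibridge across the horizontal bridges forces the horizontal bridges to be monochromatic with a single common spin $a$, the complement region to be monochromatic with a single spin $b \ne a$, and the block of horizontal bridges to be an interval in $\mathbb T_L$ — otherwise one produces either a forbidden cross or too much energy. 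Running the energy count in the reverse direction then pins $H(\sigma) = 2L$ exactly and shows $\sigma = \xi_{\scal h(P)}^{a,b} \in \mathcal R_n^{a,b}$ with $n = |P| \in \llbracket 1, L-1 \rrbracket$ (the endpoints $n = 0, L$ being excluded since they correspond to ground states with energy $0$, not $2L$).

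The main obstacle I anticipate is the combinatorial bookkeeping in the middle step: ruling out the possibility that bridges are split between two directions with disjoint spin sets, \emph{and} that the non-bridge strips (semibridges) conspire to keep the total energy at $2L$. One has to be careful that $q$ could be large, so "disjoint spin sets across directions" does not by itself collapse everything; the real force is that semibridges contribute exactly $2$ to $\Delta H_{\scal s}$ and bridges contribute $0$, so $H(\sigma) = 2L$ means \emph{exactly} $L$ strips are semibridges and $2L$ are bridges, and then the geometric incidence structure of the three families of strips on $\Lambda^*$ (each semibridge of one direction meeting each bridge of a transverse direction) leaves essentially no freedom. I would organize this as: (i) $H(\sigma) \le 2L \Rightarrow$ at least $L$ bridges, with equality forcing all strips bridge-or-semibridge; (ii) cross-freeness $\Rightarrow$ all bridges of one direction, WLOG horizontal, and the horizontal non-bridges are semibridges; (iii) a semibridge in a transverse direction forces the horizontal bridges to be a monochromatic contiguous interval and the rest monochromatic of a different color; (iv) conclude $\sigma$ is regular with $H(\sigma) = 2L$ exactly. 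Step (iii) is where the hexagonal-specific geometry enters and will need the careful picture from Figure \ref{fig51}, but it should go through cleanly once the global structure from (i)–(ii) is in hand.
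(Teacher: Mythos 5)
Your overall line of attack is the same as the paper's: apply Lemma \ref{l_Elb} to get at least $L$ bridges, use cross-freeness to put them all in one direction, deduce exactly $L$ bridges so that $H(\sigma)=2L$ and every strip is a bridge or a semibridge, and then read off the regular structure. Steps (i), (iii) and (iv) of your outline are fine.

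However, there is a genuine gap in your step (ii), and it is in fact the step you yourself flag as the ``main obstacle.'' You argue that two bridges of different directions with the \emph{same} spin form a cross, hence are forbidden, and you conclude that the spin sets used by bridges in two different directions must be \emph{disjoint}; you then gesture at $q$ being fixed to ``essentially'' collapse everything into one direction. This is the wrong reasoning and, on its own, does not close the case. The correct observation is that in the dual triangular lattice $\Lambda^{*}$, any two strips of different directions intersect in a triangle. A bridge is monochromatic, so if a horizontal bridge and a vertical bridge both exist, the triangle where they meet has a single spin, forcing the two bridges to carry the \emph{same} spin; they then form a cross, which is forbidden. In other words, disjoint spin sets across directions are geometrically impossible (because the strips meet), and equal spins produce a cross; hence bridges in two distinct directions simply cannot coexist in a cross-free configuration, regardless of $q$. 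Once you replace your disjointness argument with this intersection argument, there is no residual combinatorial bookkeeping to do at that point: all $\ge L$ bridges are of one direction, there are only $L$ strips in that direction, so exactly $L$, so $H(\sigma)=2L$ by \eqref{e_Elb}, and by the second part of Lemma \ref{l_Elb} every transverse strip is a semibridge. That in turn pins $\sigma$ down to a regular configuration exactly as you describe.
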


\begin{proof}
We fix a cross-free configuration $\sigma\in\mathcal{X}$ with $H(\sigma)\le2L$.
By Lemma \ref{l_Elb}, $\sigma$ has at least $L$ bridges. Since
these bridges must be of the same direction, there are exactly $L$
bridges of the same direction (say, horizontal), and by the second
assertion of Lemma \ref{l_Elb}, all the vertical and diagonal strips
must be semibridges of the same form. We can conclude that $\sigma$
is a regular configuration by combining the observations above.
\end{proof}
It now remains to characterize cross-free configurations with energy
$2L+1$ or $2L+2$. The following lemma is useful for those characterizations.
\begin{lem}
\label{l_crosschar}Suppose that a cross-free configuration $\sigma\in\mathcal{X}$
satisfies $H(\sigma)\le2L+2$, has $k\in\{L-2,\,L-1\}$ horizontal
bridges, and has at least one vertical or diagonal semibridge. Then,
the following statements hold for the configuration $\sigma$.
\begin{enumerate}
\item There exist two spins $a,\,b\in\Omega$ such that all horizontal bridges
are either $a$- or $b$-bridges.
\item Following (1), define two sets $P_{a}$ and $P_{b}$ by 
\begin{equation}
P_{c}=\{\ell\in\mathbb{T}_{L}:\scal{h}_{\ell}\text{ is a }c\text{-bridge}\}\;\;\;\;;\;c\in\{a,\,b\}\;.\label{e_Pc}
\end{equation}
Suppose that $P_{a},\,P_{b}\ne\emptyset$. Then, we have $P_{a},\,P_{b}\in\mathfrak{S}_{L}$
and moreover
\begin{enumerate}
\item if $k=L-2$, then all non-bridge strips are $\{a,\,b\}$-semibridges
and $H(\sigma)=2L+2$,
\item if $k=L-1$ and $H(\sigma)\le2L+1$, then all non-bridge strips are
$\{a,\,b\}$-semibridges.
\end{enumerate}
\end{enumerate}
\end{lem}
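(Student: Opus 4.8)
The plan is to combine the energy lower bound of Lemma~\ref{l_Elb} with the geometry of strips in $\Lambda^{*}$. Throughout I will use that strips of the same direction are disjoint or equal, that two strips of different directions are transversal and meet in exactly two triangles sharing a side, and that a transversal strip $\scal{t}$ decomposes into $L$ consecutive pieces of two adjacent triangles, one for each horizontal strip, visited in the cyclic order of $\mathbb{T}_{L}$ (up to a shift and a reflection). First I would note that, since $\sigma$ is cross-free and has a horizontal bridge, it has no vertical or diagonal bridge, because a union of two transversal bridges is a cross; hence all bridges of $\sigma$ are horizontal, there are exactly $k$ of them, and by Lemma~\ref{l_Elb} this yields $3L-H(\sigma)\le k$, i.e. $H(\sigma)\ge 3L-k$. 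So $H(\sigma)=2L+2$ when $k=L-2$, and $H(\sigma)\in\{2L+1,\,2L+2\}$ when $k=L-1$. In the two situations in which the semibridge structure is needed below --- namely $k=L-2$, and $k=L-1$ with $H(\sigma)\le 2L+1$ --- one has $H(\sigma)=3L-k$ exactly, so the second assertion of Lemma~\ref{l_Elb} gives that every strip of $\sigma$ is a bridge or a semibridge; in particular every vertical and every diagonal strip is then a semibridge.

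For part (1), I would take the vertical or diagonal semibridge $\scal{s}$ furnished by the hypothesis; it uses exactly two spins, say $a$ and $b$. Every horizontal bridge is transversal to $\scal{s}$, so it meets $\scal{s}$ in two triangles, and if that bridge has spin $c$ then those two triangles both carry spin $c$; hence $c\in\{a,\,b\}$. Thus all horizontal bridges are $a$- or $b$-bridges (taking $b$ arbitrary if only one spin occurs), which is (1).

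For the semibridge assertions of part (2), assume now $P_{a},\,P_{b}\ne\emptyset$, so both $a$ and $b$ occur among the spins of the horizontal bridges. Any vertical or diagonal strip $\scal{t}$ meets every horizontal strip, hence meets an $a$-bridge and a $b$-bridge, so $\scal{t}$ contains a spin-$a$ and a spin-$b$ triangle; being a semibridge, $\scal{t}$ is therefore an $\{a,\,b\}$-semibridge. Consequently, for a non-bridge horizontal strip $\scal{h}_{r}$, every one of its triangles lies in some vertical strip and hence carries spin $a$ or $b$, so $\scal{h}_{r}$, a semibridge that is not a bridge, is also an $\{a,\,b\}$-semibridge. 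This proves that all non-bridge strips are $\{a,\,b\}$-semibridges, and we already recorded $H(\sigma)=2L+2$ when $k=L-2$. It remains to prove $P_{a},\,P_{b}\in\mathfrak{S}_{L}$; I would do this for $P_{a}$, the case of $P_{b}$ being symmetric, using that $\mathbb{T}_{L}\setminus P_{a}$ is exactly the set of rows that are $b$-bridges or non-bridges.

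The connectedness of $P_{a}$ is the step I expect to be the main obstacle, since it rests on carefully tracking transitions inside a vertical strip. I would prove two facts. \emph{(i)} No maximal arc $G$ of $\mathbb{T}_{L}\setminus P_{a}$ consists only of non-bridge rows: if it did, the rows flanking $G$ would be $a$-bridges; choosing $r\in G$, the $\{a,\,b\}$-semibridge $\scal{h}_{r}$ has a spin-$b$ triangle lying in some vertical strip $\scal{v}_{j_{0}}$, and in $\scal{v}_{j_{0}}$ the pieces of $G$ together with the two flanking $a$-bridge rows form one arc $W$ of consecutive pieces that starts and ends with all-$a$ pieces and contains a spin $b$; entering and leaving that spin $b$ costs at least two transitions inside $W$, exhausting the two transitions of the $\{a,\,b\}$-semibridge $\scal{v}_{j_{0}}$, so $\scal{v}_{j_{0}}$ is monochromatic of spin $a$ outside $W$ --- contradicting that the $b$-bridge row guaranteed by $P_{b}\ne\emptyset$ lies outside $W$ and has an all-$b$ piece in $\scal{v}_{j_{0}}$. \emph{(ii)} $P_{b}$ lies in a single connected component of $\mathbb{T}_{L}\setminus P_{a}$: for any vertical strip $\scal{v}_{j}$, an $\{a,\,b\}$-semibridge with one maximal run of spin-$a$ triangles, the set $I_{a}(j)$ of rows whose $\scal{v}_{j}$-piece lies entirely in that run is an arc (since rows occur as consecutive two-triangle pieces), contains $P_{a}$, and contains no $b$-bridge row, so $\mathbb{T}_{L}\setminus I_{a}(j)$ is an arc disjoint from $P_{a}$ and containing $P_{b}$. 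Combining (i) and (ii): any component of $\mathbb{T}_{L}\setminus P_{a}$ disjoint from $P_{b}$ would be an all-non-bridge arc, which (i) forbids; hence $\mathbb{T}_{L}\setminus P_{a}$ is a single arc and $P_{a}\in\mathfrak{S}_{L}$. The soft counting needed for (1) and for the $\{a,\,b\}$-semibridge assertions should be routine; the careful part is the local geometry underlying (i) and (ii).
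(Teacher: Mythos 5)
Your proofs of part (1) and of the $\{a,b\}$-semibridge assertions in (2-a), (2-b) are correct, and for the connectedness of $P_a,P_b$ your route via the two facts (i) and (ii) is genuinely different from the paper's: the paper splits into ``no $a$-bridge adjacent to a $b$-bridge'' (handled by counting non-bridge strips against $L-k\le 2$) versus ``some $a$-bridge adjacent to a $b$-bridge'' (handled by the energy estimate $H(\sigma)\ge\frac12[2+(4+2(L-1))+(4+2(L-1))]=2L+3$ via \eqref{e_Edec}), whereas you track transitions inside a single transversal semibridge. Where both apply, your argument is a nice unified combinatorial alternative.

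However, there is a genuine gap in the coverage. The clause ``$P_a,P_b\in\mathfrak{S}_L$'' in part (2) is \emph{not} conditional on sub-cases (a) or (b): it must hold whenever the lemma's hypotheses and $P_a,P_b\ne\emptyset$ hold, which includes $k=L-1$ with $H(\sigma)=2L+2$. In that case $3L-k=2L+1<H(\sigma)$, so the equality case of Lemma~\ref{l_Elb} does not apply and one cannot conclude that every strip is a bridge or semibridge. Indeed, with one non-bridge horizontal strip of energy $2$ or $3$ the remaining $4$ (resp.\ $3$) units of excess energy may sit in one or two vertical/diagonal strips of energy $3$ or $4$, so the premises of your facts (i) and (ii) --- that $\scal{h}_r$ and $\scal{v}_{j_0}$, and every $\scal{v}_j$, are $\{a,b\}$-semibridges --- fail. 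You explicitly restrict to ``$k=L-2$, and $k=L-1$ with $H(\sigma)\le 2L+1$'' before invoking the semibridge structure, so this sub-case is simply not addressed. It is needed downstream: Proposition~\ref{p_E=00003D2L+2} (Case 1, which is $k=L-1$ and $H(\sigma)=2L+2$) applies Lemma~\ref{l_crosschar} and implicitly uses $P_a,P_b\in\mathfrak{S}_L$ to identify the configuration as type \textbf{(PP)}, \textbf{(EP)}, or \textbf{(ODP)}. The paper's energy argument (if some $i$ between two $a$-bridges is not in $P_a$, the two transversal strips through a non-$a$ triangle in $\scal{h}_i$ each have energy $\ge 4$, forcing $H(\sigma)\ge 2L+3$) closes this case without any semibridge hypothesis; your proof would need to be supplemented by such an argument, or by an extension of (i) and (ii) that tolerates one or two non-semibridge transversal strips.
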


\begin{rem}
The conclusion $P_{a},\,P_{b}\in\mathfrak{S}_{L}$ holds even when
either $P_{a}$ or $P_{b}$ is empty, but its proof will be given
later in Lemma \ref{l_MB}.
\end{rem}

\begin{proof}[Proof of Lemma \ref{l_crosschar}]
 (1) The conclusion is immediate since if the vertical or diagonal
semibridge of $\sigma$ (which exists because of the assumption of
the lemma) is an $\{a,\,b\}$-semibridge for some $a,\,b\in\Omega$,
then each horizontal bridge must be either $a$- or $b$-bridge.\medskip{}

\noindent (2) Suppose first that no $a$-bridge is adjacent to a $b$-bridge.
Then as $P_{a},\,P_{b}\ne\emptyset$ and $k\le L-1$, we may take
one connected subset $C_{a}$ of $P_{a}$ so that $|C_{a}|\le L-2$.
Then, the two strips adjacent to $C_{a}$ must not be $b$-bridges,
so that they are not bridges. Then since $k\ge L-2$, we conclude
that $C_{a}=P_{a}$ and all the strips which are not adjacent to $C_{a}$
are $b$-bridges. This implies that $P_{a},\,P_{b}\in\mathfrak{S}_{L}$.

Next, suppose that some $a$-bridge is adjacent to a $b$-bridge.
Without loss of generality, we assume that $1\in P_{a}$ and $L\in P_{b}$.
Let $m=\max\{i\in\llbracket1,\,L-1\rrbracket:i\in P_{a}\}$ and we
claim that $P_{a}=\llbracket1,\,m\rrbracket$. There is nothing to
prove if $m=1$ or $2$, since the claim holds immediately. Suppose
$m\ge3$ and there exists $i\in\llbracket2,\,m-1\rrbracket$ such
that $i\notin P_{a}$. Then, there exists a triangle in the $i$-th
horizontal strip at which the spin is not $a$. The vertical and diagonal
strips containing this triangle have energy at least $4$, because
$1\in P_{a}$, $m\in P_{a}$, and $L\in P_{b}$. All the vertical
and diagonal strips other than these two have energy at least $2$
(since the configuration $\sigma$ is cross-free). Since at least
one of the horizontal strip must be a non-bridge and has energy at
least $2$, we can conclude from \eqref{e_Edec} that 
\[
H(\sigma)\ge\frac{1}{2}\big[\,2+[4+2(L-1)]+[4+2(L-1)]\,\big]=2L+3\;.
\]
This yield a contradiction and thus we can conclude that $P_{a}=\llbracket1,\,m\rrbracket\in\mathfrak{S}_{L}$.
The proof of $P_{b}\in\mathfrak{S}_{L}$ is the same.\medskip{}

\noindent (2-a) For this case, we first note that there are $L-2$
bridges. If $H(\sigma)\le2L+1$, by Lemma \ref{l_Elb}, there are
at least $3L-H(\sigma)\ge L-1$ bridges and we get a contradiction.
Hence, we have $H(\sigma)=2L+2$ and there are $3L-H(\sigma)$ bridges;
hence, by the second assertion of Lemma \ref{l_Elb} all the non-bridge
strips are semibridges. It is clear that indeed, they must be $\{a,\,b\}$-semibridges.\medskip{}

\noindent (2-b) The proof for this part is almost identical to (2-a)
and we omit the detail.
\end{proof}
We next characterize all the cross-free configurations with energy
$2L+1$. Indeed, they must be canonical configurations. 
\begin{prop}
\label{p_E=00003D2L+1}Suppose that a cross-free configuration $\sigma\in\mathcal{X}$
satisfies $H(\sigma)=2L+1$. Then, $\sigma\in\mathcal{C}_{n,\,\scal{o}}^{a,\,b}$
for some $a,\,b\in\Omega$ and $n\in\llbracket0,\,L-1\rrbracket$.
Moreover, if $n=0$ (resp. $n=L-1$), then $|\mathfrak{p}^{a,\,b}(\sigma)|=2L-1$
(resp. $|\mathfrak{p}^{a,\,b}(\sigma)|=1$).
\end{prop}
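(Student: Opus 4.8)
Here is the plan.

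\smallskip

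\noindent\emph{Step 1 — coarse structure via the bridge count.} First I would invoke Lemma \ref{l_Elb}: since $H(\sigma)=2L+1$, the configuration $\sigma$ has at least $3L-H(\sigma)=L-1$ bridges, and cross-freeness forces all of them to have the same direction, say horizontal. It cannot have all $L$ horizontal strips as bridges: in that case every horizontal strip $\scal{h}_\ell$ is monochromatic of some colour $c_\ell$, every dual edge lies either inside a horizontal strip or between two consecutive horizontal strips, and each such interface carries exactly $L$ dual edges, so $H(\sigma)=L\cdot|\{\ell:c_\ell\ne c_{\ell+1}\}|$ would be a multiple of $L$, contradicting $H(\sigma)=2L+1$. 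Hence $\sigma$ has exactly $L-1$ horizontal bridges and no other bridge; since $L-1=3L-H(\sigma)$, the second assertion of Lemma \ref{l_Elb} then tells us that every non-bridge strip is a semibridge. Thus there is a unique non-bridge horizontal strip $\scal{h}_{\ell_0}$, it is a semibridge, and all $L$ vertical and all $L$ diagonal strips are semibridges.

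\smallskip

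\noindent\emph{Step 2 — two colours and the generic case.} Fix a vertical semibridge; its two spins are $a,b$, and by Lemma \ref{l_crosschar}(1) every horizontal bridge is then an $a$- or a $b$-bridge. With $P_a,P_b$ defined as in \eqref{e_Pc} we have $P_a\sqcup P_b=\mathbb{T}_L\setminus\{\ell_0\}$. Assume first that $P_a,P_b\neq\emptyset$. By Lemma \ref{l_crosschar}(2) (case $k=L-1$, $H(\sigma)\le 2L+1$), $P_a,P_b\in\mathfrak{S}_L$ and every non-bridge strip—in particular $\scal{h}_{\ell_0}$—is an $\{a,b\}$-semibridge; being disjoint connected subsets covering the arc $\mathbb{T}_L\setminus\{\ell_0\}$, $P_a$ and $P_b$ are complementary sub-arcs, and after relabelling the strips so that $\ell_0=L$, $\scal{h}_{L-1}$ is a $b$-bridge and $\scal{h}_1$ is an $a$-bridge. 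Since $\scal{h}_L$ is an $\{a,b\}$-semibridge, its $b$-sites form a non-empty proper connected (i.e.\ consecutive) set $\mathfrak{p}\subseteq\scal{h}_L$, so $\sigma=\xi^{a,b}_{\scal{h}(P_b)\cup\mathfrak{p}}$. Writing $u$ (resp.\ $d$) for the number of triangles of $\mathfrak{p}$ that share a side with $\scal{h}_{L-1}$ (resp.\ with $\scal{h}_1$), a direct count of the perimeter of the $b$-cluster—each horizontal–horizontal interface contributes $L$ edges and the arc $\mathfrak{p}$ contributes two more edges inside $\scal{h}_L$—gives $H(\sigma)=2L-u+d+2$. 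Imposing $H(\sigma)=2L+1$ forces $u=d+1$, hence $|\mathfrak{p}|=u+d$ is odd and $u=\tfrac{|\mathfrak{p}|+1}{2}\ge\tfrac{|\mathfrak{p}|}{2}$; as the triangles of $\mathfrak{p}$ adjacent to $\scal{h}(P_b)$ are exactly those adjacent to $\scal{h}_{L-1}$, this is precisely \eqref{cond_prot} with $P=P_b$. Therefore $\mathfrak{p}$ is a protuberance, $|\mathfrak{p}|$ is odd, and $\sigma\in\mathcal{C}^{a,b}_{|P_b|,\,\scal{o}}$ with $|P_b|\in\llbracket1,L-2\rrbracket$.

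\smallskip

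\noindent\emph{Step 3 — the boundary case and the ``moreover'' clause.} It remains to treat the case where one of $P_a,P_b$ is empty, say $P_b=\emptyset$; then every horizontal bridge is an $a$-bridge and $P_a=\mathbb{T}_L\setminus\{\ell_0\}$ is automatically connected. All non-$a$ spins lie in $\scal{h}_{\ell_0}$, and inspecting a vertical strip through $\scal{h}_{\ell_0}$ (all $a$ outside $\scal{h}_{\ell_0}$, and a semibridge) shows $\scal{h}_{\ell_0}$ is an $\{a,b\}$-semibridge for a well-defined $b$, so $\sigma=\xi^{a,b}_{\mathfrak{p}}$ with $\mathfrak{p}\subseteq\scal{h}_{\ell_0}$ a non-empty proper arc. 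Its $b$-cluster has perimeter $|\mathfrak{p}|+2$, so $H(\sigma)=2L+1$ forces $|\mathfrak{p}|=2L-1$; since $|P|=0\notin\llbracket1,L-2\rrbracket$ the condition \eqref{cond_prot} is vacuous, hence $\mathfrak{p}$ is a protuberance, $\sigma\in\mathcal{C}^{a,b}_{0,\,\scal{o}}$, and $|\mathfrak{p}^{a,b}(\sigma)|=2L-1$ as claimed for $n=0$. The case $P_a=\emptyset$ is identical with the roles of $a$ and $b$ exchanged, and the footnote following Definition \ref{d_can} then converts it to $|\mathfrak{p}^{a,b}(\sigma)|=2L-(2L-1)=1$, which is the asserted value for $n=L-1$.

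\smallskip

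\noindent The step I expect to be the main obstacle is the perimeter bookkeeping in Step 2: one must extract the exact identity $H(\sigma)=2L-u+d+2$ from the local geometry of the triangular dual lattice (in particular, which triangles of a strip touch which neighbouring strip) and then recognise that the energy constraint $H(\sigma)=2L+1$ is \emph{equivalent} to the protuberance inequality \eqref{cond_prot}; once this is in hand, the remaining work is organising the case split $P_a,P_b\neq\emptyset$ versus one of them empty, and checking that the relevant set $\mathfrak{p}$ is indeed connected (which is what Lemma \ref{l_crosschar} buys us).
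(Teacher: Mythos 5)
Your proof is correct and reaches the same conclusion, with the same skeleton as the paper's: Lemma \ref{l_Elb} forces exactly $L-1$ parallel bridges and all non-bridge strips to be semibridges; Lemma \ref{l_crosschar} supplies the two-colour decomposition $P_a\sqcup P_b=\mathbb{T}_L\setminus\{\ell_0\}$; and the argument then splits according to whether one of $P_a,P_b$ is empty. The difference is in the key step of Step 2. The paper argues \emph{geometrically} that the semibridge constraint on all vertical and diagonal strips (plus cross-freeness) forces the $b$-triangles of $\scal{h}_{\ell_0}$ to be an odd protuberance, referring to an inline figure; you instead derive the exact energy identity $H(\sigma)=2L-u+d+2$ and observe that $H=2L+1$ is literally equivalent to $u=d+1$, i.e.\ to $|\mathfrak{p}|$ odd and \eqref{cond_prot} holding. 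This is a self-contained, computational replacement for the figure-based verification, and it has the pleasant side effect of explaining why \eqref{cond_prot} is the right normalization in Definition \ref{d_cs}. A minor imprecision in Step 3: deducing that $a$ is one of the two spins of $\scal{h}_{\ell_0}$ is not quite visible from a single vertical strip — you need a strip (vertical \emph{or} diagonal) whose two triangles in $\scal{h}_{\ell_0}$ straddle the colour boundary, or alternatively the same perimeter bookkeeping (a $\{b,c\}$-semibridge with $b,c\ne a$ gives $H=2L+2$, a contradiction). The conclusion is correct and the paper is similarly terse at this point, so this is a matter of polish rather than a gap.
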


\begin{proof}
By Lemma \ref{l_Elb}, the configuration $\sigma$ has at least $L-1$
bridges. Since $\sigma$ is cross-free, these bridges are of the same
direction, say horizontal. If there are $L$ horizontal bridges, then
all the vertical and diagonal strips are of the same form and thus
the energy of $\sigma$ should be a multiple of $L$ in view of \eqref{e_Edec}.
It contradicts $H(\sigma)=2L+1$, and hence there are exactly $L-1=3L-H(\sigma)$
bridges. By the second assertion of Lemma \ref{l_Elb}, \emph{all
the non-bridge strips are semibridges.} Now, by Lemma \ref{l_crosschar},
there exist $a,\,b\in\Omega$ such that all the horizontal bridges
are either $a$- or $b$-bridges. Define $P_{a}$ and $P_{b}$ as
in Lemma \ref{l_crosschar} and write $\{\ell\}=\mathbb{T}_{L}\setminus(P_{a}\cup P_{b})$.

Suppose first that either $P_{a}$ or $P_{b}$ is empty, say $P_{b}=\emptyset$
and $P_{a}=\mathbb{T}_{L}\setminus\{\ell\}$. Then as all strips are
either bridges or semibridges, we conclude that $\scal{h}_{\ell}$
is an $\{a,\,c\}$-semibridge for some $c\ne a$. As $\sigma$ is
cross-free, we must have $|\mathfrak{p}^{a,\,c}(\sigma)|=2L-1$. The
other case $P_{a}=\emptyset$ can be handled identically.

Next, suppose that $P_{a},\,P_{b}\ne\emptyset$ so that we can apply
case (2-b) of Lemma \ref{l_crosschar}, which implies that $\scal{h}_{\ell}$
is an $\{a,\,b\}$-semibridge. As illustrated in the figure below,
since all the vertical and diagonal strips are semibridge, we can
deduce that the set of triangles in $\scal{h}_{\ell}$ with spin $b$
should be an odd protuberance (cf. Definition \ref{d_can}) between
$\xi_{\scal{h}(P)}^{a,\,b}$ and $\xi_{\scal{h}(P')}^{a,\,b}$. Note
that for the other cases vertical strip with black bold boundary is
not a semibridge. 
\begin{center}
\includegraphics[width=13cm]{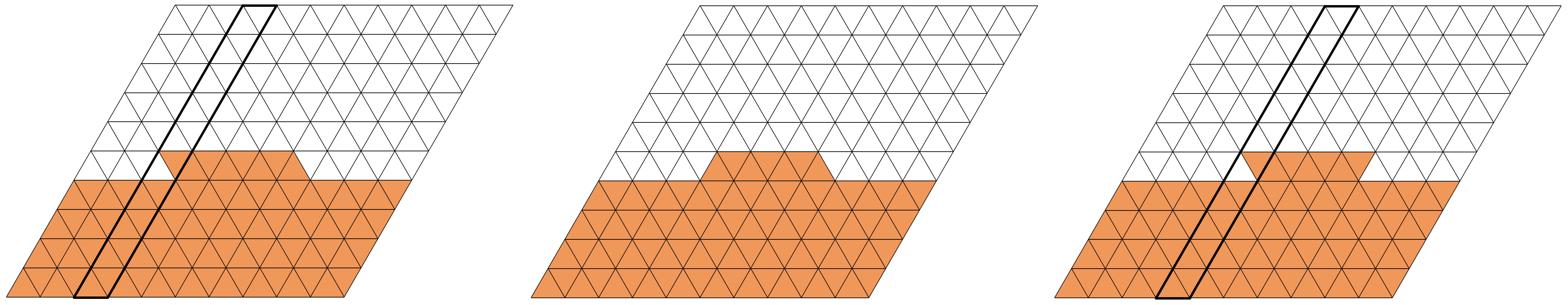}
\par\end{center}

\noindent Therefore, we can conclude that $\sigma\in\mathcal{C}_{n,\,\scal{o}}^{a,\,b}$
for some $n\in\llbracket1,\,L-2\rrbracket$. 
\end{proof}
Now, it remains to characterize the cross-free configurations with
energy $2L+2$. To this end, we introduce six different types of cross-free
configurations with energy $2L+2$ in the following definition. 
\begin{defn}[Cross-free configurations with energy $2L+2$]
\label{d_2L+2} The following types characterize the cross-free configurations
with energy $2L+2$. We refer to Figure \ref{fig64} below for illustrations
and to \eqref{e_Edec} for the verification of the fact that these
configurations (except\textbf{ (MB)}) have energy $2L+2$. 
\begin{lyxlist}{00.00.0000}
\item [{\textbf{(ODP)}}] One-sided Double Protuberances: two odd protuberances
are attached to one side of a regular configuration.
\item [{\textbf{(TDP)}}] Two-sided Double Protuberances: two odd protuberances
are attached to different sides of a regular configuration.
\item [{\textbf{(SP)}}] Superimposed Protuberances: an odd protuberance
is attached to a regular configuration, and another smaller odd protuberance
is attached to the first odd protuberance.
\item [{\textbf{(EP)}}] Even Protuberance: an even protuberance is attached
to a regular configuration.
\item [{\textbf{(PP)}}] Peculiar Protuberance: a protuberance of a third
spin and of size $1$ is attached to a regular configuration.
\item [{\textbf{(MB)}}] Monochromatic Bridges: all the bridges are parallel
and of the same spin, where more refined characterization of this
type will be given in Lemma \ref{l_MB}. 
\end{lyxlist}
\end{defn}

\begin{figure}[h]
\includegraphics[width=14cm]{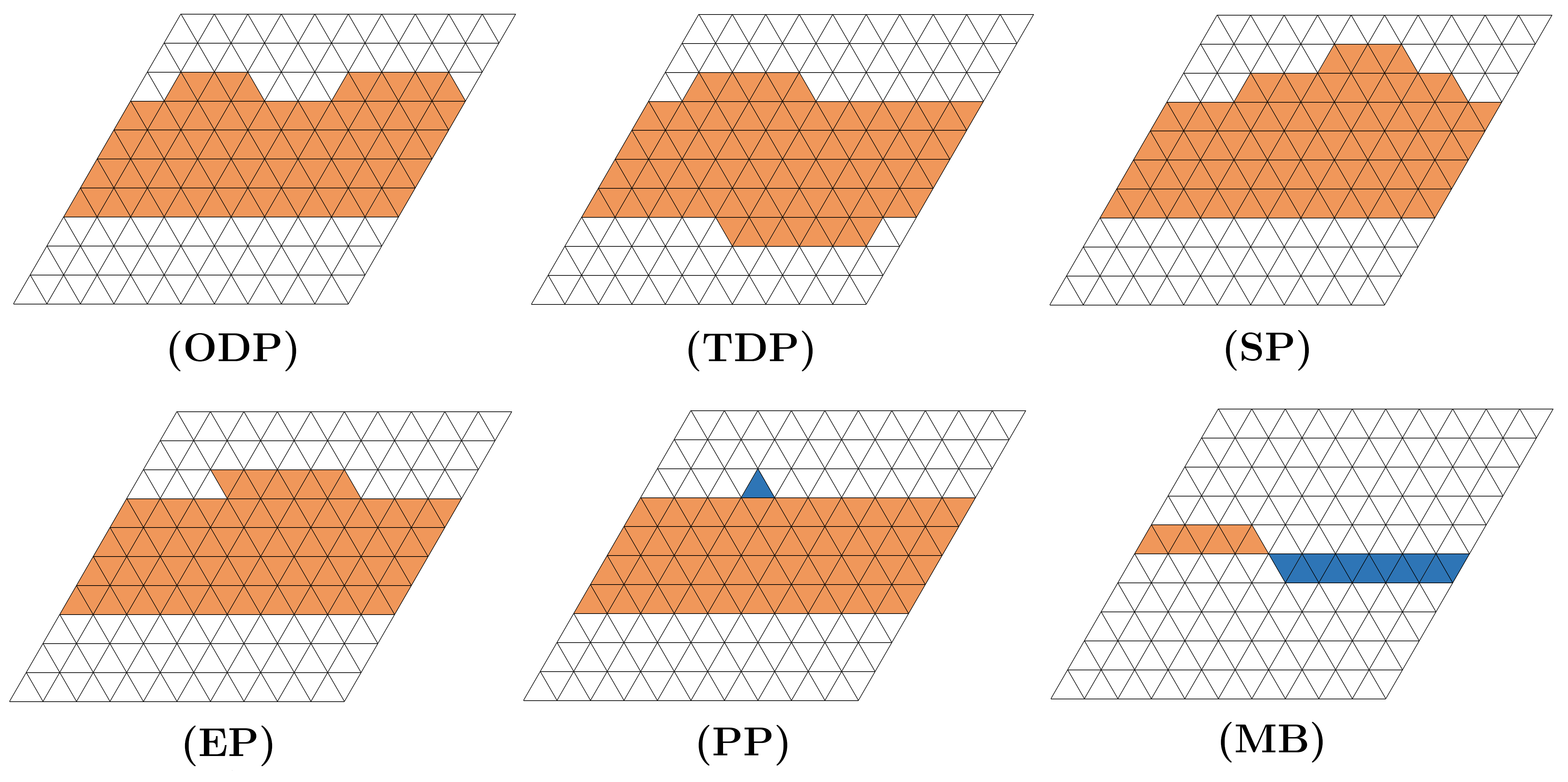}\caption{\label{fig64}Six types of cross-free configurations with energy $2L+2$
introduced in Definition \ref{d_2L+2}. We refer to Figures \ref{fig65}
and \ref{fig66} for more refined characterization of type \textbf{(MB)}. }
\end{figure}

Now, we are finally ready to characterize cross-free configurations
with energy $2L+2$. 
\begin{prop}
\label{p_E=00003D2L+2}Suppose that a cross-free configuration $\sigma\in\mathcal{X}$
satisfies $H(\sigma)=2L+2$. Then, $\sigma$ is of one of the six
types introduced in Definition \ref{d_2L+2}.
\end{prop}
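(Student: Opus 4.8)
The plan is to follow the template of Propositions~\ref{p_E<=00003D2L} and~\ref{p_E=00003D2L+1}: read the bridge skeleton off the energy count, feed it into Lemma~\ref{l_crosschar}, and then determine the admissible local shapes of the non-bridge strips. Since $3L-H(\sigma)=L-2$, Lemma~\ref{l_Elb} gives at least $L-2$ bridges, and cross-freeness forces them all to share a direction, say horizontal (two bridges of different directions would intersect and produce a cross); let $k$ be their number, so $k\in\{L-2,\,L-1,\,L\}$. The case $k=L$ is impossible: then all horizontal strips are monochromatic, so by translation symmetry all $L$ vertical strips have one common energy and all $L$ diagonal strips have one common energy, whence \eqref{e_Edec} forces $H(\sigma)$ to be a multiple of $L$, contradicting $H(\sigma)=2L+2$ (recall $L\ge8$). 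If $k=L-2$ the bound \eqref{e_Elb} is attained, so by the second part of Lemma~\ref{l_Elb} each of the remaining $2L+2$ strips is a semibridge. If $k=L-1$, then \eqref{e_Edec} with $\Delta H_{\scal{s}}\ge2$ for non-bridge strips (Lemma~\ref{l_strip}; there are no non-horizontal bridges) gives $\sum_{\scal{s}}\Delta H_{\scal{s}}(\sigma)=4L+4$ against the minimum $4L+2$, so the surplus $2$ is absorbed either by one non-bridge strip of energy $4$ or by exactly two non-bridge strips of energy $3$, all other non-bridge strips being semibridges. In every case at least one vertical or diagonal semibridge remains, so Lemma~\ref{l_crosschar} applies and produces spins $a,b$ with every horizontal bridge an $a$- or $b$-bridge; set $P_a,P_b$ as in \eqref{e_Pc}.

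This gives the first split. If $P_a=\emptyset$ or $P_b=\emptyset$, all horizontal bridges carry one spin and $\sigma$ is of type~\textbf{(MB)}, with the finer description deferred to Lemma~\ref{l_MB}. Otherwise $P_a,P_b\ne\emptyset$, so by Lemma~\ref{l_crosschar}(2) both are arcs of $\mathbb{T}_L$, and $\mathbb{T}_L\setminus(P_a\cup P_b)$, of size $L-k\in\{1,2\}$, indexes the non-bridge horizontal strips. The recurring constraint I will use is a color count: any vertical or diagonal strip meeting a triangle whose spin lies outside $\{a,b\}$ contains at least three spins and hence is not a semibridge; thus a third spin can occur only in the single energy-$4$ strip, or across the two energy-$3$ strips, of the $k=L-1$ case, and never when $k=L-2$.

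It then remains to enumerate, in this second branch, the local pictures and match them to the five remaining types. When $k=L-2$, $\sigma$ uses only $a,b$ and equals the regular configuration $\xi_{\scal{h}(P_b)}^{a,b}$ carrying a protuberance on each of the two non-bridge horizontal strips — a $b$-segment not joined through some transversal strip to the $b$-block would spoil that strip's semibridge property, so each $b$-segment is a protuberance in the sense of Definition~\ref{d_cs}, necessarily odd since an even protuberance would create a transversal strip of energy $4$ and push $H$ past $2L+2$; the two non-bridge indices are either non-adjacent, so they sit on opposite sides of the $b$-block and $\sigma$ is of type~\textbf{(TDP)}, or adjacent, so one protuberance is confined to the transversal shadow of the other and $\sigma$ is of type~\textbf{(SP)}. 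When $k=L-1$ there is a single non-bridge horizontal strip $h_{\ell_0}$, adjacent to both blocks, and the surplus unit of energy must be placed: if it lies on $h_{\ell_0}$ itself, that strip has two colors and four runs, i.e. two odd $b$-protuberances on one side, giving type~\textbf{(ODP)}; if it lies on a transversal strip while $h_{\ell_0}$ stays a semibridge, then either $h_{\ell_0}$ carries an even $b$-protuberance, giving type~\textbf{(EP)} (an odd protuberance with all transversal strips minimal would force $H=2L+1$ by the argument of Proposition~\ref{p_E=00003D2L+1}), or the surplus appears as two transversal strips of energy $3$, which by the color count forces a single triangle of a third spin bordering the $b$-block, i.e. type~\textbf{(PP)}.

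I expect the last step, the $k=L-1$ enumeration, to be the main obstacle: the single surplus unit of energy can be spent in several superficially distinct ways, and systematically excluding every configuration outside the six listed types — while checking that the protuberance inequality \eqref{cond_prot} holds, that the protuberances attach to the correct block, and that no transversal bridge (hence no cross) is secretly created — requires a careful and somewhat lengthy case analysis of the local geometry of the triangular dual lattice near the protuberances. By contrast, the $k=L-2$ case is comparatively routine, since the rigidity in Lemma~\ref{l_Elb} already pins all non-bridge strips down to $\{a,b\}$-semibridges.
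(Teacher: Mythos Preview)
Your proposal is correct and follows essentially the same route as the paper: bridge count via Lemma~\ref{l_Elb}, exclusion of $k=L$ by the divisibility argument, invocation of Lemma~\ref{l_crosschar} after securing a transversal semibridge, the \textbf{(MB)} branch when $P_a$ or $P_b$ is empty, and the same \textbf{(TDP)}/\textbf{(SP)} split for $k=L-2$. Your $k=L-1$ sub-case organization (by where the surplus energy sits: on $\scal{h}_{\ell_0}$, on one transversal of energy $4$, or on two transversals of energy $3$) differs only cosmetically from the paper's (by whether $\scal{h}_{\ell_0}$ carries a third spin and then by the energy of $\scal{h}_{\ell_0}$), the two schemes being equivalent because a transversal strip through the $a$- and $b$-blocks has odd energy if and only if it meets a third color.
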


\begin{proof}
By Lemma \ref{l_Elb}, $\sigma$ has at least $L-2$ bridges. Since
$\sigma$ is cross-free, these bridges are of the same direction,
say horizontal. Then, as in the proof of Proposition \ref{p_E=00003D2L+1},
we can observe that the number of horizontal bridges cannot be $L$
and thus the number of horizontal bridges should be either $L-1$
or $L-2$.\medskip{}

\noindent \textbf{(Case 1: $\sigma$ has $L-1$ horizontal bridges)}
If there is no vertical or diagonal semibridge, we must have $\Delta H_{\scal{v}_{\ell}}(\sigma),\,\Delta H_{\scal{d}_{\ell}}(\sigma)\ge3$
for all $\ell\in\mathbb{T}_{L}$ and therefore by \eqref{e_Edec},
we get $H(\sigma)\ge3L$ which yields a contradiction. Hence, there
exists at least one vertical or diagonal semibridge and thus by Lemma
\ref{l_crosschar}, there exist $a,\,b\in\Omega$ such that all the
horizontal bridges are $a$- or $b$-bridges. Let us define $P_{a}$
and $P_{b}$ as in Lemma \ref{l_crosschar} and let $\{\ell_{0}\}=\mathbb{T}_{L}\setminus(P_{a}\cup P_{b})$.
If $P_{a}=\emptyset$ or $P_{b}=\emptyset$, then $\sigma$ is of
type \textbf{(MB)} by definition. Now, we assume that $P_{a},\,P_{b}\ne\emptyset$.

\medskip{}

\noindent \textit{Case 1-1: The strip $\scal{h}_{\ell_{0}}$ contains
a triangle with spin which is not $a$ or $b$.} If there are two
or more such triangles, then there are at least three vertical or
diagonal strips containing these triangles with energy at least $3$.
Since all the other vertical and diagonal strips have energy at least
$2$, we can conclude from \eqref{e_Edec} that 
\[
H(\sigma)\ge\frac{1}{2}\big[\,2+3\times3+(2L-3)\times2\,\big]>2L+2
\]
which yields a contradiction. Therefore, the strip $\scal{h}_{\ell_{0}}$
contains \textit{exactly one triangle} with spin which is not $a$
or $b$. The vertical and diagonal strips containing this triangle
have energy at least $3$. Thus, if the strip $\scal{h}_{\ell_{0}}$
has energy at least $3$, we similarly get a contradiction since we
should have 
\[
H(\sigma)\ge\frac{1}{2}\big[\,3+2\times3+(2L-2)\times2\,\big]>2L+2\;.
\]
Therefore, the strip $\scal{h}_{\ell_{0}}$ has energy $2$. This
implies that all the triangles in this strip other than the one with
spin $c$ have the same spin, which is either $a$ or $b$. Hence,
$\sigma$ is of type \textbf{(PP)}.

\medskip{}

\noindent \textit{Case 1-2: The strip $\scal{h}_{\ell_{0}}$ consists
of spins $a$ and $b$ only.} By \eqref{e_Edec}, the energy of this
strip is at most $4$, and hence is either $2$ or $4$ (since it
cannot be an odd integer). If the energy of this strip is $2$, i.e.,
it is an $\{a,\,b\}$-semibridge by Lemma \ref{l_strip}, we can check
with the argument given in the proof of Proposition \ref{p_E=00003D2L+1}
based on Figure \ref{fig64} that the only possible form of configuration
$\sigma$ is of type \textbf{(EP)}. On the other hand, if the energy
of this strip is $4$, then in view of \eqref{e_Edec}, all the vertical
and diagonal bridges must have energy $2$ and thus must be semibridges.
Since this strip $\scal{h}_{\ell_{0}}$ of energy $4$ is divided
into four connected components where two of them are of spin $a$
and the remaining two are of spin $b$, by the same argument given
in Proposition \ref{p_E=00003D2L+1} based on Figure \ref{fig64},
we can readily check that $\sigma$ is of type \textbf{(ODP)}.

\medskip{}

\noindent \textbf{(Case 2: $\sigma$ has $L-2$ horizontal bridges)
}By the second statement of Lemma \ref{l_Elb}, we can still apply
Lemma \ref{l_crosschar}, and we can follow the same argument with
\textbf{(Case 1)} above to handle the case where $P_{a}$ or $P_{b}$
is empty. Hence, let us suppose that $P_{a},\,P_{b}\ne\emptyset$
and write $\mathbb{T}_{L}\setminus(P_{a}\cup P_{b})=\{\ell_{1},\,\ell_{2}\}$.
By (2) of Lemma \ref{l_crosschar}, we have $P_{a},\,P_{b}\in\mathfrak{S}_{L}$
and hence we can assume without loss of generality that $P_{a}=\llbracket1,\,m\rrbracket$
so that 
\[
\{\ell_{1},\,\ell_{2}\}\in\big\{\,\{L,\,m+1\},\,\{m+1,\,m+2\},\,\{L-1,\,L\}\,\big\}\;.
\]
Note from (2-a) of Lemma \ref{l_crosschar} that 
\begin{equation}
\text{all the non-bridge strips of }\sigma\text{ are }\{a,\,b\}\text{-semibridges}.\label{eq:CONS}
\end{equation}
If $\{\ell_{1},\,\ell_{2}\}=\{L,\,m+1\}$, by the same argument with
Proposition \ref{p_E=00003D2L+1}, strips $\scal{h}_{\ell_{1}}$ and
$\scal{h}_{\ell_{2}}$ should be aligned as in the middle one of Figure
\ref{fig64} in order to achieve \eqref{eq:CONS}, and we can conclude
that $\sigma$ is of type\textbf{ (TDP)}. A similar argument indicates
that if $\{\ell_{1},\,\ell_{2}\}=\{m+1,\,m+2\}$ or $\{L-1,\,L\}$,
the configuration $\sigma$ should be of type \textbf{(SP)} to fulfill
\eqref{eq:CONS}. 

Hence, we demonstrated that for any cases, $\sigma$ is one of the
six types given in Definition \ref{d_2L+2}.
\end{proof}
\begin{rem}
\label{r_E=00003D2L+2}A careful reading of the proof of the previous
proposition reveals that, if $\sigma$ is of type \textbf{(MB) }then
it has either $L-1$ or $L-2$ parallel bridges.
\end{rem}

In the next lemmas, we investigate more on the configurations of type
\textbf{(MB)}, since the definition of this type is vague and thus
more detailed understanding is crucially required to analyze the energy
landscape of $\mathcal{N}$-neighborhoods of the ground states. In
the analyses carried out below, we will omit elementary details in
the characterization of possible forms, since it always reduces to
a small number of subcases that should be tediously checked case by
case.
\begin{lem}
\label{l_MB}Suppose that $\sigma\in\mathcal{X}$ is of type \textbf{\textup{(MB)}}
with parallel bridges of spin $a\in\Omega$. Then, exactly one of
$(\star)$ and $(\star\star)$ given below holds.
\begin{enumerate}
\item[$(\star)$]  There exists a $(2L+2)$-path $(\omega_{n})_{n=0}^{N}$ from $\sigma$
to $\mathbf{a}$ so that $N\le4L$ and each configuration $\omega_{n}$
has at least $L-2$ $a$-bridges. 
\item[$(\star\star)$]  The configuration $\sigma$ is isolated in the sense that $\widehat{\mathcal{N}}(\sigma)=\{\sigma\}$.
\end{enumerate}
\end{lem}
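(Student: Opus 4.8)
The plan is to first derive, from the energy decomposition \eqref{e_Edec}, an explicit list of the possible local shapes of a type-\textbf{(MB)} configuration $\sigma$, and then decide shape by shape whether $\sigma$ is isolated or admits the path in $(\star)$. By Remark~\ref{r_E=00003D2L+2} the configuration $\sigma$ has $k\in\{L-1,\,L-2\}$ parallel bridges, which we may take to be horizontal $a$-bridges; these contribute $0$ to \eqref{e_Edec}, so the $L-k$ non-bridge horizontal strips together with all $2L$ vertical and diagonal strips must carry total energy $2(2L+2)=4L+4$. None of these $2L+(L-k)$ strips is a bridge, so each contributes at least $2$ by Lemma~\ref{l_strip}; hence the ``excess'' over the minimal budget is $2$ when $k=L-1$ and $0$ when $k=L-2$. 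In the latter case every non-bridge strip is forced to be a semibridge, while in the former exactly one strip has energy $3$ or $4$ (or two have energy $3$). Combining this budget constraint with cross-freeness and the requirement that all bridges be $a$-bridges pins down the contents of the at most two non-bridge horizontal strips up to the finitely many local patterns illustrated in Figures~\ref{fig65}--\ref{fig66}; this is routine but lengthy bookkeeping, which I would organize as a short sequence of sub-cases on $k$ and on how the excess energy is distributed.

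The next ingredient is an elementary \emph{isolation criterion}. Since $H(\sigma)=2L+2=\Gamma$, flipping a single spin at a triangle $x$ of $\Lambda^{*}$ (which has exactly three side-neighbours) to a spin $a'$ changes the energy by $\#\{y\sim x:\sigma(y)=\sigma(x)\}-\#\{y\sim x:\sigma(y)=a'\}$; thus $\widehat{\mathcal N}(\sigma)=\{\sigma\}$ if and only if every triangle of $\Lambda^{*}$ agrees with at least two of its three neighbours. Testing this against the list from the first step splits the \textbf{(MB)} shapes into two families: those in which the non-$a$ region is a ``thick'' width-two aligned block, which satisfy the criterion and therefore fall under $(\star\star)$ (these are the shapes of Figure~\ref{fig65}), and the remaining ``thin'' shapes (Figure~\ref{fig66}).

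For a thin shape I would build the path in $(\star)$ by peeling the non-$a$ triangles toward $a$ one at a time. All non-$a$ triangles lie in the at most two consecutive non-bridge horizontal strips, each of which is flanked (on at least its outer side) by an $a$-bridge, so their total number $m$ is at most $2\cdot 2L=4L$. I would order them $x_{1},\dots,x_{m}$ so that each $x_{i}$, at the moment of flipping, already has at least two $a$-neighbours: start at the interfaces between two distinct spins inside a strip — a flat $\Delta H=0$ move when a third spin is present at the interface, a strictly downhill move otherwise — and then cascade inward, each subsequent flip seeing a freshly created $a$-neighbour (when the two non-bridge strips are adjacent, peel the one with an outer $a$-bridge first so that it becomes an $a$-bridge and then flanks the other). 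Every flip is non-increasing, so $(\omega_{n})_{n=0}^{N}$ with $N=m\le 4L$ is a $(2L+2)$-path ending at $\mathbf a$, and since only triangles inside the at most two non-bridge strips are ever touched, each $\omega_{n}$ retains the $k\ge L-2$ original $a$-bridges. Finally, $(\star)$ and $(\star\star)$ are mutually exclusive — if $\widehat{\mathcal N}(\sigma)=\{\sigma\}$ there is no $(2L+2)$-path of positive length out of $\sigma$, whereas $\mathbf a\ne\sigma$ since $H(\sigma)=2L+2>0$ — and one of the two always holds according to whether or not $\sigma$ is isolated, so exactly one holds.

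The main obstacle is the case analysis underlying the first two steps: on the hexagonal lattice the local geometry is awkward, so enumerating all \textbf{(MB)} shapes compatible with the $4L+4$ energy budget, and then verifying both that every ``thick'' shape genuinely fails the peeling test (is isolated) and that every ``thin'' shape admits a peeling order that never stalls at an intermediate locally stable block, requires careful and somewhat tedious checking of the local spin patterns — precisely the content the authors package into Figures~\ref{fig65}--\ref{fig66} and the companion lemmas.
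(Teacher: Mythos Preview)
Your overall plan mirrors the paper's, but two points go wrong. First, the figure assignment is reversed: Figure~\ref{fig65} lists \textbf{(MB1)}--\textbf{(MB4)}, the $|P_a|=L-1$ shapes with a \emph{single} non-bridge strip (flanked by $a$-bridges on both sides), and all of them satisfy $(\star)$; Figure~\ref{fig66} lists \textbf{(MB5)}--\textbf{(MB13)}, the $|P_a|=L-2$ shapes with \emph{two} adjacent non-bridge strips, and only \textbf{(MB10)} among them is isolated. So the isolated family is a single shape inside Figure~\ref{fig66}, not the whole of Figure~\ref{fig65}.

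Second, and more substantively, your ``peel only non-$a$ to $a$, every flip non-increasing'' scheme breaks for \textbf{(MB7)}. When $|P_a|=L-2$ each non-bridge strip is flanked by an $a$-bridge on only one side, so half of its triangles have their third neighbour in the other non-bridge strip; your ``fresh $a$-neighbour plus flanking $a$-bridge'' count then gives only one $a$-neighbour, and the monotone peel can stall. For \textbf{(MB7)} it stalls immediately: both strips carry $2L-1$ spins $b$, only spins $a,b$ occur (so every $b\to a$ flip changes $H$ by $\pm1$ or $\pm3$), and a flip with $\Delta H\le -1$ would produce a cross-free configuration of energy $\le 2L+1$ with $L-2$ horizontal $a$-bridges, no $b$-bridge, and $b$-spins still in two distinct strips---impossible by Propositions~\ref{p_E<=00003D2L} and~\ref{p_E=00003D2L+1}. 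Yet $\sigma$ is not isolated: flipping the lone $a$ in $\scal h_2$ \emph{to} $b$ drops the energy to $2L+1$ and lands in $\mathcal C_1^{a,b}$ with $|\mathfrak p^{a,b}|=2L-1$, from which a canonical path reaches $\mathbf a$ in $4L-1$ further steps. This is precisely the paper's treatment of \textbf{(MB7)}, and its initial $a\to b$ anti-peel lies outside your framework.
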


\begin{proof}
It is immediate that $(\star)$ and $(\star\star)$ cannot hold simultaneously.
Hence, it suffices to prove that $\sigma$ satisfies $(\star)$ or
$(\star\star)$. Without loss of generality, we assume that the parallel
$a$-bridges are horizontal, and define $P_{a}$ as in \eqref{e_Pc}
so that we have $|P_{a}|=L-1$ or $L-2$ by Remark \ref{r_E=00003D2L+2}.
\textbf{\medskip{}
}

\noindent \textbf{(Case 1: $|P_{a}|=L-1$)} Without loss of generality,
write $\mathbb{T}_{L}\setminus P_{a}=\{1\}$.

\noindent If there are two adjacent triangles in $\scal{h}_{1}$ with
spin $a$, we can find an $a$-cross and therefore we get a contradiction
to the fact that $\sigma$ is cross-free. Hence, the strip $\scal{h}_{1}$
cannot have consecutive triangles with spin $a$. Moreover, since
all the vertical and diagonal strips have energy at least $2$, by
\eqref{e_Edec}, we have $\Delta H_{\scal{h}_{1}}(\sigma)\le4$. From
this, we can readily deduce that $\sigma$ should be of one of the
four types \textbf{(MB1)}-\textbf{(MB4)} as in Figure \ref{fig65}. 

\begin{figure}[h]
\includegraphics[width=15cm]{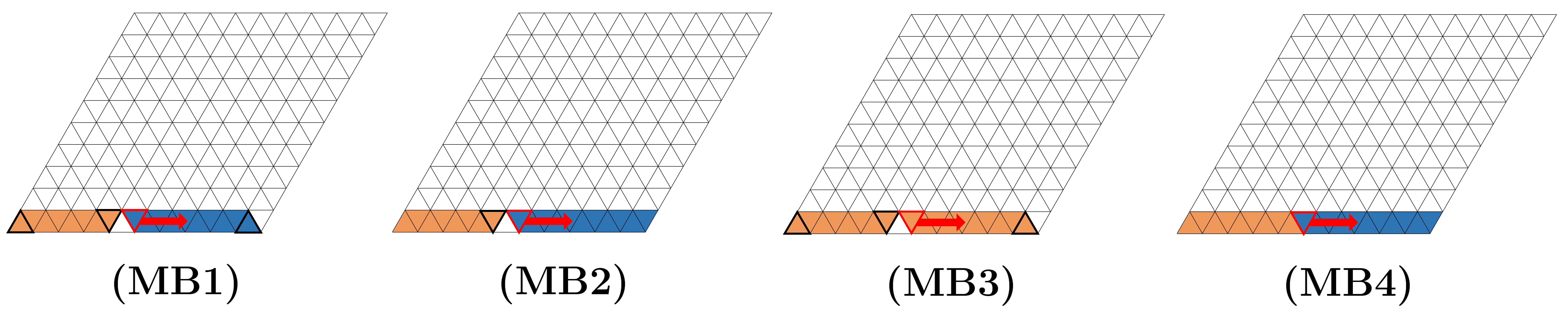}\caption{\label{fig65}Types \textbf{(MB1)}-\textbf{(MB4)}: We can update the
triangles according to the indicated arrow starting from the one with
red bold boundary to reach $\mathbf{a}$. One can change the starting
triangle to the ones with black bold boundary and then modify the
order of updates.}
\end{figure}

\noindent We now demonstrate that $(\star)$ holds for all these types.
For types \textbf{(MB1)}-\textbf{(MB3)}, we select any triangle adjacent
to a triangle with spin $a$, and for type \textbf{(MB4)}, we select
a triangle adjacent to a triangle with different spin. Then, we update
the spins in $\scal{h}_{1}$ to $a$ successively from the selected
triangle to obtain the configuration $\mathbf{a}$ (cf. Figure \ref{fig65}).
This procedure provides a $(2L+2)$-path connecting $\sigma$ and
$\mathbf{a}$ of length at most $2L$. It is immediate that all the
configurations visited by this path have at least $2L-1$ $a$-bridges
and hence we can verify the condition $(\star)$ for these types.

\medskip{}

\noindent \textbf{(Case 2: $|P_{a}|=L-2$)} Write $\mathbb{T}_{L}\setminus P_{a}=\{\ell_{1},\,\ell_{2}\}$.
By the second statement of Lemma \ref{l_Elb}, all the strips which
are not bridges must be semibridges. Moreover, if $\scal{h}_{\ell_{i}}$
for some $i\in\{1,\,2\}$ is a $\{b,\,c\}$-semibridge for some $b,\,c\in\Omega\setminus\{a\}$,
then we can find a vertical or diagonal strip which is not a semibridge
(the one which contains the adjacent triangles of spins $b$ and $c$
in $\scal{h}_{\ell_{i}}$) and thus we obtain a contradiction. Therefore,
there exist $b_{1}\ne a$ and $b_{2}\ne a$ so that $\scal{h}_{\ell_{i}}$
is an $\{a,\,b_{i}\}$-semibridge for each $i\in\{1,\,2\}$. We denote
by \textit{$b_{i}$-protuberance in $\scal{h}_{\ell_{i}}$} the set
of triangles in $\scal{h}_{\ell_{i}}$ which have spin $b_{i}$. \medskip{}

\noindent \textbf{(Claim)}\textit{ Two strips $\scal{h}_{\ell_{1}}$
and $\scal{h}_{\ell_{2}}$ are adjacent.}\medskip{}

\noindent To prove this claim, suppose the contrary that $\scal{h}_{\ell_{1}}$
and $\scal{h}_{\ell_{2}}$ are not adjacent. We denote by $m_{i}\in\llbracket1,\,2L-1\rrbracket$
the number of spins $b_{i}$ in $\scal{h}_{\ell_{i}}$ for $i=1,\,2$.
Then since each $b_{i}$-protuberance in $\scal{h}_{\ell_{i}}$ has
perimeter $m_{i}+2$, we can deduce from \eqref{e_dualHam} that $H(\sigma)=(m_{1}+2)+(m_{2}+2)$.
Since we assumed that $H(\sigma)=2L+2$, we get
\begin{equation}
m_{1}+m_{2}=2L-2\;.\label{e_m1m2}
\end{equation}
Let us first assume that $m_{2}$ is even as in the left figure below
(where $\ell_{2}$ is assumed to be $5$ and spin $b_{i}$ is denoted
by orange).
\begin{center}
\includegraphics[width=11cm]{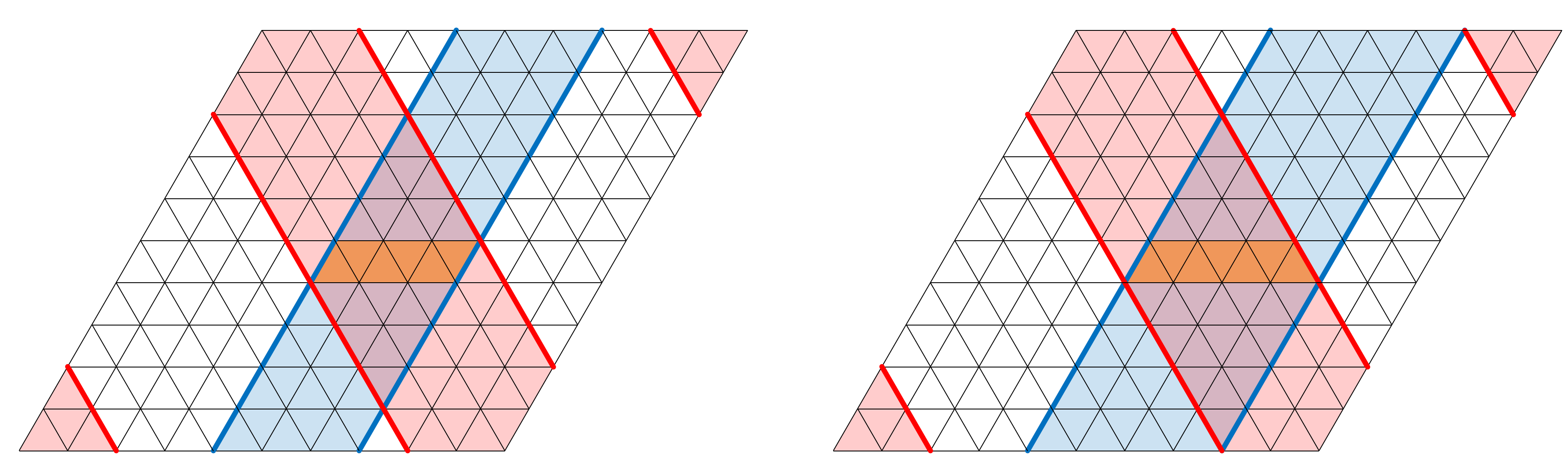}
\par\end{center}

\noindent Since the vertical strips contained in blue region must
be $\{a,\,b_{2}\}$-semibridges, set $A$ of triangles in strip $\scal{h}_{\ell_{1}}$
contained in these blue region should be of spin $a$. By the same
reasoning, the set $B$ of triangles in strip $\scal{h}_{\ell_{1}}$
contained in the red region should be of spin $a$. Since $|A|=m_{2}$,
$|B|=m_{2}+2$, and $|A\cap B|\le m_{2}-3$ provided that $\scal{h}_{\ell_{1}}$
and $\scal{h}_{\ell_{2}}$ are not adjacent, we get 
\[
m_{1}\le|\scal{h}_{\ell_{1}}\setminus(A\cup B)|=2L-|A|-|B|+|A\cap B|\le2L-m_{2}-(m_{2}+2)+(m_{2}-3)=2L-m_{2}-5\;.
\]
This contradicts \eqref{e_m1m2}. We can handle the case when $m_{2}$
is odd as in the right figure above in the same manner. For this case,
we have $|A|=|B|=m_{2}+1$ and $|A\cap B|\le m_{2}-4$, and we can
conclude $m_{1}\le2L-m_{2}-6$ to get a contradiction to \eqref{e_m1m2}.
Thus, the proof is completed.

Thanks to this claim, we can now assume without loss of generality
that $\ell_{1}=1$ and $\ell_{2}=2$. We then show that there are
nine possible types as in the following figure. 

\begin{figure}[h]
\includegraphics[width=12cm]{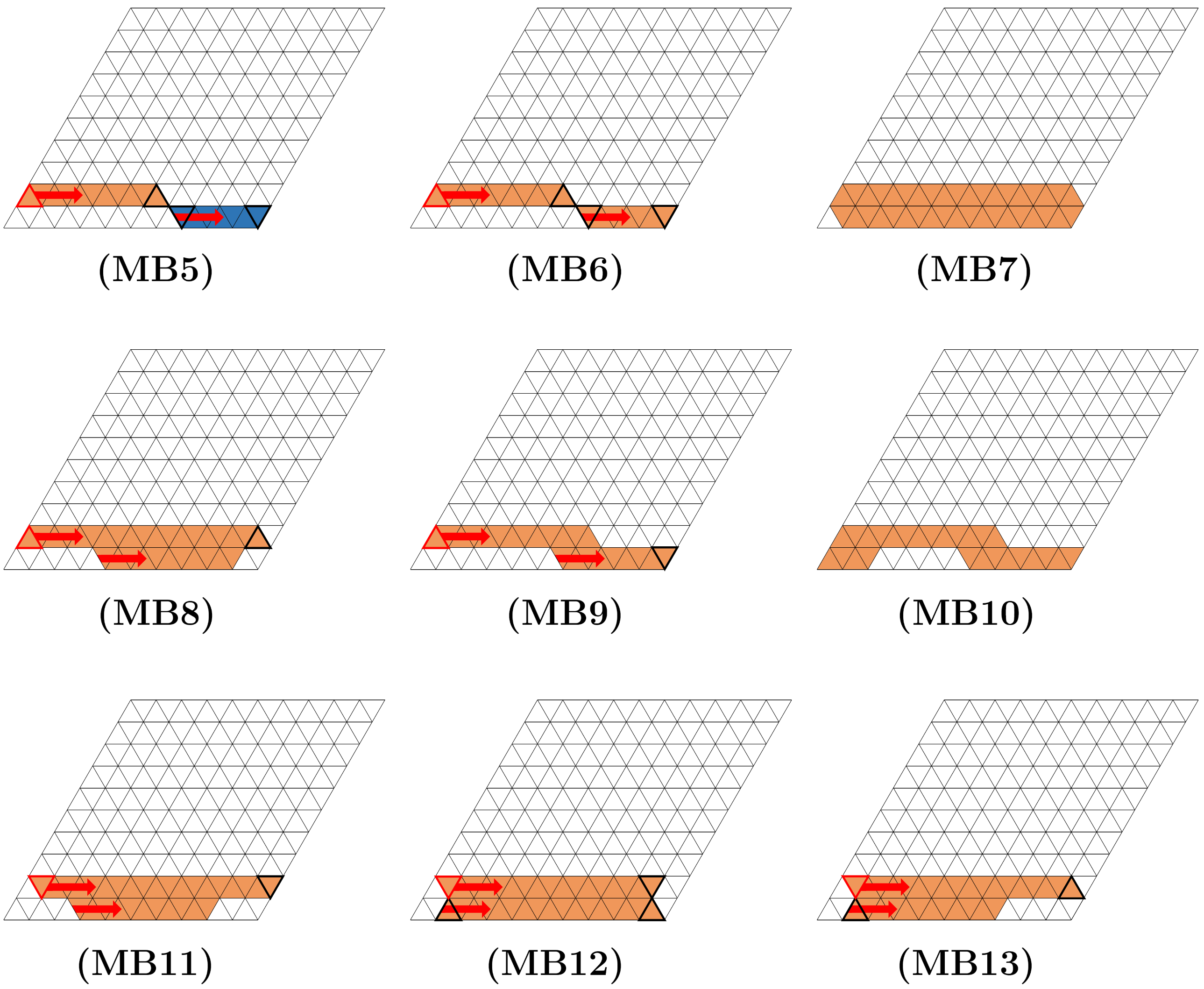}

\caption{\label{fig66}Types \textbf{(MB5)}-\textbf{(MB13)}: We refer to the
last part of the proof regarding the explanation of these figures.}
\end{figure}

\noindent To justify this classification, we first consider the case
when $b_{1}\neq b_{2}$. Then, the $b_{1}$-protuberance in $\scal{h}_{1}$
and the $b_{2}$-protuberance in $\scal{h}_{2}$ must not be adjacent
to each other, since otherwise there exists a vertical or diagonal
non-semibridge strip. Since $\sigma$ is a cross-free configuration,
we can readily conclude that $\sigma$ should be of type \textbf{(MB5)}.

Next, we consider the case $b_{1}=b_{2}=b$ and we assume without
loss of generality that the size of the $b$-protuberance in $\scal{h}_{2}$
is not smaller than that in $\scal{h}_{1}$. We can then divide the
analysis into three subcases according to the shape of the $b$-protuberance
in $\scal{h}_{2}$:
\begin{enumerate}
\item it has odd number of triangles and its lower side is longer than its
upper side,
\item it has odd number of triangles and its upper side is longer than its
lower side, or
\item it has even number of triangles.
\end{enumerate}
Without loss of generality we assume that the $b$-protuberance in
$\scal{h}_{2}$ is located at the leftmost part of the lattice as
in Figure \ref{fig66}. For case (1), we can observe that the protuberance
of $b$ in $\scal{h}_{1}$ also has odd number of triangles and its
upper side should be longer than its lower side, since otherwise there
will be a non-semibridge strip. According to five different types
of locations of this protuberance in the strip $\scal{h}_{1}$, we
get the types \textbf{(MB6)}-\textbf{(MB10)} illustrated in Figure
\ref{fig66}. For case (2), we can similarly observe that the protuberance
of $b$ in $\scal{h}_{1}$ also have odd number of triangles and it
should be aligned as in\textbf{ (MB11)} or \textbf{(MB12)}. (In \textbf{(MB12)},
$\scal{h}_{1}$ and $\scal{h}_{2}$ have the same size of $b$-protuberances.)
Finally, for case (3), the $b$-protuberance in $\scal{h}_{1}$ should
consist of even number of triangles and be aligned exactly as in \textbf{(MB13)}.
(In particular, it must be right-aligned.)

We have now fully characterized the configurations of type \textbf{(MB)},
and it only remains to investigate the path-connectivity of types
\textbf{(MB6)-(MB13)} to the configuration $\mathbf{a}$. We consider
three cases separately.
\begin{itemize}
\item \textbf{(MB10)}: Any update on this type of configuration increases
the energy. Thus, those of this type satisfy $(\star\star)$.
\item \textbf{(MB7)}: We first flip a spin $a$ in $\scal{h}_{2}$ to spin
$b$, so that we obtain a canonical configuration in $\mathcal{C}_{1}^{a,\,b}$
with protuberance size $2L-1$ (cf. Definition \ref{d_can}). Then,
we can follow a canonical path (cf. Remark \ref{r_canpath}) from
there to reach the configuration $\mathbf{a}$. The path associated
with these updates is a $(2L+2)$-path of length $4L$. Moreover,
$a$-bridges in $\scal{h}(\llbracket3,\,L\rrbracket)$ are conserved
along the path, and we can conclude that the configurations of this
type satisfy $(\star)$.
\item \textbf{(MB5)},\textbf{ (MB6)},\textbf{ (MB8)},\textbf{ (MB9)}, \textbf{(MB11)-(MB13)}:
We update spins $b$ to $a$ in the order indicated in Figure \ref{fig66}
to obtain the configuration $\mathbf{a}$. More precisely, for types
\textbf{(MB5)}, \textbf{(MB6)}, \textbf{(MB8)}, and \textbf{(MB9)},
we update the triangles according to the indicated arrow starting
from the one with red bold boundary to reach $\mathbf{a}$. For types
\textbf{(MB11)-(MB13)}, we first update the triangle with red bold
boundary, then update one of the triangles with black bold boundary,
and then update the remaining spins $b$ to $a$ according to the
arrow to reach $\mathbf{a}$. In all the aforementioned types, one
can select the starting triangle as the ones with black bold boundary.
We remark that for \textbf{(MB13)}, if there are same number of orange
triangles in $\scal{h}_{1}$ and $\scal{h}_{2}$, then the black triangle
at $\scal{h}_{2}$ is no longer available as a starting triangle.
For \textbf{(MB8)}, the red or black triangle might not be available
as a starting triangle if the $b$-protuberance in $\scal{h}_{1}$
is aligned to the right or the left. Then, as in the previous case,
we can readily observe that the path associated with these updates
satisfies all the requirements in $(\star)$, and thus the configurations
of these types satisfy $(\star)$. 
\end{itemize}
This completes the proof.
\end{proof}
We can deduce the following lemma from a careful inspection of the
proof of the previous lemma.
\begin{lem}
\label{l_MB2}Let $\sigma\in\mathcal{X}$ be a configuration of type\textbf{\textup{
(MB)}} except \textbf{\textup{(MB7)}} with parallel bridges of spin
$a\in\Omega$, and let $\zeta\in\mathcal{X}$ be a configuration satisfying
$\sigma\sim\zeta$ such that either $H(\zeta)\le2L+1$ or $\zeta$
has a cross\footnote{In fact, if $\zeta$ has a cross, then we can prove that $H(\zeta)\le2L+1$.}.
Then, there exists a $(2L+1)$-path of length less than $4L$ connecting
$\zeta$ and $\mathbf{a}$. In particular, $\zeta\in\mathcal{N}(\mathbf{a})$.
\end{lem}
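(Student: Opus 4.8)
The plan is to run through the classification of type \textbf{(MB)} configurations into the thirteen subtypes \textbf{(MB1)}--\textbf{(MB13)} obtained in the proof of Lemma \ref{l_MB}, and in each case to produce an explicit $(2L+1)$-path of length $<4L$ from $\zeta$ to $\mathbf{a}$. Two subtypes require no work: for \textbf{(MB10)} Lemma \ref{l_MB} gives $\widehat{\mathcal{N}}(\sigma)=\{\sigma\}$, so every $\zeta\sim\sigma$ with $\zeta\ne\sigma$ has $H(\zeta)\ge2L+3$, and since any configuration possessing a cross has energy at most $2L+1$, no neighbour of $\sigma$ meets the hypotheses and the claim is vacuous; and \textbf{(MB7)} is excluded by assumption. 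So only \textbf{(MB1)}--\textbf{(MB6)}, \textbf{(MB8)}, \textbf{(MB9)}, \textbf{(MB11)}--\textbf{(MB13)} remain, and in the cross case one first reconfirms the footnote claim that a cross forces $H(\zeta)\le2L+1$, after which all remaining instances satisfy $H(\zeta)\le2L+1$.

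The key mechanism is the elementary observation (implicit in the eating procedures of Lemma \ref{l_MB}, which flip only non-$a$ triangles to $a$): if a triangle $t$ carries a spin $\ne a$ and has at least two neighbours of spin $a$, then the flip $t\mapsto a$ changes the energy by $3-2\cdot(\#\,a\text{-neighbours of }t)\le-1$ and hence does not raise it. Starting from $\zeta$ and iterating such flips — always peeling off a non-$a$ triangle adjacent to two triangles of spin $a$ — produces a path that ends at $\mathbf{a}$ (the non-$a$ region strictly shrinks), never exceeds $H(\zeta)\le2L+1$, and therefore is a $(2L+1)$-path; moreover its length equals the number of non-$a$ triangles of $\zeta$.

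It remains to bound that length and to justify that the peeling never gets stuck. Since $\zeta$ arises from a type-\textbf{(MB)} configuration with $L-1$ or $L-2$ parallel $a$-bridges by a single spin flip, and since a flip lying on an $a$-bridge would raise the energy by at least $1$ (so cannot produce $H(\zeta)\le2L+1$), the configuration $\zeta$ retains at least $L-2$ of those $a$-bridges; also no single flip can turn a non-bridge strip of $\sigma$ into a monochromatic bridge, because each non-bridge strip of $\sigma$ contains at least two triangles of spin $a$ (for the two-non-bridge-strip case this follows from \eqref{e_m1m2}, which forces the $a$-arc of each semibridge to have size at least three, and for the one-non-bridge-strip case the strip has energy $4$, not $2$). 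Hence the non-$a$ region of $\zeta$ is supported on at most two horizontal strips, fills neither of them, and thus consists of fewer than $4L$ triangles; this already gives the length bound $<4L$. Finally, for each of the remaining subtypes one inspects the possible shapes of $\zeta$ (the non-$a$ region is a union of at most two or three arcs lying in one or two adjacent strips, possibly cut by a transverse $a$-bridge in the cross case) and checks that one can always peel it off starting from its boundary with the $a$-bridges, each step flipping a triangle with at least two $a$-neighbours. The ``in particular'' $\zeta\in\mathcal{N}(\mathbf{a})$ is then immediate.

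The main obstacle is exactly this last step — the ``careful inspection'' referred to in the statement — namely enumerating, for each of \textbf{(MB1)}--\textbf{(MB6)}, \textbf{(MB8)}, \textbf{(MB9)}, \textbf{(MB11)}--\textbf{(MB13)}, the neighbours $\zeta$ with $H(\zeta)\le2L+1$ or with a cross, and verifying in every resulting shape that the non-$a$ region admits a boundary-first peeling order in which each flipped triangle has two $a$-neighbours. This is routine but tedious and is the only place where the specific geometry of the hexagonal dual lattice is used; everything else (the energy bookkeeping, the length bound, and the reductions for \textbf{(MB7)} and \textbf{(MB10)}) is as above.
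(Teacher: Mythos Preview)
Your approach coincides with the paper's: both run the \textbf{(MB1)}--\textbf{(MB13)} case split from Lemma~\ref{l_MB} and, for each admissible $\zeta$, peel the non-$a$ region along the update orders already established there (the paper phrases this as ``continue to update according to the order indicated in the figure,'' and observes that after the first step the path stays at level $\le 2L+1$). Two small corrections are in order. First, the sentence ``any configuration possessing a cross has energy at most $2L+1$'' is false as a general statement; what you actually need (and what the footnote asserts) is only the context-specific claim that \emph{these particular} $\zeta$'s---being one flip away from a cross-free \textbf{(MB)} configuration---cannot simultaneously carry a cross and have energy $>2L+1$, and this does require the inspection you defer. Second, the assertion that every non-bridge strip of $\sigma$ contains at least two triangles of spin $a$ is false for type \textbf{(MB4)}, where $\scal{h}_1$ is a $\{b,c\}$-semibridge containing no $a$ at all; this does not hurt you, because the inspection reveals that \textbf{(MB4)} actually admits no $\zeta$ meeting the hypotheses (the paper's own list omits \textbf{(MB4)} for this reason), and the length bound $<4L$ follows anyway from the cruder observation that the non-$a$ region of $\zeta$ lies in at most two horizontal strips.
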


\begin{proof}
We can notice from Figures \ref{fig65} and \ref{fig66} that such
a $\zeta$ exists only when $\sigma$ is of type \textbf{(MB1)-(MB3)},\textbf{
(MB5)},\textbf{ (MB6)},\textbf{ (MB8)},\textbf{ (MB9)},\textbf{ }or
\textbf{(MB11)-(MB13)}, and moreover $\zeta$ is obtained from $\sigma$
by one of the following ways: 
\begin{enumerate}
\item Updating the spin at a triangle highlighted by (either black or red)
bold boundary in Figures \ref{fig65} and \ref{fig66} into $a$. 
\item For type \textbf{(MB3)}, $\zeta$ can be obtained by flipping spin
$a$ at the strip $\scal{h}_{1}$ to $b$. For this case, $\zeta$
is a canonical configuration with $2L-1$ triangles of $b$ at a strip.
\item For type\textbf{ (MB8)} such that the strip $\scal{h}_{1}$ contains
only one triangle with spin $b$, the configuration $\zeta$ can additionally
obtained by flipping that spin $b$ to spin $a$. We note that $\zeta$
is of the same type as in case (2) above. 
\end{enumerate}
For case (1), if $\zeta$ is obtained from $\sigma$ by flipping the
spin at a triangle with red boundary, then we can continue to update
according to the order indicated in the figure to reach $\mathbf{a}$.
Then, the path corresponding to the sequence of updates provides a
$(2L+1)$-path of length less than $4L$ connecting $\zeta$ and $\mathbf{a}$.
The case when $\zeta$ is obtained from $\sigma$ by flipping a spin
at a triangle with black boundary can be handled in a similar way.
For cases (2) and (3), since $\zeta$ is a canonical configuration,
it is connected to $\mathbf{a}$ via a canonical path (cf. Remark
\ref{r_canpath}) which is a $(2L+1)$-path of length $2L-1$. 
\end{proof}
\begin{rem}
If we consider the Ising case, then the type \textbf{(PP) }is unavailable
and also the analysis of type \textbf{(MB) }becomes much simpler.
\end{rem}

As a byproduct of the characterization carried out in the current
section, we derive a rough bound on the number of cross-free configurations
which will be required in later computations. For a sequence $(a_{L})_{L=1}^{\infty}$,
we write $a_{L}=O(f(L))$ if there exists a constant $C>0$ such that
$|a_{L}|\le Cf(L)$ for all $L$.
\begin{lem}
\label{l_cfcount}The number of cross-free configuration with energy
less than or equal to $2L+2$ is $O(L^{6})$. 
\end{lem}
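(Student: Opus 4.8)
The plan is to bound the number of cross-free configurations $\sigma$ with $H(\sigma) \le 2L+2$ by exploiting the very detailed classification established in Propositions \ref{p_E<=00003D2L}, \ref{p_E=00003D2L+1}, and \ref{p_E=00003D2L+2}, together with Lemmas \ref{l_MB} and \ref{l_MB2}. The point is that each such $\sigma$ is, up to a choice of direction (3 possibilities), spins ($O(1)$ choices since at most three spins appear), and some position parameters, essentially a regular configuration $\xi_{\scal{s}(P)}^{a,b}$ with a bounded amount of extra local decoration, and each of these discrete data ranges over a set of size $O(L)$. Multiplying a bounded number of such $O(L)$-factors gives the claimed $O(L^6)$.

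First I would handle $H(\sigma) = 2L$: by Proposition \ref{p_E<=00003D2L} these are exactly the regular configurations $\xi_{\scal{s}(P)}^{a,b}$, and the number of such is $O(L^2)$ — indeed $3$ directions, $O(1)$ ordered spin pairs, and $|P| \le L$ together with a choice of one of $\le L$ starting positions for the connected block $P \in \mathfrak{S}_L$, so $O(L^2)$ in total. Next, for $H(\sigma) = 2L+1$, Proposition \ref{p_E=00003D2L+1} says $\sigma \in \mathcal{C}_{n,\scal{o}}^{a,b}$; such a configuration is determined by the underlying regular configuration ($O(L^2)$ choices as above, but now $P$ is specified by $P_a$, which again costs $O(L)$ position and $O(L)$ length), the index $\ell$ of the non-bridge strip (at most $L$ choices, in fact adjacent to $P$ so $O(1)$ given $P$), and the odd protuberance $\mathfrak{p} \subseteq \scal{s}_\ell$, which by Definition \ref{d_ocs} is a one-dimensional canonical set — determined by its length ($\le 2L$) and its position within the $2L$-torus $\scal{s}_\ell$ ($\le 2L$), hence $O(L^2)$ choices. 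So the count here is $O(L^4)$.

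For $H(\sigma) = 2L+2$, Proposition \ref{p_E=00003D2L+2} splits $\sigma$ into the six types. For types \textbf{(ODP)}, \textbf{(TDP)}, \textbf{(SP)}, \textbf{(EP)}, \textbf{(PP)}: each consists of a regular configuration plus one or two protuberances (or one protuberance carrying a sub-protuberance, or one "peculiar" third-spin triangle), and in every case the configuration is pinned down by: direction ($3$), spins ($O(1)$), the regular part's block $P$ (position $\times$ length $= O(L^2)$), the index of the affected strip(s) (each $O(1)$ relative to $P$), and for each protuberance its length and position within its $2L$-torus ($O(L^2)$ per protuberance, with at most two protuberances). This yields $O(L^2) \cdot O(L^2) \cdot O(L^2) = O(L^6)$, which is the dominant term and the source of the exponent $6$. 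For type \textbf{(MB)}, Remark \ref{r_E=00003D2L+2} says there are $L-1$ or $L-2$ parallel bridges of a single spin $a$, and Lemmas \ref{l_MB}–\ref{l_MB2} (see Figures \ref{fig65}, \ref{fig66}) classify $\sigma$ into finitely many subtypes \textbf{(MB1)}–\textbf{(MB13)}; each subtype is determined by the spin $a$ ($O(1)$), the direction of the bridges ($3$), the one or two non-bridge strip indices ($O(L)$, at most $O(L^2)$), and $O(1)$ local shape parameters (the sizes $m_1, m_2$ and alignments of the $b$-protuberances), each of which is an integer in $\llbracket 1, 2L-1 \rrbracket$, contributing $O(L)$ each but only a bounded number of them — so again at most $O(L^4)$, which is absorbed.

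Summing the $O(L^2) + O(L^4) + O(L^6)$ contributions over the finitely many energy values $\{2L, 2L+1, 2L+2\}$ and finitely many configuration types gives $O(L^6)$, proving the lemma. The only mildly delicate point is bookkeeping the protuberance count uniformly: one must observe that a one-dimensional canonical set inside a fixed strip $\scal{s}_\ell \cong \mathbb{T}_{2L}$ is specified by at most two integer parameters each of size $O(L)$ (length and anchor position, with the even/decomposed case in Definition \ref{d_ocs} adding only a bounded number of extra sub-cases), so the naive "union bound over all parameters" is legitimate. I expect no real obstacle here beyond careful enumeration, since all the structural work has already been done in the cited propositions and lemmas.
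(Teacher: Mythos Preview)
Your proposal is correct and takes essentially the same approach as the paper: the paper's own proof is a one-liner stating that the full characterization in Propositions~\ref{p_E<=00003D2L}, \ref{p_E=00003D2L+1}, and \ref{p_E=00003D2L+2} reduces the lemma to elementary counting, and you have simply written out that counting in detail. Your invocation of Lemmas~\ref{l_MB}--\ref{l_MB2} to pin down the \textbf{(MB)} subtypes is a natural elaboration the paper leaves implicit.
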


\begin{proof}
Since we get a full characterization of cross-free configurations
in Propositions \ref{p_E<=00003D2L}, \ref{p_E=00003D2L+1}, and \ref{p_E=00003D2L+2},
the conclusion of the lemma follows directly from elementary counting.
\end{proof}

\subsection{\label{sec63}Dead-ends}

In this subsection, we summarize the geometry of the energy landscape
near the canonical configurations. As a consequence, we are able to
get the full characterization of dead-ends (cf. Definition \ref{d_dead})
the process encounters in the course of transitions between ground
states.

We first introduce some notation.
\begin{itemize}
\item For a configuration $\sigma\in\mathcal{X}$ and $c\in\Omega$, we
say that a subset $C$ of $\Lambda^{*}$ is a $c$-cluster if it is
a monochromatic cluster consisting of spin $c$.
\item The\textit{ boundary} of a set $A\subseteq\Lambda^{*}$ refers to
the collection of triangles in $\Lambda^{*}\setminus A$ adjacent
to triangles in $A$. An example is given by the following figure;
if $A$ is the collection of orange triangles, the blue triangles
are the boundary of $A$.
\begin{center}
\includegraphics[width=6cm]{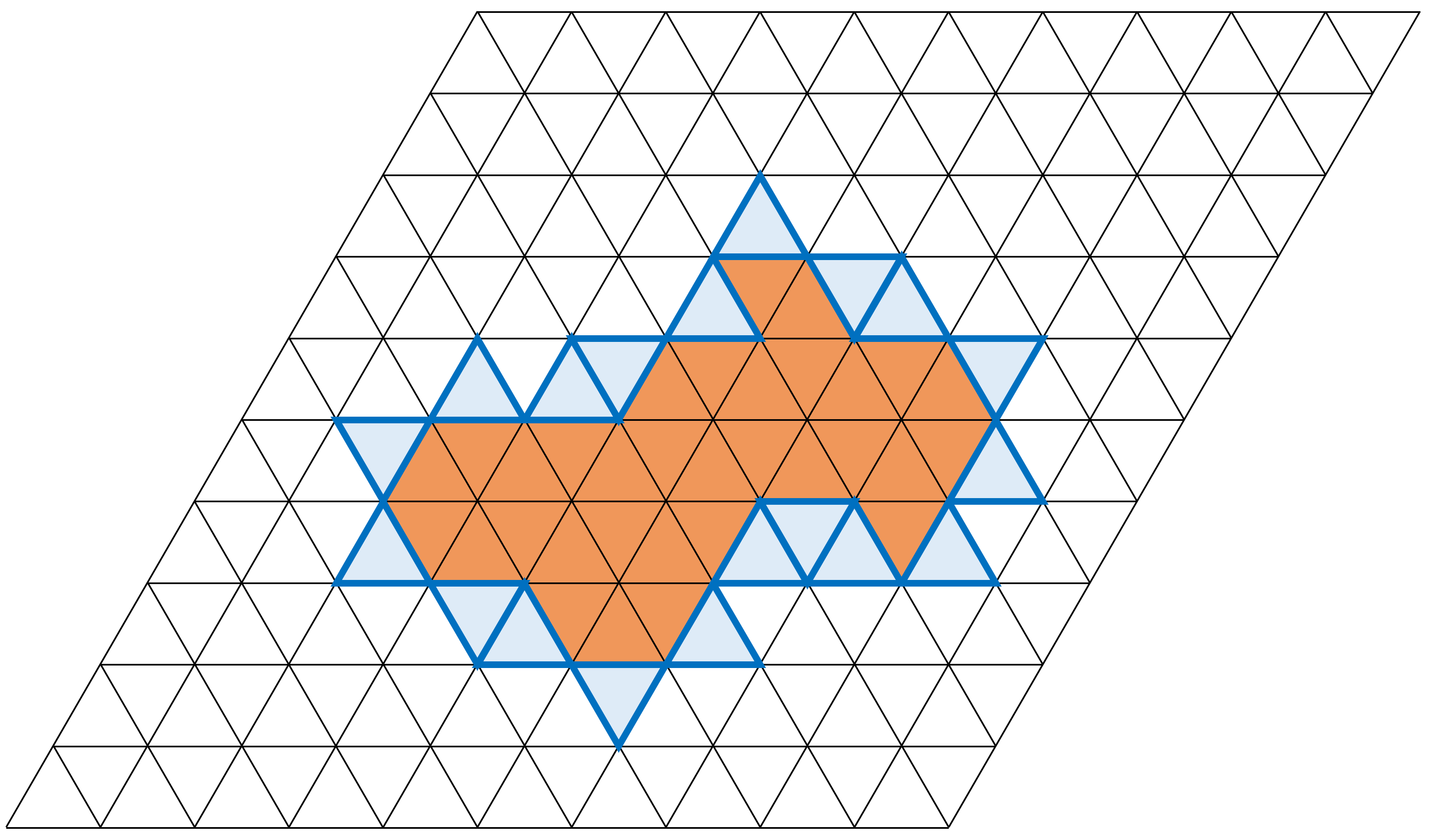}
\par\end{center}
\item For a configuration $\sigma\in\mathcal{X}$, we say that a triangle
$x\in\Lambda^{*}$ is a boundary triangle of $\sigma$ if $x$ belongs
to a boundary of a certain cluster of $\sigma$. Since a non-boundary
triangle $x$ of $\sigma$ has the same spin with its three adjacent
triangles, we can observe that 
\begin{equation}
\text{flipping the spin at a non-boundary triangle }x\text{ of }\sigma\text{ increases the energy by }3\;,\label{e_nonbdry}
\end{equation}
while flipping the spin at a boundary triangle increases the energy
by at most $2$ (or decreases the energy up to $3$).
\item Let $\sigma\in\mathcal{X}$ be a configuration satisfying $H(\sigma)\le2L+2$.
If $\zeta\in\mathcal{X}$ is obtained by a flip of spin of $\sigma$
(i.e., $\sigma\sim\zeta$) and $H(\zeta)\le2L+2$, we write $\sigma\approx\zeta$
and the corresponding flip is called a \textit{good flip}.
\end{itemize}
We now characterize all the configurations connected to a canonical
configuration $\sigma$ and having energy at most $2L+2$. We decompose
our investigation into three cases: $\sigma\in\mathcal{R}_{n}^{a,\,b}$
(Lemma \ref{l_reg}), $\sigma\in\mathcal{C}_{n,\,\scal{o}}^{a,\,b}$
(Lemma \ref{l_odd}), and $\sigma\in\mathcal{C}_{n,\,\scal{e}}^{a,\,b}$
(Lemma \ref{l_even}). To that end, we define the following collections
for $a,\,b\in\Omega$.
\begin{itemize}
\item $\mathcal{P}_{n}^{a,\,b}$, $n\in\llbracket2,\,L-2\rrbracket$: the
collection of configurations of type \textbf{(PP)} which can be obtained
by a good flip of a configuration in $\mathcal{R}_{n}^{a,\,b}$.
\item $\mathcal{Q}_{n}^{a,\,b}$, $n\in\llbracket1,\,L-2\rrbracket$: the
collection of configurations of type \textbf{(ODP)},\textbf{ (TDP)},\textbf{
}or\textbf{ (SP) }which can be obtained by a good flip of a configuration
in $\mathcal{C}_{n,\,\scal{o}}^{a,\,b}$.
\item $\widehat{\mathcal{R}}_{n}^{a,\,b}$, $n\in\llbracket2,\,L-2\rrbracket$:
the collection of configurations $\zeta$ such that
\[
\text{either }\zeta\in\mathcal{C}_{n,\,\scal{o}}^{a,\,b}\text{ with }|\mathfrak{p}^{a,\,b}(\zeta)|=1\;,\text{ or }\zeta\in\mathcal{C}_{n-1,\,\scal{o}}^{a,\,b}\text{ with }|\mathfrak{p}^{a,\,b}(\zeta)|=2L-1\;.
\]
Namely, $\widehat{\mathcal{R}}_{n}^{a,\,b}$ is the collection of
canonical configurations obtained by a good flip of a regular configuration
in $\mathcal{R}_{n}^{a,\,b}$.
\end{itemize}
We now start the characterization. We fix $a,\,b\in\Omega$ in the
remainder of the current section.
\begin{lem}
\label{l_reg}Suppose that $\sigma\in\mathcal{R}_{n}^{a,\,b}$ with
$n\in\llbracket2,\,L-2\rrbracket$ and $\zeta\in\mathcal{X}$ satisfies
$\sigma\approx\zeta$. Then, we have either $\zeta\in\widehat{\mathcal{R}}_{n}^{a,\,b}$
or $\zeta\in\mathcal{P}_{n}^{a,\,b}$. In particular, we have $\widehat{\mathcal{R}}_{n}^{a,\,b}=\mathcal{N}(\mathcal{R}_{n}^{a,\,b})\setminus\mathcal{R}_{n}^{a,\,b}$.
\end{lem}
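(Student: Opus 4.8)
The plan is to first characterise all single-flip neighbours of a regular configuration $\sigma\in\mathcal{R}_{n}^{a,\,b}$ (which yields the displayed inclusion), and then to upgrade this to the identity for $\mathcal{N}(\mathcal{R}_{n}^{a,\,b})$ by a short local analysis around the configurations in $\widehat{\mathcal{R}}_{n}^{a,\,b}$.

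For the first part, write (without loss of generality) $\sigma=\xi_{\scal{h}(P)}^{a,\,b}$ with $|P|=n\in\llbracket2,\,L-2\rrbracket$, so that the $b$-region $\scal{h}(P)$ and the $a$-region $\scal{h}(\mathbb{T}_{L}\setminus P)$ are both horizontal bands of width at least two. The key geometric observation is that, since each triangle of $\Lambda^{*}$ has exactly one neighbour outside its own horizontal strip, every boundary triangle of $\sigma$ (an $a$-triangle abutting $\scal{h}(P)$, or a $b$-triangle abutting the $a$-band) has exactly one neighbour of the opposite colour; the width-$\geq2$ hypothesis is precisely what rules out a triangle being adjacent to the other region on two sides. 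Consequently, for $\zeta$ with $\sigma\sim\zeta$: flipping a non-boundary triangle raises the energy by $3$ (cf. \eqref{e_nonbdry}) and is not a good flip; flipping a boundary triangle to the other element of $\{a,\,b\}$ raises it by $2-1=1$; and flipping a boundary triangle to a third spin $c$ raises it by $2-0=2$. In the first case one obtains either a size-$1$ $b$-protuberance on $\scal{h}(P)$, i.e. an element of $\mathcal{C}_{n,\,\scal{o}}^{a,\,b}$ with $|\mathfrak{p}^{a,\,b}(\zeta)|=1$, or (flipping a $b$-triangle in an extreme strip of the $b$-band) a size-$(2L-1)$ protuberance, i.e. an element of $\mathcal{C}_{n-1,\,\scal{o}}^{a,\,b}$ with $|\mathfrak{p}^{a,\,b}(\zeta)|=2L-1$; both lie in $\widehat{\mathcal{R}}_{n}^{a,\,b}$ (one checks routinely, using $2\leq n\leq L-2$, that the single added triangle satisfies the protuberance condition \eqref{cond_prot}). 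In the second case one obtains a type \textbf{(PP)} configuration whose underlying regular configuration is $\sigma\in\mathcal{R}_{n}^{a,\,b}$, hence an element of $\mathcal{P}_{n}^{a,\,b}$. This gives $\zeta\in\widehat{\mathcal{R}}_{n}^{a,\,b}\cup\mathcal{P}_{n}^{a,\,b}$.

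For the identity $\widehat{\mathcal{R}}_{n}^{a,\,b}=\mathcal{N}(\mathcal{R}_{n}^{a,\,b})\setminus\mathcal{R}_{n}^{a,\,b}$, the inclusion $\supseteq$ is immediate: each $\zeta\in\widehat{\mathcal{R}}_{n}^{a,\,b}$ arises from a good flip of some $\sigma\in\mathcal{R}_{n}^{a,\,b}$ and has $H(\zeta)=2L+1$ by Remark \ref{r_can}, so $(\sigma,\,\zeta)$ is a $(2L+1)$-path and $\zeta\in\mathcal{N}(\sigma)$, while $H(\zeta)\neq2L$ gives $\zeta\notin\mathcal{R}_{n}^{a,\,b}$. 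For $\subseteq$, I would show that the connected component of any $\sigma\in\mathcal{R}_{n}^{a,\,b}$ in the graph on $\{\eta\in\mathcal{X}:H(\eta)\leq2L+1\}$ with single-flip edges is contained in $\mathcal{R}_{n}^{a,\,b}\cup\widehat{\mathcal{R}}_{n}^{a,\,b}$, arguing by induction along a $(2L+1)$-path. By the first part, restricted to energy $\leq2L+1$ (which discards the type \textbf{(PP)} configurations, of energy $2L+2$), every single flip of a configuration in $\mathcal{R}_{n}^{a,\,b}$ of energy $\leq2L+1$ lies in $\widehat{\mathcal{R}}_{n}^{a,\,b}$ — here one uses that a single flip of a regular configuration is never regular, since two distinct regular configurations differ on at least one full strip of $2L$ triangles. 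Conversely, a short case analysis of $\xi_{\scal{h}(P)\cup\{x\}}^{a,\,b}$ (and its counterpart with a size-$(2L-1)$ protuberance) shows that every single flip of energy $\leq2L+1$ merely retracts the protuberance, landing back in $\mathcal{R}_{n}^{a,\,b}$: by the same width-$\geq2$ geometry, enlarging the protuberance or altering the underlying band costs $+1$ in energy, flipping $x$ to a third spin also costs $+1$, and only the retraction lowers the energy. Hence the component equals $\mathcal{R}_{n}^{a,\,b}\sqcup\widehat{\mathcal{R}}_{n}^{a,\,b}$, and since these sets are disjoint (energies $2L$ and $2L+1$), $\mathcal{N}(\mathcal{R}_{n}^{a,\,b})\setminus\mathcal{R}_{n}^{a,\,b}=\widehat{\mathcal{R}}_{n}^{a,\,b}$.

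The main obstacle is the local geometric bookkeeping near the band interfaces of the triangular lattice $\Lambda^{*}$: one must verify carefully, respecting the orientation of the triangles, that in a width-$\geq2$ band every boundary triangle has a single cross-interface neighbour — so that all admissible flips change the energy by exactly $+1$ or $+2$ and no energy-decreasing flip is available — together with the analogous check for the size-$1$ protuberance configurations in the converse direction. These verifications are elementary but tedious; I would lean on the figures of Section \ref{sec62} and on the already-established classifications in Propositions \ref{p_E<=00003D2L} and \ref{p_E=00003D2L+1} to keep the case analysis short, as the conventions of this section permit.
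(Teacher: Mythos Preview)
Your proof is correct and follows essentially the same approach as the paper: classify good flips of a regular configuration via the boundary-triangle observation \eqref{e_nonbdry}, and then verify the identity $\widehat{\mathcal{R}}_{n}^{a,\,b}=\mathcal{N}(\mathcal{R}_{n}^{a,\,b})\setminus\mathcal{R}_{n}^{a,\,b}$ by checking that the only $(2L+1)$-neighbours of a configuration in $\widehat{\mathcal{R}}_{n}^{a,\,b}$ lie back in $\mathcal{R}_{n}^{a,\,b}$. Note only that your labels ``$\supseteq$'' and ``$\subseteq$'' for the two inclusions are swapped (the argument you call ``$\supseteq$'' in fact establishes $\widehat{\mathcal{R}}_{n}^{a,\,b}\subseteq\mathcal{N}(\mathcal{R}_{n}^{a,\,b})\setminus\mathcal{R}_{n}^{a,\,b}$, and vice versa).
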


\begin{proof}
Let us fix $\sigma\in\mathcal{R}_{n}^{a,\,b}$. Since $H(\sigma)=2L$
and $H(\zeta)\le2L+2$, by \eqref{e_nonbdry}, the configuration $\zeta$
is obtained from $\sigma$ by flipping a boundary triangle. First,
we assume that we flip a spin at a boundary triangle of the $b$-cluster
of $\sigma$ (which has spin $a$) to $c$ to get $\zeta$. As one
can check from the figure below, we get $\zeta\in\widehat{\mathcal{R}}_{n}^{a,\,b}$
(in particular, $\zeta\in\mathcal{C}_{n,\,\scal{o}}^{a,\,b}$ with
$|\mathfrak{p}^{a,\,b}(\zeta)|=1$) or \textbf{$\zeta\in\mathcal{P}_{n}^{a,\,b}$
}if $c=a$ or $c\notin\{a,\,b\}$, respectively. 
\begin{center}
\includegraphics[width=14cm]{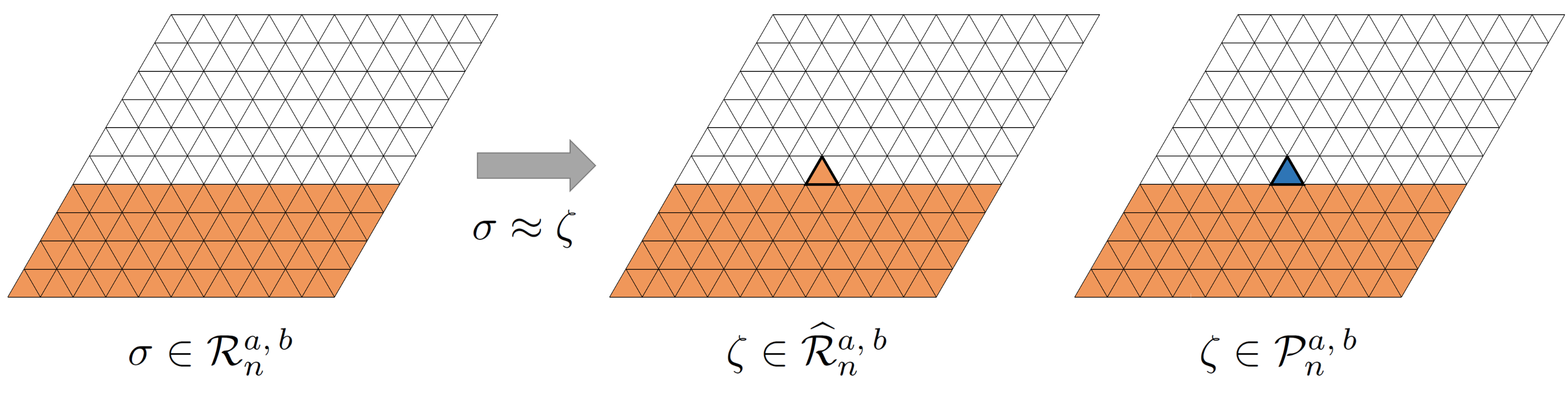}
\par\end{center}

\noindent The case when we flip a boundary triangle of the $a$-cluster
is identical to the previous case and we can conclude the proof of
the first statement. For the second statement, we first observe that
if $\xi\sim\zeta$ for some $\zeta\in\widehat{\mathcal{R}}_{n}^{a,\,b}$
and $H(\xi)<2L+2$, then we must have $\xi\in\mathcal{R}_{n}^{a,\,b}$.
Since the configuration of type \textbf{(PP)} has energy $2L+2$,
the second assertion of the lemma is direct from the first one. 
\end{proof}
Thanks to Lemma \ref{l_reg}, we will hereafter discard the notation
$\widehat{\mathcal{R}}_{n}^{a,\,b}$ and use $\mathcal{N}(\mathcal{R}_{n}^{a,\,b})\setminus\mathcal{R}_{n}^{a,\,b}$
instead.
\begin{lem}
\label{l_odd}Suppose that $\sigma\in\mathcal{C}_{n,\,\scal{o}}^{a,\,b}$
with $n\in\llbracket2,\,L-2\rrbracket$ and $\zeta\in\mathcal{X}$
satisfies $\sigma\approx\zeta$. Then, we have either $\zeta\in\mathcal{R}_{n}^{a,\,b}\cup\mathcal{R}_{n+1}^{a,\,b}\cup\mathcal{C}_{n,\,\scal{e}}^{a,\,b}$,
$\zeta\in\mathcal{P}_{n}^{a,\,b}\cup\mathcal{P}_{n+1}^{a,\,b}$, or
$\zeta\in\mathcal{Q}_{n}^{a,\,b}$. In particular, if $3\le|\mathfrak{p}^{a,\,b}(\sigma)|\le2L-3$,
we have either $\zeta\in\mathcal{C}_{n,\,\scal{e}}^{a,\,b}$ or $\zeta\in\mathcal{Q}_{n}^{a,\,b}$.
\end{lem}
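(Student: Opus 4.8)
The plan is to use the precise energy level $H(\sigma)=2L+1$ (Remark \ref{r_can}) together with $H(\zeta)\le 2L+2$ to pin the flip $\sigma\to\zeta$ down to a \emph{boundary} triangle, and then to run a short local case analysis. Since $H(\zeta)-H(\sigma)\le 1$, the single triangle $x$ at which $\sigma$ and $\zeta$ differ cannot be a non-boundary triangle of $\sigma$ by \eqref{e_nonbdry}. Writing $s=\sigma(x)$, $t=\zeta(x)\neq s$, and $n_r(x)$ for the number of neighbours of $x$ of spin $r$ in $\sigma$, the energy change equals $n_s(x)-n_t(x)$, so admissibility reads $n_t(x)\ge n_s(x)-1$. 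As $\sigma$ uses only the two spins $a,b$ (determined by $\sigma\in\mathcal{C}_{n,\scal{o}}^{a,b}$), this dichotomizes the analysis: a flip to a third spin $t\notin\{a,b\}$ is admissible only when $x$ has at least two neighbours of the spin opposite to $s$, whereas every other admissible flip keeps the spin set $\{a,b\}$ and just adds one triangle to, or removes one from, the $b$-cluster $A:=\scal{h}(P)\cup\mathfrak{p}$, where I set $\mathfrak{p}:=\mathfrak{p}^{a,b}(\sigma)$ and normalise (by the threefold symmetry of the lattice) so that $\sigma$ has horizontal bridges, $P\prec P'$, $\{\ell\}=P'\setminus P$, $\mathfrak{p}\subseteq\scal{h}_\ell$.

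Next I would record the exact shape of $\mathfrak{p}$. Because $n\le L-2$, exactly one of $\scal{h}_{\ell-1},\scal{h}_{\ell+1}$ lies in $\scal{h}(P)$; call the other the \emph{outer strip} of $\scal{h}_\ell$ (an $a$-bridge of $\sigma$). Since $|\mathfrak{p}|$ is odd, Definition \ref{d_ocs} forces $\mathfrak{p}$ to be a single connected run in $\scal{h}_\ell$, and since $n\ge 1$, condition \eqref{cond_prot} forces the majority orientation of this run — hence both of its end triangles — to point towards $\scal{h}(P)$, the minority orientation pointing towards the outer strip; when $|\mathfrak{p}|=2L-1$ this says the unique $a$-triangle of $\scal{h}_\ell$ points towards the outer strip, and when $|\mathfrak{p}|=1$ it says the single triangle of $\mathfrak{p}$ points towards $\scal{h}(P)$.

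With this in hand the case analysis runs as follows. For a spin-preserving flip, $\zeta=\xi_{A'}^{a,b}$ with $A'=A\pm\{x\}$. If the flip alters $\mathfrak{p}$ inside $\scal{h}_\ell$ into an even one-dimensional canonical set (Definition \ref{d_ocs}) — adding or removing a triangle at an end of the run, adding the triangle sharing a single vertex with an end, or splitting off a lone triangle next to an end — then $\zeta\in\mathcal{C}_{n,\scal{e}}^{a,b}$; the only exceptions are the two degenerate closings, which occur exactly when $|\mathfrak{p}|=1$ (removing $\mathfrak{p}$ yields $\xi_{\scal{h}(P)}^{a,b}\in\mathcal{R}_n^{a,b}$) and when $|\mathfrak{p}|=2L-1$ (filling $\scal{h}_\ell$ yields $\xi_{\scal{h}(P')}^{a,b}\in\mathcal{R}_{n+1}^{a,b}$). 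If instead the spin-preserving flip creates a monochromatic $b$-run disjoint from $\mathfrak{p}$ — a second run on $\scal{h}_\ell$, a run on the boundary strip of $\scal{h}(P)$ opposite to $\scal{h}_\ell$, a run on the outer strip attached to $\mathfrak{p}$, a run carved out of an end strip of $\scal{h}(P)$ (which simultaneously thins the block by one strip), or the two runs obtained by deleting an interior triangle of $\mathfrak{p}$ — then $\zeta$ has the shape of one of the types \textbf{(ODP)}, \textbf{(TDP)}, \textbf{(SP)} of Definition \ref{d_2L+2} (with the caveat that whenever one of the two pieces is a lone triangle sharing a vertex with the other, the configuration is really in $\mathcal{C}_{n,\scal{e}}^{a,b}$), so $\zeta\in\mathcal{Q}_n^{a,b}$ by the very definition of $\mathcal{Q}_n^{a,b}$. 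Finally, using the shape of $\mathfrak{p}$ one checks that the only boundary triangles with two neighbours of the opposite spin are the unique $a$-triangle of $\scal{h}_\ell$ when $|\mathfrak{p}|=2L-1$ and the unique triangle of $\mathfrak{p}$ when $|\mathfrak{p}|=1$; flipping either to a third spin produces a regular configuration ($\xi_{\scal{h}(P')}^{a,b}$, resp.\ $\xi_{\scal{h}(P)}^{a,b}$) carrying a size-$1$ third-spin protuberance, i.e.\ a configuration of type \textbf{(PP)} in $\mathcal{P}_{n+1}^{a,b}$, resp.\ $\mathcal{P}_n^{a,b}$. This exhausts all admissible flips and gives the first assertion; for the second, the hypothesis $3\le|\mathfrak{p}^{a,b}(\sigma)|\le 2L-3$ excludes the sizes $1$ and $2L-1$, hence all of $\mathcal{R}_n^{a,b},\mathcal{R}_{n+1}^{a,b},\mathcal{P}_n^{a,b},\mathcal{P}_{n+1}^{a,b}$, leaving only $\mathcal{C}_{n,\scal{e}}^{a,b}$ and $\mathcal{Q}_n^{a,b}$.

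The one genuinely delicate point — and the bulk of the real work — is the bookkeeping in the last two families of cases: after a single flip one must decide exactly which of the even one-dimensional canonical shapes of Definition \ref{d_ocs} (a connected run, versus a connected piece together with a lone vertex-adjacent triangle) or which of the types \textbf{(ODP)}--\textbf{(SP)}/\textbf{(PP)} is produced, and verify that \eqref{cond_prot} and the canonical-set conditions are inherited or destroyed as claimed. This reduces to a finite inspection of the local pictures around the tip of $\mathfrak{p}$ and around the corners of $\scal{h}(P)$, of the routine kind whose elementary verifications I would omit, together with a little extra care at the extreme values $n=2$ and $n=L-2$ where $\scal{h}(P)$ is thin enough that the ``opposite boundary strip'' and the ``outer strip'' need not be genuinely distinct in every subcase (one checks this produces no new outcomes). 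The repeated use of \eqref{e_Edec} to confirm the value of $H(\zeta)$ is routine.
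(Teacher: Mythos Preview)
Your proposal is correct and follows essentially the same approach as the paper: restrict to boundary triangles via \eqref{e_nonbdry}, then run a local case analysis on the possible good flips. The paper organizes the case analysis by grouping the boundary triangles of $\sigma$ into seven geometric types (illustrated in a figure) and treating the extreme protuberance sizes $|\mathfrak{p}^{a,b}(\sigma)|\in\{1,2L-1\}$ first, whereas you organize it by the nature of the flip (spin-preserving versus third-spin) and by the shape of the resulting configuration; these are cosmetic differences in presentation rather than a different argument.
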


\begin{proof}
We fix $\sigma\in\mathcal{C}_{n,\,\scal{o}}^{a,\,b}$ and first consider
the case $|\mathfrak{p}^{a,\,b}(\sigma)|=1$. By \eqref{e_nonbdry},
we can notice that we have to flip a boundary triangle of $\sigma$
to get $\zeta$. We can group the boundary triangles of $\sigma$
into seven types as in Figure \ref{fig67}-(left).

\begin{figure}[h]
\includegraphics[width=15cm]{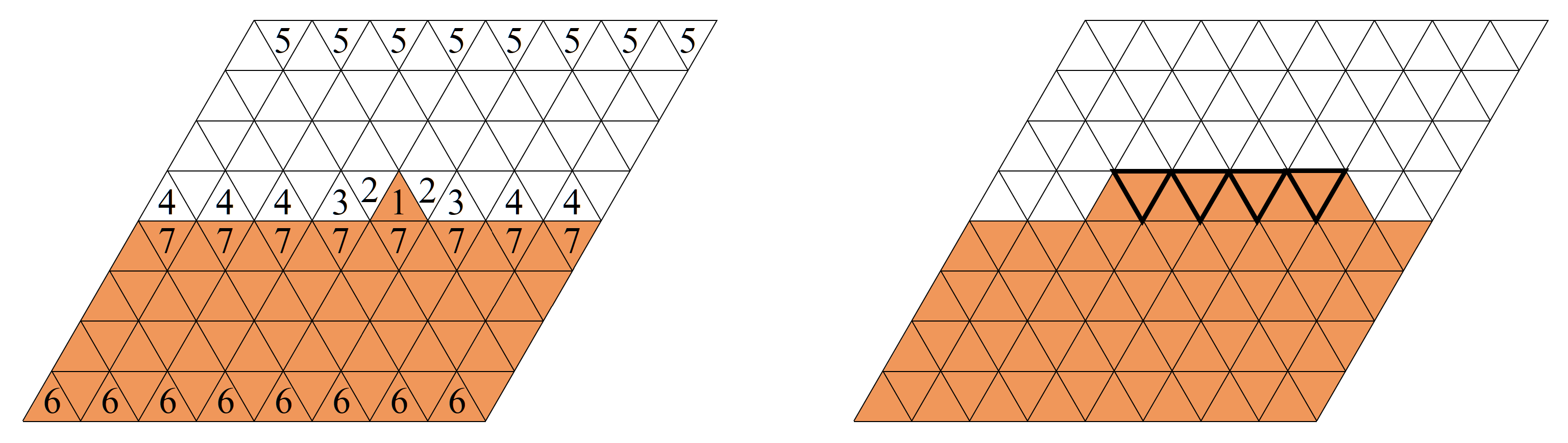}

\caption{\label{fig67}Good flip of a configuration in $\mathcal{C}_{n,\,\scal{o}}^{a,\,b}$.
\textbf{(Left)} $|\mathfrak{p}^{a,\,b}(\sigma)|=1$ or $2L-1$.\textbf{
(Right)} $3\le|\mathfrak{p}^{a,\,b}(\sigma)|\le2L-3$.}
\end{figure}

\noindent If we flip the triangle of type $1$, we get $\zeta\in\mathcal{R}_{n}^{a,\,b}$
or $\zeta\in\mathcal{P}_{n}^{a,\,b}$. If a flip of the spin of a
triangle in types $2$-$7$ is a good flip, the spin must be flipped
to either $a$ or $b$. Hence, we get a configuration in $\mathcal{C}_{n,\,\scal{e}}^{a,\,b}$
(resp. in $\mathcal{Q}_{n}^{a,\,b}$) if we flip the spin at a triangle
of types $2$ or $3$ (resp. types $4$-$7$). The case $|\mathfrak{p}^{a,\,b}(\sigma)|=2L-1$
can be handled in the exact same way with this case and we get either
$\zeta\in\mathcal{R}_{n+1}^{a,\,b}$, $\zeta\in\mathcal{P}_{n+1}^{a,\,b}$,
$\zeta\in\mathcal{C}_{n,\,\scal{e}}^{a,\,b}$, or $\zeta\in\mathcal{Q}_{n}^{a,\,b}$.

Next, we consider the case $3\le|\mathfrak{p}^{a,\,b}(\sigma)|\le2L-3$.
The proof is similar to the previous case. In particular, the flip
of triangles of types $2$-$7$ are of the identical nature. The only
difference appears in the flip of a triangle of type $1$, i.e., a
triangle in the protuberance of spin $b$. For this case, we have
to flip triangle denoted by bold black boundary in Figure \ref{fig67}-(right)
to get a configuration belonging to $\mathcal{C}_{n,\,\scal{e}}^{a,\,b}$
or \textbf{$\mathcal{Q}_{n}^{a,\,b}$}.
\end{proof}
\begin{lem}
\label{l_even}Suppose that $\sigma\in\mathcal{C}_{n,\,\scal{e}}^{a,\,b}$
with $n\in\llbracket2,\,L-2\rrbracket$ and $\zeta\in\mathcal{X}$
satisfies $\sigma\approx\zeta$. Then, $\zeta\in\mathcal{C}_{n,\,\scal{o}}^{a,\,b}$.
\end{lem}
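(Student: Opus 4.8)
The plan is to run the same kind of boundary-triangle inspection as in the proof of Lemma \ref{l_odd}, but the bookkeeping is lighter here because $H(\sigma)=2L+2$ is already the largest energy at which a good flip is permitted, so every good flip will be forced to \emph{lower} the energy.

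First I would fix the setup: by Remark \ref{r_can} we have $H(\sigma)=2L+2$, and (the vertical and diagonal cases being identical) we may write $\sigma=\xi_{A}^{a,\,b}$ with $A=\scal{h}(P)\cup\mathfrak{p}$, where $P=\llbracket1,\,n\rrbracket$ and $\mathfrak{p}\subseteq\scal{h}_{n+1}$ is a protuberance attached to $\scal{h}(P)$ of even size. Since $\sigma\approx\zeta$ forces $H(\zeta)\le2L+2=H(\sigma)$, no good flip can raise the energy, and by \eqref{e_nonbdry} the flipped spin sits at a boundary triangle $x$ of $\sigma$. If $x$ has $k$ neighbors of the color opposite to its own (so $1\le k\le 3$, because $\sigma$ uses only the two colors $a$ and $b$), then flipping $x$ to the opposite color changes the energy by $3-2k$, while flipping $x$ to a third color changes it by $3-k$. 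I would first check that $\sigma$ has no triangle all three of whose neighbors carry the opposite color, i.e. that $k\le2$ always; granting this, a good flip occurs only when $k=2$, it is then the flip of $x$ to the opposite color, and $H(\zeta)=2L+1$. The one delicate point in the step $k\le2$ is the case $\mathfrak{p}=U_{1}\cup U_{2}$ with $|U_{2}|=1$ of Definition \ref{d_ocs}: the single $a$-triangle wedged between $U_{1}$ and $U_{2}$ does have both of its neighbors inside $\scal{h}_{n+1}$ colored $b$, but condition \eqref{cond_prot} pins down the orientation of that triangle so that its third neighbor lies in $\scal{h}_{n+2}$ and is colored $a$; so $k=2$ there, not $3$.

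It then remains to enumerate the boundary triangles of $\sigma$ with $k=2$, grouping them by type exactly as in Figure \ref{fig67}. I would check, in the same spirit as (and more quickly than) the proof of Lemma \ref{l_odd}, that every flip away from the strip $\scal{h}_{n+1}$, or inside $\scal{h}_{n+1}$ but away from $\mathfrak{p}$, has $k\le1$ and so is not good — here one uses $n\le L-2$, so that $\scal{h}_{n+2}$ is a genuine $a$-bridge and no fresh protuberance can be seeded cheaply — and that the only four good flips are: enlarging $\mathfrak{p}$ by the unique adjacent triangle of $\scal{h}_{n+1}$ that shares a side with $\scal{h}_{n}$; deleting the unique end triangle of $\mathfrak{p}$ that shares a side with $\scal{h}_{n+2}$; and, when $\mathfrak{p}=U_{1}\cup U_{2}$, filling the gap between $U_{1}$ and $U_{2}$ or deleting $U_{2}$. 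In each of these four cases the modified protuberance is a connected arc of odd size which still obeys \eqref{cond_prot}, so $\zeta\in\mathcal{C}_{n,\,\scal{o}}^{a,\,b}$ by Definition \ref{d_can}, as claimed.

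I expect the main obstacle to be purely the case bookkeeping of this last step — keeping track of the handful of boundary-triangle types and verifying \eqref{cond_prot} for the resulting odd protuberance in each case; since the paper explicitly allows omitting such elementary details, the written-out argument can be kept short and made to rest on Figure \ref{fig67}.
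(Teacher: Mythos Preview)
Your proposal is correct and follows essentially the same approach as the paper: both argue that since $H(\sigma)=2L+2$ a good flip cannot raise the energy, then locate the few boundary triangles at which a flip actually lowers it and check that each resulting configuration lies in $\mathcal{C}_{n,\scal{o}}^{a,\,b}$. The paper compresses the case analysis into a figure, whereas you spell it out (including the nice observation that \eqref{cond_prot} forces the gap triangle in the disconnected case to point toward $\scal{h}_{n+2}$, preventing $k=3$); your phrase ``inside $\scal{h}_{n+1}$ but away from $\mathfrak{p}$'' should be read as ``not in or adjacent to $\mathfrak{p}$'' to be consistent with the good flips you then list, but this is only a wording issue.
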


\begin{proof}
There are essentially two cases (depending on whether the protuberance
is connected or not) to be considered as in the figure below.
\begin{center}
\includegraphics[width=12cm]{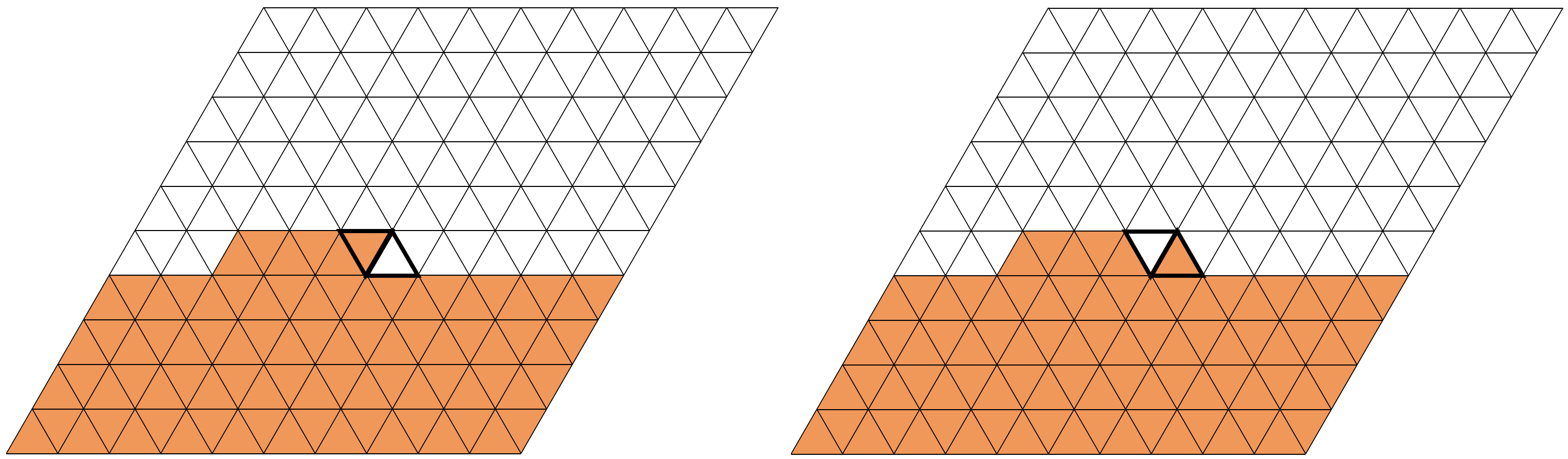}
\par\end{center}

\noindent Since the configuration $\sigma$ already has energy $2L+2$,
the good flip must not increase the energy, and therefore should flip
the spin at one of the triangles with bold black boundary in the figure
above either from $a$ to $b$ or from $b$ to $a$. Since the configuration
obtained from this any of such flips belongs to $\mathcal{C}_{n,\,\scal{o}}^{a,\,b}$,
the proof is completed. 
\end{proof}
The non-canonical configurations appeared in the preceding three lemmas
are defined now as the dead-ends.
\begin{defn}[Dead-ends]
\label{d_dead} For $a,\,b\in\Omega$, define 
\[
\mathcal{D}^{a,\,b}=\Big[\,\bigcup_{n=2}^{L-2}\mathcal{P}_{n}^{a,\,b}\,\Big]\cup\Big[\,\bigcup_{n=2}^{L-3}\mathcal{Q}_{n}^{a,\,b}\,\Big]\;.
\]
It is clear that $\sigma\in\mathcal{D}^{a,\,b}$ implies $H(\sigma)=2L+2$.
We say that a configuration $\sigma$ belonging to $\mathcal{D}^{a,\,b}$
is a \textit{dead-end} between $\mathbf{a}$ and $\mathbf{b}$. For
each proper partition $(A,\,B)$, we write
\[
\mathcal{D}^{A,\,B}=\bigcup_{a'\in A}\bigcup_{b'\in B}\mathcal{D}^{a',\,b'}\;.
\]
\end{defn}

Next, we perform further investigations on the dead-end configurations,
after which we can explain why these configurations are called dead-ends
(cf. Remark \ref{r_dead}).
\begin{lem}
\label{l_PP}Suppose that $\sigma\in\mathcal{P}_{n}^{a,\,b}$ with
$n\in\llbracket2,\,L-2\rrbracket$ and $\zeta\in\mathcal{X}$ satisfies
$\sigma\approx\zeta$. Then, we have either $\zeta\in\mathcal{N}(\mathcal{R}_{n}^{a,\,b})$
(two choices) or $\zeta\in\mathcal{P}_{n}^{a,\,b}$ ($q-3$ choices).
\end{lem}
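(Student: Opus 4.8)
The configuration $\sigma \in \mathcal{P}_n^{a,b}$ is of type \textbf{(PP)}: it consists of a regular configuration in $\mathcal{R}_n^{a,b}$ (with clusters of spins $a$ and $b$) together with a single triangle, lying in some strip $\scal{h}_{\ell_0}$ adjacent to the $a/b$ interface, carrying a third spin $c \notin \{a,b\}$. Since $H(\sigma) = 2L+2$, which is the top of the energy range $\le 2L+2$, a good flip $\sigma \approx \zeta$ cannot increase the energy, so by \eqref{e_nonbdry} it must flip a boundary triangle of $\sigma$, and moreover it must flip that triangle to a spin that does not raise the energy. The plan is therefore to enumerate the boundary triangles of $\sigma$ by type, exactly as in the proofs of Lemmas \ref{l_reg} and \ref{l_odd}, and check each case against the claimed list.

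First I would isolate the local picture around the peculiar triangle $x_0$ (the one with spin $c$) and its neighboring triangles in $\scal{h}_{\ell_0}$ and in the two adjacent strips; away from $x_0$, the configuration looks exactly like a regular configuration $\xi := \xi_{\scal{h}(P)}^{a,b} \in \mathcal{R}_n^{a,b}$, whose boundary triangles were already classified in Lemma \ref{l_reg}. The boundary triangles of $\sigma$ thus split into two groups: (i) those that also are boundary triangles of the underlying regular configuration $\xi$ and are "far" from $x_0$, and (ii) those in the immediate vicinity of $x_0$. For group (i), flipping such a triangle has the same effect as in Lemma \ref{l_reg} except that the peculiar triangle $x_0$ is still present; flipping the triangle of spin $a$ (or $b$) adjacent to the $a/b$ interface but away from $x_0$ to the opposite interface spin yields a canonical configuration with the protuberance $x_0$ still attached, and one checks this lands in $\widehat{\mathcal{R}}_n^{a,b}$-type configuration plus a protuberance — concretely, $\zeta \in \mathcal{N}(\mathcal{R}_n^{a,b})$ (this accounts for the "two choices," flipping toward the $a$-side or the $b$-side). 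For group (ii), the only good flips are: flipping $x_0$ itself back to $a$ or $b$ (which returns us to $\mathcal{R}_n^{a,b} \subseteq \mathcal{N}(\mathcal{R}_n^{a,b})$), or flipping one of the triangles adjacent to $x_0$ to one of the remaining third spins $c' \in \Omega \setminus \{a,b,c\}$, giving another type-\textbf{(PP)} configuration in $\mathcal{P}_n^{a,b}$; flipping $x_0$ to a fourth spin would not change the energy but keeps $\sigma$ of type \textbf{(PP)}, contributing further elements of $\mathcal{P}_n^{a,b}$. Counting these carefully gives the stated "$q-3$ choices" (the $q-3$ spins other than $a$, $b$, and $c$ that can be substituted while preserving the \textbf{(PP)} structure).

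I would present the case analysis with a figure analogous to Figures \ref{fig67} and the display in the proof of Lemma \ref{l_reg}, grouping the boundary triangles of $\sigma$ into a small number of types and tabulating, for each, which flips are good and where they land. The bookkeeping must confirm two things: that no good flip produces anything outside $\mathcal{N}(\mathcal{R}_n^{a,b}) \cup \mathcal{P}_n^{a,b}$ (in particular, that one cannot reach an \textbf{(ODP)}/\textbf{(TDP)}/\textbf{(SP)} configuration or a configuration of energy $< 2L+2$ other than those already in $\mathcal{N}(\mathcal{R}_n^{a,b})$, which follows because the peculiar third-spin triangle forces the non-bridge strips through $x_0$ to retain energy $\ge 3$, using \eqref{e_Edec}); and that the multiplicities are exactly two and $q-3$. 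The main obstacle I anticipate is the same one flagged by the authors for the whole \textbf{(PP)} analysis: the local geometry of the hexagonal lattice around the third-spin triangle is delicate, and one must be careful that flipping certain triangles near $x_0$ does not accidentally create a cross or drop the energy below $2L+2$ via an unexpected cancellation — ruling this out requires a localized version of the strip-energy decomposition \eqref{e_Edec} restricted to the $O(1)$ strips meeting a neighborhood of $x_0$, together with the observation that all other strips remain bridges. Once that local analysis is pinned down, the global conclusion $\zeta \in \mathcal{N}(\mathcal{R}_n^{a,b}) \cup \mathcal{P}_n^{a,b}$ with the stated counts follows by elementary inspection, as in the companion lemmas.
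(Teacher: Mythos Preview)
Your plan contains a genuine error in the core case analysis. You claim that boundary triangles away from the peculiar triangle $x_0$ (your ``group (i)'') can be good-flipped and land in $\mathcal{N}(\mathcal{R}_n^{a,b})$, and that triangles adjacent to $x_0$ can be good-flipped to a third spin to produce new \textbf{(PP)} configurations. Both are false. Since $H(\sigma)=2L+2$, a good flip must not increase the energy; but every triangle $y\neq x_0$ in $\sigma$ disagrees with at most one of its three neighbours (either it is a non-boundary triangle, or it sits on the $a/b$ interface with a single mismatch, or it neighbours $x_0$ with a single mismatch against spin $c$), so flipping $y$ to any other spin raises its local contribution by at least $1$ and hence $H(\zeta)\ge 2L+3$. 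In particular, your group (i) flips yield energy $2L+3$, not configurations in $\mathcal{N}(\mathcal{R}_n^{a,b})$; and even set-theoretically, such a $\zeta$ would carry \emph{two} protuberances (the old $c$-triangle and a new size-$1$ protuberance) and thus cannot lie in $\mathcal{N}(\mathcal{R}_n^{a,b})$ at all.

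The correct argument, which is the paper's, is the one you mention only in passing: the \emph{only} good flip is at $x_0$ itself. Flipping $x_0$ to $a$ returns to the underlying regular configuration in $\mathcal{R}_n^{a,b}$; flipping $x_0$ to $b$ gives the canonical configuration in $\mathcal{C}_{n,\scal{o}}^{a,b}$ with $|\mathfrak{p}^{a,b}|=1$, which lies in $\mathcal{N}(\mathcal{R}_n^{a,b})\setminus\mathcal{R}_n^{a,b}$ (not in $\mathcal{R}_n^{a,b}$ as you wrote). These are the ``two choices''. Flipping $x_0$ to any $c'\in\Omega\setminus\{a,b,c\}$ keeps $H=2L+2$ and yields another \textbf{(PP)} configuration in $\mathcal{P}_n^{a,b}$; these are the ``$q-3$ choices''. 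There is no further case analysis to do, and no elaborate local strip decomposition is needed beyond the one-line observation that $x_0$ is the unique triangle with three mismatched neighbours.
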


\begin{proof}
A good flip of a configuration $\sigma\in\mathcal{P}_{n}^{a,\,b}$
must flip the spin at the peculiar protuberance. By flipping this
spin to $a$ or $b$, we get a configuration in $\mathcal{N}(\mathcal{R}_{n}^{a,\,b})$.
Otherwise, we get a configuration in $\mathcal{P}_{n}^{a,\,b}$ and
we are done.
\end{proof}
\begin{lem}
\label{l_QQ}Suppose that $\sigma\in\mathcal{Q}_{n}^{a,\,b}$ with
$n\in\llbracket2,\,L-3\rrbracket$ is obtained from $\xi\in\mathcal{C}_{n,\,\scal{o}}^{a,\,b}$
by flipping a spin. Suppose that $\zeta\in\mathcal{X}$ satisfies
$\sigma\approx\zeta$.
\begin{enumerate}
\item If $|\mathfrak{p}^{a,\,b}(\xi)|\neq1,\,2L-1$, we have $\zeta=\xi$.
\item If $|\mathfrak{p}^{a,\,b}(\xi)|=1$ (so that $\xi\in\mathcal{N}(\mathcal{R}_{n}^{a,\,b})$),
there are exactly two possible configurations for $\zeta$, which
are both in $\mathcal{N}(\mathcal{R}_{n}^{a,\,b})\setminus\mathcal{R}_{n}^{a,\,b}$.
\item If $|\mathfrak{p}^{a,\,b}(\xi)|=2L-1$ (so that $\xi\in\mathcal{N}(\mathcal{R}_{n+1}^{a,\,b})$),
there are exactly two possible configurations for $\zeta$, which
are both in $\mathcal{N}(\mathcal{R}_{n+1}^{a,\,b})\setminus\mathcal{R}_{n+1}^{a,\,b}$.
\end{enumerate}
\end{lem}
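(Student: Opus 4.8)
The plan is to use that $H(\sigma)=2L+2$ is the maximal energy, so that every good flip of $\sigma$ strictly lowers the energy, and then to pin down which triangle can be flipped. Since $\sigma$ is of one of the types \textbf{(ODP)}, \textbf{(TDP)}, \textbf{(SP)} of Definition \ref{d_2L+2}, it uses only the two spins $a$ and $b$; hence each boundary triangle of $\sigma$ has (number of neighbours of its own spin) plus (number of neighbours of the other spin) equal to $3$, an odd number, so these two counts cannot coincide and flipping any boundary triangle changes the energy. Combined with \eqref{e_nonbdry}, $\sigma\approx\zeta$ forces $\zeta$ to be obtained by flipping a boundary triangle to a spin held by strictly more of its neighbours than its own; in a two-spin configuration the resulting energy drop is either $1$ or $3$, and a drop of $3$ would require an isolated one-triangle cluster, which \textbf{(ODP)}/\textbf{(TDP)}/\textbf{(SP)} configurations do not have (their only defects are protuberances genuinely attached to the large monochromatic band). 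So $H(\zeta)=2L+1$. Moreover a single flip of $\sigma$ cannot turn any of its perpendicular strips (all semibridges, each with both arcs of length $\ge 2$) into a bridge, so $\zeta$ is cross-free; as $\zeta$ also uses only spins $a,b$, Proposition \ref{p_E=00003D2L+1} (and the identification of $\mathcal{C}^{a,b}_{m,\scal{o}}$- with $\mathcal{C}^{b,a}_{m',\scal{o}}$-configurations noted in Definition \ref{d_can}) gives $\zeta\in\mathcal{C}_{m,\scal{o}}^{a,b}$ for some $m$.

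The core of the proof is then a local inspection. By Lemma \ref{l_odd} the flip $\xi\to\sigma$ is at a triangle of types $4$--$7$ in Figure \ref{fig67}, so the $b$-cluster of $\sigma$ is the $b$-cluster $\scal{h}(P)\cup\mathfrak{p}$ of $\xi$ --- where $\mathfrak{p}=\mathfrak{p}^{a,b}(\xi)$ is a connected odd protuberance, hence (by \eqref{cond_prot}) aligned so that its longer side touches $\scal{h}(P)$ --- together with one further size-$1$ protuberance $\beta$, disjoint from $\mathfrak{p}$, sitting either on a strip adjacent to $\scal{h}(P)$ (cases \textbf{(ODP)}, \textbf{(TDP)}) or directly on top of $\mathfrak{p}$ (case \textbf{(SP)}). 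By the first paragraph, $\sigma\to\zeta$ must flip a boundary triangle with exactly one neighbour of its own spin, to the other spin. Running through these: triangles on the outer rows of $\scal{h}(P)$ away from $\mathfrak{p},\beta$ have two neighbours of their own spin; the end triangles of $\mathfrak{p}$ and the $a$-triangles just outside them also have two neighbours of their own spin, by the alignment of $\mathfrak{p}$ --- this is exactly what forbids good flips that would grow or shrink $\mathfrak{p}$ --- provided $|\mathfrak{p}|\ge 3$; and the $a$-neighbours of $\beta$ have two neighbours of their own spin because $\beta$ stays clear of $\mathfrak{p}$, which holds in all three types (for \textbf{(ODP)} because Definition \ref{d_2L+2} requires the two protuberances to be truly separated rather than merely sharing a vertex, a configuration that would instead fall into $\mathcal{C}_{\cdot,\scal{e}}$ as a disconnected even protuberance). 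The only remaining candidate is the single triangle of $\beta$, which has one $b$-neighbour and two $a$-neighbours, and flipping it to $a$ recovers $\xi$. Hence when $3\le|\mathfrak{p}^{a,b}(\xi)|\le 2L-3$ the unique good flip is $\zeta=\xi$, which is (1).

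Finally I would handle the two endpoint positions, which supply the second configuration in (2) and (3). If $|\mathfrak{p}^{a,b}(\xi)|=1$, the single triangle of $\mathfrak{p}$ is itself a triangle with exactly one neighbour of its own spin, and flipping it to $a$ yields the canonical configuration $\xi_{\scal{h}(P)\cup\beta}^{a,b}$, which has protuberance size $1$ and so, by Lemma \ref{l_reg}, lies in $\mathcal{N}(\mathcal{R}_n^{a,b})\setminus\mathcal{R}_n^{a,b}$; since $\xi$ itself (protuberance size $1$) lies there too, there are exactly two choices for $\zeta$. If $|\mathfrak{p}^{a,b}(\xi)|=2L-1$, then the strip carrying $\mathfrak{p}$ is all $b$ except one $a$-triangle (the size-$1$ $a$-protuberance $\mathfrak{p}^{b,a}(\xi)$), which is a one-own-neighbour triangle of $\sigma$; flipping it to $b$ completes a $b$-bridge and gives a canonical configuration with protuberance $\beta$ of size $1$ over $\scal{h}(P\cup\{\ell\})$, lying in $\mathcal{N}(\mathcal{R}_{n+1}^{a,b})\setminus\mathcal{R}_{n+1}^{a,b}$ by Lemma \ref{l_reg}, while $\xi$ (which has protuberance size $2L-1$) lies there as well, so again exactly two choices. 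The hard part throughout is the bookkeeping of the second paragraph: one must keep precise track of the alignment of $\mathfrak{p}$ (which governs whether its ends admit good flips) and of the separation between $\mathfrak{p}$ and $\beta$ dictated by Definition \ref{d_2L+2}, since this is precisely what stops the neighbourhoods of the two protuberances from interacting and generating additional good flips.
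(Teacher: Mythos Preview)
Your overall strategy is sound and the local bookkeeping in paragraph~2 is done carefully, but there is a genuine gap in your description of $\sigma$. You assert that the flip $\xi\to\sigma$ at a type $4$--$7$ triangle always \emph{adds} a size-$1$ $b$-protuberance $\beta$ to the $b$-cluster of $\xi$. This is only half the story: the good flips from $\xi$ that land in $\mathcal{Q}_n^{a,b}$ can equally well flip a $b$-spin to $a$, i.e.\ dent the outer row of the $b$-band $\scal{h}(P)$. Indeed, the paper's own proof of this lemma is explicit that $\sigma$ arises from $\xi$ by flipping ``either from $a$ to $b$ or $b$ to $a$'', and for $|\mathfrak{p}^{a,b}(\xi)|=1$ it lists four types, two of which are dents (giving \textbf{SP} and \textbf{TDP} from the $a$-side, since $|\mathfrak{p}^{b,a}(\xi)|=2L-1$). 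Your entire paragraph~2 analysis --- scanning for triangles with exactly one own-spin neighbour --- is predicated on $\sigma$ having the specific shape $\scal{h}(P)\cup\mathfrak{p}\cup\beta$, so the dent cases are simply not covered.

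The fix is easy and you already have the ingredient: you invoke the identification $\mathcal{C}_{n,\scal{o}}^{a,b}=\mathcal{C}_{L-1-n,\scal{o}}^{b,a}$ from Definition~\ref{d_can} in passing. Applying it once more gives $\mathcal{Q}_n^{a,b}=\mathcal{Q}_{L-1-n}^{b,a}$, so by swapping the roles of $a$ and $b$ you may assume without loss of generality that the flip $\xi\to\sigma$ adds a $b$-triangle, which is exactly your setup. Under this swap parts (2) and (3) exchange (since $|\mathfrak{p}^{a,b}(\xi)|=1\Leftrightarrow|\mathfrak{p}^{b,a}(\xi)|=2L-1$ and $\mathcal{R}_n^{a,b}=\mathcal{R}_{L-n}^{b,a}$), so the conclusions match up. With that one sentence your argument becomes complete; compared to the paper's figure-based case check, your route through Proposition~\ref{p_E=00003D2L+1} is more self-contained but correspondingly longer, and the first paragraph's conclusion $\zeta\in\mathcal{C}_{m,\scal{o}}^{a,b}$ is not actually needed once you do the direct local inspection of paragraph~2.
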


\begin{proof}
If $|\mathfrak{p}^{a,\,b}(\xi)|\neq1,\,2L-1$. we can notice from
the figure below that $\sigma$ is obtained from $\xi$ by flip the
spin at one of the triangles with bold boundary either from $a$ to
$b$ or $b$ to $a$. 
\noindent \begin{center}
\includegraphics[width=4.7cm]{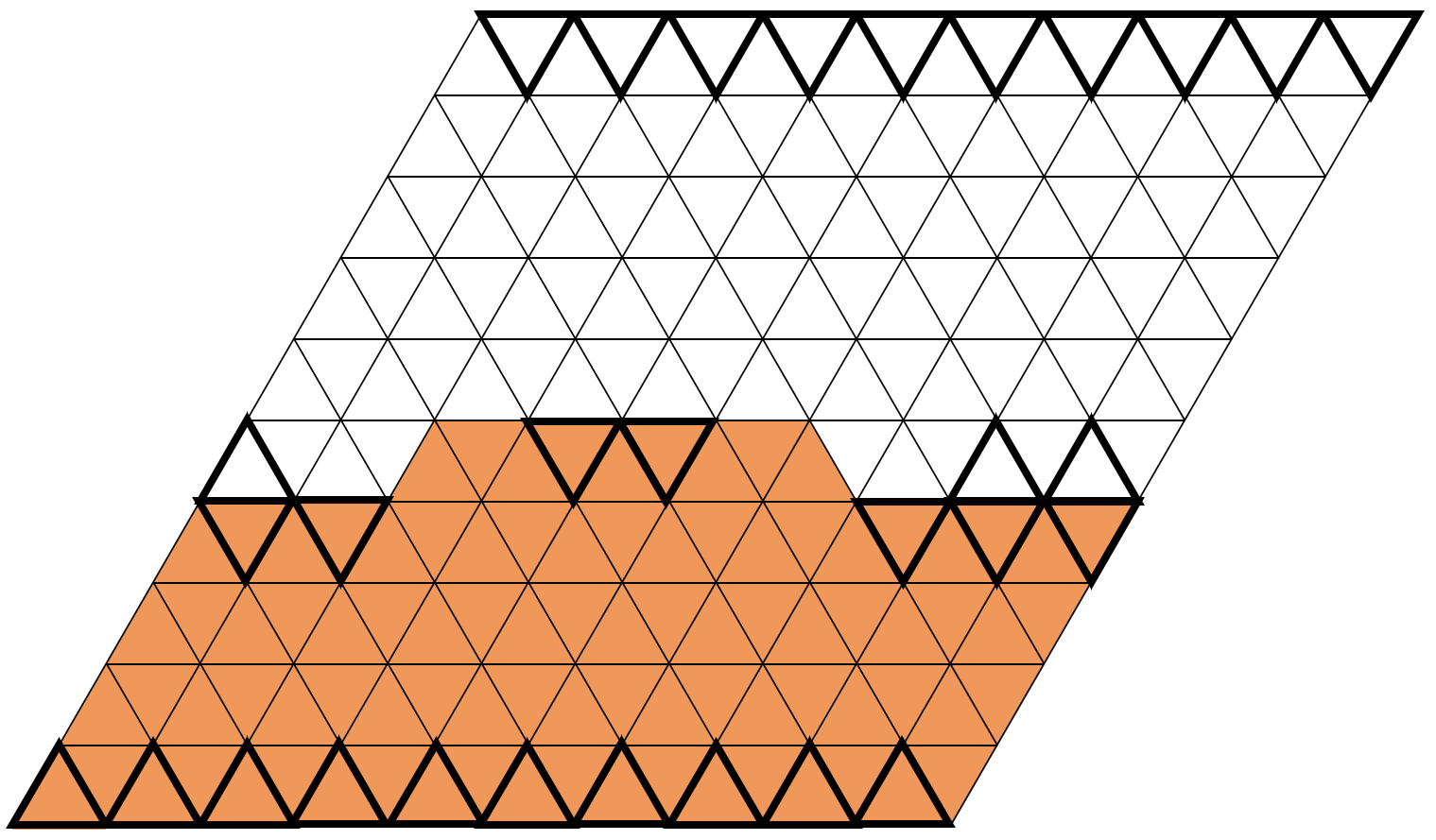}
\par\end{center}

\noindent Then, it is direct from that a good flip of $\sigma$ must
flip back this updated spin, since otherwise the energy will be further
increased to at least $2L+3$. Hence, we get $\zeta=\xi$.

Next, we consider the case $|\mathfrak{p}^{a,\,b}(\xi)|=1$. Then,
as in the figure below, $\sigma$ should be obtained by adding a protuberance
of spin $a$ or $b$ of size one to $\xi$, and there are four different
types. 
\noindent \begin{center}
\includegraphics[width=16cm]{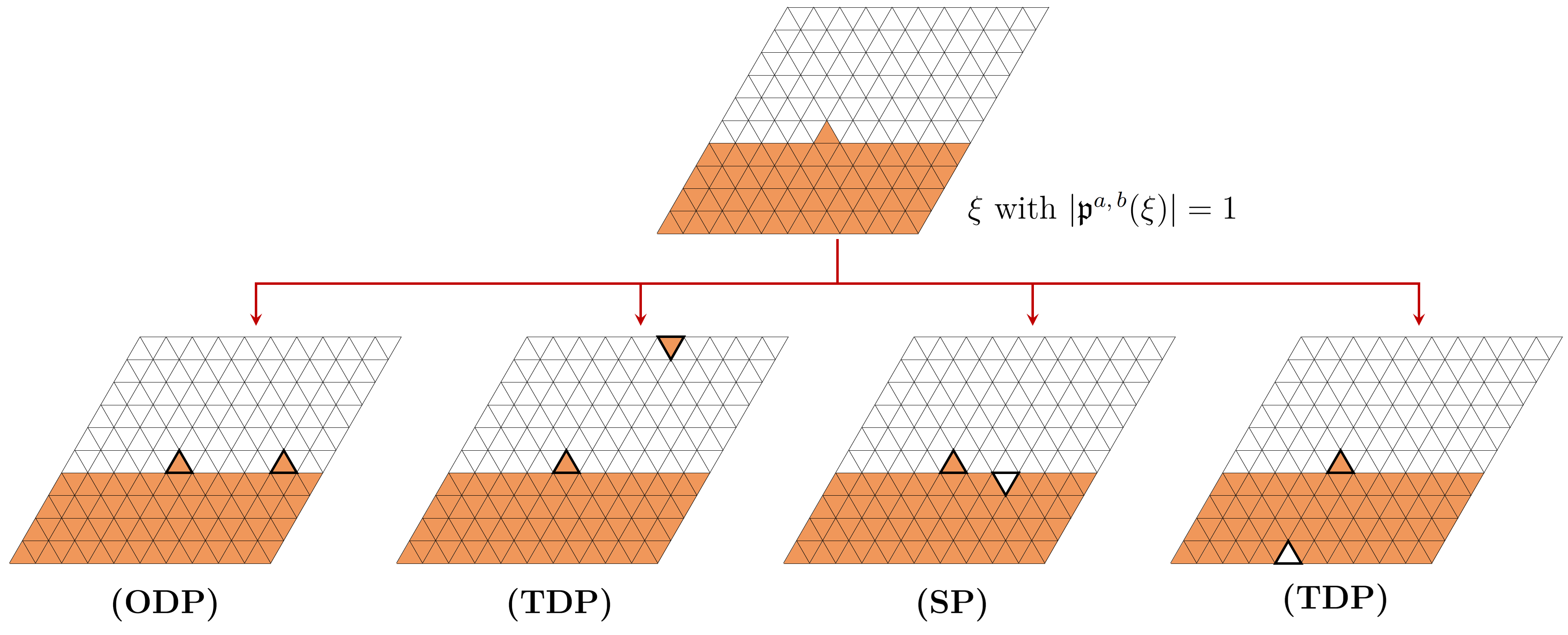}
\par\end{center}

\noindent Therefore, $\sigma$ has two protuberances of size one denoted
by bold boundary and a good flip must remove one of them. Thus, there
are exactly two possible configurations $\zeta_{1},\,\zeta_{2}$ for
$\zeta$ and it is immediate that $\zeta_{1},\,\zeta_{2}\in\mathcal{N}(\mathcal{R}_{n}^{a,\,b})\setminus\mathcal{R}_{n}^{a,\,b}$.
The proof for the case $|\mathfrak{p}^{a,\,b}(\xi)|=2L-1$ is almost
identical to the case $|\mathfrak{p}^{a,\,b}(\xi)|=1$ and we omit
the detail.
\end{proof}
Finally, we provide a summary of the preceding results.
\begin{prop}
\label{p_dead}Let $\sigma\in\bigcup_{n=2}^{L-3}\mathcal{C}_{n}^{a,\,b}$
or $\sigma\in\mathcal{D}^{a,\,b}$ and suppose that $\zeta\in\mathcal{X}$
satisfies $\zeta\approx\sigma$. Then, $\zeta$ is either a canonical
configuration\footnote{Indeed, we have $\zeta\in[\bigcup_{n=2}^{L-3}\mathcal{C}_{n}^{a,\,b}]\cup\mathcal{N}(\mathcal{R}_{2}^{a,\,b})\cup\mathcal{N}(\mathcal{R}_{L-2}^{a,\,b})$.}
or a dead-end in $\mathcal{D}^{a,\,b}$.
\end{prop}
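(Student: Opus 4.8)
The plan is to prove Proposition \ref{p_dead} by a routine case analysis on the membership of $\sigma$, reducing it entirely to the five structural lemmas already established (Lemmas \ref{l_reg}, \ref{l_odd}, \ref{l_even}, \ref{l_PP}, and \ref{l_QQ}). First I would recall the decomposition from Remark \ref{r_can}, namely $\mathcal{C}_{n}^{a,\,b}=\mathcal{R}_{n}^{a,\,b}\cup\mathcal{R}_{n+1}^{a,\,b}\cup\mathcal{C}_{n,\,\scal{o}}^{a,\,b}\cup\mathcal{C}_{n,\,\scal{e}}^{a,\,b}$ for $n\in\llbracket1,\,L-2\rrbracket$, together with the definition $\mathcal{D}^{a,\,b}=[\bigcup_{n=2}^{L-2}\mathcal{P}_{n}^{a,\,b}]\cup[\bigcup_{n=2}^{L-3}\mathcal{Q}_{n}^{a,\,b}]$. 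Since $\sigma$ lies in $\bigcup_{n=2}^{L-3}\mathcal{C}_{n}^{a,\,b}$ or in $\mathcal{D}^{a,\,b}$, it belongs to one of the basic classes $\mathcal{R}_{m}^{a,\,b}$ with $m\in\llbracket2,\,L-2\rrbracket$, $\mathcal{C}_{m,\,\scal{o}}^{a,\,b}$ with $m\in\llbracket2,\,L-3\rrbracket$, $\mathcal{C}_{m,\,\scal{e}}^{a,\,b}$ with $m\in\llbracket2,\,L-3\rrbracket$, $\mathcal{P}_{m}^{a,\,b}$ with $m\in\llbracket2,\,L-2\rrbracket$, or $\mathcal{Q}_{m}^{a,\,b}$ with $m\in\llbracket2,\,L-3\rrbracket$; in each case the index range is exactly the one in which the corresponding lemma applies.

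Next I would treat the five cases in turn. If $\sigma\in\mathcal{R}_{m}^{a,\,b}$, Lemma \ref{l_reg} gives $\zeta\in\widehat{\mathcal{R}}_{m}^{a,\,b}\cup\mathcal{P}_{m}^{a,\,b}$, where $\widehat{\mathcal{R}}_{m}^{a,\,b}\subseteq\mathcal{C}_{m,\,\scal{o}}^{a,\,b}\cup\mathcal{C}_{m-1,\,\scal{o}}^{a,\,b}$ consists of canonical configurations and $\mathcal{P}_{m}^{a,\,b}\subseteq\mathcal{D}^{a,\,b}$. If $\sigma\in\mathcal{C}_{m,\,\scal{o}}^{a,\,b}$, Lemma \ref{l_odd} gives $\zeta\in\mathcal{R}_{m}^{a,\,b}\cup\mathcal{R}_{m+1}^{a,\,b}\cup\mathcal{C}_{m,\,\scal{e}}^{a,\,b}$ (all canonical), or $\zeta\in\mathcal{P}_{m}^{a,\,b}\cup\mathcal{P}_{m+1}^{a,\,b}$, or $\zeta\in\mathcal{Q}_{m}^{a,\,b}$, and since $m\le L-3$ we have $m+1\le L-2$ so the last three outputs lie in $\mathcal{D}^{a,\,b}$. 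If $\sigma\in\mathcal{C}_{m,\,\scal{e}}^{a,\,b}$, Lemma \ref{l_even} gives $\zeta\in\mathcal{C}_{m,\,\scal{o}}^{a,\,b}$, which is canonical. If $\sigma\in\mathcal{P}_{m}^{a,\,b}$, Lemma \ref{l_PP} gives $\zeta\in\mathcal{N}(\mathcal{R}_{m}^{a,\,b})$ — which is canonical, since $\mathcal{N}(\mathcal{R}_{m}^{a,\,b})=\mathcal{R}_{m}^{a,\,b}\cup\widehat{\mathcal{R}}_{m}^{a,\,b}$ by the last statement of Lemma \ref{l_reg} — or $\zeta\in\mathcal{P}_{m}^{a,\,b}\subseteq\mathcal{D}^{a,\,b}$. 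Finally, if $\sigma\in\mathcal{Q}_{m}^{a,\,b}$ is obtained from $\xi\in\mathcal{C}_{m,\,\scal{o}}^{a,\,b}$ by a spin flip, Lemma \ref{l_QQ} gives $\zeta=\xi$, or $\zeta\in\mathcal{N}(\mathcal{R}_{m}^{a,\,b})\setminus\mathcal{R}_{m}^{a,\,b}$, or $\zeta\in\mathcal{N}(\mathcal{R}_{m+1}^{a,\,b})\setminus\mathcal{R}_{m+1}^{a,\,b}$, all of which are canonical. Collecting the outputs of the five cases shows that $\zeta$ is always a canonical configuration or an element of $\mathcal{D}^{a,\,b}$, which is the assertion; keeping track of which indices actually occur (the only canonical outputs that need not lie in $\bigcup_{n=2}^{L-3}\mathcal{C}_{n}^{a,\,b}$ are those in $\widehat{\mathcal{R}}_{m}^{a,\,b}$ or $\mathcal{N}(\mathcal{R}_{m}^{a,\,b})$ for $m\in\{2,\,L-2\}$) yields the sharper inclusion stated in the footnote.

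There is no genuinely hard step here: all the geometric analysis has been carried out in the five lemmas, and the proposition is essentially their bookkeeping-level assembly. The one point requiring a little care is simply the verification that every index produced by a lemma remains inside the admissible range, so that a $\mathcal{P}$- or $\mathcal{Q}$-output is really a dead-end in the sense of Definition \ref{d_dead} and a $\mathcal{C}_{m,\,\scal{o}}$, $\mathcal{C}_{m,\,\scal{e}}$, $\mathcal{R}_{m}$, $\widehat{\mathcal{R}}_{m}$, or $\mathcal{N}(\mathcal{R}_{m})$ output is really among the canonical configurations; this goes through automatically from the restriction of $\sigma$ to $n\in\llbracket2,\,L-3\rrbracket$ (for the $\mathcal{C}_{n}$ part) and to $\mathcal{P}_{m},\mathcal{Q}_{m}$ with $m$ in the ranges of Definition \ref{d_dead}, as indicated in the previous paragraph. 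Hence the proof consists precisely of invoking the five lemmas case by case.
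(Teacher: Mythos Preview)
Your proposal is correct and takes exactly the same approach as the paper, which simply states that the proposition is a direct consequence of Lemmas \ref{l_reg}, \ref{l_odd}, \ref{l_even}, \ref{l_PP}, and \ref{l_QQ}. Your case-by-case bookkeeping of the index ranges is an accurate elaboration of that one-line proof.
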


\begin{proof}
This proposition is a direct consequence of Lemmas \ref{l_reg}, \ref{l_odd},
\ref{l_even}, \ref{l_PP}, and \ref{l_QQ}.
\end{proof}
\begin{rem}
\label{r_dead}Now, we are able to explain why the configurations
in $\mathcal{D}^{a,\,b}$ is called dead-end configurations. According
to the definition of $\mathcal{D}^{a,\,b}$, a dead-end $\sigma$
is adjacent to either $\mathcal{N}(\mathcal{R}_{n}^{a,\,b})$ for
some $n\in\llbracket2,\,L-2\rrbracket$ or $\xi\in\mathcal{C}_{n,\,\scal{o}}^{a,\,b}$
such that $|\mathfrak{p}^{a,\,b}(\xi)|\in\llbracket3,\,2L-3\rrbracket$
for some $n\in\llbracket2,\,L-3\rrbracket$. Let $\sigma\approx\zeta$.
Then, for the former case, by Lemmas \ref{l_PP} and \ref{l_QQ}-(2)(3),
$\zeta$ is either another dead-end configuration adjacent to $\mathcal{N}(\mathcal{R}_{n}^{a,\,b})$
or a configuration in $\mathcal{N}(\mathcal{R}_{n}^{a,\,b})$. Hence,
these ones indeed serve as \textit{dead-ends} attached to $\mathcal{N}(\mathcal{R}_{n}^{a,\,b})$
(which consists of canonical configurations only by Lemma \ref{l_reg}).
For the latter case, $\zeta=\xi$ by Lemma \ref{l_QQ}-(1) and therefore
$\{\sigma\}$ is a single \textit{dead-end} attached to the canonical
configuration $\xi$.
\end{rem}

\subsection{\label{sec64}Energy barrier}

Now, we are ready to prove Theorem \ref{t_Ebarrier}. We first establish
the upper bound.
\begin{prop}
\label{p_EBub}For any $a,\,b\in\Omega$, we have $\Phi(\mathbf{a},\,\mathbf{b})\le2L+2$.
\end{prop}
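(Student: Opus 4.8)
The plan is to explicitly exhibit a $(2L+2)$-path connecting $\mathbf{a}$ and $\mathbf{b}$, which immediately yields $\Phi(\mathbf{a},\,\mathbf{b})\le 2L+2$. The natural candidate is a \emph{canonical path} in the sense of Remark \ref{r_canpath}. Concretely, fix a direction, say horizontal, and consider the nested family $P_0=\emptyset\prec P_1\prec\cdots\prec P_L=\mathbb{T}_L$ in $\mathfrak{S}_L$ given by $P_n=\llbracket 1,\,n\rrbracket$ (for $n\ge 1$). Then $\xi_{\scal{h}(P_0)}^{a,\,b}=\mathbf{a}$ and $\xi_{\scal{h}(P_L)}^{a,\,b}=\mathbf{b}$, and for each $n\in\llbracket 0,\,L-1\rrbracket$ we concatenate a canonical path inside $\mathcal{C}_{\scal{h}(P_n,\,P_{n+1})}^{a,\,b}$ running from $\xi_{\scal{h}(P_n)}^{a,\,b}$ to $\xi_{\scal{h}(P_{n+1})}^{a,\,b}$ (this is the single-strip filling move depicted in Figure \ref{fig63}). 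Joining these $L$ canonical paths end to end produces a single path from $\mathbf{a}$ to $\mathbf{b}$.

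The key step is verifying that every configuration visited along this path has energy at most $2L+2$. This is precisely the content of Remark \ref{r_can}: each configuration on a canonical path between $\xi_{\scal{h}(P_n)}^{a,\,b}$ and $\xi_{\scal{h}(P_{n+1})}^{a,\,b}$ lies in $\mathcal{C}_{\scal{h}(P_n,\,P_{n+1})}^{a,\,b}\subseteq\mathcal{C}_n^{a,\,b}$, hence is either a regular configuration (energy $2L$), a canonical configuration with an odd protuberance (energy $2L+1$), or one with an even protuberance (energy $2L+2$); in every case the energy is $\le 2L+2$. One should also check the boundary cases $n=0$ and $n=L-1$ separately, but Remark \ref{r_can}(2) handles these, giving energy $\le 2L+1$ there. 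It remains only to check that consecutive moves in the filling of a single strip are genuine single-spin flips (i.e., that the path is a legitimate path in the sense of \eqref{e_path}): each step flips exactly one triangle from spin $a$ to spin $b$ (or removes the small auxiliary protuberance triangle), which is immediate from the explicit construction in Figure \ref{fig63}.

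Since the symmetry of the model makes $\Gamma=\Phi(\mathbf{a},\,\mathbf{b})$ independent of the choice of $\mathbf{a},\,\mathbf{b}\in\mathcal{S}$, this establishes $\Gamma\le 2L+2$. I do not anticipate a serious obstacle here: the only point requiring care is bookkeeping — confirming that the canonical path as defined in Remark \ref{r_canpath} actually realizes each intermediate canonical configuration via a single flip and never exceeds the stated energy levels, which is exactly why Definitions \ref{d_reg}--\ref{d_can} and Remark \ref{r_can} were set up as they were. The matching lower bound $\Phi(\mathbf{a},\,\mathbf{b})\ge 2L+2$ (and the no-deep-valley statement) is the substantive half of Theorem \ref{t_Ebarrier}, but that is a separate proposition and not needed here.
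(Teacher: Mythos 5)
Your proof is correct and follows essentially the same route as the paper's: both build the path by concatenating canonical paths along the nested filtration $P_0\prec P_1\prec\cdots\prec P_L$ with $P_n=\llbracket 1,\,n\rrbracket$, and invoke Remarks \ref{r_can} and \ref{r_canpath} to certify that every intermediate configuration has energy at most $2L+2$. Your write-up merely unpacks the energy bookkeeping (regular vs.\ odd vs.\ even protuberance) that the paper leaves implicit in the citation to Remark \ref{r_canpath}.
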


\begin{proof}
Let $P_{0}=\emptyset$ and let $P_{n}=\{1,\,\dots,\,n\}\subseteq\mathbb{T}_{L}$
for $n\in\llbracket1,\,L\rrbracket$ so that $P_{0}\prec P_{1}\prec\cdots\prec P_{L}$.
Since $\xi_{\scal{h}(P_{0})}^{a,\,b}=\mathbf{a}$ and $\xi_{\scal{h}(P_{L})}^{a,\,b}=\mathbf{b}$,
it suffices to show that $\Phi(\xi_{\scal{h}(P_{n})}^{a,\,b},\,\xi_{\scal{h}(P_{n+1})}^{a,\,b})\le2L+2$
for all $n\in\llbracket0,\,L-1\rrbracket$. This follows from Remark
\ref{r_canpath}. 
\end{proof}
Next, we turn to the matching lower bound which is the crucial part
in the proof. 
\begin{prop}
\label{p_EBlb}For any $a,\,b\in\Omega$, we have $\Phi(\mathbf{a},\,\mathbf{b})\ge2L+2$.
\end{prop}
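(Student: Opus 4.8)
The plan is to argue by contradiction. Suppose there is a $(2L+1)$-path $\omega=(\omega_{n})_{n=0}^{N}$ from $\mathbf{a}$ to $\mathbf{b}$, i.e.\ $H(\omega_{n})\le 2L+1$ for every $n$; equivalently, suppose $\mathbf{b}\in\mathcal{N}(\mathbf{a})$ (cf.\ Definition \ref{d_nbd}). By Lemma \ref{l_Elb} each $\omega_{n}$ then has at least $3L-(2L+1)=L-1\ (\ge 7)$ bridges. Recall the elementary geometric fact that any two strips of different directions intersect (two non-parallel, non-contractible loops on the torus must cross); consequently, if a configuration has bridges in two distinct directions then all of its bridges carry the same spin, since a horizontal $c$-bridge and a vertical $c'$-bridge would force a common triangle to have both spins $c$ and $c'$.

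The crux is a structural lemma extending Propositions \ref{p_E<=00003D2L} and \ref{p_E=00003D2L+1}: every configuration $\sigma$ with $H(\sigma)\le 2L+1$ admits a direction such that at least $L-2$ of the $L$ strips of that direction are bridges; moreover at most two distinct spins occur among those bridges, and, up to at most two ``dents'' carried by strips of the chosen direction, $\sigma$ is either monochromatic or a regular configuration $\mathcal{R}_{m}^{a',b'}$. (The proof uses the energy decomposition \eqref{e_Edec} together with Lemma \ref{l_strip}, the intersection fact above, and a case analysis on the number and arrangement of bridges in the style of the proof of Proposition \ref{p_E=00003D2L+2}; the cross-free cases are precisely Propositions \ref{p_E<=00003D2L} and \ref{p_E=00003D2L+1}, while a configuration carrying a cross yet having many bridges spread over two or three directions would contain a ``thickened cross'' of perimeter of order exceeding $2L$, so a crossed low-energy configuration must have nearly all its bridges parallel and hence again fits the description.) Using this, attach to each $\omega_{n}$ a \emph{type}: a single spin $s(\omega_{n})\in\Omega$ when its $\ge L-2$ parallel bridges are (up to the two dents) of one spin, and otherwise the unordered pair $\{a',b'\}$ together with the index $m$ describing the underlying regular configuration $\mathcal{R}_{m}^{a',b'}$.

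It remains to show that the type is invariant along the path, which contradicts the fact that $\omega_{0}=\mathbf{a}$ has type $a$ while $\omega_{N}=\mathbf{b}$ has type $b$. A single spin flip cannot turn a type-$s$ configuration into a type-$s'$ one with $s'\ne s$, nor alter the pair $\{a',b'\}$ of a split type: such a change would produce, at one of the two consecutive configurations, at least $L-2$ bridges of spin $s$ in one direction together with at least $L-2$ bridges of a different spin $s'$ in a transverse direction, and by the intersection fact these bridges would overlap, an impossibility. Finally, for a split type $\{a',b'\}$ the index $m$ cannot change while the energy stays $\le 2L+1$: passing from $\mathcal{R}_{m}^{a',b'}$ to $\mathcal{R}_{m\pm1}^{a',b'}$ forces the path through configurations in which a single strip is a proper mixture of the two spins, and for the total energy to stay $\le 2L+1$ the minority part on that strip must be an odd protuberance; but growing a protuberance by one site necessarily passes through an even-sized protuberance, which by Remark \ref{r_can} has energy $2L+2$, contradicting $H\le 2L+1$. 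Hence no such path exists and $\Phi(\mathbf{a},\mathbf{b})\ge 2L+2$.

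The step I expect to be the main obstacle is the structural lemma: a complete description of all configurations of energy $\le 2L+1$ — including the dented regular configurations and the ones carrying a cross — is more delicate than the cross-free statements already established, and one must also check carefully in the last step that there is no ``exotic'' detour (through configurations with two simultaneous dents, through a third spin, or through crossed configurations) that changes the index $m$ without raising the energy to $2L+2$. Here the rigidity coming from Lemma \ref{l_strip} (a low-energy non-bridge strip must be a semibridge) together with the fact that non-parallel strips always intersect is what closes off all such detours.
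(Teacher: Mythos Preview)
Your approach shares the paper's starting point (track bridge structure along a hypothetical $(2L+1)$-path), but it is more elaborate than necessary and, as stated, has a genuine gap. Your claimed invariance of ``type'' fails: there \emph{is} a $(2L+1)$-path from $\mathbf{a}$ (type~$a$) to $\mathcal{R}_{1}^{a,b}$ (split type $\{a,b\}$, index $m=1$), namely any canonical path through $\mathcal{C}_{0}^{a,b}$, which by Remark~\ref{r_can}(2) stays at height $\le 2L+1$. So the type \emph{does} change from monochromatic to split on a legitimate $(2L+1)$-path, and your contradiction does not follow directly. What you would actually need is the weaker (but still non-trivial) statement that the index $m$ of the split type cannot move within $\llbracket 2,L-2\rrbracket$; but justifying this rigorously forces you back through the structural lemma you left unproved, and in particular you must rule out detours through crossed configurations, configurations of type \textbf{(MB)}, or third-spin defects, none of which your sketch handles.

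The paper's proof avoids all of this by tracking a single integer: $u(n)=$ number of $b$-bridges in $\omega_{n}$. Since $u(0)=0$ and $u(N)=3L$, one looks at the first time $n^{*}$ that $u(n)\ge 2$. Because $\omega_{n^{*}}$ still has a bridge of spin $\ne b$ (Lemma~\ref{l_Elb}), it is cross-free, so $u(n^{*})=2$ and Propositions~\ref{p_E<=00003D2L} and~\ref{p_E=00003D2L+1} pin it down as an element of $\mathcal{R}_{2}^{a',b}\cup\mathcal{C}_{2,\scal{o}}^{a',b}$. Then one step back via Lemmas~\ref{l_reg} and~\ref{l_odd} (restricted to height $\le 2L+1$) already contradicts the minimality of $n^{*}$. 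This bypasses your structural lemma entirely: only the cross-free classification at levels $\le 2L+1$ is needed, and that is already established.
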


\begin{proof}
Suppose the contrary so that there exists a $(2L+1)$-path $(\omega_{n})_{n=0}^{N}$
in $\mathcal{X}$ with $\omega_{0}=\mathbf{a}$, $\omega_{N}=\mathbf{b}$.
For each $n\in\llbracket0,\,N\rrbracket$, define $u(n)$ as the number
of $b$-bridges in $\omega_{n}$ so that we have $u(0)=0$ and $u(N)=3L$.
Now, we define 
\begin{equation}
n^{*}=\min\{n\in\llbracket0,\,N\rrbracket:u(n)\ge2\}\;,\label{e_EBlb2}
\end{equation}
so that we have a trivial bound $n^{*}\ge3$. Notice that a spin flip
at a certain triangle can only affect the three strips containing
that triangle and hence 
\begin{equation}
|u(n+1)-u(n)|\le3\;\;\;\;\text{for all}\;n\in\llbracket0,\,L-1\rrbracket\;.\label{e_EBlb}
\end{equation}
From this observation, we know that $u(n^{*})\in\llbracket2,\,4\rrbracket$.
On the other hand, by Lemma \ref{l_Elb}, we have at least $3L-(2L+1)=L-1$
bridges, and hence there exists a bridge with spin not $b$. This
implies that $\omega_{n^{*}}$ does not have a cross. Then, we must
have $u(n^{*})-u(n^{*}-1)=1$ and therefore we have $u(n^{*})=2$.

By Propositions \ref{p_E<=00003D2L} and \ref{p_E=00003D2L+1}, we
have either $\omega_{n^{*}}\in\mathcal{R}_{2}^{a',\,b}$ or $\omega_{n^{*}}\in\mathcal{C}_{2,\,\scal{o}}^{a',\,b}$
for some $a'\in\Omega\setminus\{b\}$. If $\omega_{n^{*}}\in\mathcal{R}_{2}^{a',\,b}$,
then by Lemma \ref{l_reg} and the minimality assumption of $n^{*}$,
we must have $\omega_{n^{*}-1}\in\mathcal{C}_{1,\,\scal{o}}^{a',\,b}$.
Then, since $H(\omega_{n^{*}-2})\le2L+1$, we can deduce from Lemma
\ref{l_odd} that $\omega_{n^{*}-2}\in\mathcal{R}_{2}^{a',\,b}$ which
contradicts the minimality of $n^{*}$ in \eqref{e_EBlb2}. On the
other hand, if $\omega_{n^{*}}\in\mathcal{C}_{2,\,\scal{o}}^{a',\,b}$,
then since $H(\omega_{n^{*}-1})\le2L+1$, we can infer from Lemma
\ref{l_odd} that $\omega_{n^{*}-1}\in\mathcal{R}_{2}^{a',\,b}\cup\mathcal{R}_{3}^{a',\,b}$,
and therefore we again get a contradiction to the minimality of $n^{*}$.
Since we got a contradiction for both cases, the proof is completed.
\end{proof}
Now, we can conclude the proof of Theorem \ref{t_Ebarrier}.
\begin{proof}[Proof of Theorem \ref{t_Ebarrier}]
By Propositions \ref{p_EBub} and \ref{p_EBlb}, it suffices to prove
that $\Phi(\sigma,\,\mathcal{S})-H(\sigma)<2L+2$ for all $\sigma\notin\mathcal{S}$.
The proof of this bound is identical to \cite[Lemma 6.11]{KS2} and
we refer to the detailed proof therein.
\end{proof}

\section{\label{sec7_Saddle}Saddle Structure}

In order to get the Eyring--Kramers-type quantitative analysis of
metastability, we need more detailed understanding of the energy landscape.
We perform this in the current section by completely analyzing the
saddle structure between ground states. We remark that the discussion
given in this section has similar flavor with that of \cite[Section 7]{KS2},
but the detail is quite different because we are considering hexagonal
lattice with a complicated dead-end structure, and also we are working
on the large volume regime.

\subsection{\label{sec71}Typical configurations}
\begin{defn}[Typical configurations]
\label{d_typ} Let $(A,\,B)$ be a proper partition of $\Omega$. 
\begin{enumerate}
\item For $a,\,b\in\Omega$, we define the collection of \textit{bulk typical
configurations} between $\mathbf{a}$ and $\mathbf{b}$ as 
\begin{equation}
\mathcal{B}^{a,\,b}=\bigcup_{n=2}^{L-3}\mathcal{C}_{n}^{a,\,b}\cup\mathcal{D}^{a,\,b}\;.\label{e_Bulk}
\end{equation}
Then, we define the collection of bulk configurations between $\mathcal{S}(A)$
and $\mathcal{S}(B)$ as 
\[
\mathcal{B}^{A,\,B}=\bigcup_{a'\in A}\bigcup_{b'\in B}\mathcal{B}^{a',\,b'}\;.
\]
\item For $a,\,b\in\Omega$, we write 
\begin{align*}
\mathcal{B}_{\Gamma}^{a,\,b} & =\{\sigma\in\mathcal{B}^{a,\,b}:H(\sigma)=\Gamma\}=\bigcup_{n=2}^{L-3}\mathcal{C}_{n,\,\scal{e}}^{a,\,b}\cup\mathcal{D}^{a,\,b}\;,\\
\mathcal{B}_{\Gamma}^{A,\,B} & =\{\sigma\in\mathcal{B}^{A,\,B}:H(\sigma)=\Gamma\}=\bigcup_{a'\in A}\bigcup_{b'\in B}\mathcal{B}_{\Gamma}^{a',\,b'}\;.
\end{align*}
Then, we define (cf. Definition \ref{d_nbd})
\begin{equation}
\mathcal{E}^{A}=\widehat{\mathcal{N}}\big(\,\mathcal{S}(A);\,\mathcal{B}_{\Gamma}^{A,\,B}\,\big)\;\;\;\;\text{and}\;\;\;\;\mathcal{E}^{B}=\widehat{\mathcal{N}}\big(\,\mathcal{S}(B);\,\mathcal{B}_{\Gamma}^{A,\,B}\,\big)\;.\label{e_Edge}
\end{equation}
The collection of \textit{edge typical configurations} between $\mathcal{S}(A)$
and $\mathcal{S}(B)$ is defined as 
\[
\mathcal{E}^{A,\,B}=\mathcal{E}^{A}\cup\mathcal{E}^{B}\;.
\]
\end{enumerate}
\end{defn}

In the remainder of the current section, we fix a proper partition
$(A,\,B)$ of $\Omega$.
\begin{rem}
\label{r_typ}In fact, all canonical configurations are indeed typical
configurations. To see this, we let $c_{1},\,c_{2}\in\Omega$ and
demonstrate that $\mathcal{C}_{n}^{c_{1},\,c_{2}}\subseteq\mathcal{B}^{A,\,B}\cup\mathcal{E}^{A,\,B}$
for all $n\in\llbracket0,\,L-1\rrbracket$. First, if $c_{1},\,c_{2}\in A$
or $c_{1},\,c_{2}\in B$, then by Remark \ref{r_canpath}, it is straightforward
that $\mathcal{C}_{n}^{c_{1},\,c_{2}}\subseteq\mathcal{E}^{A}$ or
$\mathcal{C}_{n}^{c_{1},\,c_{2}}\subseteq\mathcal{E}^{B}$, respectively.
Next, we assume that $c_{1}$ and $c_{2}$ belong to different sets,
say $c_{1}\in A$ and $c_{2}\in B$. We divide into two cases.
\begin{enumerate}
\item In the bulk part, for $n\in\llbracket2,\,L-3\rrbracket$ we have $\mathcal{C}_{n}^{c_{1},\,c_{2}}\subseteq\mathcal{B}^{A,\,B}$
which is immediate from the definition \eqref{e_Bulk}.
\item Remarks \ref{r_can} and \ref{r_canpath} imply that all the canonical
configurations in $\mathcal{C}_{0}^{c_{1},\,c_{2}}\cup\mathcal{C}_{1}^{c_{1},\,c_{2}}$
are connected by a $\Gamma$-path in $\mathcal{C}_{0}^{c_{1},\,c_{2}}\cup\mathcal{C}_{1}^{c_{1},\,c_{2}}$
to $\mathbf{c_{1}}\in\mathcal{S}(A)$. As we clearly have $(\mathcal{C}_{0}^{c_{1},\,c_{2}}\cup\mathcal{C}_{1}^{c_{1},\,c_{2}})\cap\mathcal{B}_{\Gamma}^{A,\,B}=\emptyset$,
the definition of $\mathcal{E}^{A}$ implies that $\mathcal{C}_{0}^{c_{1},\,c_{2}}\cup\mathcal{C}_{1}^{c_{1},\,c_{2}}\subseteq\mathcal{E}^{A}$.
Similarly, we have $\mathcal{C}_{L-2}^{c_{1},\,c_{2}}\cup\mathcal{C}_{L-1}^{c_{1},\,c_{2}}\subseteq\mathcal{E}^{B}$.
\end{enumerate}
\end{rem}

\begin{prop}
\label{p_typ}The following properties hold.
\begin{enumerate}
\item We have $\mathcal{E}^{A}\cap\mathcal{B}^{A,\,B}=\mathcal{N}(\mathcal{R}_{2}^{A,\,B})$
and $\mathcal{E}^{B}\cap\mathcal{B}^{A,\,B}=\mathcal{N}(\mathcal{R}_{L-2}^{A,\,B})$. 
\item It holds that $\mathcal{E}^{A,\,B}\cup\mathcal{B}^{A,\,B}=\widehat{\mathcal{N}}(\mathcal{S})$.
\end{enumerate}
\end{prop}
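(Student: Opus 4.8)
I would prove each identity by double inclusion, leaning on Lemma~\ref{l_reg} (the exact description $\mathcal{N}(\mathcal{R}_n^{a,b})=\mathcal{R}_n^{a,b}\cup\widehat{\mathcal{R}}_n^{a,b}$), Lemmas~\ref{l_odd}, \ref{l_even} and Proposition~\ref{p_dead} (good flips of bulk configurations remain among canonical configurations and dead-ends), Remark~\ref{r_typ} (every canonical configuration is typical), the canonical paths of Remark~\ref{r_canpath}, and Theorem~\ref{t_Ebarrier} — which, since $\Phi(\mathbf{a},\mathbf{b})=\Gamma$, makes $\widehat{\mathcal{N}}(\mathbf{a})=\widehat{\mathcal{N}}(\mathcal{S})$ for every $\mathbf{a}\in\mathcal{S}$. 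Using the symmetry $A\leftrightarrow B$ (under which $\mathcal{E}^B$ becomes $\mathcal{E}^A$, $\mathcal{R}_{L-2}^{A,B}$ becomes $\mathcal{R}_2^{B,A}$, and $\mathcal{B}^{A,B}$ is unchanged) it suffices to treat $\mathcal{E}^A$ in part~(1).

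\textbf{Part (1).} For "$\supseteq$": given $\zeta\in\mathcal{N}(\mathcal{R}_2^{a,b})$ with $a\in A,b\in B$, I would pick $\xi\in\mathcal{R}_2^{a,b}$ joined to $\zeta$ by a $(2L+1)$-path; this path has energy $\le 2L+1<\Gamma$ and so avoids $\mathcal{B}_\Gamma^{A,B}$, while a canonical path $\mathbf{a}\to\xi$ visits only configurations of $\mathcal{C}_0^{a,b}\cup\mathcal{C}_1^{a,b}$, which — being canonical of level at most $1$ — also avoid $\mathcal{B}_\Gamma^{A,B}$; concatenating gives $\zeta\in\widehat{\mathcal{N}}(\mathcal{S}(A);\mathcal{B}_\Gamma^{A,B})=\mathcal{E}^A$, and Lemma~\ref{l_reg} identifies $\mathcal{N}(\mathcal{R}_2^{a,b})$ with regular configurations of level $2$ and the neighbouring canonical configurations, which lie in $\mathcal{B}^{A,B}$. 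For "$\subseteq$": take $\sigma\in\mathcal{E}^A\cap\mathcal{B}^{A,B}$; as $\mathcal{E}^A=\widehat{\mathcal{N}}(\mathcal{S}(A);\mathcal{B}_\Gamma^{A,B})$ is disjoint from $\mathcal{B}_\Gamma^{A,B}$, $\sigma$ is neither a dead-end nor an even canonical configuration, so $\sigma\in\mathcal{R}_n^{a',b'}\cup\mathcal{R}_{n+1}^{a',b'}\cup\mathcal{C}_{n,\scal{o}}^{a',b'}$ for some $n\in\llbracket 2,L-3\rrbracket$ and $a'\in A,b'\in B$. I would then examine the connected component of $\sigma$ in the graph on $\mathcal{X}\setminus\mathcal{B}_\Gamma^{A,B}$ with edges $\approx$: by Proposition~\ref{p_dead} a $\approx$-step from a bulk canonical configuration lands on another canonical configuration (dead-ends being barred), and since the even canonical configurations of levels $\llbracket 2,L-3\rrbracket$ are barred too, Lemmas~\ref{l_reg}, \ref{l_odd}, \ref{l_even} show that inside a family $\mathcal{C}_\bullet^{a',b'}$ feeding $\mathcal{B}^{A,B}$ one can only move within the isolated triples $\{\mathcal{R}_n^{a',b'},\,\mathcal{C}_{n,\scal{o}}^{a',b'}(|\mathfrak{p}|=1),\,\mathcal{C}_{n-1,\scal{o}}^{a',b'}(|\mathfrak{p}|=2L-1)\}$, while the only regular columns abutting configurations outside the bulk are $\mathcal{R}_2^{a',b'}$ (opening towards level $1$, hence towards $\mathbf{a'}$) and $\mathcal{R}_{L-2}^{a',b'}$ (opening towards $\mathbf{b'}$). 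Since $\sigma$ is connected to $\mathcal{S}(A)$ it must sit in the component of some $\mathcal{R}_2^{a',b'}$, whose bulk part is $\{\mathcal{R}_2^{a',b'},\,\mathcal{C}_{2,\scal{o}}^{a',b'}(|\mathfrak{p}|=1)\}\subseteq\mathcal{N}(\mathcal{R}_2^{A,B})$; the component of $\mathcal{R}_{L-2}^{a',b'}$ is ruled out because $\mathcal{S}(A)$ and $\mathcal{S}(B)$ cannot be joined by a $(2L+2)$-path avoiding $\mathcal{B}_\Gamma^{A,B}$, which I would deduce from the saddle analysis of Section~\ref{Sec6_EB}.

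\textbf{Part (2).} "$\subseteq$" is immediate: $\mathcal{E}^{A,B}\subseteq\widehat{\mathcal{N}}(\mathcal{S}(A))\cup\widehat{\mathcal{N}}(\mathcal{S}(B))=\widehat{\mathcal{N}}(\mathcal{S})$, every canonical configuration is joined to a ground state by a canonical $(2L+2)$-path, and every dead-end in $\mathcal{D}^{a,b}$ is one good flip from a canonical configuration by the definitions of $\mathcal{P}_n^{a,b}$ and $\mathcal{Q}_n^{a,b}$, so $\mathcal{B}^{A,B}\subseteq\widehat{\mathcal{N}}(\mathcal{S})$. For "$\supseteq$" the engine is a closure property: \emph{if $\sigma$ is typical and $\sigma\approx\zeta$, then $\zeta$ is typical.} Indeed, if $\sigma\in\mathcal{B}^{A,B}$, say $\sigma\in\bigcup_{n=2}^{L-3}\mathcal{C}_n^{a',b'}\cup\mathcal{D}^{a',b'}$, Proposition~\ref{p_dead} makes $\zeta$ a canonical configuration (typical by Remark~\ref{r_typ}) or a dead-end in $\mathcal{D}^{a',b'}\subseteq\mathcal{B}^{A,B}$; and if $\sigma\in\mathcal{E}^A$, then either $\zeta\in\mathcal{B}_\Gamma^{A,B}\subseteq\mathcal{B}^{A,B}$, or $\zeta\notin\mathcal{B}_\Gamma^{A,B}$ and appending the flip to a witnessing path puts $\zeta\in\widehat{\mathcal{N}}(\mathcal{S}(A);\mathcal{B}_\Gamma^{A,B})=\mathcal{E}^A$ (and likewise with $B$). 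Granting this, let $\sigma\in\widehat{\mathcal{N}}(\mathcal{S})=\widehat{\mathcal{N}}(\mathbf{a})$ with $\mathbf{a}\in\mathcal{S}(A)$ and a $(2L+2)$-path $\omega_0=\mathbf{a},\dots,\omega_N=\sigma$; if this path avoids $\mathcal{B}_\Gamma^{A,B}$ then $\sigma\in\mathcal{E}^A$, and otherwise, letting $m$ be the last index with $\omega_m\in\mathcal{B}_\Gamma^{A,B}\subseteq\mathcal{B}^{A,B}$, the chain $\omega_m\approx\omega_{m+1}\approx\cdots\approx\omega_N$ along the $(2L+2)$-path propagates typicality from $\omega_m$ to $\sigma$, so $\sigma\in\mathcal{B}^{A,B}\cup\mathcal{E}^{A,B}$.

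\textbf{Main obstacle.} The real work is the inclusion $\mathcal{E}^A\cap\mathcal{B}^{A,B}\subseteq\mathcal{N}(\mathcal{R}_2^{A,B})$ of part~(1), i.e.\ the statement that the level-$2$ regular column is the unique gateway from $\mathcal{S}(A)$ into the bulk beneath the forbidden layer $\mathcal{B}_\Gamma^{A,B}$. Making this precise requires the full local picture of the plateau — the cross-free classification of Section~\ref{sec62}, the dead-end analysis of Section~\ref{sec63}, and the check (via Lemma~\ref{l_MB2} and the \textbf{(MB)} discussion) that a path routed through configurations carrying a cross provides no shortcut into the bulk. By contrast, part~(2)'s "$\supseteq$" is comparatively soft once the closure property is isolated, being essentially Proposition~\ref{p_dead} together with Remark~\ref{r_typ} and the definition of $\mathcal{E}^A$.
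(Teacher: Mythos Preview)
Your plan is correct and mirrors the argument the paper imports from \cite[Proposition 7.5]{KS2}: the closure property in part~(2) is exactly the content of Proposition~\ref{p_dead} together with Remark~\ref{r_typ} and the definition of $\mathcal{E}^A$, and your decomposition of $\mathcal{B}^{A,B}\setminus\mathcal{B}_\Gamma^{A,B}$ into the isolated $\mathcal{N}(\mathcal{R}_n^{a,b})$-blocks plus isolated odd canonical configurations with $|\mathfrak{p}|\in\llbracket 3,2L-3\rrbracket$ is precisely what Lemmas~\ref{l_reg}, \ref{l_odd}, \ref{l_even} give.

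One point to sharpen: in part~(1) you rule out the possibility $\sigma\in\mathcal{N}(\mathcal{R}_{L-2}^{a',b'})$ by appealing vaguely to ``the saddle analysis of Section~\ref{Sec6_EB}''. The precise statement you need is that $\mathcal{E}^A\cap\mathcal{E}^B=\emptyset$ (since $\mathcal{N}(\mathcal{R}_{L-2}^{a',b'})\subseteq\mathcal{E}^B$ by Remark~\ref{r_typ}). This is Proposition~\ref{p_EAEB}, stated after Proposition~\ref{p_typ} in the paper, but its proof is independent of Proposition~\ref{p_typ} (it uses only the cross-free classification of Section~\ref{sec62} and Lemmas~\ref{l_reg}--\ref{l_QQ}), so there is no circularity; in the companion work \cite{KS2} the analogue of Proposition~\ref{p_EAEB} in fact precedes that of Proposition~\ref{p_typ}. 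You should cite it explicitly rather than gesture at Section~\ref{Sec6_EB}.
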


This proposition explains why we defined the typical configurations
as in Definition \ref{d_typ}. In particular, since $\widehat{\mathcal{N}}(\mathcal{S})$
is the collection of all configurations connected to the ground states
by a $\Gamma$-path, we can observe from part (2) of the previous
proposition that  the sets $\mathcal{E}^{A,\,B}$ and $\mathcal{B}^{A,\,B}$
are properly defined to explain the saddle structure between $\mathcal{S}(A)$
and $\text{\ensuremath{\mathcal{S}(B)}. }$ 

The proof of Proposition \ref{p_typ} is identical to that of \cite[Proposition 7.5]{KS2},
since the proof therein is robust against the microscopic feature
of the model. It suffices to replace \cite[Lemma 7.2]{KS2} for the
square lattice with Lemmas \ref{l_reg}, \ref{l_odd}, and \ref{l_even}
for the hexagonal lattice. It should be mentioned that in \cite[Proposition 7.5]{KS2},
it was asserted that $\mathcal{E}^{A}\cap\mathcal{B}^{A,\,B}=\mathcal{R}_{2}^{A,\,B}$
and $\mathcal{E}^{B}\cap\mathcal{B}^{A,\,B}=\mathcal{R}_{L-2}^{A,\,B}$
(instead of $\mathcal{N}(\mathcal{R}_{2}^{A,\,B})$ and $\mathcal{N}(\mathcal{R}_{L-2}^{A,\,B})$)
since in that case it holds that $\mathcal{N}(\mathcal{R}_{i}^{A,\,B})=\mathcal{R}_{i}^{A,\,B}$
for all $i\in\llbracket2,\,L-2\rrbracket$.

The following proposition is the hexagonal version of \cite[Proposition 7.4]{KS2}
and asserts that $\mathcal{E}^{A}$ and $\mathcal{E}^{B}$ are disjoint.
We provide a proof since it is technically more difficult than that
of \cite[Proposition 7.4]{KS2}. A union of two strips of different
directions is called a \textit{$b$-semicross} of $\sigma\in\mathcal{X}$
if all the spins at these two strips are $b$, except for the one
in the intersection which is not $b$, as in the following figure. 
\noindent \begin{center}
\includegraphics[width=4.8cm]{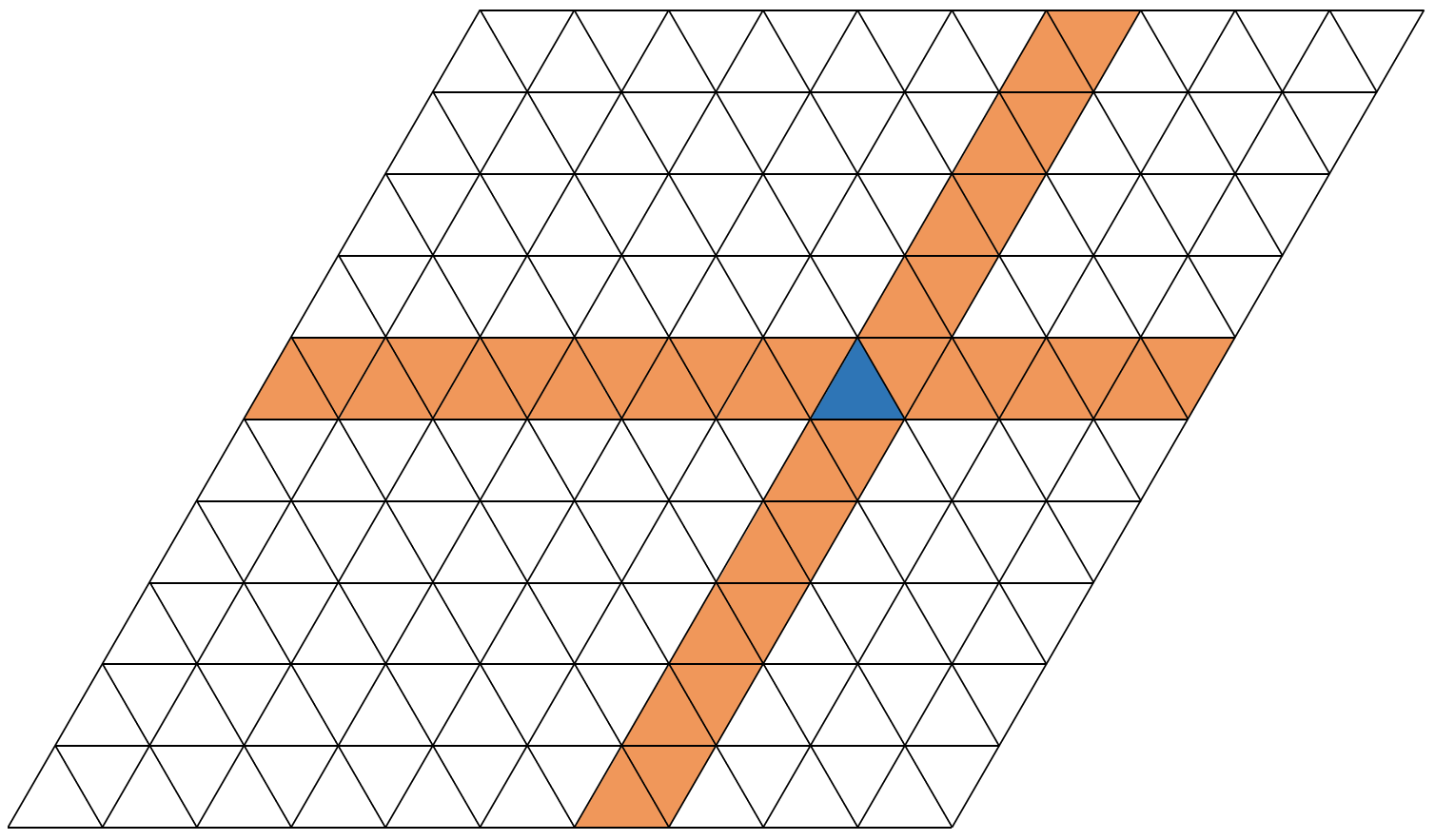}
\par\end{center}
\begin{prop}
\label{p_EAEB}Let $(A,\,B)$ be a proper partition. Each configuration
in $\mathcal{E}^{A}$ does not have a $b$-cross for all $b\in B$.
In particular, it holds that $\mathcal{E}^{A}\cap\mathcal{E}^{B}=\emptyset$.
\end{prop}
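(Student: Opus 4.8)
The plan is to prove the first assertion — that no configuration in $\mathcal{E}^{A}$ contains a $b$-cross for any $b\in B$ — and then deduce disjointness as an easy corollary. The disjointness is immediate once the first claim is established: if $\sigma\in\mathcal{E}^{A}\cap\mathcal{E}^{B}$, then $\sigma\in\mathcal{E}^{B}=\widehat{\mathcal{N}}(\mathcal{S}(B);\,\mathcal{B}^{A,B}_{\Gamma})$ means there is a $(2L+2)$-path in $\mathcal{X}\setminus\mathcal{B}^{A,B}_{\Gamma}$ from $\sigma$ to some $\mathbf{b}\in\mathcal{S}(B)$, and $\mathbf{b}$ trivially contains a $b$-cross, so by tracking backwards along this path one finds the first configuration containing a $b$-cross; but the configuration immediately preceding it — which is in $\mathcal{E}^{A}$ by the path-connectivity and does not contain a $b$-cross — is adjacent to a configuration with a $b$-cross, which I will argue forces it to lie in $\mathcal{B}^{A,B}_{\Gamma}$ or to already have a $b$-cross, a contradiction. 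So the whole weight is on the first assertion.

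For the first assertion I would argue by contradiction along the defining path. Suppose $\sigma\in\mathcal{E}^{A}=\widehat{\mathcal{N}}(\mathcal{S}(A);\,\mathcal{B}^{A,B}_{\Gamma})$ has a $b$-cross for some $b\in B$. Take a $(2L+2)$-path $(\omega_n)_{n=0}^N$ in $\mathcal{X}\setminus\mathcal{B}^{A,B}_{\Gamma}$ with $\omega_0=\mathbf{a}$ for some $\mathbf{a}\in\mathcal{S}(A)$ and $\omega_N=\sigma$. Since $\mathbf{a}$ has no $b$-cross (it has zero $b$-bridges, as $b\neq a$), let $m$ be the first index with $\omega_m$ containing a $b$-cross. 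Then $\omega_{m-1}$ has no $b$-cross but is adjacent to $\omega_m$ which does. A single spin flip can create a $b$-cross only in one of a few ways: the flip turns a $b$-semicross into a $b$-cross (completing the missing central triangle to spin $b$), or a horizontal/vertical/diagonal $b$-bridge already present combines with a strip that becomes a $b$-bridge exactly after the flip. In the second scenario, $\omega_{m-1}$ already has a $b$-bridge and a strip of the other direction that is one triangle away from being a $b$-bridge — meaning that non-$b$-bridge strip of $\omega_{m-1}$ is an $\{b,c\}$-semibridge with a protuberance of size one, which forces a specific low-energy structure; a careful accounting via \eqref{e_Edec} shows that such $\omega_{m-1}$ must in fact be a configuration of type \textbf{(MB)} with $b$-bridges (since the $b$-bridge already present together with the $b$-semicross or near-bridge forces $L-1$ or $L-2$ parallel $b$-bridges), hence $\omega_{m-1}\in\mathcal{D}^{b',b}$-type structures are excluded but $\omega_{m-1}$ being an \textbf{(MB)}-configuration of spin $b$; and such a configuration is either isolated (ruled out, since it has a neighbor $\omega_m$ with energy $\le 2L+2$, so case $(\star)$ of Lemma \ref{l_MB} applies) or connects to $\mathbf{b}$, contradicting minimality of $m$ since by Lemma \ref{l_MB2} the predecessor $\omega_{m-2}$ (or $\omega_{m-1}$ itself) would already lie in $\mathcal{N}(\mathbf{b})$ and in particular contain or be close to a $b$-cross. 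The first scenario (semicross $\to$ cross) is the main one: here $\omega_{m-1}$ has a $b$-semicross. I would then show via the energy decomposition \eqref{e_Edec} that a configuration with a $b$-semicross and energy $\le 2L+2$ must have almost all strips being $b$-bridges — precisely, the two strips of the semicross contribute, the remaining $2L-2$ strips contribute at least $2$ each unless $b$-bridges, and a count shows at least $2L-3$ strips of the third direction... this forces the configuration to be, up to the semicross defect, an \textbf{(MB)}-type configuration of spin $b$; but then, again by Lemma \ref{l_MB}, either it satisfies $(\star)$ and one traces a $(2L+2)$-path of configurations all having $\ge L-2$ $b$-bridges down to $\mathbf{b}$ — so $\omega_{m-1}\in\widehat{\mathcal{N}}(\mathbf{b};\cdot)$-reachable giving a contradiction with $\sigma\in\mathcal{E}^A$ via Proposition~\ref{p_typ}(2) and the definition of $\mathcal{B}^{A,B}_{\Gamma}$ separating $\mathcal{E}^A$ from $\mathcal{E}^B$ — or it is isolated, contradicting the existence of neighbor $\omega_m$.

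More carefully, the cleanest route may be: first establish the purely energetic lemma that \emph{any} $\sigma$ with $H(\sigma)\le 2L+2$ possessing a $b$-cross or a $b$-semicross has all its bridges parallel (horizontal, say) and of spin $b$, with at least $L-2$ of them — this follows by the same argument as in Lemma~\ref{l_crosschar} and Proposition~\ref{p_E=00003D2L+2}: a $b$-cross uses up two perpendicular directions' worth of $b$-bridges, leaving the third direction, and the energy budget $2L+2$ against the contributions $\frac12\sum_\ell[\Delta H_{\scal{h}_\ell}+\Delta H_{\scal{v}_\ell}+\Delta H_{\scal{d}_\ell}]$ forces all but a bounded number of the third-direction strips to also be $b$-bridges, contradiction unless the cross is "degenerate"; one checks the semicross case similarly. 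Consequently, such a $\sigma$ is of type \textbf{(MB)} (or is $\mathbf{b}$ itself or a canonical configuration in $\mathcal{C}_0^{b',b}\cup\mathcal{C}_1^{b',b}\cup\mathcal{C}_{L-2}^{b',b}\cup\mathcal{C}_{L-1}^{b',b}$), and Remark~\ref{r_typ} together with Lemmas~\ref{l_MB}, \ref{l_MB2} show that every neighbor of such a $\sigma$ of energy $\le2L+2$ lies in $\mathcal{E}^B\cup\mathcal{B}^{B,A}$ — in fact in a component of $\mathcal{X}\setminus\mathcal{B}^{A,B}_\Gamma$ that reaches $\mathbf{b}$. Then the contradiction with $\sigma\in\mathcal{E}^A$ is clean. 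The main obstacle I anticipate is exactly the enumeration in this energetic lemma: ruling out all the borderline configurations where a $b$-semicross coexists with an $\{b,c\}$-semibridge in the third direction while keeping energy $\le2L+2$, and checking that these too reduce to type \textbf{(MB)} or are handled by the case analysis of Lemma~\ref{l_MB}; this is tedious but reduces, as the authors note elsewhere, to a small number of subcases checked via \eqref{e_Edec} and the dual-lattice perimeter formula \eqref{e_dualHam}.
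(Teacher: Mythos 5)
Your proposal recognizes the correct ingredients (Lemma~\ref{l_Elb}, the characterization of low-energy cross-free configurations, Lemma~\ref{l_MB}, Lemma~\ref{l_MB2}), but the overall architecture has a genuine gap, and the paper's argument is structured quite differently to avoid it.

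The critical difference is the choice of stopping time along the path. You track $m=\min\{n:\omega_n\text{ has a }b\text{-cross}\}$. The paper instead tracks the function $u(n)=$ (number of $b$-bridges in $\omega_n$) and takes $n_0=\max\{n\ge 1:u(n-1)\le 1\text{ and }u(n)\ge 2\}$ --- the \emph{last} time $u$ crosses from $\le 1$ to $\ge 2$. This is not a cosmetic change: the maximality of $n_0$ is what furnishes the standing constraint $u(n)\ge 2$ for all $n\ge n_0$, and the condition $u(n_0-1)\le 1$ is what makes the two case analyses collapse. In your setup $\omega_{m-1}$ is merely ``$b$-cross-free,'' which still allows arbitrarily many parallel $b$-bridges, and neither control is available.

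This matters concretely in your ``Scenario 1'' (semicross $\to$ cross). You assert that a configuration with a $b$-semicross and $H\le 2L+2$ is of type \textbf{(MB)}. That is false without the extra constraint $u\le 1$: the configuration $\mathbf{b}$ with a single spin flipped to some $c\ne b$ has a $b$-semicross, $L-1$ parallel $b$-bridges (in fact $3L-3$ $b$-bridges across all directions, hence $b$-crosses already!), and energy $3$; it is nowhere near type \textbf{(MB)}. The paper's Case 1 instead observes that a $b$-semicross excludes all $c$-bridges with $c\ne b$, and then uses $u(n_0-1)\le 1$ to conclude there is \emph{at most one bridge in total}, so $H(\omega_{n_0-1})\ge 3L-1>2L+2$ by Lemma~\ref{l_Elb} --- a clean numerical contradiction. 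You have no analogue of $u(n_0-1)\le 1$, so this step does not close.

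The second structural problem is the intended endgame. You want to conclude that $\omega_{m-1}$ (or $\omega_m$) lies in $\mathcal{N}(\mathbf{b})$ via Lemmas~\ref{l_MB},~\ref{l_MB2}, and then derive a contradiction ``with $\sigma\in\mathcal{E}^A$ via... the definition of $\mathcal{B}^{A,B}_\Gamma$ separating $\mathcal{E}^A$ from $\mathcal{E}^B$.'' But that separation is precisely the content of this proposition; invoking it is circular. Arriving at $\omega_m\in\mathcal{N}(\mathbf{b})$ only shows (after prepending the $(\Gamma-1)$-path from $\mathbf{b}$ to $\omega_m$) that $\mathbf{b}\in\mathcal{E}^A$, which is a statement you would still have to refute. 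The paper avoids this entirely: in its Case~2 it shows $\omega_{n_0}\in\mathcal{R}_2^{a,b}\cup\mathcal{C}_{2,\scal{o}}^{a,b}$ and then, combining the standing inequality $u(n)\ge 2$ for $n\ge n_0$ with the fact that the path must avoid $\mathcal{B}^{A,B}_\Gamma$ (so it cannot step into $\mathcal{C}_{n,\scal{e}}^{a,b}$, $\mathcal{P}_n^{a,b}$, $\mathcal{Q}_n^{a,b}$), shows via Lemmas~\ref{l_reg},~\ref{l_odd},~\ref{l_PP},~\ref{l_QQ} that the tail of the path is confined to $\mathcal{N}(\mathcal{R}_2^{a,b})$ or $\mathcal{N}(\mathcal{R}_3^{a,b})$, neither of which contains a configuration with a $b$-cross. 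That is a direct contradiction with $\omega_N=\sigma$ having a $b$-cross, with no circular appeal to the disjointness being proved. To repair your proof you would essentially need to swap your stopping time for $n_0$, which is the paper's route.
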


\begin{proof}
Suppose on the contrary that $\sigma\in\mathcal{E}^{A}$ has a $b$-cross
for some $b\in B$. Then since $\sigma\in\mathcal{E}^{A}$, we can
find a $\Gamma$-path $(\omega_{n})_{n=0}^{N}$ in $\mathcal{X}\setminus\mathcal{B}_{\Gamma}^{A,\,B}$
with $\omega_{0}\in\mathcal{S}(A)$ and $\omega_{N}=\sigma$. For
$n\in\llbracket0,\,N\rrbracket$, define $u(n)$ as the number of
$b$-bridges in $\omega_{n}$ so that 
\begin{equation}
u(0)=0\;,\;u(N)\ge2\;,\;\text{and }|u(n+1)-u(n)|\le3\text{ for all }n\in\llbracket0,\,N-1\rrbracket\;,\label{e_edge1}
\end{equation}
as in the proof of Proposition \ref{p_EBlb}. Define
\[
n_{0}=\max\{n\ge1:u(n-1)\le1\text{ and }u(n)\ge2\}
\]
so that, summing up, 
\begin{equation}
(\omega_{n})_{n=0}^{N}\text{ is a }\Gamma\text{-path in }\mathcal{X}\setminus\mathcal{B}_{\Gamma}^{A,\,B}\;\;\;\;\text{and}\;\;\;\;u(n)\ge2\text{ for all }n\ge n_{0}\;.\label{e_bft}
\end{equation}
We divide the proof into two cases.\medskip{}

\noindent \textbf{(Case 1: $u(n_{0})-u(n_{0}-1)\ge2$)} In this case,
a single spin update from $\omega_{n_{0}-1}$ to $\omega_{n_{0}}$
creates at least two $b$-bridges. This is possible only when we update
the triangle at the intersection of a $b$-semicross to obtain a $b$-cross.
Since $u(n_{0}-1)\in\{0,\,1\}$, the configuration $\omega_{n_{0}-1}$
cannot have a $b$-cross. Moreover, the existence of a $b$-semicross
implies that there is no $c$-bridge for all $c\in\Omega\setminus\{b\}$.
Hence, $\omega_{n_{0}-1}$ has at most one bridge and its energy is
at least $3L-1$ by Lemma \ref{l_Elb}. This contradicts the fact
that $(\omega_{n})$ is a $\Gamma$-path.\medskip{}

\noindent \textbf{(Case 2: $u(n_{0})-u(n_{0}-1)=1$)} Here, we must
have $u(n_{0}-1)=1$ and $u(n_{0})=2$. Since $\omega_{n_{0}-1}$
has exactly one $b$-bridge, it is cross-free. Moreover, since a single
spin update from $\omega_{n_{0}-1}$ to $\omega_{n_{0}}$ should create
the second $b$-bridge, we can apply Propositions \ref{p_E<=00003D2L},
\ref{p_E=00003D2L+1}, and \ref{p_E=00003D2L+2} to assert that there
are only four possible forms of $\omega_{n_{0}-1}$ as in the figure
below. 
\noindent \begin{center}
\includegraphics[width=16cm]{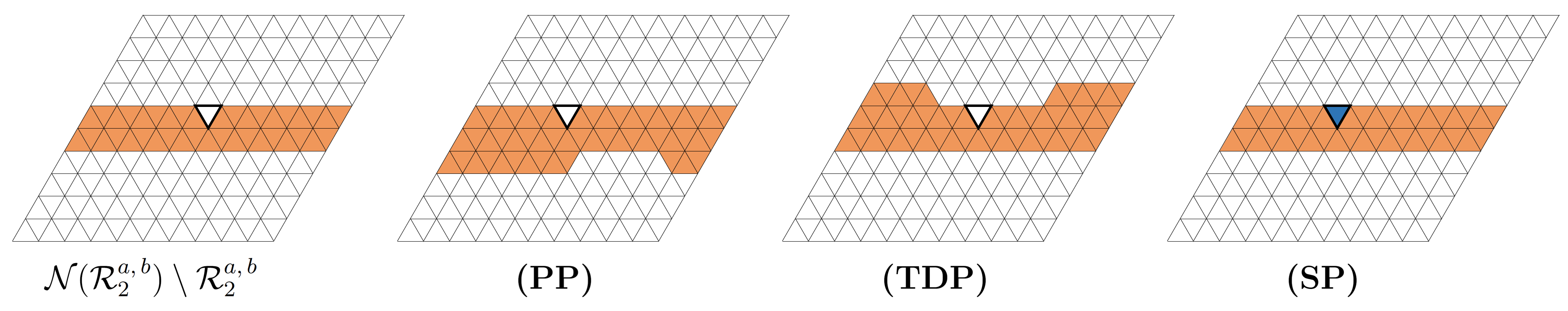}
\par\end{center}

\noindent Note that we have to update the spin at a triangle with
bold boundary to $b$ to get $\omega_{n_{0}},$and hence we have $\omega_{n_{0}}\in\mathcal{R}_{2}^{a,\,b}\cup\mathcal{C}_{2,\,\scal{o}}^{a,\,b}$.
Note that $\omega_{n_{0}}$ does not have a $b$-cross so that $n_{0}<N$.
Now, we consider four subcases.
\begin{itemize}
\item $\omega_{n_{0}}\in\mathcal{R}_{2}^{a,\,b}$: We can conclude from
Lemmas \ref{l_reg}, \ref{l_odd}, \ref{l_PP}, and \ref{l_QQ}-(2)
along with \eqref{e_bft} that $\omega_{n}\in\mathcal{N}(\mathcal{R}_{2}^{a,\,b})$
for all $n\ge n_{0}$. This contradicts the fact that $\omega_{N}$
has a $b$-cross.
\item $\omega_{n_{0}}\in\mathcal{C}_{2,\,\scal{o}}^{a,\,b}$ with $|\mathfrak{p}^{a,\,b}(\omega_{n_{0}})|\in\llbracket3,\,2L-3\rrbracket$:
Since $\omega_{n_{0}}\approx\omega_{n_{0}+1}$, by Lemma \ref{l_odd},
we get $\omega_{n_{0}+1}\in\mathcal{C}_{2,\,\scal{e}}^{a,\,b}\cup\mathcal{Q}_{2}^{a,\,b}\subseteq\mathcal{B}_{\Gamma}^{A,\,B}$.
This contradicts \eqref{e_bft}.
\item $\omega_{n_{0}}\in\mathcal{C}_{2,\,\scal{o}}^{a,\,b}$ with $|\mathfrak{p}^{a,\,b}(\omega_{n_{0}})|=1$:
The same logic with the case $\omega_{n_{0}}\in\mathcal{R}_{2}^{a,\,b}$
leads to the same conclusion.
\item $\omega_{n_{0}}\in\mathcal{C}_{2,\,\scal{o}}^{a,\,b}$ with $|\mathfrak{p}^{a,\,b}(\omega_{n_{0}})|=2L-1$:
By the same logic with the case $\omega_{n_{0}}\in\mathcal{R}_{2}^{a,\,b}$,
we get $\omega_{n}\in\mathcal{N}(\mathcal{R}_{3}^{a,\,b})$ for all
$n\ge n_{0}$. This again contradicts the fact that $\omega_{N}$
has a $b$-cross.
\end{itemize}
Since we get a contradiction for all cases, the first assertion of
the proposition is proved. For the second assertion, we first observe
from the first part of the proposition that $\mathcal{E}^{A}\cap\mathcal{S}(B)=\emptyset$.
Then, by the definitions of $\mathcal{E}^{A}$ and $\mathcal{E}^{B}$,
it also holds that $\mathcal{E}^{A}\cap\mathcal{E}^{B}=\emptyset$.
\end{proof}

\subsection{\label{sec72}Structure of edge configurations}

Fix a proper partition $(A,\,B)$ throughout this subsection. We now
investigate the structure of the sets $\mathcal{E}^{A}$ and $\mathcal{E}^{B}$
more deeply as in \cite[Section 7.3]{KS2}.

We start by decomposing $\mathcal{E}^{A}=\mathcal{I}^{A}\cup\mathcal{O}^{A}$
where
\[
\mathcal{O}^{A}=\{\sigma\in\mathcal{E}^{A}:H(\sigma)=\Gamma\}\;\;\;\;\text{and}\;\;\;\;\mathcal{I}^{A}=\{\sigma\in\mathcal{E}^{A}:H(\sigma)<\Gamma\}\;.
\]
Further, we take a representative set $\mathcal{I}_{\textup{rep}}^{A}\subseteq\mathcal{I}^{A}$
in such a way that each $\sigma\in\mathcal{I}^{A}$ satisfies $\sigma\in\mathcal{N}(\zeta)$
for exactly one $\zeta\in\mathcal{I}_{\textup{rep}}^{A}$. With this
notation, we can further decompose the set $\mathcal{E}^{A}$ into
\[
\mathcal{E}^{A}=\mathcal{O}^{A}\cup\Big(\,\bigcup_{\zeta\in\mathcal{I}_{\textup{rep}}^{A}}\mathcal{N}(\zeta)\,\Big)\;.
\]
For the convenience of notation, we can assume that $\mathcal{S}(A),\,\mathcal{R}_{2}^{A,\,B}\subseteq\mathcal{I}_{\textup{rep}}^{A}$
so that configurations in $\mathcal{N}(\mathbf{a})$ with $\mathbf{a}\in\mathcal{S}(A)$
and in $\mathcal{N}(\sigma)$ with $\sigma\in\mathcal{R}_{2}^{A,\,B}$
are represented by $\mathbf{a}$ and $\sigma$, respectively\footnote{We can notice from Lemma \ref{l_reg} that $\mathcal{N}$-neighborhoods
of two different $\sigma,\,\sigma\in\mathcal{R}_{2}^{A,\,B}$ are
indeed disjoint.}.

We now assign a graph structure on $\mathcal{E}^{A}$ based on this
decomposition. More precisely, we introduce a graph $\mathscr{G}^{A}=(\mathscr{V}^{A},\,E(\mathscr{V}^{A}))$
where the vertex set is defined by $\mathscr{V}^{A}=\mathcal{O}^{A}\cup\mathcal{I}_{\textup{rep}}^{A}$
and the edge set is defined by $\{\sigma,\,\sigma'\}\in E(\mathscr{V}^{A})$
for $\sigma,\,\sigma'\in\mathscr{V}^{A}$ if and only if
\[
\begin{cases}
\sigma,\,\sigma'\in\mathcal{O}^{A}\text{ and }\sigma\sim\sigma'\;\;\text{or}\\
\sigma\in\mathcal{O}^{A}\;,\;\sigma'\in\mathcal{I}_{\textup{rep}}^{A}\text{ and }\sigma\sim\zeta\text{ for some }\zeta\in\mathcal{N}(\sigma')\;.
\end{cases}
\]

Next, we construct a continuous-time Markov chain $\{Z^{A}(t)\}_{t\ge0}$
on $\mathscr{V}^{A}$ with rate $r^{A}:\mathscr{V}^{A}\times\mathscr{V}^{A}\rightarrow\mathbb{R}$
defined by 
\begin{equation}
r^{A}(\sigma,\,\sigma')=\begin{cases}
1 & \text{if }\sigma,\,\sigma'\in\mathcal{O}^{A}\;,\\
|\{\zeta\in\mathcal{N}(\sigma):\zeta\sim\sigma'\}| & \text{if }\sigma\in\mathcal{I}_{\textup{rep}}^{A}\;,\;\sigma'\in\mathcal{O}^{A}\;,\\
|\{\zeta\in\mathcal{N}(\sigma'):\zeta\sim\sigma\}| & \text{if }\sigma\in\mathcal{O}^{A}\;,\;\sigma'\in\mathcal{I}_{\textup{rep}}^{A}\;,
\end{cases}\label{e_rA}
\end{equation}
and $r^{A}(\sigma,\,\sigma')=0$ if $\{\sigma,\,\sigma'\}\notin E(\mathscr{V}^{A})$.
Since the rate $r^{A}(\cdot,\,\cdot)$ is symmetric, the Markov chain
$Z^{A}(\cdot)$ is reversible with respect to the uniform distribution
on $\mathscr{V}^{A}$.
\begin{notation}
\label{n_edge}We denote by $L^{A}(\cdot)$, $h_{\cdot,\,\cdot}^{A}(\cdot)$,
$\mathrm{cap}^{A}(\cdot,\,\cdot)$, and $D^{A}(\cdot)$ the generator,
equilibrium potential, capacity, and Dirichlet form, respectively,
of the Markov chain $Z^{A}(\cdot)$. For those who are not familiar
with these notions, we refer to Section \ref{sec81} for the definitions. 
\end{notation}

\subsubsection*{Configurations in $\mathcal{E}^{A}$}

In the following series of lemmas, we study several essential features
of the configurations in $\mathcal{E}^{A}$. 
\begin{lem}
\label{l_edge1}Suppose that $\sigma\in\mathcal{E}^{A}$ has an $a$-cross
for some $a\in A$. Then, we have that $h_{\mathcal{S}(A),\,\mathcal{R}_{2}^{A,B}}^{A}(\sigma)=1$
(cf. Notation \ref{n_edge}).
\end{lem}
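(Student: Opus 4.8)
The plan is to show that starting from $\sigma$, the Markov chain $Z^A(\cdot)$ (or rather, the underlying single-flip dynamics restricted to energy $\le\Gamma$) cannot escape the set $\mathcal{E}^A$, i.e.\ it can never reach $\mathcal{R}_2^{A,B}$ before first hitting $\mathcal{S}(A)$; equivalently, every $\Gamma$-path in $\mathcal{X}\setminus\mathcal{B}_\Gamma^{A,B}$ issuing from $\sigma$ that reaches $\mathcal{R}_2^{A,B}$ must pass through $\mathcal{S}(A)$ first. Since $h^A_{\mathcal{S}(A),\mathcal{R}_2^{A,B}}$ is the equilibrium potential, it takes value $1$ on $\sigma$ precisely when the chain started at $\sigma$ hits $\mathcal{S}(A)$ before $\mathcal{R}_2^{A,B}$ with probability one, so it suffices to prove the path-combinatorial statement: a configuration with an $a$-cross ($a\in A$) cannot be connected to $\mathcal{R}_2^{A,B}$ by a $\Gamma$-path inside $\mathcal{X}\setminus\mathcal{B}_\Gamma^{A,B}$ without visiting $\mathcal{S}(A)$.

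**Key steps.** First I would record that if $\sigma$ has an $a$-cross for $a\in A$, then $\sigma$ has at least two $a$-bridges, hence (mirroring the $u(n)$ bookkeeping in the proof of Proposition~\ref{p_EBlb} and Proposition~\ref{p_EAEB}, now with $b$ replaced by $a$) along any $\Gamma$-path $(\omega_n)$ out of $\sigma$, the number $v(n)$ of $a$-bridges changes by at most $3$ per step, starts at $v(0)\ge2$. Second, I would argue that the path cannot bring $v$ down below $2$ without passing through $\mathcal{S}(A)$: if at some step $v$ drops from $\ge2$ to $\le1$, one analyzes the reverse flip — reaching a configuration with two or more $a$-bridges from one with at most one $a$-bridge by a single flip — and by Propositions~\ref{p_E<=00003D2L}, \ref{p_E=00003D2L+1}, \ref{p_E=00003D2L+2} (exactly as in Proposition~\ref{p_EAEB}, Case~2) the configuration just before must be one of the four explicit forms, so that the configuration with $\ge2$ $a$-bridges lies in $\mathcal{R}_2^{a',a}\cup\mathcal{C}_{2,\scal{o}}^{a',a}$ for some $a'$; but then the path is trapped in $\mathcal{N}(\mathcal{R}_2^{a',a})$ (or $\mathcal{N}(\mathcal{R}_3^{a',a})$) forever by Lemmas~\ref{l_reg}, \ref{l_odd}, \ref{l_even}, \ref{l_PP}, \ref{l_QQ} together with the avoidance of $\mathcal{B}_\Gamma^{A,B}$, contradicting that the path left from a configuration with an $a$-cross and stays away from $\mathcal{S}(A)$. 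Hence $v(n)\ge2$ along the whole path. Third, since $\mathcal{R}_2^{A,B}=\bigcup_{a''\in A,b''\in B}\mathcal{R}_2^{a'',b''}$ and each such regular configuration has exactly $L$ parallel bridges of a \emph{single} spin and in particular has \emph{no} $a$-cross (indeed no cross at all), while $v(n)\ge2$ means $\omega_n$ always has two $a$-bridges — one must check these two $a$-bridges are actually of different directions on the terminal configuration, which follows because a configuration in $\mathcal{E}^A$ with energy $<\Gamma$ connected to $\sigma$ keeping $\ge2$ $a$-bridges is forced (again via Propositions~\ref{p_E<=00003D2L}--\ref{p_E=00003D2L+2} and the cross-free analysis) to retain an $a$-cross — we conclude the path can never reach $\mathcal{R}_2^{A,B}$. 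Therefore $h^A_{\mathcal{S}(A),\mathcal{R}_2^{A,B}}(\sigma)=1$.

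**Alternative cleaner route and the main obstacle.** A slicker packaging: Proposition~\ref{p_EAEB} already tells us each configuration in $\mathcal{E}^A$ has no $b$-cross for $b\in B$, so $\sigma$ having an $a$-cross with $a\in A$ is consistent; the content of this lemma is that such $\sigma$ belongs to the ``connected component of $\mathcal{S}(A)$'' within $\mathcal{E}^A$ in a sense that excludes $\mathcal{R}_2^{A,B}$. The natural way is to show the edge graph $\mathscr{G}^A$ restricted to $\{$configurations with some $a$-cross, $a\in A\}\cup\mathcal{S}(A)$ is a union of components each containing exactly one element of $\mathcal{S}(A)$ and disjoint from the vertices representing $\mathcal{R}_2^{A,B}$; then $h^A=1$ on each such component because the equilibrium potential is $1$ on $\mathcal{S}(A)$ and harmonic, hence constant on components not meeting $\mathcal{R}_2^{A,B}$. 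The main obstacle is the second step above: ruling out that a $\Gamma$-path can shed its $a$-cross and wander off — this requires carefully re-running the bridge-counting / cross-free-classification argument of Proposition~\ref{p_EAEB} with the roles of $A$ and $B$ swapped and tracking which neighborhood $\mathcal{N}(\mathcal{R}_n^{a',a})$ the path gets absorbed into, using that such neighborhoods do not contain $\mathcal{S}(A)$ for $n\in\llbracket2,L-2\rrbracket$ and that leaving them is impossible without raising the energy above $\Gamma$ or entering $\mathcal{B}_\Gamma^{A,B}$. Once that trapping is established the conclusion about the equilibrium potential is immediate from harmonicity.
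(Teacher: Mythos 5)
Your overall reduction is sound --- it does suffice to show that any $\Gamma$-path in $\mathcal{X}\setminus\mathcal{B}_\Gamma^{A,B}$ between $\sigma$ and $\mathcal{R}_2^{A,B}$ must pass through $\mathcal{N}(\mathcal{S}(A))$ --- and the analogy with Proposition~\ref{p_EAEB} is the right instinct. However, the invariant you track, $v(n)$, the number of $a$-bridges, carries too little information. For every $b\in B$ the target $\mathcal{R}_2^{A,B}$ contains $\xi_{\scal{s}(P)}^{a,b}$ with $|P|=2$, which has $L-2\ge 2$ parallel $a$-bridges and no cross; so $v(n)\ge 2$ along the whole path is perfectly compatible with landing in $\mathcal{R}_2^{A,B}$, and no contradiction results. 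Your third step notices this and tries to upgrade from ``two $a$-bridges'' to ``a retained $a$-cross'', but the reason you supply (a configuration in $\mathcal{E}^A$ connected to $\sigma$ keeping $\ge 2$ $a$-bridges is \emph{forced} to retain an $a$-cross) is a restatement of the conclusion you are trying to reach, and does not follow from the static classification in Propositions~\ref{p_E<=00003D2L}--\ref{p_E=00003D2L+2}, which describe individual cross-free configurations and say nothing by themselves about which ones a path can visit.

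The quantity to track is the presence of an $a$-cross itself, and the dynamical input you are missing is Lemma~\ref{l_MB2}. The paper's proof supposes a $\Gamma$-path $(\omega_n)_{n=0}^N$ from $\omega_0\in\mathcal{R}_2^{A,B}$ to $\omega_N=\sigma$ in $\mathcal{X}\setminus[\mathcal{N}(\mathcal{S}(A))\cup\mathcal{B}_\Gamma^{A,B}]$, sets $n_1=\min\{n\ge 1:\omega_n\text{ has an }a\text{-cross}\}$, and observes that $\omega_{n_1-1}$ is cross-free. Propositions~\ref{p_E<=00003D2L}--\ref{p_E=00003D2L+2} then force $\omega_{n_1-1}$ to be of type \textbf{(MB)} other than \textbf{(MB7)}, or a $\mathcal{C}_{0,\scal{o}}^{a,b}$ configuration with protuberance of size $2L-1$; in the first case Lemma~\ref{l_MB2} and in the second a short explicit path place $\omega_{n_1}$ in $\mathcal{N}(\mathbf{a})$, contradicting the avoidance of $\mathcal{N}(\mathcal{S}(A))$. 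You never invoke Lemma~\ref{l_MB2}, and without it the \textbf{(MB)} dead-ends abutting an $a$-cross are not controlled. Your drop-step analysis also has secondary gaps: when $v$ drops to $0$ the configuration just past the drop need not be cross-free, so the ``four explicit forms'' classification from Proposition~\ref{p_EAEB} does not apply as stated; and when the background spin $a'$ of the resulting $\mathcal{R}_2^{a',a}$ lies in $A$, the claimed trapping does not follow from avoidance of $\mathcal{B}_\Gamma^{A,B}$ alone. But the central problem is the weak invariant and the circular patch.
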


\begin{proof}
We fix $\sigma\in\mathcal{E}^{A}$ which has an $a$-cross for some
$a\in A$. It suffices to prove that any $\Gamma$-path from $\sigma$
to $\mathcal{R}_{2}^{A,\,B}$ in $\mathcal{X}\setminus\mathcal{B}_{\Gamma}^{A,\,B}$
must visit $\mathcal{N}(\mathcal{S}(A))$. Suppose the contrary that
there exists a $\Gamma$-path $(\omega_{n})_{n=0}^{N}$ in $\mathcal{X}\setminus[\mathcal{N}(\mathcal{S}(A))\cup\mathcal{B}_{\Gamma}^{A,\,B}]$
connecting $\omega_{0}\in\mathcal{R}_{2}^{A,\,B}$ and $\omega_{N}=\sigma$.
Let
\[
n_{1}=\min\{n\ge1:\omega_{n}\text{ has an }a\text{-cross}\}\in\llbracket1,\,N\rrbracket\;,
\]
so that $\omega_{n_{1}-1}$ is clearly cross-free. Hence, we are able
to apply Propositions \ref{p_E<=00003D2L}, \ref{p_E=00003D2L+1},
and \ref{p_E=00003D2L+2} to conclude that $\omega_{n_{1}-1}$ is
either of type \textbf{(MB)} or satisfies $\omega_{n_{1}-1}\in\mathcal{C}_{0,\,\scal{o}}^{a,\,b}$
with $|\mathfrak{p}^{a,\,b}(\omega_{n_{1}-1})|=2L-1$. For the former
case, $\omega_{n_{1}-1}$ is clearly not of type \textbf{(MB7)} since
$\omega_{n_{1}}$ must have a cross, and thus by Lemma \ref{l_MB2},
we have $\omega_{n_{1}}\in\mathcal{N}(\mathbf{a})$ and we get a contradiction.
For the latter case, since $\omega_{n_{1}}$ has an $a$-cross, the
configurations $\omega_{n_{1}-1}$ and $\omega_{n_{1}}$ must be of
the following form.
\begin{center}
\includegraphics[width=10.3cm]{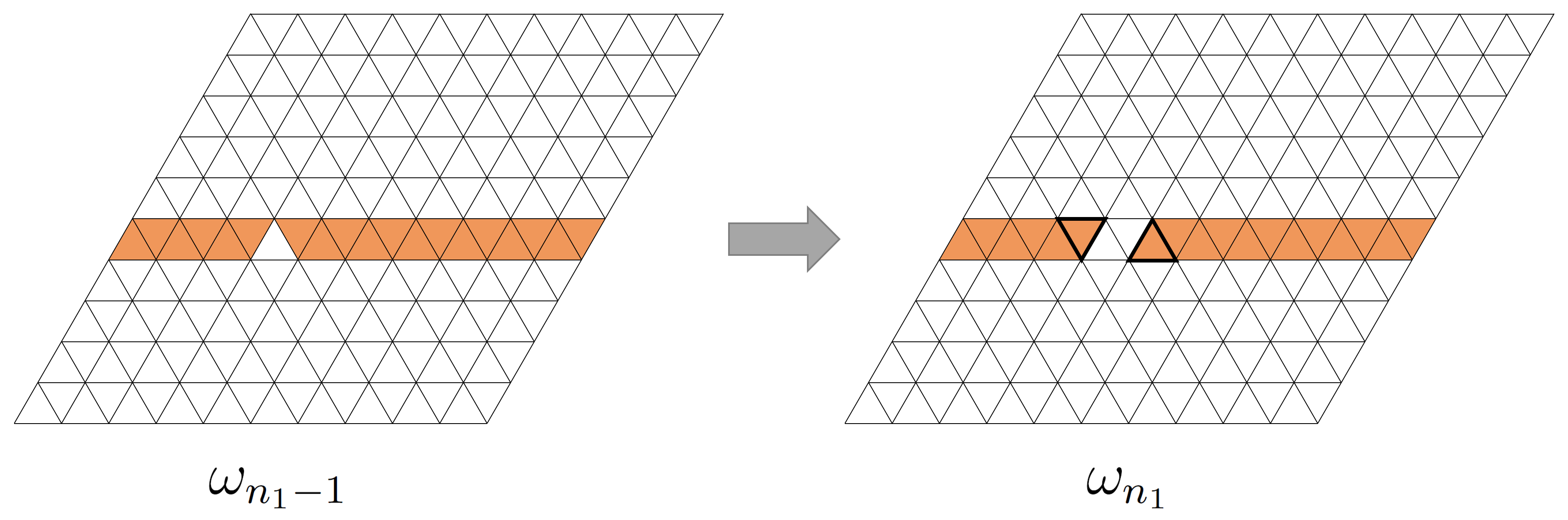}
\par\end{center}

\noindent Thus, we update each spin $b$ in $\omega_{n_{1}}$ to spin
$a$ in a consecutive manner as in the proof of Lemma \ref{l_MB}
(where we start the update from a triangle highlighted by bold boundary)
to get a $(\Gamma-1)$-path from $\omega_{n_{1}}$ to $\mathbf{a}$.
Hence, we have $\omega_{n_{1}}\in\mathcal{N}(\mathbf{a})$ and we
get a contradiction for this case as well.
\end{proof}
\begin{lem}
\label{l_edge2}Fix $a\in A$ and suppose that $\sigma\in\mathcal{N}(\mathbf{a})$
and that there exists $\zeta\in\mathcal{O}^{A}$ such that $\sigma\sim\zeta$
and $h_{\mathcal{S}(A),\,\mathcal{R}_{2}^{A,B}}^{A}(\zeta)\ne1$.
Then, the followings hold. 
\begin{enumerate}
\item There exists a $(\Gamma-1)$-path from $\sigma$ to $\mathbf{a}$
of length less than $4L$. 
\item We have
\[
\big|\,\{\sigma\in\mathcal{N}(\mathbf{a}):\exists\zeta\in\mathcal{O}^{A}\text{ with }\sigma\sim\zeta\text{ and }h_{\mathcal{S}(A),\,\mathcal{R}_{2}^{A,B}}^{A}(\zeta)\ne1\}\,\big|=O(L^{8})\;.
\]
\end{enumerate}
\end{lem}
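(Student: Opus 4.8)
The plan is to first convert the hypotheses on $\zeta$ into the single statement that $\zeta$ is cross-free, and then establish (2) by a crude count and (1) by a case analysis over the six types of cross-free configurations of energy $\Gamma$.

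\emph{Reduction, and proof of (2).} Fix $\sigma$ and $\zeta$ as in the statement. Since $\zeta\in\mathcal{O}^{A}\subseteq\mathcal{E}^{A}$, Proposition \ref{p_EAEB} gives that $\zeta$ has no $b$-cross for any $b\in B$; and since $h_{\mathcal{S}(A),\,\mathcal{R}_{2}^{A,B}}^{A}(\zeta)\neq1$, the contrapositive of Lemma \ref{l_edge1} gives that $\zeta$ has no $a'$-cross for any $a'\in A$. Hence $\zeta$ is cross-free, and since $H(\zeta)=\Gamma=2L+2$, Proposition \ref{p_E=00003D2L+2} places $\zeta$ among the six types of Definition \ref{d_2L+2}. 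Part (2) follows at once: each $\sigma$ counted there is a single-spin-flip neighbour of a cross-free configuration with energy $\le2L+2$; by Lemma \ref{l_cfcount} there are $O(L^{6})$ such configurations, and each configuration of $\mathcal{X}$ has exactly $2L^{2}(q-1)=O(L^{2})$ single-spin-flip neighbours, so the set in (2) has cardinality $O(L^{6})\cdot O(L^{2})=O(L^{8})$.

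\emph{Proof of (1), type \textbf{(MB)}.} Note that $\sigma\sim\zeta$ forces $H(\sigma)\ge2L-1$, while $\sigma\in\mathcal{N}(\mathbf{a})$ forces $H(\sigma)\le2L+1$. Assume first that $\zeta$ is of type \textbf{(MB)} with parallel bridges of spin $c$. If $\zeta$ is not of type \textbf{(MB7)}, then since $H(\sigma)\le2L+1$ we may invoke Lemma \ref{l_MB2} (with the roles of its ``$\sigma$'' and ``$\zeta$'' played by our $\zeta$ and $\sigma$), which gives a $(2L+1)$-path of length $<4L$ from $\sigma$ to $\mathbf{c}$; since $\sigma$ is then $(2L+1)$-connected to both $\mathbf{a}$ and $\mathbf{c}$, Theorem \ref{t_Ebarrier} forces $c=a$, so this is the desired path. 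If $\zeta$ is of type \textbf{(MB7)}, then by the proof of Lemma \ref{l_MB} its only good-flip neighbour with energy $\le2L+1$ is a canonical configuration with protuberance of size $2L-1$, which can only be shrunk or grown through energy $2L+2$ and hence is not $(2L+1)$-connected to any ground state --- contradicting $\sigma\in\mathcal{N}(\mathbf{a})$; so this subcase does not arise.

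\emph{Proof of (1), the remaining types, and the main obstacle.} If $\zeta$ is of one of the dead-end types \textbf{(ODP)}, \textbf{(TDP)}, \textbf{(SP)}, \textbf{(PP)}, or of the even-protuberance type \textbf{(EP)} (so $\zeta\in\mathcal{C}_{n,\,\scal{e}}^{a',b'}$ with $n\in\llbracket1,L-2\rrbracket$), then the good-flip descriptions around canonical configurations --- Lemmas \ref{l_reg}, \ref{l_odd}, \ref{l_even}, \ref{l_PP}, \ref{l_QQ} --- list the possibilities for $\sigma$: each is a configuration of energy $\Gamma$ (excluded, since $H(\sigma)\le2L+1$), or lies in $\mathcal{N}(\mathcal{R}_{n}^{a',b'})$, or is a canonical configuration in $\mathcal{C}_{n,\,\scal{o}}^{a',b'}$. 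Since $\mathcal{R}_{n}^{a',b'}$ with $n\in\llbracket2,L-2\rrbracket$, and $\mathcal{C}_{n,\,\scal{o}}^{a',b'}$ with protuberance size in $\llbracket3,2L-3\rrbracket$, admit no $(2L+1)$-path to any ground state (any shrinking or growing move crosses energy $2L+2$; here $L\ge8$), the only $\sigma$ compatible with $\sigma\in\mathcal{N}(\mathbf{a})$ are those within $O(1)$ good flips of a single-strip regular configuration $\mathcal{R}_{1}^{a,b'}$ (the spin being pinned to $a$ as above, using that $\mathcal{R}_{1}^{a,b'}$ is $(2L+1)$-connected to $\mathbf{a}$ but to no other ground state); from such a $\sigma$ one reaches $\mathbf{a}$ via $O(1)$ local moves followed by a canonical path of length $2L$ (Remark \ref{r_canpath}), a $(2L+1)$-path of total length $<4L$. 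The main obstacle is precisely this case analysis: verifying for each of the six types that the good-flip neighbours of $\zeta$ lying in $\mathcal{N}(\mathbf{a})$ are only the short-range ones near $\mathbf{a}$, and tracking the $<4L$ bound on the path length (which in every surviving case is $2L$ from a canonical path plus $O(1)$ local flips, or the $\le4L$ estimate of Lemma \ref{l_MB2}); the \textbf{(MB)} family is the delicate part.
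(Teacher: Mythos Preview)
Your proof follows the same approach as the paper's: reduce to $\zeta$ cross-free via Proposition~\ref{p_EAEB} and Lemma~\ref{l_edge1}, derive (2) from Lemma~\ref{l_cfcount} plus the $O(L^{2})$ count of single-flip neighbours, and prove (1) by casing on the six types of Definition~\ref{d_2L+2}, with Lemma~\ref{l_MB2} handling the \textbf{(MB)} family and \textbf{(MB7)} ruled out separately.

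One technical wrinkle in your treatment of the non-\textbf{(MB)} types: you invoke Lemmas~\ref{l_even}, \ref{l_PP}, \ref{l_QQ} to enumerate the good-flip neighbours of $\zeta$, but those lemmas are stated only for $n\in\llbracket2,L-2\rrbracket$ (or $\llbracket2,L-3\rrbracket$), whereas $\zeta\in\mathcal{O}^{A}$ forces $\zeta\notin\mathcal{B}_{\Gamma}^{A,B}$, so the relevant $\zeta$ of types \textbf{(ODP)}, \textbf{(TDP)}, \textbf{(SP)}, \textbf{(EP)}, \textbf{(PP)} with one spin in $A$ and one in $B$ live precisely at the edge indices $n\in\{1,L-2\}$ (or else both spins lie in $A$) --- outside the literal scope of those lemmas. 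The local analysis of course carries over, but the citations are formally off. The paper avoids this by arguing directly that the constraint $\sigma\in\mathcal{N}(\mathbf{a})$ pins $\sigma$ to $\mathcal{R}_{1}^{a,c}$ or $\mathcal{C}_{1,\scal{o}}^{a,c}$ with $|\mathfrak{p}^{a,c}(\sigma)|=1$, yielding a canonical $(\Gamma-1)$-path of length exactly $2L$ or $2L+1$; this is sharper than your ``$O(1)$ flips from $\mathcal{R}_{1}^{a,b'}$ plus a canonical path'', though both suffice for the stated $<4L$ bound.
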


\begin{proof}
(1) By Proposition \ref{p_EAEB} and Lemma \ref{l_edge1}, $\zeta$
is a cross-free configuration of energy $\Gamma$. Thus, by Proposition
\ref{p_E=00003D2L+2}, $\zeta$ is of type\textbf{ (ODP)},\textbf{
(TDP)},\textbf{ (SP)},\textbf{ (EP)},\textbf{ (PP) }or\textbf{ (MB)}.\textbf{
}For the first five types, since $\sigma\in\mathcal{N}(\mathbf{a})$,
we can readily infer that the only possible cases are $\sigma\in\mathcal{R}_{1}^{a,\,c}$
or $\sigma\in\mathcal{C}_{1,\,\scal{o}}^{a,\,c}$ with $|\mathfrak{p}^{a,\,c}(\sigma)|=1$
for some $c\in\Omega\setminus\{a\}$. Then, we clearly have a $(\Gamma-1)$-path
from $\sigma$ to $\mathbf{a}$ of length $2L$ or $2L+1$ which is
indeed a canonical path (cf. Remarks \ref{r_can} and \ref{r_canpath}).\textbf{
}Next we assume that $\zeta$ is of type\textbf{ (MB)}. If $\zeta$
is of type \textbf{(MB7)}, then clearly we have $\sigma\in\mathcal{C}_{1,\,\scal{o}}^{c,\,c'}$
for some $c,\,c'\in\Omega$, which contradicts $\sigma\in\mathcal{N}(\mathbf{a})$
by Lemmas \ref{l_reg} and \ref{l_odd}. Otherwise, the statement
of part (1) is direct from Lemma \ref{l_MB2}. \medskip{}

\noindent (2) Since $\zeta$ is a cross-free configuration of energy
$\Gamma$, Lemma \ref{l_cfcount} implies that there are $O(L^{6})$
possibilities for $\zeta$. As there are $O(L^{2})$ ways of flipping
a spin, we get a (loose) bound $O(L^{8})$ for the number of possible
configurations for $\sigma$.
\end{proof}
\begin{lem}
\label{l_edge3}Let $\zeta\in\mathcal{I}_{\textup{rep}}^{A}\setminus(\mathcal{S}(A)\cup\mathcal{R}_{2}^{A,\,B}\cup\mathcal{C}_{1,\,\scal{o}}^{A,\,B})$
and let $\sigma\in\mathcal{N}(\zeta)$. Then, $\sigma$ has an $a$-cross
for some $a\in A$. If $\xi\in\mathcal{O}^{A}$ satisfies $\xi\sim\sigma$,
then $\xi$ also has an $a$-cross.
\end{lem}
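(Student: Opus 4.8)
The plan is to establish three things in order: that $\zeta$ itself must carry an $a$-cross with $a\in A$; that this property then propagates along $\mathcal{N}(\zeta)$; and finally that it also passes to the adjacent configurations of $\mathcal{O}^{A}$.

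First I would show that $\zeta$ has an $a$-cross for some $a\in A$. Since $\zeta\in\mathcal{I}_{\textup{rep}}^{A}\subseteq\mathcal{I}^{A}$ we have $\zeta\in\mathcal{E}^{A}$ and $H(\zeta)\le 2L+1$. Suppose $\zeta$ were cross-free; then by Propositions \ref{p_E<=00003D2L} and \ref{p_E=00003D2L+1} it is a regular configuration or an odd canonical configuration. I would then run through the possible shapes: using Remarks \ref{r_can}, \ref{r_canpath} and \ref{r_typ} to see that $\mathcal{R}_{1}^{\cdot,\cdot}$ and the $\mathcal{C}_{0,\scal{o}}^{\cdot,\cdot}$ configurations (with the spin on the $A$-side) reach a ground state of $\mathcal{S}(A)$ via a $(2L+1)$-path obtained by growing the single protuberance to a full strip and then collapsing it; using Proposition \ref{p_typ}(1) to absorb the "bulk" regular and odd canonical configurations into $\mathcal{E}^{A}\cap\mathcal{B}^{A,B}=\mathcal{N}(\mathcal{R}_{2}^{A,B})$; and observing that the configurations lying on the $\mathcal{S}(B)$-side fall into $\mathcal{E}^{B}$ and hence, by Proposition \ref{p_EAEB}, outside $\mathcal{E}^{A}$. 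Since $\zeta$ is the representative of its own $\mathcal{N}$-class, this forces $\zeta\in\mathcal{S}(A)\cup\mathcal{R}_{2}^{A,B}\cup\mathcal{C}_{1,\scal{o}}^{A,B}$, contradicting the hypothesis; so $\zeta$ carries a cross, which by Proposition \ref{p_EAEB} is not a $b$-cross with $b\in B$, hence is an $a$-cross with $a\in A$.

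Next I would propagate the $a$-cross along $\mathcal{N}(\zeta)$. The key elementary input is that two strips of different directions share at least one triangle, so any configuration with an $a$-cross has all of its bridges equal to $a$-bridges; combined with Lemma \ref{l_Elb}, such a configuration of energy at most $2L+1$ has at least $L-1$ $a$-bridges and no other bridge. Take a $(2L+1)$-path $\zeta=\omega_{0}\sim\cdots\sim\omega_{N}=\sigma$ and suppose $\omega_{m}$ is the last with an $a$-cross, while $\omega_{m+1}=\omega_{m}^{x,c}$ has none. A single flip touches exactly three strips, one per direction, so it changes the number of $a$-bridges in any fixed direction by at most one; since a cross of another spin cannot be created by a single flip, $\omega_{m+1}$ is in fact cross-free, and the loss of the $a$-cross forces $\omega_{m}$ to have at most one $a$-bridge in two of the three directions, hence at least $L-3$ in the third. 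For such an $\omega_{m}$ the non-$a$ spins are confined to at most three parallel strips, and since the unique transversal $a$-bridge makes every non-bridge in the other two directions contribute at least $2$ to the energy, \eqref{e_Edec} shows those three strips carry total strip-energy at most $6$; a short case inspection (growing the at most three protuberances to full monochromatic strips and collapsing to a ground state as in Remark \ref{r_canpath}) then shows $\omega_{m}\in\mathcal{N}(\mathbf{a})$ for some $\mathbf{a}\in\mathcal{S}(A)$. But then $\zeta\in\mathcal{N}(\omega_{m})=\mathcal{N}(\mathbf{a})$, so $\zeta$ would be represented by $\mathbf{a}\in\mathcal{S}(A)$, a contradiction; hence every $\omega_{n}$, in particular $\sigma$, has an $a$-cross.

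Finally, for $\xi\in\mathcal{O}^{A}$ with $\xi\sim\sigma$: by the argument just given $\sigma$ cannot have at most one $a$-bridge in two of the three directions (else $\sigma$, and hence $\zeta$, would lie in $\mathcal{N}(\mathbf{a})$), so $\sigma$ has at least two $a$-bridges in each of two different directions; since $\xi$ differs from $\sigma$ by a single flip it retains at least one $a$-bridge in each of these two directions, and therefore carries an $a$-cross. The main obstacle is the case analysis of the first step — verifying that a cross-free configuration cannot be an "exotic" representative once $\mathcal{B}_{\Gamma}^{A,B}$ has been removed from $\mathcal{E}^{A}$ — together with the subsidiary claim in the second step that an $a$-cross configuration of energy at most $2L+1$ with few bridges in two directions necessarily sits in $\mathcal{N}(\mathcal{S}(A))$; both are routine but require patient bookkeeping of which regular and odd canonical configurations survive inside $\mathcal{E}^{A}$.
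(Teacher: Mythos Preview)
Your approach differs substantially from the paper's and is more elaborate than necessary. The paper dispenses with the first claim in one line: since any $\sigma\in\mathcal{N}(\zeta)$ lies in $\mathcal{I}^{A}$ with $H(\sigma)\le 2L+1$, if $\sigma$ were cross-free it would be regular or odd canonical by Propositions~\ref{p_E<=00003D2L} and~\ref{p_E=00003D2L+1}, and this is (asserted to be) incompatible with $\zeta\notin\mathcal{S}(A)\cup\mathcal{R}_{2}^{A,B}\cup\mathcal{C}_{1,\scal{o}}^{A,B}$; Proposition~\ref{p_EAEB} then rules out a $b$-cross. Your step~2 propagation is therefore redundant: the classification argument you run on $\zeta$ applies verbatim to $\sigma$, since all it uses is $H\le 2L+1$, membership in $\mathcal{E}^{A}$, and the identity of the representative of the $\mathcal{N}$-class. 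For $\xi$, the paper supposes $\xi$ cross-free, places it among the six types of Proposition~\ref{p_E=00003D2L+2}, observes that for \textbf{(ODP)}--\textbf{(PP)} no single-flip neighbour can carry a cross (each such $\xi$ has horizontal bridges of two distinct spins, so no vertical or diagonal strip can be made monochromatic by one flip), and handles \textbf{(MB)} via Lemma~\ref{l_MB2}, which forces $\sigma\in\mathcal{N}(\mathbf{a})$ and hence $\zeta\in\mathcal{S}(A)$. Your bridge-count for $\xi$ is a nice idea but rests on the ``short case inspection'' of step~2, which is really a reprise of the \textbf{(MB)}-type analysis of Lemmas~\ref{l_MB}--\ref{l_MB2}; with up to three non-bridge strips of total strip-energy~$6$ it is not short.

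There is also a genuine gap in your step~1 case analysis (shared with the paper's terse first sentence): you do not cover cross-free $\zeta$ of the form $\mathcal{R}_{n}^{c,d}$ or $\mathcal{C}_{n,\scal{o}}^{c,d}$ with \emph{both} $c,d\in A$ and $n$ in the bulk range. Such configurations lie in $\mathcal{I}^{A}$ (Remark~\ref{r_typ}), are not in $\mathcal{B}^{A,B}$ so Proposition~\ref{p_typ}(1) does not absorb them, are not on the $\mathcal{S}(B)$-side, and their $\mathcal{N}$-class meets none of $\mathcal{S}(A)$, $\mathcal{R}_{2}^{A,B}$, $\mathcal{C}_{1,\scal{o}}^{A,B}$; the first conclusion of the lemma actually fails there when $|A|\ge 2$. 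This does not affect the only downstream use (Lemma~\ref{l_div5}), but your bookkeeping does not close it.
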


\begin{proof}
By Propositions \ref{p_E<=00003D2L} and \ref{p_E=00003D2L+1}, $\sigma$
cannot be a cross-free configuration. Since $\sigma$ cannot have
a $b$-cross for $b\in B$ by Proposition \ref{p_EAEB}, it must have
an $a$-cross for some $a\in A$. For the second part of the lemma,
since $\xi\sim\sigma$, the configuration $\xi$ should be cross-free
if it does not have an $a$-cross. If $\xi$ is cross-free, then by
Proposition \ref{p_E=00003D2L+2}, $\xi$ is of type \textbf{(ODP)},\textbf{
(TDP)},\textbf{ (SP)},\textbf{ (EP)},\textbf{ (PP) }or\textbf{ (MB)}.
The first five types are impossible since $\sigma$ has a cross. If
$\xi$ is of type \textbf{(MB)} other than \textbf{(MB7)}, then Lemma
\ref{l_MB2} implies that $\sigma\in\mathcal{N}(\mathbf{a})$ which
contradicts $\zeta\notin\mathcal{S}(A)$. Finally, if $\xi$ is of
type \textbf{(MB7)}, then $\sigma$ cannot have a cross, and thus
we have a contradiction. This concludes the proof.
\end{proof}

\subsubsection*{Estimate of jump rate}

The following proposition explains the reason why we introduced the
Markov chain $Z^{A}(\cdot)$.
\begin{prop}
\label{p_ZA}Define a projection map $\Pi^{A}:\mathcal{E}^{A}\rightarrow\mathscr{V}^{A}$
by
\[
\Pi^{A}(\sigma)=\begin{cases}
\zeta & \text{if }\sigma\in\mathcal{N}(\zeta)\text{ for some }\zeta\in\mathcal{I}_{\textup{rep}}^{A}\;,\\
\sigma & \text{if }\sigma\in\mathcal{O}^{A}\;.
\end{cases}
\]
Suppose that $L^{2/3}\ll e^{\beta}$. Then, the followings hold.
\begin{enumerate}
\item For $\sigma_{1},\,\sigma_{2}\in\mathcal{O}^{A}$ with $\sigma_{1}\sim\sigma_{2}$,
we have
\[
\frac{1}{q}e^{-\Gamma\beta}r^{A}\big(\,\Pi^{A}(\sigma_{1}),\,\Pi^{A}(\sigma_{2})\,\big)=(1+o_{L}(1))\times\mu_{\beta}(\sigma_{1})c_{\beta}(\sigma_{1},\,\sigma_{2})\;.
\]
\item For $\sigma_{1}\in\mathcal{O}^{A}$ and $\sigma_{2}\in\mathcal{I}^{A}$
with $\sigma_{1}\sim\sigma_{2}$, we have
\[
\frac{1}{q}e^{-\Gamma\beta}r^{A}\big(\,\Pi^{A}(\sigma_{1}),\,\Pi^{A}(\sigma_{2})\,\big)=(1+o_{L}(1))\times\sum_{\zeta\in\mathcal{N}(\sigma_{2})}\mu_{\beta}(\sigma_{1})c_{\beta}(\sigma_{1},\,\zeta)\;.
\]
\end{enumerate}
\end{prop}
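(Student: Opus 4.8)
The plan is to reduce both identities, by unwinding the definitions of $\Pi^{A}$, $r^{A}$, $\mu_{\beta}$, and $c_{\beta}$, to the single fact that $Z_{\beta}=q+o_{L}(1)$. This is precisely part (1) of Theorem \ref{t_Gibbs1} under the standing hypothesis $L^{2/3}\ll e^{\beta}$ (recall $\gamma_{0}=2/3$ for the hexagonal lattice), and it yields the uniform estimate $\mu_{\beta}(\sigma)=\frac{1}{q}(1+o_{L}(1))\,e^{-\beta H(\sigma)}$ for every $\sigma\in\mathcal{X}$; this is the only place in which the assumption on $\beta$ is invoked.

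For part (1), I would first note that $\Pi^{A}$ is the identity on $\mathcal{O}^{A}$, and that $\sigma_{1}\sim\sigma_{2}$ together with $\sigma_{1},\sigma_{2}\in\mathcal{O}^{A}$ makes $\{\sigma_{1},\sigma_{2}\}$ an edge of $\mathscr{G}^{A}$ with $r^{A}(\sigma_{1},\sigma_{2})=1$ by the first line of \eqref{e_rA}. On the other side, $H(\sigma_{1})=H(\sigma_{2})=\Gamma$ gives $c_{\beta}(\sigma_{1},\sigma_{2})=e^{-\beta\max\{0,\,0\}}=1$, while $\mu_{\beta}(\sigma_{1})=\frac{1}{q}(1+o_{L}(1))\,e^{-\Gamma\beta}$ by the observation above. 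Both sides therefore equal $\frac{1}{q}(1+o_{L}(1))\,e^{-\Gamma\beta}$, proving (1).

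For part (2), write $\zeta=\Pi^{A}(\sigma_{2})$ for the unique element of $\mathcal{I}_{\textup{rep}}^{A}$ with $\sigma_{2}\in\mathcal{N}(\zeta)$. Since being joined by a $(2L+1)$-path is a symmetric (by reversibility) and transitive (by concatenation of paths) relation, $\mathcal{N}(\zeta)=\mathcal{N}(\sigma_{2})$, and hence the second line of \eqref{e_rA} gives $r^{A}(\sigma_{1},\zeta)=|\{\zeta'\in\mathcal{N}(\sigma_{2}):\zeta'\sim\sigma_{1}\}|$. Now every $\zeta'\in\mathcal{N}(\sigma_{2})$ lies on some $(2L+1)$-path, so $H(\zeta')\le 2L+1<\Gamma=H(\sigma_{1})$; consequently $c_{\beta}(\sigma_{1},\zeta')=\mathbf{1}\{\zeta'\sim\sigma_{1}\}$, and the sum on the right collapses to $\mu_{\beta}(\sigma_{1})\,|\{\zeta'\in\mathcal{N}(\sigma_{2}):\zeta'\sim\sigma_{1}\}|=\frac{1}{q}(1+o_{L}(1))\,e^{-\Gamma\beta}\,|\{\zeta'\in\mathcal{N}(\sigma_{2}):\zeta'\sim\sigma_{1}\}|$. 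Matching this with $\frac{1}{q}e^{-\Gamma\beta}\,r^{A}(\sigma_{1},\zeta)$ completes (2).

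I do not expect a genuine analytic obstacle here; the work is entirely in the bookkeeping, the two points to get right being the identity $\mathcal{N}(\zeta)=\mathcal{N}(\sigma_{2})$ and the observation that the $(1+o_{L}(1))$ error factor is exactly the one delivered by $Z_{\beta}=q+o_{L}(1)$. The latter is why the statement is formulated with the threshold hypothesis $L^{\gamma_{0}}\ll e^{\beta}$ rather than anything stronger.
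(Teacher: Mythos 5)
Your proof is correct and follows essentially the same route as the paper: both reduce everything to $Z_{\beta}=q+o_{L}(1)$ (Theorem \ref{t_Gibbs1}-(1)) together with the observation that $\zeta\sim\sigma_1$, $H(\zeta)\le H(\sigma_1)$ forces $c_\beta(\sigma_1,\zeta)=1$. Your explicit verification that $\mathcal{N}(\Pi^A(\sigma_2))=\mathcal{N}(\sigma_2)$ via the equivalence-relation structure of $(2L+1)$-path connectivity makes a step fully precise that the paper leaves implicit.
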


\begin{proof}
(1) By definition, we have $r^{A}\big(\,\Pi^{A}(\sigma_{1}),\,\Pi^{A}(\sigma_{2})\,\big)=1$
and thus the conclusion follows immediately from \eqref{e_detbal}
and part (1) of Theorem \ref{t_Gibbs1}.\medskip{}

\noindent (2) For this case, by definition we can write 
\[
\frac{1}{q}e^{-\Gamma\beta}r^{A}\big(\,\Pi^{A}(\sigma_{1}),\,\Pi^{A}(\sigma_{2})\,\big)=\frac{1}{q}e^{-\Gamma\beta}\times\big|\,\{\zeta\in\mathcal{N}(\sigma_{2}):\zeta\sim\sigma_{1}\}\,\big|\;.
\]
By part (1) of Theorem \ref{t_Gibbs1} and \eqref{e_detbal}, the
right-hand side equals
\begin{align*}
 & (1+o_{L}(1))\times\mu_{\beta}(\sigma_{1})\times\big|\,\{\zeta\in\mathcal{N}(\sigma_{2}):\zeta\sim\sigma_{1}\}\,\big|\\
 & =(1+o_{L}(1))\times\sum_{\zeta\in\mathcal{N}(\sigma_{2}):\,\zeta\sim\sigma_{1}}\mu_{\beta}(\sigma_{1})c_{\beta}(\sigma_{1},\,\zeta)\;,
\end{align*}
where we implicitly used the fact that $\min\{\mu_{\beta}(\sigma_{1}),\,\mu_{\beta}(\sigma_{2})\}=\mu_{\beta}(\sigma_{1})$
at the identity. This proves part (2).
\end{proof}

\subsubsection*{An auxiliary constant}

Finally, we define a constant 
\begin{equation}
\mathfrak{e}_{A}=\frac{1}{|\mathscr{V}^{A}|\mathrm{cap}^{A}\big(\,\mathcal{S}(A),\,\mathcal{R}_{2}^{A,\,B}\,\big)}\;.\label{e_eA}
\end{equation}

\begin{prop}
\label{p_eAest}We have $\mathfrak{e}_{A}\le L^{-1}$.
\end{prop}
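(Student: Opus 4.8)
The plan is to identify $\mathfrak{e}_A$ (up to the harmless factor $|\mathscr{V}^A|\ge 1$) with an effective resistance in the weighted graph $\mathscr{G}^A$ and then to bound that resistance from above by exhibiting many short, edge-disjoint paths of conductance at least one. With $D^A$ and $\mathrm{cap}^A$ the Dirichlet form and capacity of $Z^A$ (Section \ref{sec81}), in either normalization convention for the uniform reference measure on $\mathscr{V}^A$ one has $|\mathscr{V}^A|\,\mathrm{cap}^A(\mathcal{S}(A),\mathcal{R}_2^{A,B})\ge\mathrm{Cap}(\mathcal{S}(A),\mathcal{R}_2^{A,B})$, where $\mathrm{Cap}$ denotes the capacity of the network on $\mathscr{V}^A$ with conductances $r^A(\cdot,\cdot)$; hence $\mathfrak{e}_A\le R_{\mathrm{eff}}(\mathcal{S}(A),\mathcal{R}_2^{A,B})$, its effective resistance. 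By Rayleigh monotonicity together with the series and parallel laws, $R_{\mathrm{eff}}(\mathcal{S}(A),\mathcal{R}_2^{A,B})\le 4L/N$ whenever $\mathscr{G}^A$ contains $N$ pairwise edge-disjoint paths of length at most $4L$ joining $\mathcal{S}(A)$ to $\mathcal{R}_2^{A,B}$, every edge of which has $r^A\ge 1$. So it suffices to produce $N\ge 4L^2$ such paths.

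I would build these from canonical paths (Remark \ref{r_canpath}). Fix $\mathbf{a}\in\mathcal{S}(A)$, $b\in B$, a direction $\scal{s}$, a base index $\ell$, an adjacent index $m\in\{\ell-1,\ell+1\}$, a triangle $x$ of $\scal{s}_m$ sharing a side with $\scal{s}_\ell$, and a direction of growth. Concatenating the canonical path from $\mathbf{a}=\xi_{\scal{s}(\emptyset)}^{a,b}$ to $\xi_{\scal{s}_\ell}^{a,b}$ inside $\mathcal{C}_0^{a,b}$ (a $(2L+1)$-path by Remark \ref{r_can}(2)) with the canonical path that grows a one-dimensional protuberance in $\scal{s}_m$ from $\{x\}$ up to all of $\scal{s}_m$, I obtain a $\Gamma$-path inside $\mathcal{C}_0^{a,b}\cup\mathcal{C}_1^{a,b}\subseteq\mathcal{E}^A$ (Remark \ref{r_typ}) of length at most $4L$ from $\mathbf{a}$ to $\xi_{\scal{s}(\{\ell,m\})}^{a,b}\in\mathcal{R}_2^{A,B}$. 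Projecting by $\Pi^A$ gives a walk in $\mathscr{G}^A$: the prefix up to and including $\xi_{\scal{s}_\ell\cup\{x\}}^{a,b}$ collapses onto $\mathbf{a}$ and the last step collapses onto $\xi_{\scal{s}(\{\ell,m\})}^{a,b}$, since each of those configurations is joined to $\mathbf{a}$ (resp.\ to $\xi_{\scal{s}(\{\ell,m\})}^{a,b}$) by a $(2L+1)$-path, while the intermediate even protuberances lie in $\mathcal{O}^A$ and stay distinct vertices; one reads off from \eqref{e_rA} that each edge produced has conductance $\ge 1$. After loop-erasure this is a path from $\mathbf{a}$ to $\mathcal{R}_2^{A,B}$ of length at most $4L$.

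For edge-disjointness I would use that every edge of such a projected path is incident to one of its even-protuberance vertices in $\mathcal{O}^A$ — consecutive configurations of energy $\le 2L+1$ are $\mathcal{N}$-equivalent and hence identified by $\Pi^A$, so no edge comes from a pair of configurations both of energy $\le 2L+1$. For two distinct choices of the data $(\scal{s},\ell,m,x,\text{growth direction})$ these $\mathcal{O}^A$-vertices are all distinct configurations (the $b$-protuberance sits in different strips or different positions), and $\mathcal{O}^A\cap\mathcal{I}^A_{\textup{rep}}=\emptyset$, so the two projected paths share no edge. There are $3$ directions, $L$ base indices, $2$ adjacent indices, $L$ admissible triangles $x$ (exactly the triangles of $\scal{s}_m$ touching $\scal{s}_\ell$, which is what keeps condition \eqref{cond_prot} valid along the whole growth), and $2$ growth directions, giving $N\ge 12L^2$ pairwise edge-disjoint paths of length $\le 4L$. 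Hence $\mathfrak{e}_A\le R_{\mathrm{eff}}(\mathcal{S}(A),\mathcal{R}_2^{A,B})\le 4L/(12L^2)<L^{-1}$.

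The routine but genuinely load-bearing part is the second step: verifying that every configuration along the growing-protuberance path lies in $\mathcal{E}^A$ (i.e.\ avoids $\mathcal{B}_\Gamma^{A,B}$, which forces one to stay in $\mathcal{C}_0^{a,b}\cup\mathcal{C}_1^{a,b}$ and to start the protuberance at a touching triangle), that the $\Pi^A$-image is a genuine path in $\mathscr{G}^A$ with conductances $\ge 1$, and that the even-protuberance configurations occurring for different data are pairwise distinct. All of this is elementary but rests on the neighborhood descriptions of Lemmas \ref{l_reg}, \ref{l_odd}, \ref{l_even} and on Remarks \ref{r_can}--\ref{r_typ}, applied to the very simple configurations (a single bridge plus a one-dimensional protuberance) that occur along these paths.
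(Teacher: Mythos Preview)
Your argument is correct and is essentially the paper's approach: the paper invokes the Thomson principle with a test flow supported on canonical paths from $\mathcal{R}_2^{A,B}$ to $\mathcal{S}(A)$, and your edge-disjoint-paths bound is exactly the special case of Thomson where the flow splits evenly over those paths. One small imprecision worth noting: the odd-protuberance configurations in $\mathcal{C}_{1,\scal{o}}^{a,b}$ with $|\mathfrak{p}^{a,b}|\in\llbracket 3,2L-3\rrbracket$ do \emph{not} collapse under $\Pi^A$ (each satisfies $\mathcal{N}(\sigma)=\{\sigma\}$ and is its own representative, cf.\ the proof of Lemma~\ref{l_div4}), so the projected path alternates $\mathcal{O}^A$ and $\mathcal{I}_{\textup{rep}}^A$ vertices; but this does not affect your edge-disjointness argument, since every edge is still incident to an even-protuberance vertex in $\mathcal{O}^A$, and your parity check (touching triangles sit at one parity in $\mathbb{T}_{2L}$) correctly shows those even-size configurations are pairwise distinct across the data $(\scal{s},\ell,m,x,\text{direction})$.
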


\begin{proof}
As in the proof of \cite[Proposition 7.9]{KS2}, the proof is completed
by applying Thomson principle along with a test flow defined along
a canonical path from $\mathcal{R}_{2}^{A,\,B}$ to $\mathcal{S}(A)$.
We do not tediously repeat the proof and refer the readers to \cite[Proposition 7.9]{KS2}
for more detailed explanation of this method.
\end{proof}
To conclude this section, it should be remarked that we can repeat
the same constructions on the other set $\mathcal{E}^{B}$ and obviously
the same conclusions also hold for this set as well.

\section{\label{sec8_PT}General Strategy for Eyring--Kramers Formula}

In the remainder of the article, we focus on the proof of Eyring--Kramers
formula (Theorem \ref{t_EK}) based on our careful investigation of
the energy landscape carried out in the previous section. To that
end, we review the robust strategy for the potential-theoretic proof
of the Eyring--Kramers formula in this section. Although our contents
are self-contained, we refer to \cite[Sections 3 and 4]{KS2} for
more comprehensive discussions on the strategy given in this section.

\subsection{\label{sec81}Proof of Theorem \ref{t_EK} via capacity estimates}

The \textit{Dirichlet form} $D_{\beta}(\cdot)$ associated with the
Metropolis dynamics $\sigma_{\beta}(\cdot)$ (cf. Section \ref{sec23})
is given by, for each $f:\mathcal{X}\rightarrow\mathbb{R}$,
\[
D_{\beta}(f)=\frac{1}{2}\sum_{\sigma,\,\zeta\in\mathcal{X}}\mu_{\beta}(\sigma)c_{\beta}(\sigma,\,\zeta)[f(\zeta)-f(\sigma)]^{2}\;.
\]
For disjoint and non-empty subsets $\mathcal{A}$ and $\mathcal{B}$
of $\mathcal{X}$, the \textit{equilibrium potential} and \textit{capacity}
between $\mathcal{A}$ and $\mathcal{B}$ are defined as
\[
h_{\mathcal{A},\,\mathcal{B}}^{\beta}(\sigma)=\mathbb{P}_{\sigma}^{\beta}\big[\,\tau_{\mathcal{A}}<\tau_{\mathcal{B}}\,\big]\;\;\;\;\text{and}\;\;\;\;\mathrm{Cap}_{\beta}(\mathcal{A},\,\mathcal{B})=D_{\beta}\big(\,h_{\mathcal{A},\,\mathcal{B}}^{\beta}\,\big)\;,
\]
respectively. The following is the sharp capacity estimate between
ground states.
\begin{thm}[Capacity estimate for hexagonal lattice]
\label{t_cap} Suppose that $\beta=\beta_{L}$ satisfies $L^{10}\ll e^{\beta}$
and let $(A,\,B)$ be a proper partition. Then, it holds that 
\begin{equation}
\mathrm{Cap}_{\beta}\big(\,\mathcal{S}(A),\,\mathcal{S}(B)\,\big)=\Big[\,\frac{12|A|(q-|A|)}{q}+o_{L}(1)\,\Big]\,e^{-\Gamma\beta}\;.\label{e_cap}
\end{equation}
\end{thm}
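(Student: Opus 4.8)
The plan is to establish the matching upper and lower bounds for $\mathrm{Cap}_{\beta}(\mathcal{S}(A),\mathcal{S}(B))$ via the two classical variational principles: the Dirichlet principle ($\mathrm{Cap}_{\beta}(\mathcal{A},\mathcal{B}) = \inf_{f} D_{\beta}(f)$ over $f$ with $f|_{\mathcal{A}}=1$, $f|_{\mathcal{B}}=0$) for the upper bound, and the Thomson principle (supremum over unit flows from $\mathcal{A}$ to $\mathcal{B}$ of the reciprocal of the flow energy) for the lower bound. The strategy mirrors that of \cite[Section 4]{KS2}, but the new combinatorial input is that we must control the contribution of the complex dead-end maze and of the configurations with energy above $\Gamma$ in the large-volume regime. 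The decomposition $\widehat{\mathcal{N}}(\mathcal{S}) = \mathcal{E}^{A,B}\cup\mathcal{B}^{A,B}$ from Proposition \ref{p_typ} together with Theorem \ref{t_Ebarrier} (there is no valley of depth $\geq\Gamma$ outside $\mathcal{S}$) is what localizes the whole computation to the typical configurations: any test function or test flow can be taken constant ($=h^{\beta}$-value, $=0$-flow) outside $\widehat{\mathcal{N}}(\mathcal{S})$ up to an error that is $o_L(1)\,e^{-\Gamma\beta}$, because configurations of energy $> \Gamma$ carry Gibbs mass that is negligible after multiplication by $e^{\Gamma\beta}$ — here is where the counting Lemmas \ref{l_Xi} and \ref{l_cfcount} and the hypothesis $L^{10}\ll e^{\beta}$ enter, precisely to absorb the $O(L^{6})$–$O(L^{8})$ polynomial factors coming from the number of relevant saddle and dead-end configurations.

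\textbf{Upper bound.} I would build an explicit test function $f$ as follows. On the bulk part $\mathcal{B}^{A,B}$ the transition between $\mathcal{S}(A)$ and $\mathcal{S}(B)$ passes through the family of canonical configurations indexed by $n\in\llbracket 2,L-3\rrbracket$ and, for each $n$, through a one-dimensional "ladder" $\mathcal{R}_n \to \mathcal{C}_{n,\scal{o}} \to \mathcal{C}_{n,\scal{e}} \to \mathcal{C}_{n,\scal{o}} \to \mathcal{R}_{n+1}$; assign $f$ the value obtained by solving the associated one-dimensional (birth–death) harmonic problem along this ladder, set $f\equiv 1$ on $\mathcal{E}^A$ and $f\equiv 0$ on $\mathcal{E}^B$, and $f\equiv$ (its $\mathcal{S}(A)$- or $\mathcal{S}(B)$-value) outside $\widehat{\mathcal{N}}(\mathcal{S})$. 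The key point for the dead-ends: since every dead-end in $\mathcal{D}^{a,b}$ is, by Proposition \ref{p_dead} and Remark \ref{r_dead}, only attached to canonical configurations at level $\Gamma$ (or at $\mathcal{N}(\mathcal{R}_n)$), one extends $f$ to each dead-end by its unique harmonic value, which contributes \emph{zero} to $D_{\beta}(f)$ because a dead-end is a local maximum of the landscape with all neighbors of energy $\le\Gamma$ — wait, more carefully: the dead-end edges either carry no Dirichlet cost (the dead-end value can be chosen to match the single canonical neighbor) or carry cost already bounded by $O(L^{?})e^{-(\Gamma+1)\beta}$ which is negligible. Summing the one-dimensional Dirichlet contributions over all $n$, all three strip directions $\scal{h},\scal{v},\scal{d}$, and all ordered pairs $(a,b)\in A\times B$, and noting each elementary conductance at the top of the ladder is $\asymp e^{-\Gamma\beta}$ while the "width" of the ladder (number of protuberance positions) contributes the combinatorial constant, yields $D_{\beta}(f) \le [\,\tfrac{12|A|(q-|A|)}{q} + o_L(1)\,]e^{-\Gamma\beta}$; the constant $12$ is the hexagonal analogue of the $8$ appearing for the square lattice and comes from $3$ directions $\times$ the $4 = \binom{?}{?}$ factor in the one-dimensional effective resistance of the protuberance ladder, combined with $\kappa_0 = 1/12$.

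\textbf{Lower bound.} Here I would use Proposition \ref{p_ZA}, which is exactly designed for this: it identifies, up to $(1+o_L(1))$, the Dirichlet form of $\sigma_\beta(\cdot)$ restricted to $\mathcal{E}^A$ with $\tfrac1q e^{-\Gamma\beta}$ times the Dirichlet form $D^A$ of the reduced reversible Markov chain $Z^A(\cdot)$ on the finite-type graph $\mathscr{G}^A$. Combined with Proposition \ref{p_eAest} ($\mathfrak{e}_A \le L^{-1}$), the capacity $\mathrm{cap}^A(\mathcal{S}(A),\mathcal{R}_2^{A,B})$ inside the edge region is so large ($\gg e^{-\Gamma\beta}$ after rescaling) that the edge pieces $\mathcal{E}^A$, $\mathcal{E}^B$ act as "thick wires" — passing through them is free to leading order — so the bottleneck is entirely in the bulk ladder, whose effective conductance is computed by the same one-dimensional series/parallel reduction as in the upper bound. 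Concretely, one constructs a unit flow from $\mathcal{S}(A)$ to $\mathcal{S}(B)$ supported on the canonical ladders (splitting the unit flow equally among the $3|A|(q-|A|)$ ladder types), routes it through $\mathcal{E}^A$ and $\mathcal{E}^B$ using the capacity lower bounds from Proposition \ref{p_eAest}, and bounds its energy; Thomson's principle then gives $\mathrm{Cap}_{\beta}(\mathcal{S}(A),\mathcal{S}(B)) \ge [\,\tfrac{12|A|(q-|A|)}{q} - o_L(1)\,]e^{-\Gamma\beta}$.

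\textbf{Main obstacle.} The hard part is \emph{not} the one-dimensional ladder computation (routine, and essentially inherited from \cite{KS2}) but rather controlling the dead-end maze in the lower-bound flow argument and, relatedly, showing that all the configurations of energy $>\Gamma$ — whose number now grows polynomially in $L$ — contribute negligibly. One must verify that no alternative low-energy channel between $\mathcal{S}(A)$ and $\mathcal{S}(B)$ exists other than the canonical ladders (this is where the full characterization of Section \ref{Sec6_EB}, especially Propositions \ref{p_E<=00003D2L}–\ref{p_E=00003D2L+2} and the type-\textbf{(MB)} analysis of Lemmas \ref{l_MB}, \ref{l_MB2}, is indispensable), and that the dead-ends, being genuine dead-ends (Remark \ref{r_dead}), can be deleted from the reduced network without changing the effective resistance. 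The polynomial-in-$L$ degeneracy of saddles is precisely what forces the hypothesis $L^{10}\ll e^{\beta}$: each $e^{\Gamma\beta}$-rescaled error term is a product of a polynomial count (at worst $O(L^8)$ from Lemma \ref{l_edge2}, or $O(L^6)$ from Lemma \ref{l_cfcount}) times $e^{-\beta}$ coming from one extra energy unit, and $L^{10}\ll e^{\beta}$ guarantees this is $o_L(1)$ with room to spare.
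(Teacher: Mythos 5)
Your overall strategy is the same as the paper's: localize the variational problem to $\widehat{\mathcal{N}}(\mathcal{S})$, prove the upper bound via a Dirichlet-principle test function built along the canonical ladders, and prove the lower bound via a Thomson-type test flow, with the hypothesis $L^{10}\ll e^{\beta}$ absorbing the polynomially many configurations near the saddle plateau. The upper bound sketch is essentially correct; the paper's test function (Definition \ref{d_tf}) is exactly the affine "ladder harmonic" interpolation you describe, and while the paper interpolates $f$ on $\mathcal{E}^{A}$ by the reduced-chain harmonic function rather than setting $f\equiv 1$ there, that refinement only affects the $O(\mathfrak{e}_{A})=O(L^{-1})$ subleading term, so your simpler choice would give the same leading constant.

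The genuine gap is in the lower bound. You propose the \emph{classical} Thomson principle, requiring a divergence-free unit flow from $\mathcal{S}(A)$ to $\mathcal{S}(B)$. But the natural gradient flow $\psi(\sigma,\zeta)=\mu_{\beta}(\sigma)c_{\beta}(\sigma,\zeta)[f(\sigma)-f(\zeta)]$ associated with your test function is \emph{not} divergence-free: it has spurious sources and sinks at the dead-ends $\mathcal{D}^{A,B}$, at the $\mathcal{N}$-neighborhoods of the regular configurations $\mathcal{R}_{n}^{A,B}$, and at the boundary between $\mathcal{E}^{A}$ and the bulk. Constructing an exactly divergence-free flow through the dead-end maze would require rerouting flow around every dead-end, and you would still need to justify that the total flow along each canonical ladder is the right fraction of unity; you do not address this. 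The paper sidesteps the difficulty by using the \emph{generalized} Thomson principle (Theorem \ref{t_gTP}), which allows arbitrary nonzero flows and replaces the unit-flow constraint by the weighted divergence sum $\sum_{\sigma}h_{\mathcal{S}(A),\mathcal{S}(B)}^{\beta}(\sigma)(\mathrm{div}\,\psi)(\sigma)$. The price is that one must then prove (i) the divergence vanishes identically on dead-ends and on the middle of the ladder (Lemma \ref{l_div1}), (ii) the divergences on each $\mathcal{N}(\mathcal{R}_{n}^{A,B})$ sum to zero (Lemma \ref{l_div2}), (iii) the equilibrium potential $h_{\mathcal{S}(A),\mathcal{S}(B)}^{\beta}$ is nearly constant on those $\mathcal{N}$-neighborhoods (Lemma \ref{l_eqp}), and (iv) the residual divergence mass is polynomially bounded (Lemmas \ref{l_eqp2}, \ref{l_divsum}); this is where the $L^{10}\ll e^{\beta}$ hypothesis actually bites on the lower bound. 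Your remark that the dead-ends "can be deleted from the reduced network" is true at the level of effective resistance, but it does not by itself produce the divergence-free flow your argument needs — the divergence control and equilibrium-potential flatness are the missing ingredients.

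One further unaddressed point: the contribution from pairs of configurations straddling the boundary of $\widehat{\mathcal{N}}(\mathcal{S})$ (with one side having energy $>\Gamma$ and the other side inside) is not controlled merely by the counting Lemmas \ref{l_Xi} and \ref{l_cfcount}; the paper needs a dedicated "nice pair" count (Proposition \ref{p_count}) and an isoperimetric inequality (Lemma \ref{l_cross}) to show that no low-energy short-cut crosses from the $A$-side to the $B$-side outside the saddle plateau. You correctly flag this as the main new obstacle in the large-volume regime, but the precise mechanism (bounding $U_{i}$ by $(CL)^{3i+1}$ and summing against $e^{-i\beta}$) is what actually produces the threshold $L^{10}\ll e^{\beta}$ and is not supplied in your sketch.
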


\begin{rem}[Capacity estimate for square lattice]
 For the square lattice, we have the same form of capacity estimate
under the condition $L^{3}\ll e^{\beta}$, where the only difference
is that the constant $12$ in the right-hand side of \eqref{e_cap}
should be replaced by $8$.
\end{rem}

The proof of this theorem will be given in Sections \ref{sec9_UB}
and \ref{sec10_LB}. At this moment, let us conclude the proof of
Theorem \ref{t_EK} by assuming Theorem \ref{t_cap}.
\begin{proof}[Proof of Theorem \ref{t_EK}]
 We first consider the formula in \eqref{e_EK}. By the well-known
formula established in \cite[display (3.18)]{BEGK2} (for more detailed
discussion, we also refer to \cite[Proposition 6.10]{BL1}.), we can
write
\begin{equation}
\mathbb{E}_{\mathbf{a}}^{\beta}\big[\,\tau_{\mathcal{S}\setminus\{\mathbf{a}\}}\,\big]=\frac{\sum_{\sigma\in\mathcal{X}}\mu_{\beta}(\sigma)\,h_{\mathbf{a},\,\mathcal{S}\setminus\{\mathbf{a}\}}^{\beta}(\sigma)}{\mathrm{Cap}_{\beta}(\mathbf{a},\,\mathcal{S}\setminus\{\mathbf{a}\})}\;\;\;\;\text{for }\mathbf{a}\in\mathcal{S}\;.\label{e_EK1}
\end{equation}
Since 
\[
\Big|\,\sum_{\sigma\in\mathcal{S}}\mu_{\beta}(\sigma)h_{\mathbf{a},\,\mathcal{S}\setminus\{\mathbf{a}\}}^{\beta}(\sigma)-\mu_{\beta}(\mathbf{a})\,\Big|\le\mu_{\beta}(\mathcal{X}\setminus\mathcal{S})=o_{L}(1)
\]
by part (1) of Theorem \ref{t_Gibbs1}, we can conclude that the numerator
in the right-hand side of \eqref{e_EK1} is $\frac{1}{q}+o_{L}(1)$.
Since the denominator is $[\frac{12(q-1)}{q}+o_{L}(1)]e^{-\Gamma\beta}$
by Theorem \ref{t_cap} with $A=\{a\}$ and $B=\Omega\setminus\{a\}$,
we can prove the first formula in \eqref{e_EK}.

Now, let us turn to the second formula of \eqref{e_EK}. By the symmetry
of the model, we have $\mathbb{P}_{\mathbf{a}}^{\beta}[\sigma_{\beta}(\tau_{\mathcal{S}\setminus\{\mathbf{a}\}})=\mathbf{b}]=\frac{1}{q-1}$.
If $\sigma_{\beta}(\tau_{\mathcal{S}\setminus\{\mathbf{a}\}})\neq\mathbf{b}$,
we can refresh the dynamics from $t=\tau_{\mathcal{S}\setminus\{\mathbf{a}\}}$.
Then, by the strong Markov property, we get a geometric random variable
(with success probability $1/(q-1)$) structure and thus deduce that
\[
\mathbb{E}_{\mathbf{a}}^{\beta}\big[\,\tau_{\mathbf{b}}\,\big]=(q-1)\mathbb{E}_{\mathbf{a}}^{\beta}\big[\,\tau_{\mathcal{S}\setminus\{\mathbf{a}\}}\,\big]\;.
\]
For more rigorous and formal proof of this argument, we refer to \cite[Section 3.2]{KS2}.
Hence, the first and second formulas in \eqref{e_EK} are equivalent
to each other and we conclude the proof.
\end{proof}

\subsection{\label{sec82}Strategy to estimate capacity}

We now explain two variational principles to estimate the capacity,
which will be crucially used in the proof of Theorem \ref{t_cap}.
Although our discussion is self-contained, we refer to \cite[Section 4]{KS2}
for more detailed explanation.

\subsubsection*{Dirichlet principle}

We fix two disjoint and non-empty subsets $\mathcal{A}$ and $\mathcal{B}$
of $\mathcal{X}$ in this subsection. We first establish the minimization
principle for $\mathrm{Cap}_{\beta}(\mathcal{A},\,\mathcal{B})$.
Denote by $\mathfrak{C}(\mathcal{A},\,\mathcal{B})$ the collection
of functions $f:\mathcal{X}\rightarrow\mathbb{R}$ such that $f=1$
on $\mathcal{A}$ and $f=0$ on $\mathcal{B}$. 
\begin{thm}[Dirichlet principle]
\label{t_DP} We have
\[
\mathrm{Cap}_{\beta}(\mathcal{A},\,\mathcal{B})=\min_{f\in\mathfrak{C}(\mathcal{A},\,\mathcal{B})}D_{\beta}(f)\;.
\]
Moreover, the equilibrium potential $h_{\mathcal{A},\,\mathcal{B}}^{\beta}$
is the unique optimizer of the minimization problem.
\end{thm}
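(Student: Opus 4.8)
The plan is to prove this via the classical orthogonal-decomposition argument for reversible Markov chains. First I would set up the bilinear form associated with the Dirichlet form,
\[
D_\beta(f,\,g)=\frac{1}{2}\sum_{\sigma,\,\zeta\in\mathcal{X}}\mu_\beta(\sigma)c_\beta(\sigma,\,\zeta)[f(\zeta)-f(\sigma)][g(\zeta)-g(\sigma)]\;,
\]
so that $D_\beta(f)=D_\beta(f,\,f)$, together with the generator $(\mathscr{L}_\beta f)(\sigma)=\sum_{\zeta\in\mathcal{X}}c_\beta(\sigma,\,\zeta)[f(\zeta)-f(\sigma)]$. Using the detailed balance identity \eqref{e_detbal} to symmetrize the double sum, a summation by parts yields the key identity
\[
D_\beta(f,\,g)=-\sum_{\sigma\in\mathcal{X}}\mu_\beta(\sigma)(\mathscr{L}_\beta f)(\sigma)\,g(\sigma)\qquad\text{for all }f,\,g:\mathcal{X}\to\mathbb{R}\;.
\]

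Second, I would verify that the equilibrium potential $h:=h_{\mathcal{A},\,\mathcal{B}}^\beta$ is \emph{harmonic off} $\mathcal{A}\cup\mathcal{B}$, that is, $(\mathscr{L}_\beta h)(\sigma)=0$ for every $\sigma\in\mathcal{X}\setminus(\mathcal{A}\cup\mathcal{B})$. Since $\mathcal{X}$ is finite and the Metropolis chain $\sigma_\beta(\cdot)$ is irreducible (any configuration is reachable from any other by single spin flips), we have $\tau_{\mathcal{A}}\wedge\tau_{\mathcal{B}}<\infty$ almost surely; conditioning on the first jump out of such a $\sigma$ and applying the Markov property gives $h(\sigma)=\sum_{\zeta}\big[c_\beta(\sigma,\,\zeta)/\sum_{\eta}c_\beta(\sigma,\,\eta)\big]h(\zeta)$, which is exactly $(\mathscr{L}_\beta h)(\sigma)=0$. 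Since moreover $h\equiv1$ on $\mathcal{A}$ and $h\equiv0$ on $\mathcal{B}$, we have $h\in\mathfrak{C}(\mathcal{A},\,\mathcal{B})$, so $D_\beta(h)=\mathrm{Cap}_\beta(\mathcal{A},\,\mathcal{B})$ is a competitor in the minimization.

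Third, I would carry out the decomposition. Given an arbitrary $f\in\mathfrak{C}(\mathcal{A},\,\mathcal{B})$, set $g=f-h$, which vanishes on $\mathcal{A}\cup\mathcal{B}$. Expanding bilinearly, $D_\beta(f)=D_\beta(h)+2D_\beta(h,\,g)+D_\beta(g)$, and by the summation-by-parts identity $D_\beta(h,\,g)=-\sum_{\sigma}\mu_\beta(\sigma)(\mathscr{L}_\beta h)(\sigma)g(\sigma)=0$, because each summand has either $(\mathscr{L}_\beta h)(\sigma)=0$ (for $\sigma\notin\mathcal{A}\cup\mathcal{B}$) or $g(\sigma)=0$ (for $\sigma\in\mathcal{A}\cup\mathcal{B}$). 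Hence $D_\beta(f)=D_\beta(h)+D_\beta(g)\ge D_\beta(h)=\mathrm{Cap}_\beta(\mathcal{A},\,\mathcal{B})$, which is the asserted minimization formula. For uniqueness of the optimizer, equality forces $D_\beta(g)=0$; since $c_\beta(\sigma,\,\zeta)>0$ exactly when $\sigma\sim\zeta$ and $\mathcal{X}$ is connected under $\sim$, this makes $g$ constant, and as $g$ vanishes on the nonempty set $\mathcal{A}$ we conclude $g\equiv0$, i.e.\ $f=h$.

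I do not expect a genuine obstacle here, as this is the standard potential-theoretic identity; the only point needing a little care is the harmonicity of $h$ in the second step, which relies on the finiteness of the state space and irreducibility of $\sigma_\beta(\cdot)$ to guarantee that $h$ solves the Dirichlet boundary value problem on $\mathcal{X}\setminus(\mathcal{A}\cup\mathcal{B})$.
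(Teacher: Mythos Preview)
Your argument is correct and is exactly the classical orthogonal-decomposition proof of the Dirichlet principle for reversible chains. The paper does not give its own proof but simply cites \cite[Chapter 7]{BdH}, where precisely this argument appears; so your proposal matches the referenced approach.
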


For the proof of this well-known principle, we refer to \cite[Chapter 7]{BdH}.
We remark that this principle holds for the process $\sigma_{\beta}(\cdot)$
since it is reversible. 

\subsubsection*{Generalized Thomson principle}

In order to explain the maximization problem for capacity, we recall
the flow structure. A function $\phi:\mathcal{X}\times\mathcal{X}\rightarrow\mathbb{R}$
is called a\textit{ flow} on $\mathcal{X}$ associated with the Markov
process $\sigma_{\beta}(\cdot)$, if it is anti-symmetric in the sense
that $\phi(\sigma,\,\zeta)=-\phi(\zeta,\,\sigma)$ for all $\sigma,\,\zeta\in\mathcal{X}$\footnote{We set $\phi(\sigma,\,\sigma)=0$ for all $\sigma\in\mathcal{X}.$},
and satisfies $\phi(\sigma,\,\zeta)\ne0$ only if $\sigma\sim\zeta$.

For each flow $\phi$, we define the \textit{norm} and \textit{divergence}
of the flow $\phi$ by 
\begin{align*}
\Vert\phi\Vert_{\beta}^{2} & =\frac{1}{2}\sum_{\sigma,\,\zeta\in\mathcal{X}:\,\sigma\sim\zeta}\frac{\phi(\sigma,\,\zeta)^{2}}{\mu_{\beta}(\sigma)c_{\beta}(\sigma,\,\zeta)}\;,\\
(\mathrm{div}\,\phi)(\sigma) & =\sum_{\zeta\in\mathcal{X}}\phi(\sigma,\,\zeta)=\sum_{\zeta\in\mathcal{X}:\,\sigma\sim\zeta}\phi(\sigma,\,\zeta)\;\;\;\;\text{for all }\sigma\in\mathcal{X}\;.
\end{align*}
The\textit{ harmonic flow} \textit{$\varphi_{\mathcal{A},\,\mathcal{B}}^{\beta}$}
between $\mathcal{A}$ and $\mathcal{B}$ is defined by 
\[
\varphi_{\mathcal{A},\,\mathcal{B}}^{\beta}(\sigma,\,\zeta)=\mu_{\beta}(\sigma)c_{\beta}(\sigma,\,\zeta)\big[\,h_{\mathcal{A},\,\mathcal{B}}^{\beta}(\sigma)-h_{\mathcal{A},\,\mathcal{B}}^{\beta}(\zeta)\,\big]\;\;\;\;;\;\sigma\in\mathcal{X}\;,\;\zeta\in\mathcal{X}\;.
\]
The detailed balance condition \eqref{e_detbal} ensures that $\varphi_{\mathcal{A},\,\mathcal{B}}^{\beta}$
is indeed a flow, and we can readily verify from definitions that
$\Vert\varphi_{\mathcal{A},\,\mathcal{B}}^{\beta}\Vert_{\beta}^{2}=\mathrm{Cap}_{\beta}(\mathcal{A},\,\mathcal{B})$.
\begin{thm}[Generalized Thomson principle \cite{Seo ZRP}]
\label{t_gTP} We have 
\[
\mathrm{Cap}_{\beta}(\mathcal{A},\,\mathcal{B})=\max_{\phi\ne0}\frac{1}{\|\phi\|_{\beta}^{2}}\Big[\,\sum_{\sigma\in\mathcal{X}}h_{\mathcal{A},\,\mathcal{B}}^{\beta}(\sigma)(\mathrm{div}\,\phi)(\sigma)\,\Big]^{2}\;.
\]
Moreover, the flow $c\varphi_{\mathcal{A},\,\mathcal{B}}^{\beta}$
for $c\ne0$ is an optimizer of the maximization problem.
\end{thm}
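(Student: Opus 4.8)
The plan is to derive the variational formula from the classical duality between the Dirichlet principle (Theorem \ref{t_DP}) and the flow-based representation of capacity, by exploiting the identity $\|\varphi_{\mathcal{A},\mathcal{B}}^{\beta}\|_{\beta}^{2}=\mathrm{Cap}_{\beta}(\mathcal{A},\mathcal{B})$ already recorded in the excerpt. The first step is to establish the elementary ``reciprocity'' pairing: for any flow $\phi$ and any function $f:\mathcal{X}\to\mathbb{R}$,
\[
\sum_{\sigma\in\mathcal{X}}f(\sigma)\,(\mathrm{div}\,\phi)(\sigma)=\frac{1}{2}\sum_{\sigma,\zeta:\,\sigma\sim\zeta}\phi(\sigma,\zeta)\,[f(\sigma)-f(\zeta)]\;,
\]
which follows by expanding the divergence and using the antisymmetry of $\phi$ to symmetrize the double sum. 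Applying this with $f=h_{\mathcal{A},\mathcal{B}}^{\beta}$ and then the Cauchy--Schwarz inequality in the form $\big(\sum a_{\sigma\zeta}b_{\sigma\zeta}\big)^{2}\le\big(\sum a_{\sigma\zeta}^{2}\big)\big(\sum b_{\sigma\zeta}^{2}\big)$ with $a_{\sigma\zeta}=\phi(\sigma,\zeta)/\sqrt{\mu_{\beta}(\sigma)c_{\beta}(\sigma,\zeta)}$ and $b_{\sigma\zeta}=\sqrt{\mu_{\beta}(\sigma)c_{\beta}(\sigma,\zeta)}\,[h^{\beta}_{\mathcal{A},\mathcal{B}}(\sigma)-h^{\beta}_{\mathcal{A},\mathcal{B}}(\zeta)]$ yields
\[
\Big[\sum_{\sigma}h_{\mathcal{A},\mathcal{B}}^{\beta}(\sigma)(\mathrm{div}\,\phi)(\sigma)\Big]^{2}\le \|\phi\|_{\beta}^{2}\cdot D_{\beta}(h_{\mathcal{A},\mathcal{B}}^{\beta})=\|\phi\|_{\beta}^{2}\,\mathrm{Cap}_{\beta}(\mathcal{A},\mathcal{B})\;,
\]
which is exactly the ``$\le$'' direction of the claimed formula, valid for every nonzero flow $\phi$.

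For the reverse inequality (and the identification of the optimizer), I would plug in the harmonic flow $\phi=\varphi_{\mathcal{A},\mathcal{B}}^{\beta}$. Here the key computation is that the numerator becomes $\mathrm{Cap}_{\beta}(\mathcal{A},\mathcal{B})^{2}$: by the reciprocity identity and the definition of $\varphi_{\mathcal{A},\mathcal{B}}^{\beta}$,
\[
\sum_{\sigma}h_{\mathcal{A},\mathcal{B}}^{\beta}(\sigma)(\mathrm{div}\,\varphi_{\mathcal{A},\mathcal{B}}^{\beta})(\sigma)=\frac{1}{2}\sum_{\sigma,\zeta}\mu_{\beta}(\sigma)c_{\beta}(\sigma,\zeta)[h_{\mathcal{A},\mathcal{B}}^{\beta}(\sigma)-h_{\mathcal{A},\mathcal{B}}^{\beta}(\zeta)]^{2}=D_{\beta}(h_{\mathcal{A},\mathcal{B}}^{\beta})=\mathrm{Cap}_{\beta}(\mathcal{A},\mathcal{B})\;,
\]
while the denominator is $\|\varphi_{\mathcal{A},\mathcal{B}}^{\beta}\|_{\beta}^{2}=\mathrm{Cap}_{\beta}(\mathcal{A},\mathcal{B})$ as noted in the text. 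Hence the quotient equals $\mathrm{Cap}_{\beta}(\mathcal{A},\mathcal{B})^{2}/\mathrm{Cap}_{\beta}(\mathcal{A},\mathcal{B})=\mathrm{Cap}_{\beta}(\mathcal{A},\mathcal{B})$, so the supremum is attained and equals the capacity. Scaling $\phi\mapsto c\phi$ multiplies numerator by $c^{2}$ and denominator by $c^{2}$, so the quotient is unchanged, giving the stated family $c\varphi_{\mathcal{A},\mathcal{B}}^{\beta}$ of optimizers.

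I do not anticipate a genuine obstacle here — the whole argument is a one-line Cauchy--Schwarz plus a matching evaluation at the harmonic flow, and the paper even cites \cite{Seo ZRP} for the result. The only points requiring mild care are: (i) checking that the supremum over nonzero $\phi$ is actually a maximum, which is immediate since the harmonic flow is nonzero whenever $\mathcal{A},\mathcal{B}\neq\emptyset$ are disjoint (and the degenerate case $\mathrm{Cap}_{\beta}=0$ cannot occur for an irreducible finite chain); and (ii) noting that the Cauchy--Schwarz step is sharp precisely when $a_{\sigma\zeta}\propto b_{\sigma\zeta}$, i.e.\ when $\phi(\sigma,\zeta)\propto\mu_{\beta}(\sigma)c_{\beta}(\sigma,\zeta)[h^{\beta}_{\mathcal{A},\mathcal{B}}(\sigma)-h^{\beta}_{\mathcal{A},\mathcal{B}}(\zeta)]$, which both confirms optimality of $c\varphi_{\mathcal{A},\mathcal{B}}^{\beta}$ and shows these are the only optimizers. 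Since the statement as given only asserts that $c\varphi_{\mathcal{A},\mathcal{B}}^{\beta}$ \emph{is} an optimizer, part (ii) is not strictly needed, and the proof reduces to the two displayed computations above.
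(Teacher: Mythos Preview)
Your proof is correct and is the standard Cauchy--Schwarz argument for this variational principle. The paper itself does not give a proof of Theorem \ref{t_gTP}; it simply refers to \cite[Theorem 4.7]{KS2} (and ultimately to \cite{Seo ZRP}), and the argument recorded there is precisely the one you wrote out.
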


We refer to \cite[Theorem 4.7]{KS2} for the proof. As in Theorem
\ref{t_DP}, the reversibility of the process $\sigma_{\beta}(\cdot)$
is essentially used in the formulation of this maximization problem. 

We use Theorems \ref{t_DP} and \ref{t_gTP} to establish the sharp
upper and lower bounds of the capacity $\text{Cap}_{\beta}(\mathcal{S}(A),\,\mathcal{S}(B))$
for each proper partition $(A,\,B)$ in Sections \ref{sec9_UB} and
\ref{sec10_LB}, respectively. 

\section{\label{sec9_UB}Upper Bound for Capacities}
\begin{notation}
\label{n_sec910}In this and the next sections, we fix a proper partition
$(A,\,B)$. Then, we define the constants $\mathfrak{b}=\mathfrak{b}(L,\,A,\,B)$
and $\mathfrak{c}=\mathfrak{c}(L,\,A,\,B)$ as
\begin{equation}
\mathfrak{b}=\frac{(5L-3)(L-4)}{60L^{2}|A|(q-|A|)}\;\;\;\;\text{and}\;\;\;\;\mathfrak{c}=\mathfrak{b}+\mathfrak{e}_{A}+\mathfrak{e}_{B}\label{e_b}
\end{equation}
where the constants $\mathfrak{e}_{A}$ and $\mathfrak{e}_{B}$ are
defined in \eqref{e_eA}. Then, by \eqref{e_b} and Proposition \ref{p_eAest},
\begin{equation}
\mathfrak{c}=\mathfrak{b}+\mathfrak{e}_{A}+\mathfrak{e}_{B}=\frac{1}{12|A|(q-|A|)}(1+o_{L}(1))\;.\label{e_c}
\end{equation}

The purpose of the current section is to establish a suitable test
function in order to use the Dirichlet principle to get a sharp upper
bound of $\text{Cap}_{\beta}(\mathcal{S}(A),\,\mathcal{S}(B))$. The
corresponding computation for the square lattice in the $\beta\rightarrow\infty$
regime was carried out in \cite[Section 9]{KS2}, where the discontinuity
of the test function along the boundary of $\widehat{\mathcal{N}}(\mathcal{S})$
and the set $\widehat{\mathcal{N}}(\mathcal{S})^{c}$ was easily handled
since energy is the only dominating factor of the system (as $L$
is fixed). For the current model, we need to be very careful when
controlling the discontinuity of the test function along this boundary,
because the number of configurations with higher energy also increases
as $L$ tends to $\infty$. This difficulty imposes the sub-optimal
condition on $\beta$ (i.e., $L^{10}\ll e^{\beta}$).
\end{notation}

\subsection{\label{sec91}Construction of test function}

The following definition (Definition \ref{d_tf}) constructs our test
function which approximates the equilibrium potential $h_{\mathcal{S}(A),\,\mathcal{S}(B)}^{\beta}$
between $\mathcal{S}(A)$ and $\mathcal{S}(B)$ in view of Theorem
\ref{t_DP}. 
\begin{notation}
\label{n_92}The following notation will be used in the remainder
of the article.
\begin{enumerate}
\item We simply write $\mathfrak{h}^{A}=h_{\mathcal{S}(A),\,\mathcal{R}_{2}^{A,B}}^{A}:\mathscr{V}^{A}\rightarrow[0,\,1]$
(cf. Notation \ref{n_edge}) and naturally extend $\mathfrak{h}^{A}$
to a function on $\mathcal{E}^{A}$ by letting $\mathfrak{h}^{A}(\sigma)=\mathfrak{h}^{A}(\zeta)$
if $\sigma\in\mathcal{N}(\zeta)$ for some $\zeta\in\mathcal{I}_{\textup{rep}}^{A}$.
\item For each $a\in\Omega$ and $\mathcal{\sigma\in X}$, we write $\Vert\sigma\Vert_{a}$
the number of sites in $\Lambda$ with spin $a$ (in $\sigma$), i.e.,
\begin{equation}
\Vert\sigma\Vert_{a}=\sum_{x\in\Lambda}\mathbf{1}\{\sigma(x)=a\}\;.\label{e_counta}
\end{equation}
\end{enumerate}
\end{notation}

\begin{defn}
\label{d_tf}We now define a function $f=f^{A,\,B}:\mathcal{X}\rightarrow\mathbb{R}$.
Recall Notation \ref{n_sec910} and \ref{n_92}.\medskip{}

\noindent (1) \textbf{Construction on $\mathcal{E}^{A,\,B}=\mathcal{E}^{A}\cup\mathcal{E}^{B}$:
}We define (cf. \eqref{e_eA})
\[
f(\sigma)=\begin{cases}
1-\frac{\mathfrak{e}_{A}}{\mathfrak{c}}[1-\mathfrak{h}^{A}(\sigma)] & \text{if }\sigma\in\mathcal{E}^{A}\;,\\
\frac{\mathfrak{e}_{B}}{\mathfrak{c}}[1-\mathfrak{h}^{B}(\sigma)] & \text{if }\sigma\in\mathcal{E}^{B}\;.
\end{cases}
\]

\noindent (2) \textbf{Construction on $\mathcal{B}^{A,\,B}$:} Let
$a\in A$ and $b\in B$. By \eqref{e_Bulk}, it suffices to consider
the following cases.
\begin{itemize}
\item $\sigma\in\mathcal{N}(\mathcal{R}_{n}^{a,\,b})$ with $n\in\llbracket2,\,L-2\rrbracket$:
\[
f(\sigma)=\frac{1}{\mathfrak{c}}\Big[\,\frac{L-2-n}{L-4}\mathfrak{b}+\mathfrak{e}_{B}\,\Big]\;.
\]
\item $\sigma\in\mathcal{C}_{n}^{a,\,b}$ with $n\in\llbracket2,\,L-3\rrbracket$
and $|\mathfrak{p}^{a,\,b}(\sigma)|\in\llbracket2,\,2L-2\rrbracket$
(the case $|\mathfrak{p}^{a,\,b}(\sigma)|\in\{0,\,1,\,2L-1,\,2L\}$
is considered above): 
\[
f(\sigma)=\begin{cases}
\frac{1}{\mathfrak{c}}\big[\,\frac{(5L-3)(L-2-n)-(3-\mathfrak{d}(\sigma))}{(5L-3)(L-4)}\mathfrak{b}+\mathfrak{e}_{B}\,\big] & \text{if }|\mathfrak{p}^{a,\,b}(\sigma)|=2\;,\\
\frac{1}{\mathfrak{c}}\big[\,\frac{(5L-3)(L-3-n)+(3-\mathfrak{d}(\sigma))}{(5L-3)(L-4)}\mathfrak{b}+\mathfrak{e}_{B}\,\big] & \text{if }|\mathfrak{p}^{a,\,b}(\sigma)|=2L-2\;,\\
\frac{1}{\mathfrak{c}}\big[\,\frac{(5L-3)(L-2-n)-\frac{5m-3}{2}}{(5L-3)(L-4)}\mathfrak{b}+\mathfrak{e}_{B}\,\big] & \text{if }|\mathfrak{p}^{a,\,b}(\sigma)|=m\in\llbracket3,\,2L-3\rrbracket\;,
\end{cases}
\]
where $\mathfrak{d}(\sigma)=\mathbf{1}\{\sigma:\mathfrak{p}^{a,\,b}(\sigma)\text{ is disconnected}\}$.
\item $\sigma\in\mathcal{D}^{a,\,b}$: By the definition of $\mathcal{D}^{a,\,b}$,
we can find a canonical configuration $\zeta$ in $\mathcal{B}^{a,\,b}$
such that $\zeta\sim\sigma$. If such $\zeta$ is unique, we set $f(\sigma)=f(\zeta)$.
In view of Lemmas \ref{l_PP} and \ref{l_QQ}, it is also possible
that there are two such canonical configurations $\zeta_{1}$ and
$\zeta_{2}$, but in that case we have $\zeta_{1},\,\zeta_{2}\in\mathcal{N}(\mathcal{R}_{n}^{A,\,B})$
for some $n\in\llbracket2,\,L-2\rrbracket$ and therefore $f(\zeta_{1})=f(\zeta_{2})$
by the definition above. We set $f(\sigma)=f(\zeta_{1})=f(\zeta_{2})$
in this case.
\end{itemize}
We note in this moment that parts (1) and (2) do not collide on the
set $\mathcal{E}^{A,\,B}\cap\mathcal{B}^{A,\,B}=\mathcal{N}(\mathcal{R}_{2}^{A,\,B})\cup\mathcal{N}(\mathcal{R}_{L-2}^{A,\,B})$
(cf. Proposition \ref{p_typ}-(1)), since both definitions assign
the same value $\frac{\mathfrak{b}+\mathfrak{e}_{B}}{\mathfrak{c}}$
(resp. $\frac{\mathfrak{e}_{B}}{\mathfrak{c}}$) on $\mathcal{N}(\mathcal{R}_{2}^{A,\,B})$
(resp. $\mathcal{N}(\mathcal{R}_{L-2}^{A,\,B})$).\medskip{}

\noindent (3) \textbf{Construction on $\widehat{\mathcal{N}}(\mathcal{S})^{c}$:}
For $\sigma\in\widehat{\mathcal{N}}(\mathcal{S})^{c}$, we define
(cf. \eqref{e_counta})
\[
f(\sigma)=\begin{cases}
1 & \text{if }\sum_{a\in A}\|\sigma\|_{a}\ge L^{2}\;,\\
0 & \text{if }\sum_{a\in A}\|\sigma\|_{a}<L^{2}\;.
\end{cases}
\]
By Proposition \ref{p_typ}-(2), the constructions above define $f$
on the set $\mathcal{X}$.
\end{defn}

In the remainder of the current section, we shall prove the following
proposition.
\begin{prop}
\label{p_tf}The test function $f=f^{A,\,B}$ constructed in the previous
definition belongs to $\mathfrak{C}(\mathcal{S}(A),\,\mathcal{S}(B))$
and moreover satisfies
\[
D_{\beta}(f)=\frac{1+o_{L}(1)}{q\mathfrak{c}}e^{-\Gamma\beta}\;.
\]
\end{prop}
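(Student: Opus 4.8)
The first claim, that $f\in\mathfrak{C}(\mathcal{S}(A),\,\mathcal{S}(B))$, is immediate: on $\mathbf{a}\in\mathcal{S}(A)\subseteq\mathcal{E}^A$ we have $\mathfrak{h}^A(\mathbf{a})=1$ (since $\mathbf a\in\mathcal S(A)$), so $f(\mathbf a)=1$; similarly $f=0$ on $\mathcal{S}(B)$. For the Dirichlet form estimate, I would split the sum
\[
D_\beta(f)=\frac12\sum_{\sigma\sim\zeta}\mu_\beta(\sigma)c_\beta(\sigma,\zeta)[f(\zeta)-f(\sigma)]^2
\]
according to where the edge $\{\sigma,\zeta\}$ lies, relative to the decomposition $\mathcal{X}=\mathcal{E}^A\cup\mathcal{E}^B\cup\mathcal{B}^{A,B}\cup\widehat{\mathcal{N}}(\mathcal{S})^c$ (which is exhaustive by Proposition \ref{p_typ}-(2)). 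The contributions are: (i) edges inside $\mathcal{E}^A$ (and symmetrically $\mathcal{E}^B$); (ii) edges inside the bulk $\mathcal{B}^{A,B}$ along the "main road" $\mathcal{N}(\mathcal{R}_n^{a,b})$, $\mathcal{C}_n^{a,b}$; (iii) edges from bulk dead-ends to the main road; (iv) edges crossing between $\widehat{\mathcal{N}}(\mathcal{S})$ and $\widehat{\mathcal{N}}(\mathcal{S})^c$; (v) edges inside $\widehat{\mathcal{N}}(\mathcal{S})^c$. The strategy is to show (ii) is the dominant term giving exactly $\frac{1}{q\mathfrak{c}}e^{-\Gamma\beta}(1+o_L(1))$, that (i) and (iii) are absorbed into the $o_L(1)$ via the definitions of $\mathfrak{e}_A,\mathfrak{e}_B$ and Proposition \ref{p_eAest}, that (v) is zero since $f$ is constant ($0$ or $1$) on $\widehat{\mathcal{N}}(\mathcal{S})^c$ where $f$ is determined by whether $\sum_{a\in A}\|\sigma\|_a\ge L^2$, and that (iv) is the delicate term forcing the condition $L^{10}\ll e^\beta$.

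\textbf{The main road computation (ii).} Along a canonical path the energy profile alternates $2L\to 2L+1\to 2L+2\to 2L+1\to 2L$, so by the detailed balance identity \eqref{e_detbal}, each edge $\sigma\sim\zeta$ in the bulk contributes $\min\{\mu_\beta(\sigma),\mu_\beta(\zeta)\}[f(\zeta)-f(\sigma)]^2$; using $Z_\beta=q+o_L(1)$ (Theorem \ref{t_Gibbs1}), $\mu_\beta(\xi)=\frac{1}{q}e^{-\beta H(\xi)}(1+o_L(1))$, and the lowest relevant energy on such an edge is $2L+1=\Gamma-1$, so $\min\{\mu_\beta(\sigma),\mu_\beta(\zeta)\}=\frac{1}{q}e^{-(\Gamma-1)\beta}(1+o_L(1))$. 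The point of the piecewise-linear design of $f$ on $\mathcal{B}^{A,B}$ — with the increments encoded by the coefficients $\frac{5m-3}{2}$, $3-\mathfrak{d}(\sigma)$, and the global scaling $(5L-3)(L-4)$ — is precisely that, when one sums $[f(\zeta)-f(\sigma)]^2$ over all edges of a single canonical path from $\mathbf{a}$ to $\mathbf{b}$ weighted appropriately and then divides through, one gets a telescoping/harmonic-type sum equal to $\frac{\mathfrak{b}}{\mathfrak{c}^2}\cdot(\text{number of }a,b\text{ pairs and direction/starting-point choices})^{-1}$; combined with the prefactor $e^{-(\Gamma-1)\beta}\cdot e^{-\beta}$ wait — more carefully, the resistance of a single elementary step of size $\delta$ across a saddle of height $2L+2$ contributes $\mu_\beta\,\delta^2\asymp \frac1q e^{-\Gamma\beta}\delta^2$, and summing $\delta$'s of the right sizes over the $|A|(q-|A|)$ choices of $(a,b)$ and the $3L$-ish choices of direction and offset gives the claimed $\frac{1}{q\mathfrak{c}}e^{-\Gamma\beta}$. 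I would verify this by a direct but careful bookkeeping: the definition of $\mathfrak{b}$ in \eqref{e_b} as $\frac{(5L-3)(L-4)}{60L^2|A|(q-|A|)}$ is reverse-engineered so that this sum works out, and \eqref{e_c} gives $\mathfrak{c}^{-1}=12|A|(q-|A|)(1+o_L(1))$, matching the constant $12$ in Theorem \ref{t_cap}. The dead-end edges (iii) contribute $[f(\zeta)-f(\sigma)]=0$ by construction (dead-ends get the value of their neighbor on the main road, per Lemmas \ref{l_PP}, \ref{l_QQ} and the last bullet of Definition \ref{d_tf}-(2)), except possibly for $\mathcal{Q}_n$-type dead-ends adjacent to two distinct main-road configurations — but those both lie in $\mathcal{N}(\mathcal{R}_n^{A,B})$ and have equal $f$-value, so again zero.

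\textbf{The boundary term (iv) and the source of $L^{10}\ll e^\beta$.} This is the main obstacle. Edges crossing from $\widehat{\mathcal{N}}(\mathcal{S})$ to its complement can only originate at configurations $\sigma$ of energy $\le 2L+2$ that are adjacent to a configuration $\zeta$ of energy $\ge 2L+3$; on such an edge $[f(\zeta)-f(\sigma)]^2\le 1$ and $\mu_\beta(\sigma)c_\beta(\sigma,\zeta)\le\frac1q e^{-(2L+3)\beta}(1+o_L(1))$. The number of such edges is bounded by (number of configurations with energy $\le 2L+2$) times $O(L^2)$; by Lemma \ref{l_cfcount} the cross-free ones number $O(L^6)$, but I must also count configurations with a cross and energy $\le 2L+2$ that sit on the boundary, which after intersecting with $\widehat{\mathcal{N}}(\mathcal{S})$ and using Lemmas \ref{l_edge2}-(2) and \ref{l_edge3} is $O(L^8)$. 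Hence this term is $O(L^{10})e^{-(2L+3)\beta}=O(L^{10})e^{-\beta}\cdot e^{-\Gamma\beta}$, which is $o_L(1)\cdot\frac{1}{q\mathfrak{c}}e^{-\Gamma\beta}$ precisely when $L^{10}\ll e^\beta$ (recall $\mathfrak{c}^{-1}\asymp L^0$). I would also note one must check $f$ is well-defined and its jump across $\partial\widehat{\mathcal{N}}(\mathcal S)$ is genuinely $O(1)$ rather than something larger — it is, since $f\in[0,1]$ everywhere by construction. Assembling (i)–(v): $D_\beta(f)=\frac{1}{q\mathfrak{c}}e^{-\Gamma\beta}(1+o_L(1))$, which is the claim. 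I would close by remarking that the $\mathcal{E}^A$-internal term (i) equals $\frac{\mathfrak{e}_A^2}{\mathfrak{c}^2}\,\mathrm{Cap}^A(\mathcal{S}(A),\mathcal{R}_2^{A,B})\cdot\frac1q e^{-\Gamma\beta}(1+o_L(1))$ by Proposition \ref{p_ZA}, and by definition \eqref{e_eA} of $\mathfrak{e}_A$ this is $\frac{\mathfrak{e}_A}{\mathfrak c^2 |\mathscr V^A|}\cdot\frac1q e^{-\Gamma\beta}\le \frac{\mathfrak e_A}{\mathfrak c^2}\cdot\frac1q e^{-\Gamma\beta}$, which together with $\mathfrak{b}+\mathfrak{e}_A+\mathfrak{e}_B=\mathfrak{c}$ is exactly the bookkeeping that makes the total come out to $\frac{1}{q\mathfrak{c}}e^{-\Gamma\beta}$; this is why the normalizations in \eqref{e_b} were chosen as they are.
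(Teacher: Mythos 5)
Your proposal correctly identifies the right three-way split (inside $\widehat{\mathcal{N}}(\mathcal{S})$, the cross term, and inside $\widehat{\mathcal{N}}(\mathcal{S})^{c}$), and your accounting of the main term via $\mathfrak{b}+\mathfrak{e}_{A}+\mathfrak{e}_{B}=\mathfrak{c}$ and Proposition \ref{p_ZA} matches the paper's Lemma \ref{l_tf1}. But there is a genuine and self-contradictory gap in your treatment of term (v). You assert that the sum over edges inside $\widehat{\mathcal{N}}(\mathcal{S})^{c}$ is zero ``since $f$ is constant ($0$ or $1$)'' there, while in the same breath saying $f$ ``is determined by whether $\sum_{a\in A}\|\sigma\|_{a}\ge L^{2}$.'' Those two claims contradict each other: $f$ is \emph{not} constant on $\widehat{\mathcal{N}}(\mathcal{S})^{c}$, it jumps by $1$ across the level set $\{\sum_{a\in A}\|\sigma\|_{a}=L^{2}\}$. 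Every edge $\{\sigma,\zeta\}$ inside $\widehat{\mathcal{N}}(\mathcal{S})^{c}$ with $\sum_{a\in A}\|\sigma\|_{a}=L^{2}$ and $\sum_{a\in A}\|\zeta\|_{a}=L^{2}-1$ (a ``nice pair'' in the paper's terminology) contributes $\mu_{\beta}(\sigma)c_{\beta}(\sigma,\zeta)$ in full. Bounding this contribution is in fact the hardest part of the proof: one needs to show first that nice pairs have energy at least $2L+3$ (Proposition \ref{p_count}-(1), which uses the full characterization of cross-free configurations in Propositions \ref{p_E<=00003D2L}--\ref{p_E=00003D2L+2} together with Lemmas \ref{l_PP} and \ref{l_MB}), and second to count nice pairs at energy $2L+i$ by $(CL)^{3i+1}$ for small $i$ and by a coarser volume bound via Lemma \ref{l_Xi} for large $i$ (Proposition \ref{p_count}-(2) and Lemma \ref{l_tf3}). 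This term, not the cross term (iv), is the one that forces the condition $L^{10}\ll e^{\beta}$; the cross term is shown in Lemma \ref{l_tf2} to be $O(L^{8}e^{-(\Gamma+1)\beta})$ after noting that configurations with a cross contribute zero (since $f\equiv1$ there), and needs only $L^{8}\ll e^{\beta}$.

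A secondary point: your bound for (iv) double-counts. After restricting to $\mathcal{E}^{A}$, configurations with a cross have $f(\sigma)=1$ by Lemma \ref{l_edge1} and, since an adjacent $\zeta$ still has $\sum_{b\in B}\|\zeta\|_{b}\le L^{2}$ by the isoperimetric Lemma \ref{l_cross}, also $f(\zeta)=1$; so only cross-free $\sigma\in\mathcal{E}^{A,B}$ contribute, and by Lemma \ref{l_cfcount} there are only $O(L^{6})$ of them, yielding $O(L^{8}e^{-(\Gamma+1)\beta})$ rather than your looser $O(L^{10}e^{-(\Gamma+1)\beta})$. One must also separately bound the contribution from $\sigma\in\mathcal{B}^{A,B}\setminus\mathcal{E}^{A,B}$, which you omitted but which is smaller. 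To repair the proposal you would need to supply a counting argument in the spirit of Proposition \ref{p_count} for the within-$\widehat{\mathcal{N}}(\mathcal{S})^{c}$ term; without it the proof does not close.
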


\subsection{Configurations with intermediate energy}

The purpose of the current section is to provide some estimates that
control the discontinuity of the test function $f$ along the boundary
of $\widehat{\mathcal{N}}(\mathcal{S})$, which will be the most difficult
part in the proof of Proposition \ref{p_tf} and was not encountered
in the small volume regime considered in \cite{KS2}.

A pair of configurations $(\sigma,\,\zeta)$ in $\mathcal{X}$ is
called a \textit{nice pair} if they satisfy (cf. \eqref{e_counta})
\begin{equation}
\sigma,\,\zeta\notin\widehat{\mathcal{N}}(\mathcal{S})\;,\;\;\;\sigma\sim\zeta\;,\;\;\;\sum_{a\in A}\|\sigma\|_{a}=L^{2}\;\;\;\text{and}\;\;\;\sum_{a\in A}\|\zeta\|_{a}=L^{2}-1\;.\label{e_itmE}
\end{equation}
The following counting of nice pairs is the main result of the current
section. 
\begin{prop}
\label{p_count}For $i\ge0$, denote by $U_{i}=U_{i}^{A,\,B,\,L}$
the number of nice pairs $(\sigma,\,\zeta)$ satisfying $\max\{H(\sigma),\,H(\zeta)\}=2L+i$.
\begin{enumerate}
\item We have $U_{i}=0$ for all $i\le2$.
\item There exists a constant $C=C(q)>0$ such that $U_{i}\le(CL){}^{3i+1}$
for all $i<(\sqrt{6}-2)L-1$.
\end{enumerate}
\end{prop}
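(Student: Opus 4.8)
The plan is to bound the number of nice pairs by controlling the geometry of the common ``boundary'' of $\sigma$ and $\zeta$ and then counting colorings. First I would record the elementary but crucial observation behind part (1): if $(\sigma,\zeta)$ is a nice pair then $\sigma\notin\widehat{\mathcal N}(\mathcal S)$, so $\Phi(\sigma,\mathcal S)>\Gamma=2L+2$; in particular $\sigma$ cannot be connected to a ground state by a $(2L+2)$-path. But any configuration with $H(\sigma)\le 2L+2$ lies in $\widehat{\mathcal N}(\mathcal S)$ --- this is exactly the content of Theorem~\ref{t_Ebarrier} together with the structural Propositions~\ref{p_E<=00003D2L}, \ref{p_E=00003D2L+1}, \ref{p_E=00003D2L+2}, since every configuration of energy $\le 2L+2$ is either cross-free (hence regular, canonical, or one of the six listed types, all of which are shown to connect to $\mathcal S$ via a $(2L+2)$-path) or has a cross (hence connects to $\mathcal S$ even more easily). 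Thus $H(\sigma)\ge 2L+3$ for any $\sigma$ appearing in a nice pair, which forces $\max\{H(\sigma),H(\zeta)\}\ge 2L+3$, giving $U_i=0$ for $i\le 2$. (One should double-check the boundary cases $i=0,1,2$ directly from the structural propositions, but this reduces to a finite check.)

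For part (2), fix $i<(\sqrt6-2)L-1$ and a nice pair $(\sigma,\zeta)$ with $\max\{H(\sigma),H(\zeta)\}=2L+i=:k$. Since $\sigma\sim\zeta$, they differ at a single site, so $|H(\sigma)-H(\zeta)|\le 3$ and $H(\sigma),H(\zeta)\in[\![k-3,k]\!]$. The idea is to count $\sigma$ (then multiply by $O(L^2)\cdot q$ for the choice of the flipped site and its new spin to recover $\zeta$). I would count $\sigma$ by its dual edge set $\mathfrak A^*(\sigma)\subseteq E(\Lambda^*)$ exactly as in the proof of Lemma~\ref{l_Xi}: the graph $G[\mathfrak A^*(\sigma)]$ has minimum degree $\ge 2$, so every connected component contains a cycle and hence at least three edges, and by Lemma~\ref{l_graph} the edge set decomposes into connected pieces of size in $\{3,4,5,6\}$. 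With $|\mathfrak A^*(\sigma)|=H(\sigma)\le k$, the number of such edge sets is at most
\[
\sum_{\substack{n_3,n_4,n_5,n_6\ge 0\\ 3n_3+4n_4+5n_5+6n_6\le k}}\binom{\theta L^2}{n_3}\binom{\theta L^2}{n_4}\binom{\theta L^2}{n_5}\binom{\theta L^2}{n_6}\le\sum_{j\le k/3}\binom{4\theta L^2}{j}\le (k/3+1)\binom{4\theta L^2}{\lfloor k/3\rfloor},
\]
using the combinatorial identity \eqref{e_combid} and monotonicity of $\binom{4\theta L^2}{j}$ for $j\le k/3 \le 2\theta L^2$ (valid since $k\le 3L$). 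Crudely, $\binom{4\theta L^2}{j}\le (4\theta L^2)^j\le (CL)^{2j}\le (CL)^{2k/3}$, and since $2k/3\le 2L$ this is already $\le (CL)^{\,\text{(something)}}$; but to land on the exponent $3i+1=3(k-2L)+1$ one cannot afford $(CL)^{2k/3}$ directly. The refinement I would use is that for a nice pair the ``excess energy'' $k-2L$ is small, and one shows that $\sigma$ differs from a regular configuration by only $O(i)$ dual edges; then the count of $\sigma$ becomes $O(L)$ (choice of direction and position of the underlying strip) times $\binom{O(L^2)}{O(i)}\le (CL^2)^{O(i)}=(CL)^{O(i)}$, with constants chosen so the total, including the $q^{k+1}$ factor for colorings (which is $O(1)^{k}$ and absorbs into $(CL)^{3i+1}$ only after noting $q$ is fixed and $k=2L+i$ --- here the hypothesis $i<(\sqrt6-2)L-1$ is what makes $q^{2L}$ subexponential relative to $L^{3i}$... ), works out to $(CL)^{3i+1}$.

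The main obstacle, and where the hypothesis $i<(\sqrt6-2)L-1$ must enter, is precisely this last bookkeeping: a priori $\sigma$ has energy $\sim 2L$, and the naive $q^{H(\sigma)+1}\sim q^{2L}$ coloring factor and the naive $\binom{\theta L^2}{k/3}$ edge-set factor are each far larger than $(CL)^{3i+1}$. So the real work is to prove a \emph{structure lemma} for configurations $\sigma$ with $H(\sigma)\in[\![2L,2L+i]\!]$, $\sigma\notin\widehat{\mathcal N}(\mathcal S)$, and $i$ small: such $\sigma$ must have $\ge 3L-H(\sigma)\ge L-i$ bridges (Lemma~\ref{l_Elb}), all in one direction since there is no cross (a cross would put $\sigma$ close to $\mathcal S$), and the $\le i+\text{const}$ non-bridge strips, together with the small-energy budget $\frac12\sum\Delta H_{\scal s}$, confine the non-bridge region to a ``window'' of $O(i)$ strips, outside of which $\sigma$ is a monochromatic stack of parallel bridges of a \emph{single} spin $a$ (otherwise one finds two bridges of different spins flanking the window, forcing extra energy, via the kind of estimate in Lemma~\ref{l_crosschar} and the $(\sqrt6-2)L$ threshold bound $H(\sigma)\ge\frac12[2+2\times\frac{(\text{something})^2}{2}]$-type Plancherel/isoperimetric count that appears in the ``$\scal h_{\ell_1},\scal h_{\ell_2}$ not adjacent'' argument of Lemma~\ref{l_MB}). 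Granting this structure, $\sigma$ is determined by: the direction ($3$ choices), the position of the window ($O(L)$), the two flanking spins $a\ne b$ ($O(1)$), and the detailed pattern inside the $O(i)$-strip window, of which there are at most $(CL)^{O(i)}$ by the Lemma~\ref{l_Xi}-style edge-counting restricted to the window; tracking the constants in the exponent to get exactly $3i+1$, and incorporating the extra $O(L^2)$ for passing from $\sigma$ to $\zeta$, completes the proof. I expect proving this confinement/structure lemma --- in particular extracting the precise threshold $(\sqrt6-2)L-1$ from the energy cost of a misaligned pair of non-bridge strips --- to be the hard part; everything after it is the routine Lemma~\ref{l_graph}/\eqref{e_combid} counting already rehearsed in Section~\ref{sec4_Gibbs}.
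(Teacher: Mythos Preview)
Your proposal has genuine gaps in both parts.

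\textbf{Part (1).} The assertion that ``any configuration with $H(\sigma)\le 2L+2$ lies in $\widehat{\mathcal N}(\mathcal S)$'' is false: Lemma~\ref{l_MB} exhibits configurations of type \textbf{(MB10)} with energy exactly $\Gamma$ that are isolated, i.e.\ $\widehat{\mathcal N}(\sigma)=\{\sigma\}$. So one cannot conclude $H(\sigma)\ge 2L+3$ for every $\sigma$ in a nice pair. The paper's argument is more delicate: first it shows (via the isoperimetric Lemma~\ref{l_cross} combined with the nice-pair constraint $\sum_{a\in A}\|\eta\|_a\in\{L^2-1,L^2\}$) that both $\sigma$ and $\zeta$ are cross-free; then, if either had energy $<\Gamma$ it would be canonical hence in $\widehat{\mathcal N}(\mathcal S)$; finally, if $H(\sigma)=H(\zeta)=\Gamma$, the relation $\sigma\sim\zeta$ forces both to be of type \textbf{(PP)} or both of type \textbf{(MB)}, and in the latter case the very fact that $\zeta\in\widehat{\mathcal N}(\sigma)\setminus\{\sigma\}$ rules out $(\star\star)$, so both satisfy $(\star)$ and lie in $\widehat{\mathcal N}(\mathcal S)$.

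\textbf{Part (2).} Your general shape (bridges in one direction, then count) is right, but two key mechanisms are misidentified. First, cross-freeness of $\eta\in\{\sigma,\zeta\}$ does \emph{not} come from ``a cross would put $\sigma$ close to $\mathcal S$''; it comes from Lemma~\ref{l_cross}: a $c$-cross forces $\sum_{c'\ne c}\|\eta\|_{c'}\le H(\eta)^2/6<(\sqrt 6 L-1)^2/6<L^2-1$, contradicting the nice-pair constraint whichever side $c$ lies on. This is precisely where the threshold $(\sqrt 6-2)L-1$ enters. Second, and this is what kills the $q^{2L}$ factor you were worried about: the \emph{same} constraint $\sum_{a\in A}\|\eta\|_a\approx L^2$ pins down the bridge spin pattern. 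At most two spins appear among the $L-\alpha$ bridges (three would force all vertical/diagonal strips to have energy $\ge 3$, hence $H\ge 3L$), and if two spins $a_1,a_2$ occur they must be arranged in two blocks with the split position constrained to an interval of length $O(\alpha)\le O(i)$ by the requirement $\|\eta\|_{a_j}\le L^2+1$. This reduces the bridge-spin count from $q^{L-\alpha}$ to $O(iL)$. The non-bridge strips need not form a contiguous window; the paper simply counts them strip-by-strip: $\binom{L}{\alpha}$ positions, and within them $\binom{2\alpha L}{\delta}q^\delta$ patterns with total strip-energy $\delta\le 2i$. The Lemma~\ref{l_graph}/\ref{l_Xi} machinery is not used here at all.
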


To prove this proposition, we first establish an isoperimetric inequality.
\begin{lem}
\label{l_cross}Suppose that $\sigma\in\mathcal{X}$ has an $a$-cross
for some $a\in\Omega$. Then, we have $\sum_{b\in\Omega\setminus\{a\}}\|\sigma\|_{b}\le\frac{H(\sigma)^{2}}{6}$.
\end{lem}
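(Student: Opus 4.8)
The plan is to use the dual-lattice representation of the energy, $H(\sigma)=\tfrac12\sum_{A^*}(\text{perimeter of }A^*)$ from \eqref{e_dualHam}, together with the geometric constraint imposed by the presence of an $a$-cross. First I would fix the $a$-cross: it is a union of an $a$-bridge $\scal{s}$ of one direction and an $a$-bridge $\scal{s}'$ of a transverse direction. Every triangle of $\Lambda^*$ bearing a non-$a$ spin therefore lies in the complement of $\scal{s}\cup\scal{s}'$, i.e.\ in one of the $(L-1)\times(L-1)$ "off-cross" positions (this is where hexagonal geometry enters: removing two transverse strips from the rhombus-shaped triangular lattice leaves a region that, for counting purposes, behaves like an $(L-1)\times(L-1)$ grid of triangles). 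Let $N=\sum_{b\ne a}\|\sigma\|_b$ be the number of such non-$a$ triangles and let $\mathcal{U}\subseteq\Lambda^*$ denote this set of triangles; the claim to prove is $N\le H(\sigma)^2/6$.

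The key step is an isoperimetric bound relating $|\mathcal{U}|=N$ to the total boundary length of the non-$a$ clusters. Since $\scal{s}\cup\scal{s}'$ is monochromatic of spin $a$ and every non-$a$ cluster is contained in the complement of the cross, each non-$a$ cluster $C$ satisfies $\mathrm{perimeter}(C)\ge$ (something growing like $\sqrt{|C|}$) by the discrete isoperimetric inequality on the triangular lattice; more precisely, in the triangular lattice a connected region of $k$ triangles has perimeter at least on the order of $2\sqrt{k}$ up to the right constant. Summing over the non-$a$ clusters $C_1,\dots,C_r$ and using $\sum_j|C_j|=N$, convexity of $k\mapsto\sqrt k$ gives $\sum_j\mathrm{perimeter}(C_j)\ge c\sqrt N$ for a suitable explicit constant $c$, and since every dual edge in $\mathfrak{A}^*(\sigma)$ lies on the boundary of exactly one non-$a$ cluster and one $a$-cluster, we have $H(\sigma)=|\mathfrak{A}^*(\sigma)|\ge\frac12\sum_j\mathrm{perimeter}(C_j)$, wait — more carefully, $H(\sigma)$ counts each boundary edge once, and the boundary of the union $\bigcup_j C_j$ of non-$a$ clusters is contained in $\mathfrak{A}^*(\sigma)$, so $H(\sigma)\ge\sum_j\mathrm{perimeter}(C_j)$ minus internal double-counting; I would instead directly bound $H(\sigma)$ from below by the perimeter of $\mathcal{U}=\bigcup_j C_j$ as a whole, $H(\sigma)\ge|\partial\mathcal{U}|$, and then use the isoperimetric inequality $|\partial\mathcal{U}|\ge\sqrt{6\,|\mathcal{U}|}$ valid for subsets of the triangular lattice (the constant $6$ being exactly what makes a "hexagonal" patch of $k$ small triangles, which has perimeter $\sim\sqrt{6k}$, the extremal shape). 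This yields $H(\sigma)^2\ge 6N$, which is the assertion.

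The main obstacle is pinning down the sharp constant $6$ in the triangular-lattice isoperimetric inequality and making sure it survives the restriction to the off-cross region. The clean way is to note that a "triangular region" of side $k$ (a large triangle subdivided into $k^2$ unit triangles) has $k^2$ triangles and perimeter $3k$, giving ratio $(\text{perimeter})^2/(\#\text{triangles})=9$, whereas a "hexagonal region" (regular hexagon of side $k$) has $6k^2$ triangles and perimeter $6k$, giving ratio $36/6 = 6$; hence the hexagon is extremal and $|\partial\mathcal{U}|^2\ge 6|\mathcal{U}|$ should hold for all finite $\mathcal{U}$, which I would justify either by citing a standard edge-isoperimetric inequality for the triangular lattice or by a short compression/Loomis–Whitney-type argument projecting onto the three strip directions: $|\mathcal{U}|$ is bounded by a product of the three "shadow" sizes, each shadow size is at most (roughly) the perimeter in that direction, and the three directional perimeters sum to $|\partial\mathcal{U}|$, so AM–GM gives $|\mathcal{U}|\le(|\partial\mathcal{U}|/3)^{?}$ — I would need to calibrate the exponents and constants so that the final inequality is exactly $|\mathcal{U}|\le|\partial\mathcal{U}|^2/6$. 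The restriction to the off-cross region only helps (it can only decrease the available volume for a given perimeter), so no loss there. Once the isoperimetric inequality is in hand, the lemma follows in one line.
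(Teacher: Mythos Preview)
Your approach is essentially the same as the paper's: reduce to an isoperimetric inequality on the triangular lattice. The paper streamlines it in two ways. First, it collapses all non-$a$ spins to a single spin $b_0$, defining $\widetilde\sigma$ with $\widetilde\sigma(x)=a$ if $\sigma(x)=a$ and $\widetilde\sigma(x)=b_0$ otherwise; then $H(\widetilde\sigma)\le H(\sigma)$ and $\|\widetilde\sigma\|_{b_0}=\sum_{b\ne a}\|\sigma\|_b$, so it suffices to treat the two-spin case, where your inequality $H\ge|\partial\mathcal U|$ becomes an equality. Second, rather than attempting to derive the constant $6$, the paper simply cites the edge-isoperimetric inequality on the triangular lattice from \cite{Gru} (Theorem~1.2 there), which gives exactly $\|\widetilde\sigma\|_{b_0}\le H(\widetilde\sigma)^2/6$.

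Your identification of the hexagon as the extremal shape and the constant $6$ is correct, but your Loomis--Whitney/compression sketch is left with a ``?'' in the exponents and would take real work to complete with the sharp constant; citing the known result is the right move here. One point you correctly sensed but did not fully articulate: the role of the $a$-cross is not merely that it ``only helps'' --- it is essential, because it forces $\mathcal U$ to sit in a simply-connected region of the torus, so that the planar isoperimetric inequality applies at all. Without the cross, $\mathcal U$ could be a strip wrapping around the torus, with perimeter $O(L)$ but area of order $L^2$, and the inequality would fail.
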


\begin{proof}
We fix $b_{0}\in\Omega\setminus\{a\}$ and define $\widetilde{\sigma}\in\mathcal{X}$
by
\[
\widetilde{\sigma}(x)=\begin{cases}
a & \text{if }\sigma(x)=a\;,\\
b_{0} & \text{if }\sigma(x)\ne a\;.
\end{cases}
\]
Then, it is immediate that $H(\widetilde{\sigma})\le H(\sigma)$ and
$\sum_{b\in\Omega\setminus\{a\}}\|\sigma\|_{b}=\|\widetilde{\sigma}\|_{b_{0}}$.
Therefore, it suffices to prove that $\|\widetilde{\sigma}\|_{b_{0}}\le\frac{H(\widetilde{\sigma})^{2}}{6}$.
As $\widetilde{\sigma}$ also has an $a$-cross, this is a direct
consequence of the isoperimetric inequality \cite[Theorem 1.2]{Gru}. 
\end{proof}
\begin{proof}[Proof of Proposition \ref{p_count}]
 Let $(\sigma,\,\zeta)$ be a nice pair satisfying $\max\{H(\sigma),\,H(\zeta)\}<\sqrt{6}L-1$.
Suppose now that $\eta\in\{\sigma,\,\zeta\}$ has a $c$-cross for
some $c\in\Omega$. Then, by Lemma \ref{l_cross}, we have
\[
\sum_{c':\,c'\ne c}\|\eta\|_{c'}\le\frac{H(\eta)^{2}}{6}<\frac{(\sqrt{6}L-1)^{2}}{6}<L^{2}-1\;.
\]
If $c\in B$, we get a contradiction to $\sum_{a\in A}\|\eta\|_{a}\ge L^{2}-1$,
and we get a similar contradiction when $c\in A$. Thus, both $\sigma$
and $\zeta$ are cross-free.\medskip{}

\noindent (1) Suppose that there exists a nice pair $(\sigma,\,\zeta)$
such that $H(\sigma),\,H(\zeta)\le\Gamma$. If the cross-free configuration
$\eta\in\{\sigma,\,\zeta\}$ satisfies $H(\eta)<\Gamma$, we can apply
Propositions \ref{p_E<=00003D2L} and \ref{p_E=00003D2L+1} to conclude
that $\eta\in\mathcal{R}_{n}^{a_{1},\,a_{2}}\cup\mathcal{C}_{n,\,\scal{o}}^{a_{1},\,a_{2}}$
for some $n$ and $a_{1},\,a_{2}\in\Omega$. Then by Remark \ref{r_typ},
we obtain $\eta\in\widehat{\mathcal{N}}(\mathcal{S})$ which yields
a contradiction. Therefore, we must have that $H(\sigma)=H(\zeta)=\Gamma$.
Since $\sigma\sim\zeta$, by Proposition \ref{p_E=00003D2L+2}, we
can notice that $\sigma$ and $\zeta$ must be both of type \textbf{(PP)}
or both of type\textbf{ (MB)}. If they are both of type \textbf{(PP)},
then Lemma \ref{l_PP} implies that $\sigma,\,\zeta\in\widehat{\mathcal{N}}(\mathcal{S})$.
If they are both of type \textbf{(MB)}, then Lemma \ref{l_MB} implies
that both $\sigma$ and $\zeta$ satisfy $(\star)$ and thus $\sigma,\,\zeta\in\widehat{\mathcal{N}}(\mathcal{S})$.
Hence, we get contradiction in both cases and the proof of part (1)
is completed.\medskip{}

\noindent (2) Fix $2<i<(\sqrt{6}-2)L-1$ and let $\eta\in\{\sigma,\,\zeta\}$
be the configuration with energy $2L+i$. Since $\eta$ is cross-free,
all the bridges of $\eta$ (whose existence is guaranteed by Lemma
\ref{l_Elb}) must be of the same direction. Without loss of generality,
we suppose that all bridges of $\eta$ are horizontal. Denote these
horizontal bridges by
\[
\scal{h}_{k_{1}},\,\dots,\,\scal{h}_{k_{L-\alpha}}\;\;\;\;\text{where }1\le k_{1}<\cdots<k_{L-\alpha}\le L\;.
\]
Since $2L+i=H(\eta)$ is not a multiple of $L$ by the condition $i<(\sqrt{6}-2)L-1<L$,
at least one horizontal strip is not a bridge and hence $\alpha\ge1$.
Write
\[
\mathbb{T}_{L}\setminus\{k_{1},\,\dots,\,k_{L-\alpha}\}=\{k_{1}',\,\dots,\,k_{\alpha}'\}\;\;\;\;\text{where }1\le k_{1}'<\cdots<k_{\alpha}'\le L\;.
\]
By Lemma \ref{l_Elb}, we have that 
\begin{equation}
2L+\alpha=3L-(L-\alpha)\le H(\sigma)=2L+i\;\;\;\;\text{and hence }\alpha\le i\;.\label{e:alphabd}
\end{equation}
Define $\delta\in\mathbb{N}$ as
\begin{equation}
\delta=\sum_{\ell=1}^{\alpha}\Delta H_{\scal{h}_{k_{\ell}'}}(\eta)\ge2\alpha\;,\label{e:lbdelta}
\end{equation}
where the inequality follows since $\Delta H_{\scal{h}_{k_{\ell}'}}(\eta)\ge2$
for all $\ell\in\llbracket1,\,\alpha\rrbracket$ (cf. Lemma \ref{l_strip}).
Now, we count possible number of nice pairs for fixed $\alpha$ and
$\delta$.\medskip{}

\noindent \textbf{(Step 1)} There are ${L \choose \alpha}$ ways to
choose the positions of strips $\scal{h}_{k_{1}},\,\dots,\,\scal{h}_{k_{L-\alpha}}$.\medskip{}

\noindent \textbf{(Step 2) Number of possible spin configurations
on $\scal{h}_{k_{1}}\cup\cdots\cup\scal{h}_{k_{L-\alpha}}$:} If these
horizontal bridges have three different spins, then all the vertical
and diagonal strips have energy at least $3$, and hence by \eqref{e_Edec}
we get 
\begin{equation}
H(\eta)\ge\frac{1}{2}(0+3L+3L)=3L\;.\label{eH3L}
\end{equation}
This contradicts $H(\eta)<\sqrt{6}L$. If all these bridges are of
the same spin, there are $q$ possible choices. If all these bridges
consist of two spins, there exist $1\le u<v\le L-\alpha$ and $a_{1},\,a_{2}\in\Omega$
such that 
\begin{equation}
\scal{h}_{k_{\ell}}\text{ is an }\begin{cases}
a_{1}\text{-bridge} & \text{if }u\le\ell<v\;,\\
a_{2}\text{-bridge} & \text{otherwise},
\end{cases}\label{e:claim}
\end{equation}
since otherwise all the vertical and diagonal strips have energy at
least $4$ and we get a contradiction as in \eqref{eH3L}. Now, we
will see which values of $(u,\,v)$ are available. By counting the
number of spins in $\scal{h}_{k_{1}}\cup\cdots\cup\scal{h}_{k_{L-\alpha}}$
we should have
\[
\|\eta\|_{a_{1}}\ge2L(v-u)\;\;\;\;\text{and}\;\;\;\;\|\eta\|_{a_{2}}\ge2L(L-\alpha-v+u)\;.
\]
On the other hand, by \eqref{e_itmE}, we have $\|\eta\|_{a_{1}},\,\|\eta\|_{a_{2}}\le L^{2}+1$.
Summing these up, we get $\frac{L}{2}-\alpha\le v-u\le\frac{L}{2}$.
Therefore, there are at most 
\[
q\times(q-1)\times L\times(\alpha+1)\le2\alpha Lq^{2}
\]
ways of assigning spins on $\scal{h}_{k_{1}}\cup\cdots\cup\scal{h}_{k_{L-\alpha}}$
satisfying \eqref{e:claim}. Summing up, there are at most $q+2\alpha Lq^{2}\le3\alpha Lq^{2}$
possible choices on these strips.\medskip{}

\noindent \textbf{(Step 3) Number of possible spin configurations
on $\scal{h}_{k_{1}'}\cup\cdots\cup\scal{h}_{k_{\alpha}'}$:} Write
$\delta_{\ell}=\Delta H_{\scal{h}_{k_{\ell}'}}(\eta)$ for $\ell\in\llbracket1,\,\alpha\rrbracket$
so that $\delta_{1}+\cdots+\delta_{\alpha}=\delta$. Since each strip
$\scal{h}_{k_{\ell}'}$ has energy $\delta_{\ell}$, it should be
divided into $\delta_{\ell}$ monochromatic clusters. There are ${2L \choose \delta_{\ell}}$
ways of dividing $\scal{h}_{k_{\ell}'}\simeq\mathbb{T}_{2L}$ into
$\delta_{\ell}$ connected clusters, and there are at most $q^{\delta_{\ell}}$
ways to assign spins to these clusters. Hence, given $\alpha$ and
$\delta$, the number of possible spin choices on $\scal{h}_{k_{1}'}\cup\cdots\cup\scal{h}_{k_{\alpha}'}$
is at most 
\begin{equation}
\sum_{\delta_{1},\,\dots,\,\delta_{\alpha}\ge0:\,\delta_{1}+\cdots+\delta_{\alpha}=\delta}{2L \choose \delta_{1}}\cdots{2L \choose \delta_{\alpha}}q^{\delta_{1}+\cdots+\delta_{\alpha}}={2\alpha L \choose \delta}q^{\delta}\;.\label{e_i>1.4}
\end{equation}

\noindent \textbf{(Step 4)} Since $\eta$ is one of $\{\sigma,\,\zeta\}$
with bigger energy, we next count the number of possible other configurations.
This configuration is obtained from $\eta$ by an update which does
not increase the energy. Since updating a spin in a bridge always
increases the energy, we have to update a spin in strips $\scal{h}_{k_{\ell}'}$,
$\ell\in\llbracket1,\,\alpha\rrbracket$. For each strip, $\scal{h}_{k_{\ell}'}$
has $\delta_{\ell}$ monochromatic clusters as observed in the previous
step, and thus there are at most $2\delta_{\ell}$ updatable triangles
in this strip (which are located at the edge of each monochromatic
cluster). Hence, we have in total $2\delta$ updatable triangles.
Since each spin in the triangle can be updated to at most three spins
in order not to increase the energy, there are at most $6\delta$
possible ways of updates.

Summing \textbf{(Step 1)-(Step 4)} up, the number of possible nice
pairs for given $\alpha$ and $\delta$ is bounded from above by
\[
3\times{L \choose \alpha}\times3\alpha Lq^{2}\times{2\alpha L \choose \delta}q^{\delta}\times6\delta\;,
\]
where the first factor $3$ reflects three possible directions for
parallel bridges. Since $\Delta H_{\scal{v}_{\ell}}(\eta)$, $\Delta H_{\scal{d}_{\ell}}(\eta)\ge2$
for all $\ell\in\mathbb{T}_{L}$ by Lemma \ref{l_strip}, we can deduce
from \eqref{e_Edec} that 
\[
\delta=\sum_{\ell=1}^{\alpha}\Delta H_{\scal{h}_{k_{\ell}'}}(\eta)\le2H(\eta)-4L=2i\;.
\]
Combining with \eqref{e:lbdelta}, we get $\delta\in\llbracket2\alpha,\,2i\rrbracket$.
Thus, we can finally bound the number of nice pairs by 
\begin{equation}
\sum_{\alpha=1}^{i}\sum_{\delta=2\alpha}^{2i}3\times{L \choose \alpha}\times3\alpha Lq^{2}\times{2\alpha L \choose \delta}q^{\delta}\times6\delta\;.\label{e:sumcnt}
\end{equation}
Since $i<(\sqrt{6}-2)L-1$, the following bounds hold for $\alpha\le i$:
\[
{L \choose \alpha}\le{L \choose i}\;,\;\;\;3\alpha Lq^{2}\le3iLq^{2}\;,\;\;\;\text{and}\;\;\;{2\alpha L \choose \delta}q^{\delta}\times6\delta\le12i{2iL \choose 2i}q^{2i}\;.
\]
Therefore, we can bound the summation \eqref{e:sumcnt} from above
by 
\[
i\times2i\times3\times{L \choose i}\times3iLq^{2}\times12i{2iL \choose 2i}q^{2i}\le216Li^{4}q^{2i+2}{L \choose i}{2iL \choose 2i}\;.
\]
By Stirling's formula and the bound ${L \choose i}\le\frac{L^{i}}{i!}$,
the right-hand side of the last formula can be bounded from above
by
\[
CLi^{4}q^{2i}\times\frac{L^{i}}{i!}\times(eL)^{2i}\le C(eqL)^{3i+1}
\]
for some constant $C>0$. This concludes the proof.
\end{proof}

\subsection{\label{sec93}Computation of Dirichlet form}

In turn, we calculate the Dirichlet form $D_{\beta}(f)$ of the test
function $f=f^{A,\,B}$. To this end, we decompose $D_{\beta}(f)$
as
\begin{equation}
\Big[\,\sum_{\{\sigma,\,\zeta\}\subseteq\widehat{\mathcal{N}}(\mathcal{S})}+\sum_{\sigma\in\widehat{\mathcal{N}}(\mathcal{S})}\sum_{\zeta\in\widehat{\mathcal{N}}(\mathcal{S})^{c}}+\sum_{\{\sigma,\,\zeta\}\subseteq\widehat{\mathcal{N}}(\mathcal{S})^{c}}\,\Big]\mu_{\beta}(\sigma)c_{\beta}(\sigma,\,\zeta)[f(\zeta)-f(\sigma)]^{2}\label{e_Dbetaf}
\end{equation}
and we shall estimate three summations separately. We recall that
we are imposing the condition $L^{10}\ll e^{\beta}$ on $\beta$.
We write for each $\mathcal{A}\subseteq\mathcal{X}$,
\begin{equation}
E(\mathcal{A})=\big\{\,\{\sigma,\,\zeta\}\subseteq\mathcal{A}:\sigma\sim\zeta\,\big\}\;.\label{e_EA1}
\end{equation}

\begin{lem}
\label{l_tf1}We have
\begin{equation}
\sum_{\{\sigma,\,\zeta\}\subseteq\widehat{\mathcal{N}}(\mathcal{S})}\mu_{\beta}(\sigma)c_{\beta}(\sigma,\,\zeta)[f(\zeta)-f(\sigma)]^{2}=\frac{1+o_{L}(1)}{q\mathfrak{c}}e^{-\Gamma\beta}\;.\label{e_ltf1}
\end{equation}
\end{lem}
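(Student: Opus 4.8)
The sum in \eqref{e_ltf1} runs over edges $\{\sigma,\zeta\}$ with both endpoints in $\widehat{\mathcal N}(\mathcal S)=\mathcal E^{A,B}\cup\mathcal B^{A,B}$ (Proposition \ref{p_typ}-(2)). The strategy is to split $E(\widehat{\mathcal N}(\mathcal S))$ into three groups according to where the edge lives: (i) edges inside $\mathcal E^{A}$ or inside $\mathcal E^{B}$ (the ``edge'' regions near the ground states); (ii) edges inside $\mathcal B^{A,B}$ (the ``bulk''/canonical backbone together with its dead-ends); (iii) the finitely-many ``interface'' edges crossing between these regions, i.e. those meeting $\mathcal N(\mathcal R_2^{A,B})\cup\mathcal N(\mathcal R_{L-2}^{A,B})$ (Proposition \ref{p_typ}-(1)). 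On each group I would compute the contribution, show that the three pieces add up, via the definitions $\mathfrak b,\mathfrak c$ in \eqref{e_b}–\eqref{e_c}, to $\frac{1+o_L(1)}{q\mathfrak c}e^{-\Gamma\beta}$, and in fact recognize the whole thing as a series–parallel resistor network whose total conductance is $\frac1q e^{-\Gamma\beta}\cdot\frac1{\mathfrak c}$.

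\textbf{Bulk contribution.} On $\mathcal B^{A,B}$ the test function $f$ was designed (Definition \ref{d_tf}-(2)) so that along each canonical path from $\mathcal R_2^{a,b}$ to $\mathcal R_{L-2}^{a,b}$ it decreases by equal increments at each good flip that raises the energy to $\Gamma$; by Remark \ref{r_can}, $c_\beta$ along such a flip contributes weight $\frac1{Z_\beta}e^{-\Gamma\beta}$, and $Z_\beta=q+o_L(1)$ by Theorem \ref{t_Gibbs1}-(1). First I would count, for fixed $(a,b)$ with $a\in A$, $b\in B$, the number of ``active'' edges (those with $f(\zeta)\ne f(\sigma)$ and $\max\{H(\sigma),H(\zeta)\}=\Gamma$): the canonical backbone between $\xi^{a,b}_{\scal h(P_n)}$ and $\xi^{a,b}_{\scal h(P_{n+1})}$ contributes $2L$ flips per increment step for each of the three directions $\scal h,\scal v,\scal d$, while dead-end flips ($\mathcal D^{a,b}$, i.e. the configurations of type (PP),(ODP),(TDP),(SP) attached by Lemmas \ref{l_reg}–\ref{l_QQ}) are $f$-flat by construction and contribute zero. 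Summing the geometric/arithmetic increments prescribed by the formulas for $\sigma\in\mathcal C_n^{a,b}$ and using $\sum_{a\in A,b\in B}1=|A|(q-|A|)$, this bulk piece evaluates to a conductance of order $\frac{1}{q}e^{-\Gamma\beta}/\mathfrak b$ between $\mathcal N(\mathcal R_2^{A,B})$ and $\mathcal N(\mathcal R_{L-2}^{A,B})$; this is exactly where the constant $\mathfrak b=\frac{(5L-3)(L-4)}{60L^2|A|(q-|A|)}$ comes from, since $60$ and $5L-3$ encode the energy-$\Gamma$ flip counts per unit of $n$ (three directions, two protuberance sides, plus the internal subdivision of a protuberance).

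\textbf{Edge contribution and assembly.} On $\mathcal E^A$, $f=1-\frac{\mathfrak e_A}{\mathfrak c}(1-\mathfrak h^A)$ is an affine image of the equilibrium potential $\mathfrak h^A=h^A_{\mathcal S(A),\mathcal R_2^{A,B}}$ of the auxiliary chain $Z^A(\cdot)$, so $\sum_{\{\sigma,\zeta\}\subseteq\mathcal E^A}\mu_\beta(\sigma)c_\beta(\sigma,\zeta)[f(\zeta)-f(\sigma)]^2=\frac{\mathfrak e_A^2}{\mathfrak c^2}\cdot$ (the Dirichlet energy of $\mathfrak h^A$ on $\mathcal E^A$). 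Here Proposition \ref{p_ZA} is the key input: it identifies $\mu_\beta c_\beta$-weighted sums over $\mathcal E^A$ with $\frac1q e^{-\Gamma\beta}$ times the corresponding $r^A$-weighted sums over $\mathscr V^A$, up to $1+o_L(1)$; hence this piece equals $(1+o_L(1))\frac{\mathfrak e_A^2}{\mathfrak c^2}\cdot\frac1q e^{-\Gamma\beta}\,\mathrm{cap}^A(\mathcal S(A),\mathcal R_2^{A,B})=(1+o_L(1))\frac{\mathfrak e_A}{\mathfrak c^2}\cdot\frac1q e^{-\Gamma\beta}$ by the definition \eqref{e_eA} of $\mathfrak e_A$. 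The same holds with $A$ replaced by $B$. Adding the bulk conductance ($\propto 1/\mathfrak b$), the two edge conductances ($\propto 1/\mathfrak e_A$, $1/\mathfrak e_B$), and noting these three sit in series between $\mathcal S(A)$ and $\mathcal S(B)$, the total conductance is $\frac1q e^{-\Gamma\beta}\big/(\mathfrak b+\mathfrak e_A+\mathfrak e_B)=\frac1{q\mathfrak c}e^{-\Gamma\beta}$; the linear choices of the slopes $\mathfrak e_A/\mathfrak c$, $\mathfrak e_B/\mathfrak c$, $\mathfrak b/\mathfrak c$ in Definition \ref{d_tf} are precisely what makes $f$ continuous at $\mathcal N(\mathcal R_2^{A,B})$, $\mathcal N(\mathcal R_{L-2}^{A,B})$ and makes the series law exact. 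The interface edges meeting $\mathcal O^A$ but at a point where $\mathfrak h^A\ne1$ (Lemma \ref{l_edge2}) are controlled by the $O(L^8)$ bound there times $e^{-\Gamma\beta}$, which is $o_L(1)\cdot e^{-\Gamma\beta}$ under $L^{10}\ll e^\beta$, so they only affect the $o_L(1)$.

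\textbf{Main obstacle.} The delicate point is \emph{not} the edge regions (those reduce cleanly to $\mathrm{cap}^A$, $\mathrm{cap}^B$ via Proposition \ref{p_ZA}) but the bulk bookkeeping: one must verify that every good flip with $H=\Gamma$ on $\mathcal B^{A,B}$ either lies on a canonical increment (contributing a controlled, equal jump of $f$) or leads into a dead-end in $\mathcal D^{a,b}$ where $f$ is flat — this is exactly the content of Proposition \ref{p_dead} and Lemmas \ref{l_PP}, \ref{l_QQ} — and then that the combinatorial count of such flips, weighted by the prescribed increments $\frac{5m-3}{2}$, $3-\mathfrak d(\sigma)$, etc., telescopes to give $\frac{\mathfrak b}{\mathfrak c^2}\cdot\frac1q e^{-\Gamma\beta}$ with no spurious lower-order terms. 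I expect this to require a careful but routine case analysis over the strip directions $\scal s\in\{\scal h,\scal v,\scal d\}$, the two ends $P\prec P'$, and the protuberance sizes $|\mathfrak p^{a,b}(\sigma)|\in\{2,\dots,2L-2\}$, parallel to \cite[Section 9]{KS2} but with the extra hexagonal dead-end types folded in as $f$-flat contributions.
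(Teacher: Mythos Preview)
Your proposal is correct and follows essentially the same route as the paper: decompose the sum into the bulk piece $E(\mathcal B^{A,B})$ and the two edge pieces $E(\mathcal E^A)$, $E(\mathcal E^B)$, observe that dead-ends are $f$-flat (via Lemmas \ref{l_PP}--\ref{l_QQ}), compute the bulk canonical increments explicitly to get $\frac{\mathfrak b}{q\mathfrak c^2}e^{-\Gamma\beta}$, reduce each edge piece via Proposition \ref{p_ZA} to the auxiliary capacity, and sum $\frac{\mathfrak b+\mathfrak e_A+\mathfrak e_B}{q\mathfrak c^2}e^{-\Gamma\beta}=\frac{1}{q\mathfrak c}e^{-\Gamma\beta}$. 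Two minor corrections: the $r^A$-weighted Dirichlet sum over $\mathscr V^A$ equals $|\mathscr V^A|\,\mathrm{cap}^A(\mathcal S(A),\mathcal R_2^{A,B})$ (not just $\mathrm{cap}^A$, cf.\ \eqref{e_eA}), and there are no separate ``interface'' edges requiring Lemma \ref{l_edge2} or the condition $L^{10}\ll e^\beta$ --- since $f$ is constant on $\mathcal N(\mathcal R_2^{A,B})$ and on $\mathcal N(\mathcal R_{L-2}^{A,B})$, the three pieces already partition the nonzero contributions cleanly, so this lemma needs only $L^{2/3}\ll e^\beta$.
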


\begin{proof}
By Propositions \ref{p_typ} and \ref{p_EAEB}, we can decompose the
left-hand side of \eqref{e_ltf1} as
\begin{equation}
\Big[\,\sum_{\{\sigma,\,\zeta\}\in E(\mathcal{B}^{A,B})}+\sum_{\{\sigma,\,\zeta\}\in E(\mathcal{E}^{A})}+\sum_{\{\sigma,\,\zeta\}\in E(\mathcal{E}^{B})}\,\Big]\mu_{\beta}(\sigma)c_{\beta}(\sigma,\,\zeta)[f(\zeta)-f(\sigma)]^{2}\;,\label{e_tf1.1}
\end{equation}
since the test function $f$ is constant on $\mathcal{E}^{A}\cap\mathcal{B}^{A,\,B}=\mathcal{N}(\mathcal{R}_{2}^{A,\,B})$
and $\mathcal{E}^{B}\cap\mathcal{B}^{A,\,B}=\mathcal{N}(\mathcal{R}_{L-2}^{A,\,B})$
as remarked in Definition \ref{d_tf}. 

Let us consider the first summation of \eqref{e_tf1.1}. Suppose that
$\sigma\in\mathcal{D}^{A,\,B}$. If $\sigma\in\mathcal{P}_{n}^{A,\,B}$
for some $n\in\llbracket2,\,L-2\rrbracket$, $a\in A$, and $b\in B$,
then Lemma \ref{l_PP} and Definition \ref{d_tf}-(2) assert that
$f(\sigma)=f(\zeta)$. If $\sigma\in\mathcal{Q}_{n}^{a,\,b}$ for
some $n\in\llbracket2,\,L-3\rrbracket$, $a\in A$, and $b\in B$,
then Lemma \ref{l_QQ} and Definition \ref{d_tf}-(2) imply that $f(\sigma)=f(\zeta)$.
The similar conclusion holds for the case $\zeta\in\mathcal{D}^{A,\,B}$
by the same logic and hence the summand vanishes if either $\sigma\in\mathcal{D}^{A,\,B}$
or $\zeta\in\mathcal{D}^{A,\,B}$. Thus, we can write the first summation
of \eqref{e_tf1.1} as
\[
\sum_{a\in A}\sum_{b\in B}\sum_{n=2}^{L-3}\sum_{\scal{s}\in\{\scal{h},\,\scal{v},\,\scal{d}\}}\sum_{P\prec P':\,|P|=n}\sum_{\{\sigma,\,\zeta\}\in E(\mathcal{C}_{\scal{s}(P,P')}^{a,b})}\mu_{\beta}(\sigma)c_{\beta}(\sigma,\,\zeta)[f(\zeta)-f(\sigma)]^{2}\;.
\]
By Lemmas \ref{l_reg}, \ref{l_odd}, \ref{l_even}, and the definition
of $f$, the last summation on $\{\sigma,\,\zeta\}$ can be rearranged
as
\[
\sum_{\sigma\in\mathcal{C}_{\scal{s}(P,P'),\scal{o}}^{a,b},\,\zeta\in\mathcal{C}_{\scal{s}(P,P'),\scal{e}}^{a,b}:\,\sigma\sim\zeta}\mu_{\beta}(\sigma)c_{\beta}(\sigma,\,\zeta)[f(\zeta)-f(\sigma)]^{2}\;.
\]
We now decompose this summation into three parts according to the
value of $|\mathfrak{p}^{a,\,b}(\zeta)|$. First suppose that $|\mathfrak{p}^{a,\,b}(\zeta)|\neq2,\,2L-2$.
Then, by the definition of $f$, \eqref{e_detbal}, and Theorem \ref{t_Gibbs1}-(1),
the summation under this restriction equals
\[
4L\times\sum_{m=3}^{2L-4}\frac{1}{Z_{\beta}}e^{-\Gamma\beta}\times\frac{\mathfrak{b}^{2}}{\mathfrak{c}^{2}}\frac{25/4}{(5L-3)^{2}(L-4)^{2}}=\frac{50\mathfrak{b}^{2}L(L-3)}{q\mathfrak{c}^{2}(5L-3)^{2}(L-4)^{2}}\times(1+o_{L}(1))e^{-\Gamma\beta}\;.
\]
Next we suppose that $|\mathfrak{p}^{a,\,b}(\zeta)|=2$. Then the
summation under this restriction can be decomposed into 
\[
\Bigg[\,\sum_{\substack{\zeta:\,|\mathfrak{p}^{a,b}(\zeta)|=2\text{ and}\\
\mathfrak{p}^{a,b}(\zeta)\text{ is connected}
}
}+\sum_{\substack{\zeta:\,|\mathfrak{p}^{a,b}(\zeta)|=2\text{ and}\\
\mathfrak{p}^{a,b}(\zeta)\text{ is disconnected}
}
}\,\Bigg]\sum_{\sigma:\,|\mathfrak{p}^{a,b}(\sigma)|\in\{1,\,3\}}\mu_{\beta}(\sigma)c_{\beta}(\sigma,\,\zeta)[f(\zeta)-f(\sigma)]^{2}\;.
\]
By the definition of $f$, \eqref{e_detbal}, and part (1) of Theorem
\ref{t_Gibbs1}, the last display equals $(1+o_{L}(1))$ times 
\begin{align*}
 & 2L\times\frac{1}{q}e^{-\Gamma\beta}\times\frac{\mathfrak{b}^{2}}{\mathfrak{c}^{2}}\frac{9+9}{(5L-3)^{2}(L-4)^{2}}+L\times\frac{1}{q}e^{-\Gamma\beta}\times\frac{\mathfrak{b}^{2}}{\mathfrak{c}^{2}}\frac{4+4+16}{(5L-3)^{2}(L-4)^{2}}\\
 & =\frac{60\mathfrak{b}^{2}L(1+o_{L}(1))}{q\mathfrak{c}^{2}(5L-3)^{2}(L-4)^{2}}e^{-\Gamma\beta}\;.
\end{align*}
For the case $|\mathfrak{p}^{a,\,b}(\zeta)|=2L-2$, we get the same
result with the case $|\mathfrak{p}^{a,\,b}(\zeta)|=2$ by an identical
argument. Gathering the computations above and applying the definition
\eqref{e_b} of $\mathfrak{b}$, we can conclude that the first summation
of \eqref{e_tf1.1} is $(1+o_{L}(1))$ times 
\begin{align}
 & \sum_{a\in A}\sum_{b\in B}\sum_{n=2}^{L-3}\sum_{\scal{s}\in\{\scal{h},\,\scal{v},\,\scal{d}\}}\sum_{P\prec P':\,|P|=n}\frac{50\mathfrak{b}^{2}L(L-3)+60\mathfrak{b}^{2}L+60\mathfrak{b}^{2}L}{q\mathfrak{c}^{2}(5L-3)^{2}(L-4)^{2}}e^{-\Gamma\beta}\nonumber \\
 & =|A|(q-|A|)\times\frac{60\mathfrak{b}^{2}L^{2}}{q\mathfrak{c}^{2}(5L-3)(L-4)}e^{-\Gamma\beta}=\frac{\mathfrak{b}}{q\mathfrak{c}^{2}}e^{-\Gamma\beta}\;.\label{e_tf1.2}
\end{align}

Next, we turn to the second summation of \eqref{e_tf1.1}. We decompose
this summation as
\begin{align*}
 & \sum_{\{\sigma_{1},\,\sigma_{2}\}\subseteq\mathcal{O}^{A}}\mu_{\beta}(\sigma_{1})c_{\beta}(\sigma_{1},\,\sigma_{2})[f(\sigma_{2})-f(\sigma_{1})]^{2}\\
 & +\sum_{\sigma_{1}\in\mathcal{O}^{A}}\sum_{\sigma_{2}\in\mathcal{I}_{\textup{rep}}^{A}}\sum_{\zeta\in\mathcal{N}(\sigma_{2})}\mu_{\beta}(\sigma_{1})c_{\beta}(\sigma_{1},\,\zeta)[f(\zeta)-f(\sigma_{1})]^{2}\;.
\end{align*}
By Proposition \ref{p_ZA}, this equals $(1+o_{L}(1))$ times 
\[
\Big[\,\sum_{\{\sigma_{1},\,\sigma_{2}\}\subseteq\mathcal{O}^{A}}+\sum_{\sigma_{1}\in\mathcal{O}^{A}}\sum_{\sigma_{2}\in\mathcal{I}_{\textup{rep}}^{A}}\,\Big]\frac{e^{-\Gamma\beta}}{q}r^{A}(\sigma_{1},\,\sigma_{2})[f(\sigma_{2})-f(\sigma_{1})]^{2}\;.
\]
By the definition of $f$, this can be written as 
\begin{align}
 & (1+o_{L}(1))\frac{\mathfrak{e}_{A}^{2}}{\mathfrak{c}^{2}}\sum_{\{\sigma_{1},\,\sigma_{2}\}\subseteq\mathscr{V}^{A}}\frac{e^{-\Gamma\beta}}{q}r^{A}(\sigma_{1},\,\sigma_{2})[\mathfrak{h}^{A}(\sigma_{2})-\mathfrak{h}^{A}(\sigma_{1})]^{2}\nonumber \\
 & =(1+o_{L}(1))\frac{e^{-\Gamma\beta}\mathfrak{e}_{A}^{2}}{q\mathfrak{c}^{2}}\times|\mathscr{V}^{A}|\mathrm{cap}^{A}\big(\,\mathcal{S}(A),\,\mathcal{R}_{2}^{A,\,B}\,\big)=(1+o_{L}(1))\frac{\mathfrak{e}_{A}}{q\mathfrak{c}^{2}}e^{-\Gamma\beta}\;.\label{e_tf1.3}
\end{align}
In conclusion, we get 
\[
\sum_{\{\sigma,\,\zeta\}\in E(\mathcal{E}^{A})}\mu_{\beta}(\sigma)c_{\beta}(\sigma,\,\zeta)[f(\zeta)-f(\sigma)]^{2}=(1+o_{L}(1))\frac{\mathfrak{e}_{A}}{q\mathfrak{c}^{2}}e^{-\Gamma\beta}\;.
\]
By an entirely same computation, the summation $\sum_{\{\sigma,\,\zeta\}\in E(\mathcal{E}^{B})}$
yields $(1+o_{L}(1))\frac{\mathfrak{e}_{B}}{q\mathfrak{c}^{2}}e^{-\Gamma\beta}$.
Gathering these results with \eqref{e_tf1.2}, we can finally conclude
that \eqref{e_tf1.1} equals
\[
(1+o_{L}(1))\times\frac{\mathfrak{b}+\mathfrak{e}_{A}+\mathfrak{e}_{B}}{q\mathfrak{c}^{2}}e^{-\Gamma\beta}=\frac{1+o_{L}(1)}{q\mathfrak{c}}e^{-\Gamma\beta}\;,
\]
as desired. This completes the proof.
\end{proof}
\begin{lem}
\label{l_tf2}We have
\begin{equation}
\sum_{\sigma\in\widehat{\mathcal{N}}(\mathcal{S})}\sum_{\zeta\in\widehat{\mathcal{N}}(\mathcal{S})^{c}}\mu_{\beta}(\sigma)c_{\beta}(\sigma,\,\zeta)[f(\zeta)-f(\sigma)]^{2}=o_{L}(e^{-\Gamma\beta})\;.\label{e_tf21}
\end{equation}
\end{lem}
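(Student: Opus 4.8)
The plan is to show that, among the pairs $\{\sigma,\zeta\}$ with $\sigma\in\widehat{\mathcal N}(\mathcal S)$, $\zeta\in\widehat{\mathcal N}(\mathcal S)^{c}$ and $\sigma\sim\zeta$, the ones for which $\sigma$ carries a cross contribute \emph{exactly} $0$, while the ones for which $\sigma$ is cross-free are too few and too energetic to matter once $L^{10}\ll e^{\beta}$. First I would record two elementary facts about any such pair. Since $\sigma\in\widehat{\mathcal N}(\mathcal S)$ is connected to $\mathcal S$ by a $(2L+2)$-path (Definition \ref{d_nbd}), we have $H(\sigma)\le\Gamma$; and if $H(\zeta)\le\Gamma$, then appending the edge $\sigma\sim\zeta$ to such a path would place $\zeta$ in $\widehat{\mathcal N}(\mathcal S)$, a contradiction, so $H(\zeta)\ge\Gamma+1$. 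Hence, by \eqref{e_detbal} and $Z_{\beta}\ge q$, $\mu_{\beta}(\sigma)c_{\beta}(\sigma,\zeta)=Z_{\beta}^{-1}e^{-\beta\max\{H(\sigma),H(\zeta)\}}\le q^{-1}e^{-(\Gamma+1)\beta}$, and since $f$ takes values in $[0,1]$ for $L$ large (immediate from Definition \ref{d_tf} and Proposition \ref{p_eAest}), every summand in \eqref{e_tf21} is bounded by $q^{-1}e^{-(\Gamma+1)\beta}$.

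Next I would split on whether $\sigma$ carries a cross. A configuration cannot carry crosses of two distinct colours, since a strip perpendicular to one bridge would then pass through a bridge of the other colour; thus $\sigma$ is cross-free or carries a $c$-cross for a unique colour $c$. Suppose $c=a\in A$. Configurations in $\mathcal B^{A,B}$ are cross-free (immediate from the definitions) and configurations in $\mathcal E^{B}$ carry no $a$-cross (Proposition \ref{p_EAEB} with the roles of $A$ and $B$ exchanged), so Proposition \ref{p_typ}(2) forces $\sigma\in\mathcal E^{A}$; then Lemma \ref{l_edge1} gives $\mathfrak h^{A}(\sigma)=1$, hence $f(\sigma)=1$ by Definition \ref{d_tf}(1). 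On the other hand, the isoperimetric bound of Lemma \ref{l_cross} gives $\sum_{b\in B}\|\sigma\|_{b}\le H(\sigma)^{2}/6\le(2L+2)^{2}/6<L^{2}-1$ for $L\ge8$, so $\sum_{b\in A}\|\sigma\|_{b}>L^{2}+1$; since $\zeta$ differs from $\sigma$ at a single site, $\sum_{b\in A}\|\zeta\|_{b}\ge L^{2}$, i.e.\ $f(\zeta)=1$. Thus $[f(\zeta)-f(\sigma)]^{2}=0$. The case $c=b\in B$ is entirely symmetric: now $\sigma\in\mathcal E^{B}$, $\mathfrak h^{B}(\sigma)=1$ and $f(\sigma)=0$, while $\sum_{a\in A}\|\sigma\|_{a}\le(2L+2)^{2}/6<L^{2}-1$ forces $\sum_{a\in A}\|\zeta\|_{a}<L^{2}$, so $f(\zeta)=0$ and the summand again vanishes.

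It then remains to sum over cross-free $\sigma$. By Lemma \ref{l_cfcount} there are $O(L^{6})$ cross-free configurations of energy at most $\Gamma$, and each has $O(L^{2})$ single-flip neighbours, so there are $O(L^{8})$ relevant pairs; by the bound of the first paragraph, the total contribution of these is $O(L^{8}e^{-\beta})\,e^{-\Gamma\beta}$, which is $o_{L}(e^{-\Gamma\beta})$ because $L^{10}\ll e^{\beta}$ makes $L^{8}e^{-\beta}\ll L^{-2}\to0$. This proves \eqref{e_tf21}.

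The step I expect to be the main obstacle is the cross case. Naively there are exponentially many (in $L$) configurations carrying a cross and having energy at most $\Gamma$ — the minority colour may occupy a region whose perimeter is as large as $\Gamma$ — so the crude estimate ``number of pairs times maximal summand'' is hopeless. The resolution is that the test function of Definition \ref{d_tf} has been designed to be locally constant across the interface between $\widehat{\mathcal N}(\mathcal S)$ and $\widehat{\mathcal N}(\mathcal S)^{c}$ precisely on such configurations, and establishing this requires combining the harmonicity input of Lemma \ref{l_edge1} (which forces $\mathfrak h^{A}\equiv1$, respectively $\mathfrak h^{B}\equiv1$, on cross-carrying configurations) with the isoperimetric inequality of Lemma \ref{l_cross} (which forces the spin count $\sum_{a\in A}\|\cdot\|_{a}$ onto the correct side of the threshold $L^{2}$ appearing in the definition of $f$ on $\widehat{\mathcal N}(\mathcal S)^{c}$).
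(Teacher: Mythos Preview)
Your proof is correct and follows essentially the same approach as the paper: the cross case is disposed of via Lemma~\ref{l_edge1} (forcing $\mathfrak h^{A}=1$ or $\mathfrak h^{B}=1$) together with the isoperimetric bound Lemma~\ref{l_cross} (forcing $f(\zeta)$ to match $f(\sigma)$), and the cross-free case via the counting bound Lemma~\ref{l_cfcount}. Your decomposition by cross/cross-free is in fact slightly more economical than the paper's, which first splits according to $\mathcal{E}^{A}$, $\mathcal{E}^{B}$, $\mathcal{B}^{A,B}\setminus\mathcal{E}^{A,B}$ and only then by cross/cross-free inside the edge pieces, handling the bulk piece with a separate refined count yielding $O(L^{5}e^{-(\Gamma+1)\beta})$; since bulk configurations are automatically cross-free, your single invocation of Lemma~\ref{l_cfcount} absorbs that case at the cost of a harmless $O(L^{8}e^{-(\Gamma+1)\beta})$ instead.
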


\begin{proof}
If $\sigma\in\widehat{\mathcal{N}}(\mathcal{S})$ and $\zeta\in\widehat{\mathcal{N}}(\mathcal{S})^{c}$,
we have that $H(\sigma)\le2L+2<H(\zeta)$ and therefore by \eqref{e_detbal},
we can rewrite the left-hand side of \eqref{e_tf21} as
\begin{equation}
\Big[\,\sum_{\sigma\in\mathcal{E}^{A}}+\sum_{\sigma\in\mathcal{E}^{B}}+\sum_{\sigma\in\mathcal{B}^{A,B}\setminus\mathcal{E}^{A,B}}\,\Big]\sum_{\zeta\in\widehat{\mathcal{N}}(\mathcal{S})^{c}}\mu_{\beta}(\zeta)[f(\zeta)-f(\sigma)]^{2}\;.\label{e_tf2}
\end{equation}
Let us consider the first summation.
\begin{itemize}
\item $\sigma\in\mathcal{E}^{A}$ has a cross and $\zeta\in\widehat{\mathcal{N}}(\mathcal{S})^{c}$
is adjacent to $\sigma$: By Proposition \ref{p_EAEB} and Lemma \ref{l_edge1},
$\sigma$ has an $a$-cross for some $a\in A$ and $h_{\mathcal{S}(A),\,\mathcal{R}_{2}^{A,B}}^{A}(\sigma)=1$
so that $f(\sigma)=1$ by the definition of $f$ on $\mathcal{E}^{A}$.
Moreover, by Lemma \ref{l_cross}, we have
\[
\sum_{b\in B}\|\sigma\|_{b}\le\sum_{b\in\Omega\setminus\{a\}}\|\sigma\|_{b}\le\frac{H(\sigma)^{2}}{6}\le\frac{2(L+1)^{2}}{3}\;.
\]
Since $\zeta\sim\sigma$, we have $\sum_{b\in B}\|\zeta\|_{b}\le L^{2}$
and thus $f(\zeta)=1$ by the definition of $f$. Hence, we have $f(\sigma)=f(\zeta)$
and we can neglect this case.
\item $\sigma\in\mathcal{E}^{A}$ is cross-free and $\zeta\in\widehat{\mathcal{N}}(\mathcal{S})^{c}$
is adjacent to $\sigma$: By Lemma \ref{l_cfcount}, the number of
such $\sigma$ is $O(L^{6})$. Since there are at most $2qL^{2}$
possible $\zeta\in\widehat{\mathcal{N}}(\mathcal{S})^{c}$ with $\sigma\sim\zeta$,
we obtain 
\[
\sum_{\sigma\in\mathcal{E}^{A}}\sum_{\zeta\in\widehat{\mathcal{N}}(\mathcal{S})^{c}}\mu_{\beta}(\zeta)[f(\zeta)-f(\sigma)]^{2}\le O(L^{6})\times qL^{2}\times Ce^{-(\Gamma+1)\beta}=O(L^{8}e^{-(\Gamma+1)\beta})\;.
\]
\end{itemize}
By the same logic, the second summation of \eqref{e_tf2} is $O(L^{8}e^{-(\Gamma+1)\beta})$
as well.

For the third summation of \eqref{e_tf2}, we note that\footnote{This is not an equality; consider e.g., $\xi\in\mathcal{C}_{2,\,\scal{o}}^{a,\,b}$
with $|\mathfrak{p}^{a,\,b}(\xi)|=1$ for some $a\in A$ and $b\in B$.}
\begin{align}
\mathcal{B}^{A,\,B}\setminus\mathcal{E}^{A,\,B} & \subseteq\bigcup_{n=3}^{L-3}\mathcal{R}_{n}^{A,\,B}\cup\bigcup_{n=2}^{L-3}\mathcal{C}_{n,\,\scal{o}}^{A,\,B}\cup\bigcup_{n=2}^{L-3}\mathcal{C}_{n,\,\scal{e}}^{A,\,B}\cup\mathcal{D}^{A,\,B}\;.\label{eq:bme}
\end{align}
Since $|f(\zeta)-f(\sigma)|\le1$, we have 
\[
\sum_{\zeta\in\widehat{\mathcal{N}}(\mathcal{S})^{c}:\,\zeta\sim\sigma}\mu_{\beta}(\zeta)[f(\zeta)-f(\sigma)]^{2}\le\sum_{\zeta\in\widehat{\mathcal{N}}(\mathcal{S})^{c}:\,\zeta\sim\sigma}\mu_{\beta}(\zeta)\;,
\]
and moreover by a direct computation, we get\footnote{It is enough to find the order of the number of configurations adjacent
to $\sigma$ with energy $\Gamma+1$, $\Gamma+2$, or $\Gamma+3$.
We omit tedious and elementary verification.} 
\[
\sum_{\zeta\in\widehat{\mathcal{N}}(\mathcal{S})^{c}:\,\zeta\sim\sigma}\mu_{\beta}(\zeta)=\begin{cases}
O(L^{2}e^{-(\Gamma+1)\beta}) & \text{if }\sigma\in\mathcal{R}_{n}^{A,\,B}\;,\\
O(Le^{-(\Gamma+1)\beta})+O(L^{2}e^{-(\Gamma+2)\beta}) & \text{if }\sigma\in\mathcal{C}_{n,\,\scal{o}}^{A,\,B}\;,\\
O(Le^{-(\Gamma+1)\beta})+O(Le^{-(\Gamma+2)\beta})+O(L^{2}e^{-(\Gamma+3)\beta}) & \text{if }\sigma\in\mathcal{C}_{n,\,\scal{e}}^{A,\,B}\;,\\
O(Le^{-(\Gamma+1)\beta})+O(Le^{-(\Gamma+2)\beta})+O(L^{2}e^{-(\Gamma+3)\beta}) & \text{if }\sigma\in\mathcal{D}^{A,\,B}\;.
\end{cases}
\]
Since 
\[
\sum_{n=3}^{L-3}|\mathcal{R}_{n}^{A,\,B}|=O(L^{2})\;\;\;\;\text{and}\;\;\;\;\sum_{n=2}^{L-3}\big(\,|\mathcal{C}_{n,\,\scal{o}}^{A,\,B}|+|\mathcal{C}_{n,\,\scal{e}}^{A,\,B}|\,\big)+|\mathcal{D}^{A,\,B}|=O(L^{4})\;,
\]
we can combine the computations above along with \eqref{eq:bme} to
conclude that (as $L\ll e^{\beta}$)
\[
\sum_{\sigma\in\mathcal{B}^{A,B}\setminus\mathcal{E}^{A,B}}\sum_{\zeta\in\widehat{\mathcal{N}}(\mathcal{S})^{c}}\mu_{\beta}(\sigma)c_{\beta}(\sigma,\,\zeta)[f(\zeta)-f(\sigma)]^{2}=O(L^{5}e^{-(\Gamma+1)\beta})\;.
\]
We can now complete the proof by gathering all the results so far
since
\[
\sum_{\sigma\in\widehat{\mathcal{N}}(\mathcal{S})}\sum_{\zeta\in\widehat{\mathcal{N}}(\mathcal{S})^{c}}\mu_{\beta}(\sigma)c_{\beta}(\sigma,\,\zeta)[f(\zeta)-f(\sigma)]^{2}=e^{-\Gamma\beta}\times O(L^{8}e^{-\beta})=o_{L}(e^{-\Gamma\beta})\;.
\]
\end{proof}
\begin{lem}
\label{l_tf3}We have
\begin{equation}
\sum_{\{\sigma,\,\zeta\}\subseteq\widehat{\mathcal{N}}(\mathcal{S})^{c}}\mu_{\beta}(\sigma)c_{\beta}(\sigma,\,\zeta)[f(\zeta)-f(\sigma)]^{2}=o_{L}(e^{-\Gamma\beta})\;.\label{e_tf3}
\end{equation}
\end{lem}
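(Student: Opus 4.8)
The plan is to use that, by Definition \ref{d_tf}-(3), $f$ takes only the values $0$ and $1$ on $\widehat{\mathcal{N}}(\mathcal{S})^{c}$, so that an edge $\{\sigma,\zeta\}\subseteq\widehat{\mathcal{N}}(\mathcal{S})^{c}$ contributes to \eqref{e_tf3} only when $f(\sigma)\neq f(\zeta)$, and then its contribution equals exactly $\mu_{\beta}(\sigma)c_{\beta}(\sigma,\zeta)$. Since $\sigma\sim\zeta$ differ by a single spin flip, $\sum_{a\in A}\|\cdot\|_{a}$ changes by at most $1$, so $f(\sigma)\neq f(\zeta)$ forces $\{\sigma,\zeta\}$ to be a nice pair in the sense of \eqref{e_itmE}; conversely every nice pair contributes. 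Hence, by the detailed balance identity \eqref{e_detbal}, the left-hand side of \eqref{e_tf3} equals $\frac{1}{Z_{\beta}}\sum_{i\geq 3}U_{i}\,e^{-\beta(2L+i)}$, where $U_{i}$ is the quantity of Proposition \ref{p_count} (whose part (1) gives $U_{0}=U_{1}=U_{2}=0$); using $Z_{\beta}\geq q\geq 1$ this is at most $\sum_{i\geq 3}U_{i}\,e^{-\beta(2L+i)}$, and I would bound the latter by splitting the sum at $i_{0}:=(\sqrt{6}-2)L-1$ (recall $\Gamma=2L+2$).

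For the low range $3\leq i<i_{0}$, part (2) of Proposition \ref{p_count} gives $U_{i}\leq(CL)^{3i+1}$, and the ratio $(CL)^{3}e^{-\beta}$ of the associated geometric series tends to $0$ because $L^{10}\ll e^{\beta}$, whence
\[
\sum_{3\leq i<i_{0}}U_{i}\,e^{-\beta(2L+i)}\;\leq\;e^{-2L\beta}\,CL\sum_{i\geq 3}\big((CL)^{3}e^{-\beta}\big)^{i}\;=\;O\!\big(L^{10}e^{-3\beta}\big)\,e^{-2L\beta}\;=\;O\!\big(L^{10}e^{-\beta}\big)\,e^{-\Gamma\beta}\;,
\]
which is $o_{L}(e^{-\Gamma\beta})$, again by $L^{10}\ll e^{\beta}$. (When $i_{0}\leq 3$ this range is empty, which is harmless.) This is precisely the step forcing the hypothesis $L^{10}\ll e^{\beta}$.

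For the high range $i\geq i_{0}$, i.e.\ configurations of energy at least $\sqrt{6}L-1$, Proposition \ref{p_count} is unavailable, so I would bound crudely $U_{i}\leq 2(q-1)L^{2}\,|\mathcal{X}_{2L+i}|$ (a nice pair with $\max\{H(\sigma),H(\zeta)\}=2L+i$ is obtained by flipping one spin of some $\eta\in\mathcal{X}_{2L+i}$, which has at most $2(q-1)L^{2}$ neighbours), thereby reducing matters to estimating $\sum_{j\geq\sqrt{6}L-1}|\mathcal{X}_{j}|e^{-\beta j}$. The main obstacle is that the estimate used in the proof of Theorem \ref{t_Gibbs1}, which relaxes $(qe^{-\beta})^{3n_{3}+4n_{4}+5n_{5}+6n_{6}}\leq(qe^{-\beta})^{3(n_{3}+n_{4}+n_{5}+n_{6})}$, loses too much for a tail starting at $\sqrt{6}L$; one must keep the exponent $j$ exactly. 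Since every piece of the decomposition behind part (1) of Lemma \ref{l_Xi} has at least three edges, one has $n_{3}+n_{4}+n_{5}+n_{6}\leq j/3$, so by the identity \eqref{e_combid},
\[
|\mathcal{X}_{j}|\leq q^{j+1}(j+1)^{3}\binom{4\theta L^{2}}{\lfloor j/3\rfloor}\;,\qquad\text{hence}\qquad|\mathcal{X}_{j}|e^{-\beta j}\leq q(j+1)^{3}\,\frac{\big(4\theta q^{3}L^{2}e^{-3\beta}\big)^{j/3}}{\lfloor j/3\rfloor!}\;.
\]
Writing $x:=4\theta q^{3}L^{2}e^{-3\beta}\to 0$ and using $m!\geq(m/e)^{m}$, the right-hand side is at most $q(j+1)^{3}(6ex/j)^{\lfloor j/3\rfloor}$; since $6ex/j\leq C'Le^{-3\beta}<1$ for all $j\geq\sqrt{6}L-1$ and large $L$, the series $\sum_{j\geq\sqrt{6}L-1}|\mathcal{X}_{j}|e^{-\beta j}$ is dominated by its first term up to a universal constant, yielding a bound $C''L^{3}(C'L)^{\sqrt{6}L/3}e^{-\beta(\sqrt{6}L-4)}$. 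Multiplying by $2(q-1)L^{2}$ and dividing by $e^{-\Gamma\beta}=e^{-(2L+2)\beta}$ gives a ratio $O\!\big(L^{5}(C'L)^{\sqrt{6}L/3}e^{-\beta((\sqrt{6}-2)L-6)}\big)$; taking logarithms, the competing leading terms are $\tfrac{\sqrt{6}}{3}L\log L$ and $-(\sqrt{6}-2)L\beta$, and since $\beta\geq 10\log L$ for large $L$ while $10(\sqrt{6}-2)>\tfrac{\sqrt{6}}{3}$, the ratio tends to $0$. Combining the two ranges proves \eqref{e_tf3}.
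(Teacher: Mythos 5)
Your proof is correct and follows essentially the same route as the paper: identify the nonzero contributions with nice pairs, split the sum at $i_0=(\sqrt{6}-2)L-1$, handle the low range via Proposition \ref{p_count}-(2), and the high range via the trivial factor $2qL^{2}$ times the bound on $|\mathcal{X}_{2L+i}|$ from Lemma \ref{l_Xi}. The only variation is in the tail: you retain the denominator $\lfloor j/3\rfloor!$ and apply $m!\ge(m/e)^m$, giving a geometric ratio $\sim L^{1/3}e^{-\beta}$ per unit $j$, whereas the paper drops the factorial and works with the coarser ratio $\sim L^{2/3}e^{-\beta}$ (then compares against $e^{-\frac{14}{15}\beta}$ using $\frac{14}{15}\sqrt{6}>2$); both suffice under $L^{10}\ll e^{\beta}$.
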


\begin{proof}
By Proposition \ref{p_count}-(1) and the definition of $f$ on $\widehat{\mathcal{N}}(\mathcal{S})^{c}$,
the left-hand side of \eqref{e_tf3} can be written as 
\begin{equation}
\sum_{i=3}^{3L^{2}-2L}\sum_{\substack{\{\sigma,\,\zeta\}\subseteq\widehat{\mathcal{N}}(\mathcal{S})^{c}:\\
\sigma\sim\zeta,\,\max\{H(\sigma),\,H(\zeta)\}=2L+i,\\
\sum_{a\in A}\|\sigma\|_{a}=L^{2}\,\text{and}\,\sum_{a\in A}\|\zeta\|_{a}=L^{2}-1
}
}\frac{1}{Z_{\beta}}e^{-(2L+i)\beta}\;.\label{e_tf4}
\end{equation}
By Theorem \ref{t_Gibbs1}-(1) and Proposition \ref{p_count}-(2),
the summation for $3\le i<(\sqrt{6}-2)L-1$ is bounded by 
\[
CL\times\sum_{i=3}^{\infty}(CL)^{3i}e^{-(2L+i)\beta}\le Le^{2\beta}e^{-\Gamma\beta}\sum_{i=3}^{\infty}(CL^{3}e^{-\beta})^{i}\le CL^{10}e^{-\beta}e^{-\Gamma\beta}\;,
\]
which equals $o_{L}(e^{-\Gamma\beta})$. We emphasize that\textbf{
this is the location where the condition $L^{10}\ll e^{\beta}$ is
crucially used.}

Next, by Lemma \ref{l_Xi}, there exists a positive integer $\theta$
such that 
\[
|\mathcal{X}_{2L+i}|\le q^{2L+i+1}\sum_{\substack{n_{3},\,n_{4},\,n_{5},\,n_{6}\ge0:\\
3n_{3}+4n_{4}+5n_{5}+6n_{6}=2L+i
}
}{\theta L^{2} \choose n_{3}}{\theta L^{2} \choose n_{4}}{\theta L^{2} \choose n_{5}}{\theta L^{2} \choose n_{6}}\;,
\]
and thus by Theorem \ref{t_Gibbs1}-(1), the summation \eqref{e_tf4}
for $i\ge(\sqrt{6}-2)L-1$ is bounded from above by
\[
2qL^{2}\sum_{j=\lfloor\sqrt{6}L\rfloor}^{3L^{2}}q^{j+1}\sum_{\substack{n_{3},\,n_{4},\,n_{5},\,n_{6}\ge0:\\
3n_{3}+4n_{4}+5n_{5}+6n_{6}=j
}
}{\theta L^{2} \choose n_{3}}{\theta L^{2} \choose n_{4}}{\theta L^{2} \choose n_{5}}{\theta L^{2} \choose n_{6}}e^{-\beta j}\;,
\]
where the factor $2qL^{2}$ comes from the trivial bound on the number
of possible $\zeta$ (resp. $\sigma)$ given $\sigma$ (resp. $\zeta$).
Using ${\alpha \choose \beta}{\gamma \choose \delta}\le{\alpha+\gamma \choose \beta+\delta}$,
we bound this by
\begin{equation}
2q^{2}L^{2}\sum_{j=\lfloor\sqrt{6}L\rfloor}^{3L^{2}}(qe^{-\beta})^{j}\sum_{\substack{n_{3},\,n_{4},\,n_{5},\,n_{6}\ge0:\\
3n_{3}+4n_{4}+5n_{5}+6n_{6}=j
}
}{4\theta L^{2} \choose n_{3}+n_{4}+n_{5}+n_{6}}\;.\label{e_tf3.2}
\end{equation}
Since $n_{3}+n_{4}+n_{5}+n_{6}\le\frac{1}{3}(3n_{3}+4n_{5}+5n_{5}+6n_{6})=\frac{j}{3}\le L^{2}$,
and since $\theta>1$, the last summation is bounded from above by
\[
\sum_{\substack{n_{3},\,n_{4},\,n_{5},\,n_{6}\ge0:\\
3n_{3}+4n_{4}+5n_{5}+6n_{6}=j
}
}{4\theta L^{2} \choose \lfloor\frac{j}{3}\rfloor}\le{4\theta L^{2} \choose \lfloor\frac{j}{3}\rfloor}\times CL^{6}\le CL^{6}(4\theta L^{2})^{j/3}\;
\]
for some positive constant $C$. Hence, \eqref{e_tf3.2} is bounded
from above by
\[
CL^{8}\sum_{j>\sqrt{6}L-1}(qe^{-\beta})^{j}(4\theta L^{2})^{j/3}=CL^{8}\sum_{j>\sqrt{6}L-1}(CL^{2/3}e^{-\beta})^{j}\;.
\]
Since $L^{2/3}e^{-\beta}\ll e^{-\frac{14}{15}\beta}$ by the condition\textbf{
$L^{10}\ll e^{\beta}$}, we can further bound the right-hand side
by \textbf{
\[
CL^{8}(Ce^{-\frac{14}{15}\beta})^{\sqrt{6}L-1}=o_{L}(e^{-\Gamma\beta})
\]
}since $\frac{14}{15}\sqrt{6}>2$.
\end{proof}
Finally, we can now conclude the proof of Proposition \ref{p_tf}.
\begin{proof}[Proof of Proposition \ref{p_tf}]
 The fact that $f\in\mathfrak{C}(\mathcal{S}(A),\,\mathcal{S}(B))$
is immediate from the construction of $f$ on $\mathcal{E}^{A,\,B}$.
The estimate of $D_{\beta}(f)$ follows from the decomposition \eqref{e_Dbetaf}
and Lemmas \ref{l_tf1}, \ref{l_tf2}, and \ref{l_tf3}.
\end{proof}
\begin{rem}
Careful reading of the proof reveals that Lemmas \ref{l_tf1}, \ref{l_tf2},
and \ref{l_tf3} hold under the conditions $L^{2/3}\ll e^{\beta}$
(the optimal one in view of Theorem \ref{t_Gibbs1}), $L^{8}\ll e^{\beta}$,
and $L^{10}\ll e^{\beta}$, respectively. This shows that the sub-optimality
of our result comes essentially from our ignorance on the behavior
of the process $\sigma_{\beta}(\cdot)$ outside $\widehat{\mathcal{N}}(\mathcal{S})$.
\end{rem}

\section{\label{sec10_LB}Lower Bound for Capacities}

The purpose of this section is to establish a suitable test flow to
apply the generalized Thomson principle (Theorem \ref{t_gTP}). This
yields the lower bound for the capacity compensating the upper bound
obtained in the previous section. At the end of the current section,
the proof of Theorem \ref{t_cap} will be finally presented. We remark
that Notation \ref{n_sec910} will be consistently used in the current
section as well.

\subsection{Construction of test flow}

In view of Theorem \ref{t_gTP}, the test flow should approximate
the flow $c\Psi_{h_{\mathcal{S}(A),\mathcal{S}(B)}^{\beta}}$ where
$h_{\mathcal{S}(A),\,\mathcal{S}(B)}^{\beta}$ denotes the equilibrium
potential between $\mathcal{S}(A)$ and $\mathcal{S}(B)$. We provide
this approximation below. Since we already know the approximation
of $h_{\mathcal{S}(A),\,\mathcal{S}(B)}^{\beta}$ from Definition
\ref{d_tf}, the construction of the test flow follows naturally from
it. 
\begin{defn}[Test flow]
\label{d_tfl} Recall the test function $f=f^{A,\,B}$ constructed
in Definition \ref{d_tf}. We define the test flow $\psi=\psi^{A,\,B}$
by (cf. Notation \ref{n_92})

\[
\psi(\sigma,\,\zeta)=\begin{cases}
\mu_{\beta}(\sigma)c_{\beta}(\sigma,\,\zeta)[f(\sigma)-f(\zeta)] & \text{if }\sigma,\,\zeta\in\mathcal{B}^{A,\,B}\;,\\
\frac{\mathfrak{e}_{A}}{Z_{\beta}\mathfrak{c}}e^{-\Gamma\beta}\times[\mathfrak{h}^{A}(\sigma)-\mathfrak{h}^{A}(\zeta)] & \text{if }\sigma,\,\zeta\in\mathcal{E}^{A}\text{ with }\sigma\sim\zeta\;,\\
\frac{\mathfrak{e}_{B}}{Z_{\beta}\mathfrak{c}}e^{-\Gamma\beta}\times[\mathfrak{h}^{B}(\zeta)-\mathfrak{h}^{B}(\sigma)] & \text{if }\sigma,\,\zeta\in\mathcal{E}^{B}\text{ with }\sigma\sim\zeta\;,\\
0 & \text{otherwise}.
\end{cases}
\]
The well-definedness of the definition on $\mathcal{N}(\mathcal{R}_{2}^{A,\,B})$
and on $\mathcal{N}(\mathcal{R}_{L-2}^{A,\,B})$ should be carefully
addressed. It can be checked by noting that, for $\sigma,\,\zeta\in\mathcal{N}(\mathcal{R}_{2}^{A,\,B})$
(resp. $\mathcal{N}(\mathcal{R}_{L-2}^{A,\,B})$), the definitions
of $\psi$ on $\mathcal{B}^{A,\,B}$ and $\mathcal{E}^{A,\,B}$ both
imply that $\psi(\sigma,\,\zeta)=0$ since we have $f(\sigma)=f(\zeta)$
as mentioned in Definition \ref{d_tf} and $\mathfrak{h}^{A}(\sigma)=\mathfrak{h}^{A}(\zeta)$
(resp. $\mathfrak{h}^{B}(\sigma)=\mathfrak{h}^{B}(\zeta)$) as mentioned
in Notation \ref{n_92}.
\end{defn}

In the remainder of the current section, we shall prove the following
proposition. 
\begin{prop}
\label{p_tfl}For the test flow $\psi=\psi^{A,\,B}$ constructed in
the previous definition, it holds that 
\begin{equation}
\frac{1}{\|\psi\|_{\beta}^{2}}\Big[\,\sum_{\sigma\in\mathcal{X}}h_{\mathcal{S}(A),\,\mathcal{S}(B)}^{\beta}(\sigma)(\mathrm{div}\,\psi)(\sigma)\,\Big]^{2}=\frac{1+o_{L}(1)}{q\mathfrak{c}}e^{-\Gamma\beta}\;.\label{e_tfl}
\end{equation}
\end{prop}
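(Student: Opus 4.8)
The plan is to apply the generalized Thomson principle (Theorem \ref{t_gTP}) with the test flow $\psi=\psi^{A,\,B}$ from Definition \ref{d_tfl}, which means I must compute the two ingredients of the right-hand side of \eqref{e_tfl}: the ``flux'' $\sum_{\sigma}h_{\mathcal{S}(A),\,\mathcal{S}(B)}^{\beta}(\sigma)(\mathrm{div}\,\psi)(\sigma)$ and the norm $\|\psi\|_{\beta}^{2}$, and check they combine to give the stated value. First I would analyze the divergence of $\psi$. Since $\psi$ is supported on edges inside $\widehat{\mathcal{N}}(\mathcal{S})=\mathcal{E}^{A,\,B}\cup\mathcal{B}^{A,\,B}$ (Proposition \ref{p_typ}-(2)), and is by construction a multiple of the harmonic flow of $f$ on the bulk part $\mathcal{B}^{A,\,B}$ and a multiple of the harmonic flows $\varphi^{A},\varphi^{B}$ associated with $\mathfrak{h}^{A},\mathfrak{h}^{B}$ on $\mathcal{E}^{A},\mathcal{E}^{B}$, its divergence vanishes everywhere except at the sources/sinks of these pieces. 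Concretely, on $\mathcal{E}^{A}$, the flow $\mathfrak{h}^{A}$-harmonic flow has divergence concentrated on $\mathcal{S}(A)$ and on $\mathcal{N}(\mathcal{R}_{2}^{A,\,B})$, and the bulk flow $\psi|_{\mathcal{B}^{A,\,B}}$ has matching divergence on $\mathcal{N}(\mathcal{R}_{2}^{A,\,B})$ (since $f$ and its associated flow are continuous there, the contributions at $\mathcal{N}(\mathcal{R}_{2}^{A,\,B})$ from the two sides cancel), and similarly for $\mathcal{E}^{B}$ and $\mathcal{N}(\mathcal{R}_{L-2}^{A,\,B})$. This leaves $(\mathrm{div}\,\psi)$ supported on $\mathcal{S}(A)\cup\mathcal{S}(B)$, with total outflow from each $\mathbf{a}\in\mathcal{S}(A)$ equal to $\frac{1}{q Z_{\beta}\mathfrak{c}}e^{-\Gamma\beta}$ up to the relative weighting of $A$-vertices (this is where the normalization $\mathfrak{e}_{A}/\mathfrak{c}$ in Definition \ref{d_tf}-(1) is designed precisely so that the total flux equals $\frac{1}{q\mathfrak{c}}e^{-\Gamma\beta}/Z_\beta$). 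Using that $h_{\mathcal{S}(A),\,\mathcal{S}(B)}^{\beta}\equiv1$ on $\mathcal{S}(A)$ and $\equiv0$ on $\mathcal{S}(B)$, the flux sum telescopes to exactly $\frac{1}{q Z_\beta\mathfrak{c}}e^{-\Gamma\beta}$, and since $Z_\beta=q+o_L(1)$ by Theorem \ref{t_Gibbs1}-(1), this gives $(1+o_L(1))\frac{1}{q^2\mathfrak{c}}e^{-\Gamma\beta}$. Wait — I should track the $Z_\beta$ factor carefully: $\psi$ includes $1/Z_\beta$ explicitly on $\mathcal{E}^{A,\,B}$ and implicitly through $\mu_\beta$ on $\mathcal{B}^{A,\,B}$, so the flux is $\frac{1}{q}e^{-\Gamma\beta}\cdot\frac{1}{\mathfrak{c}}$ after accounting $Z_\beta\sim q$, and its square is $\frac{1}{q^2\mathfrak{c}^2}e^{-2\Gamma\beta}$.

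Next I would compute $\|\psi\|_{\beta}^{2}=\frac{1}{2}\sum_{\sigma\sim\zeta}\frac{\psi(\sigma,\zeta)^2}{\mu_\beta(\sigma)c_\beta(\sigma,\zeta)}$. On the bulk part, $\psi(\sigma,\zeta)=\mu_\beta(\sigma)c_\beta(\sigma,\zeta)[f(\sigma)-f(\zeta)]$ so the summand is $\mu_\beta(\sigma)c_\beta(\sigma,\zeta)[f(\sigma)-f(\zeta)]^2$ — exactly the Dirichlet-form summand already estimated in Lemma \ref{l_tf1}, yielding $\frac{\mathfrak{b}}{q\mathfrak{c}^2}e^{-\Gamma\beta}$ for the $E(\mathcal{B}^{A,\,B})$ part. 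On $\mathcal{E}^{A}$, $\psi(\sigma,\zeta)=\frac{\mathfrak{e}_A}{Z_\beta\mathfrak{c}}e^{-\Gamma\beta}[\mathfrak{h}^A(\sigma)-\mathfrak{h}^A(\zeta)]$, so using the identification of $\mu_\beta(\sigma)c_\beta(\sigma,\zeta)$ with the rescaled $r^A$-rates (Proposition \ref{p_ZA}) and the definition of the capacity/Dirichlet form of $Z^A$, the $\mathcal{E}^A$ contribution becomes $(1+o_L(1))\frac{\mathfrak{e}_A^2}{q\mathfrak{c}^2}e^{-\Gamma\beta}\cdot|\mathscr{V}^A|\mathrm{cap}^A(\mathcal{S}(A),\mathcal{R}_2^{A,\,B})=(1+o_L(1))\frac{\mathfrak{e}_A}{q\mathfrak{c}^2}e^{-\Gamma\beta}$ by \eqref{e_eA}, matching \eqref{e_tf1.3}; symmetrically $\mathcal{E}^B$ gives $\frac{\mathfrak{e}_B}{q\mathfrak{c}^2}e^{-\Gamma\beta}$. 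Summing, $\|\psi\|_\beta^2=(1+o_L(1))\frac{\mathfrak{b}+\mathfrak{e}_A+\mathfrak{e}_B}{q\mathfrak{c}^2}e^{-\Gamma\beta}=(1+o_L(1))\frac{1}{q\mathfrak{c}}e^{-\Gamma\beta}$, which is literally the same quantity as $D_\beta(f)$ from Proposition \ref{p_tf} — unsurprising since $\psi$ is the harmonic flow of $f$. Dividing, $\frac{(\mathrm{flux})^2}{\|\psi\|_\beta^2}=\frac{(1+o_L(1))q^{-2}\mathfrak{c}^{-2}e^{-2\Gamma\beta}}{(1+o_L(1))q^{-1}\mathfrak{c}^{-1}e^{-\Gamma\beta}}=\frac{1+o_L(1)}{q\mathfrak{c}}e^{-\Gamma\beta}$, as claimed.

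The main obstacle — and the step requiring the most care — is the divergence computation: verifying that $(\mathrm{div}\,\psi)$ genuinely vanishes on all of $\widehat{\mathcal{N}}(\mathcal{S})\setminus\mathcal{S}$. The delicate points are (i) the cancellation along the interface $\mathcal{N}(\mathcal{R}_2^{A,\,B})=\mathcal{E}^A\cap\mathcal{B}^{A,\,B}$ (and the $\mathcal{R}_{L-2}$ analogue), where $\psi$ is defined by two different formulas on the two sides and one must check these glue to a divergence-free flow — this is exactly why Definition \ref{d_tfl} carries the explicit well-definedness remark; (ii) handling the dead-ends $\mathcal{D}^{A,\,B}$: since $\psi=0$ on every edge incident to a dead-end (because $f$ is constant there, by the argument in Lemma \ref{l_tf1} using Lemmas \ref{l_PP} and \ref{l_QQ}), dead-ends contribute nothing to either the flux or the norm, but this must be stated; and (iii) confirming that $\psi$ has zero divergence at interior canonical configurations $\mathcal{C}_{n,\,\scal{o}}^{a,\,b}$ and $\mathcal{C}_{n,\,\scal{e}}^{a,\,b}$, which follows from Definition \ref{d_tf}-(2) having been engineered (via the piecewise-linear-in-$n$ and the $\mathfrak{d}(\sigma)$, $\frac{5m-3}{2}$ corrections) so that $f$ is $r^A$-harmonic on the bulk — here I would invoke that the construction mirrors \cite[Section 10]{KS2} and that the local flip structure from Lemmas \ref{l_reg}, \ref{l_odd}, \ref{l_even} is exactly what makes the discrete harmonicity hold. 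Since the $\mathcal{E}^{A},\mathcal{E}^B$ pieces are harmonic flows of genuine equilibrium potentials $\mathfrak{h}^A,\mathfrak{h}^B$ of the Markov chains $Z^A,Z^B$, their divergence structure is automatic, so the only real work is the bulk harmonicity and the interface gluing; I expect these to be routine but tedious verifications paralleling \cite{KS2}, and I would not reproduce them in full.
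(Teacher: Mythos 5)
Your overall architecture — compute the norm by matching terms with $D_\beta(f)$ (as in the paper's Proposition~\ref{p_tfl1}, which recycles Lemma~\ref{l_tf1}), then compute the flux by identifying the divergence support, then divide — is the same as the paper's. The norm half of your argument is sound. But there is a genuine gap in the flux half.

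You assert that after the bulk and edge pieces are glued, ``$(\mathrm{div}\,\psi)$ is supported on $\mathcal{S}(A)\cup\mathcal{S}(B)$,'' and therefore the flux sum ``telescopes'' using $h^\beta_{\mathcal{S}(A),\mathcal{S}(B)}\equiv1$ on $\mathcal{S}(A)$ and $\equiv0$ on $\mathcal{S}(B)$. This is false, and the reason is structural: the flow $\psi$ on $\mathcal{B}^{A,B}$ is \emph{not} a genuinely harmonic flow of a potential. The test function $f$ of Definition~\ref{d_tf} is $r^A$-harmonic \emph{after projecting} each $\mathcal{N}$-neighborhood to a representative, but as a function on $\mathcal{X}$ it is only piecewise constant on each $\mathcal{N}(\zeta)$, $\zeta\in\mathcal{R}_n^{A,B}$, and the individual spin-flip divergences do not cancel pointwise. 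What the paper actually proves (Lemma~\ref{l_div2}, via the explicit computation~\eqref{e_divps}) is that $(\mathrm{div}\,\psi)(\sigma)=\pm\frac{1}{Z_\beta}\frac{10\mathfrak{b}e^{-\Gamma\beta}}{\mathfrak{c}(5L-3)(L-4)}\neq0$ at individual $\sigma\in\mathcal{N}(\zeta)$, and only the \emph{sum} $\sum_{\sigma\in\mathcal{N}(\zeta)}(\mathrm{div}\,\psi)(\sigma)$ vanishes. The same is true near $\mathcal{S}$: the total outflow from $\mathcal{N}(\mathcal{S}(A))$ is $\frac{1}{Z_\beta\mathfrak{c}}e^{-\Gamma\beta}$ (Lemma~\ref{l_div6}), but it is spread over $\mathcal{N}(\mathbf{a})$, not concentrated at $\mathbf{a}$ where $h^\beta=1$ exactly.

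Because the divergence is nonzero on configurations where $h^\beta_{\mathcal{S}(A),\mathcal{S}(B)}$ is \emph{unknown}, you cannot telescope. You must show that $h^\beta$ is approximately constant on each $\mathcal{N}$-neighborhood and bound the error incurred by replacing $h^\beta(\sigma)$ with $h^\beta(\zeta)$. This is the content of Lemmas~\ref{l_eqp} (flatness, $|h^\beta(\sigma)-h^\beta(\zeta)|\lesssim L^2 e^{-\beta}$ on $\mathcal{N}(\zeta)$ and $\lesssim N_\sigma e^{-\beta}$ near $\mathcal{S}$), \ref{l_eqp2} (bounding the path length $N_\sigma<4L$ when $(\mathrm{div}\,\psi)(\sigma)\ne0$), and \ref{l_divsum} ($\ell^1$-bounds on $\mathrm{div}\,\psi$ over each neighborhood, of sizes $O(L^2e^{-\Gamma\beta})$ and $O(L^9e^{-\Gamma\beta})$). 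The resulting error is $O(L^6 e^{-\beta} e^{-\Gamma\beta})+O(L^{10}e^{-\beta}e^{-\Gamma\beta})$, and \emph{this} is precisely where the hypothesis $L^{10}\ll e^\beta$ is needed in the lower bound. Your proposal does not mention the equilibrium-potential flatness at all, and without it the flux estimate has a hole; the condition $L^{10}\ll e^\beta$ that Theorem~\ref{t_cap} carries would also be unexplained.

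Two smaller remarks. First, your parenthetical claim that the dead-ends ``contribute nothing to either the flux or the norm, but this must be stated'' is correct and is the paper's Lemma~\ref{l_div1}-(1) — you have identified the right fact. Second, your item (iii) (harmonicity at interior canonical configurations) is on the right track and corresponds to Lemma~\ref{l_div1}-(2),(3); note that the verification there is a genuine computation using the explicit coefficients in Definition~\ref{d_tf}-(2) (e.g. the cancellation~\eqref{e_div1}), not merely a consequence of abstract harmonicity — so ``I would not reproduce them in full'' is optimistic about how automatic these steps are.
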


The proof of this proposition is divided into two steps. First, we
have to compute the flow norm $\|\psi\|_{\beta}^{2}$. This can be
done by a direct computation with our explicit construction of the
test flow $\psi$ and will be presented in Section \ref{sec102}.
Then, it remains to compute the summation appeared in the left-hand
side \eqref{e_tfl}. To that end, we have to suitably estimate $h_{\mathcal{S}(A),\,\mathcal{S}(B)}^{\beta}(\sigma)$
and then compute the divergence term $(\mathrm{div}\,\psi)(\sigma)$.
This will be done in Section \ref{sec103}. Finally, in Section \ref{sec104},
we shall conclude the proof of Proposition \ref{p_tfl} as well as
the proof of Theorem \ref{t_cap}. 

The main issue in the large volume regime in the construction of the
test function carried out in the previous section is the construction
on $\widehat{\mathcal{N}}(\mathcal{S})^{c}$. However, in the test
flow, we do not encounter this sort of difficulty as we simply assign
zero flow on this remainder set. Instead, an additional difficulty,
compared to the small volume regime, appears in the control of $h_{\mathcal{S}(A),\,\mathcal{S}(B)}^{\beta}(\sigma)$.

\subsection{\label{sec102}Flow norm of $\psi$}
\begin{prop}
\label{p_tfl1}For $L^{2/3}\ll e^{\beta}$, it holds that 
\[
\|\psi\|_{\beta}^{2}=\frac{1+o_{L}(1)}{q\mathfrak{c}}e^{-\Gamma\beta}\;.
\]
\end{prop}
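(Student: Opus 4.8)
The plan is to decompose the flow norm $\|\psi\|_\beta^2$ according to the three regions on which $\psi$ is defined, mirroring the decomposition used in Definition \ref{d_tfl}, namely the bulk $\mathcal{B}^{A,\,B}$ and the two edge pieces $\mathcal{E}^A$, $\mathcal{E}^B$. On the bulk, by the very definition of $\psi$ we have $\psi(\sigma,\,\zeta) = \mu_\beta(\sigma)c_\beta(\sigma,\,\zeta)[f(\sigma)-f(\zeta)]$, so
\[
\frac{\psi(\sigma,\,\zeta)^2}{\mu_\beta(\sigma)c_\beta(\sigma,\,\zeta)} = \mu_\beta(\sigma)c_\beta(\sigma,\,\zeta)[f(\sigma)-f(\zeta)]^2,
\]
and the corresponding contribution is precisely the bulk part of the Dirichlet form $D_\beta(f)$ already computed in the proof of Lemma \ref{l_tf1}; that part equals $(1+o_L(1))\frac{\mathfrak{b}}{q\mathfrak{c}^2}e^{-\Gamma\beta}$ (cf. \eqref{e_tf1.2}). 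First I would quote this computation verbatim, noting that the flow vanishes on pairs involving dead-ends and on $\widehat{\mathcal{N}}(\mathcal{S})^c$, so no new estimate is needed here.

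Next I would treat the edge contribution from $\mathcal{E}^A$. On $\mathcal{E}^A$ the flow is $\psi(\sigma,\,\zeta) = \frac{\mathfrak{e}_A}{Z_\beta\mathfrak{c}}e^{-\Gamma\beta}[\mathfrak{h}^A(\sigma)-\mathfrak{h}^A(\zeta)]$, which is (up to the constant prefactor $\frac{\mathfrak{e}_A}{Z_\beta\mathfrak{c}}e^{-\Gamma\beta}$) exactly the harmonic flow of the auxiliary chain $Z^A(\cdot)$ between $\mathcal{S}(A)$ and $\mathcal{R}_2^{A,\,B}$. Using Proposition \ref{p_ZA} to convert the microscopic weights $\mu_\beta(\sigma)c_\beta(\sigma,\,\zeta)$ into the chain rates $\frac{1}{q}e^{-\Gamma\beta}r^A$, and recalling that for the chain $Z^A(\cdot)$ one has $\|\varphi^A_{\mathcal{S}(A),\mathcal{R}_2^{A,B}}\|^2 = \mathrm{cap}^A(\mathcal{S}(A),\,\mathcal{R}_2^{A,\,B})$, I would obtain that this contribution equals
\[
(1+o_L(1))\,\frac{\mathfrak{e}_A^2}{q\mathfrak{c}^2}e^{-\Gamma\beta}\times |\mathscr{V}^A|\,\mathrm{cap}^A(\mathcal{S}(A),\,\mathcal{R}_2^{A,\,B}) = (1+o_L(1))\,\frac{\mathfrak{e}_A}{q\mathfrak{c}^2}e^{-\Gamma\beta},
\]
where the last equality is the definition \eqref{e_eA} of $\mathfrak{e}_A$. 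This is the same manipulation as in \eqref{e_tf1.3}. The symmetric computation on $\mathcal{E}^B$ gives $(1+o_L(1))\frac{\mathfrak{e}_B}{q\mathfrak{c}^2}e^{-\Gamma\beta}$. Summing the three pieces yields $(1+o_L(1))\frac{\mathfrak{b}+\mathfrak{e}_A+\mathfrak{e}_B}{q\mathfrak{c}^2}e^{-\Gamma\beta} = \frac{1+o_L(1)}{q\mathfrak{c}}e^{-\Gamma\beta}$ by \eqref{e_c}, as claimed.

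In fact, since $\psi$ is supported on $\widehat{\mathcal{N}}(\mathcal{S})$ and agrees with $\mu_\beta c_\beta[f(\cdot)-f(\cdot)]$ on the bulk while the edge parts reduce to the harmonic flow of $Z^A$ (resp. $Z^B$), the whole statement follows from the computations already carried out in Lemma \ref{l_tf1}: the only bookkeeping to do is to check that squaring $\psi$ and dividing by $\mu_\beta c_\beta$ on the edge reproduces exactly the Dirichlet-form integrand handled in \eqref{e_tf1.3}, which is immediate from the identity $\frac{(\mu_\beta c_\beta \,[g])^2}{\mu_\beta c_\beta} = \mu_\beta c_\beta\,[g]^2$ applied after the $Z^A$-reduction. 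I do not expect a genuine obstacle here; the main (minor) point to be careful about is the well-definedness of $\psi$ on the overlap sets $\mathcal{N}(\mathcal{R}_2^{A,\,B})$ and $\mathcal{N}(\mathcal{R}_{L-2}^{A,\,B})$, where $\psi$ must vanish — but this was already noted in Definition \ref{d_tfl}, so the decomposition into bulk and edge contributions is genuinely disjoint and no double-counting occurs. The condition $L^{2/3}\ll e^\beta$ enters only through Proposition \ref{p_ZA} and Theorem \ref{t_Gibbs1}-(1), exactly as in Lemma \ref{l_tf1}.
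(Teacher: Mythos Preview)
Your proposal is correct and follows essentially the same route as the paper: decompose $\|\psi\|_\beta^2$ over $\mathcal{B}^{A,B}$, $\mathcal{E}^A$, $\mathcal{E}^B$, identify each piece with the corresponding Dirichlet-form contribution already computed in Lemma \ref{l_tf1}, and sum. The only cosmetic difference is that on the edge pieces the paper, rather than invoking Proposition \ref{p_ZA}, simply observes that $\mathfrak{h}^A(\sigma)\ne\mathfrak{h}^A(\zeta)$ forces $\max\{H(\sigma),H(\zeta)\}=\Gamma$, so $\mu_\beta(\sigma)c_\beta(\sigma,\zeta)=Z_\beta^{-1}e^{-\Gamma\beta}$ exactly and hence $\psi(\sigma,\zeta)^2/[\mu_\beta(\sigma)c_\beta(\sigma,\zeta)]=\mu_\beta(\sigma)c_\beta(\sigma,\zeta)[f(\sigma)-f(\zeta)]^2$ on the nose; this lets the paper quote Lemma \ref{l_tf1} in one line without re-deriving \eqref{e_tf1.3}.
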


\begin{proof}
The strategy is to compare the flow norm with the Dirichlet form of
$f$. Since $\psi\equiv0$ on $\mathcal{B}^{A,\,B}\cap\mathcal{E}^{A}$
and $\mathcal{B}^{A,\,B}\cap\mathcal{E}^{A}$ as mentioned in Definition
\ref{d_tfl}, we can write
\begin{equation}
\|\psi\|_{\beta}^{2}=\Big[\,\sum_{\{\sigma,\,\zeta\}\subseteq\mathcal{B}^{A,B}}+\sum_{\{\sigma,\,\zeta\}\subseteq\mathcal{E}^{A}}+\sum_{\{\sigma,\,\zeta\}\subseteq\mathcal{E}^{B}}\,\Big]\frac{\psi(\sigma,\,\zeta)^{2}}{\mu_{\beta}(\sigma)c_{\beta}(\sigma,\,\zeta)}\;.\label{e_tfl1.1}
\end{equation}
Let us consider three summations separately. 
\begin{itemize}
\item By the definition of $\psi$ on $\mathcal{B}^{A,\,B}$, we can write
the first summation as 
\begin{equation}
\sum_{\{\sigma,\,\zeta\}\subseteq\mathcal{B}^{A,B}}\mu_{\beta}(\sigma)c_{\beta}(\sigma,\,\zeta)[f^{A,\,B}(\sigma)-f^{A,\,B}(\zeta)]^{2}\;.\label{e_tfl1.2}
\end{equation}
\item By the definition of $\psi$ on $\mathcal{E}^{A}$, the second summation
equals
\[
\sum_{\{\sigma,\,\zeta\}\subseteq\mathcal{E}^{A}}\frac{1}{\mu_{\beta}(\sigma)c_{\beta}(\sigma,\,\zeta)}\times\frac{\mathfrak{e}_{A}^{2}}{Z_{\beta}^{2}\mathfrak{c}^{2}}e^{-2\Gamma\beta}\times[\mathfrak{h}^{A}(\sigma)-\mathfrak{h}^{A}(\zeta)]^{2}\;.
\]
Note from Notation \ref{n_92} that $\{\sigma,\,\zeta\}\subseteq\mathcal{E}^{A}$
with $\mathfrak{h}^{A}(\sigma)\ne\mathfrak{h}^{A}(\zeta)$ implies
$\max\{H(\sigma),\,H(\zeta)\}=\Gamma$. Thus, by \eqref{e_detbal},
we can rewrite the last summation as 
\begin{equation}
\sum_{\{\sigma,\,\zeta\}\subseteq\mathcal{E}^{A}}\mu_{\beta}(\sigma)c_{\beta}(\sigma,\,\zeta)\times\frac{\mathfrak{e}_{A}^{2}}{\mathfrak{c}^{2}}[\mathfrak{h}^{A}(\sigma)-\mathfrak{h}^{A}(\zeta)]^{2}=\sum_{\{\sigma,\,\zeta\}\subseteq\mathcal{E}^{A}}\mu_{\beta}(\sigma)c_{\beta}(\sigma,\,\zeta)[f(\sigma)-f(\zeta)]^{2}\;.\label{e_tfl1.3}
\end{equation}
Similarly, the third summation equals
\begin{equation}
\sum_{\{\sigma,\,\zeta\}\subset\mathcal{E}^{B}}\mu_{\beta}(\sigma)c_{\beta}(\sigma,\,\zeta)[f(\sigma)-f(\zeta)]^{2}\;.\label{e_tfl1.4}
\end{equation}
\end{itemize}
Gathering \eqref{e_tf1.1}, \eqref{e_tf1.2}, \eqref{e_tf1.3}, and
\eqref{e_tfl1.4}, we can conclude that
\[
\|\psi\|_{\beta}^{2}=\sum_{\{\sigma,\,\zeta\}\subseteq\widehat{\mathcal{N}}(\mathcal{S})}\mu_{\beta}(\sigma)c_{\beta}(\sigma,\,\zeta)[f(\sigma)-f(\zeta)]^{2}\;.
\]
The right-hand side is $\frac{1+o_{L}(1)}{q\mathfrak{c}}e^{-\Gamma\beta}$
by Lemma \ref{l_tf1} and the proof is completed.
\end{proof}

\subsection{\label{sec103}Divergence of $\psi$}

Next, we compute the summation appeared in \eqref{e_tfl}. More precisely,
we wish to prove the following proposition in this section. 
\begin{prop}
\label{p_tfl2}Suppose that $L^{10}\ll e^{\beta}$. Then, we have
that 
\begin{equation}
\sum_{\sigma\in\mathcal{X}}h_{\mathcal{S}(A),\,\mathcal{S}(B)}^{\beta}(\sigma)(\mathrm{div}\,\psi)(\sigma)=\frac{1}{q\mathfrak{c}}e^{-\Gamma\beta}+o_{L}(e^{-\Gamma\beta})\;.\label{e_tfl2}
\end{equation}
\end{prop}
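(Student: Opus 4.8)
\textbf{Proof proposal for Proposition \ref{p_tfl2}.}

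The plan is to exploit the fact that $\psi$ is supported entirely on $\widehat{\mathcal{N}}(\mathcal{S})$, so that the sum over $\sigma\in\mathcal{X}$ reduces to a sum over $\widehat{\mathcal{N}}(\mathcal{S})=\mathcal{E}^{A,\,B}\cup\mathcal{B}^{A,\,B}$. First I would recall the standard identity for divergence-weighted sums of a flow against a function: by antisymmetry of $\psi$ and the single-flip structure, $\sum_{\sigma}h(\sigma)(\mathrm{div}\,\psi)(\sigma)=\frac{1}{2}\sum_{\sigma,\zeta}\psi(\sigma,\zeta)[h(\sigma)-h(\zeta)]$, where $h=h_{\mathcal{S}(A),\mathcal{S}(B)}^{\beta}$. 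The key point is that on the support of $\psi$ the test function $f$ was designed precisely so that $\psi$ behaves like the harmonic flow $\varphi_{\mathcal{S}(A),\mathcal{S}(B)}^{\beta}$; in particular, for the portion of $\psi$ living on $\mathcal{B}^{A,\,B}$ we have $\psi(\sigma,\zeta)=\mu_{\beta}(\sigma)c_{\beta}(\sigma,\zeta)[f(\sigma)-f(\zeta)]$, and for the portions on $\mathcal{E}^{A}$ and $\mathcal{E}^{B}$ the flow is a scalar multiple of the equilibrium-potential difference of the reduced chains $Z^{A},Z^{B}$.

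The main idea is then to replace $h=h_{\mathcal{S}(A),\mathcal{S}(B)}^{\beta}$ by the explicit test function $f=f^{A,B}$ in the sum, at the cost of a controlled error. I would argue in two stages. Stage one: on $\widehat{\mathcal{N}}(\mathcal{S})$, the equilibrium potential $h$ is close to $f$. Concretely, $h$ equals $1$ on $\mathcal{S}(A)$, $0$ on $\mathcal{S}(B)$, and is harmonic for the full dynamics; using the saddle-structure analysis (Propositions \ref{p_typ}, \ref{p_EAEB} and Lemmas \ref{l_reg}, \ref{l_odd}, \ref{l_even}), together with the bottleneck estimates for the reduced chains $Z^{A}$, $Z^{B}$ (captured by the constants $\mathfrak{e}_{A},\mathfrak{e}_{B}$ via Proposition \ref{p_eAest}), one shows that for $\sigma$ in the relevant part of $\widehat{\mathcal{N}}(\mathcal{S})$, $|h(\sigma)-f(\sigma)|=o_{L}(1)$; the leakage of $h$ into $\widehat{\mathcal{N}}(\mathcal{S})^{c}$ is negligible because crossing that region costs extra energy, which is quantified by Proposition \ref{p_count} and the condition $L^{10}\ll e^{\beta}$ exactly as in the proofs of Lemmas \ref{l_tf2}, \ref{l_tf3}. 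Stage two: with $h$ replaced by $f$, the sum $\frac{1}{2}\sum_{\sigma,\zeta}\psi(\sigma,\zeta)[f(\sigma)-f(\zeta)]$ is, on the $\mathcal{B}^{A,B}$ part, literally $\sum_{\{\sigma,\zeta\}\subseteq\mathcal{B}^{A,B}}\mu_{\beta}(\sigma)c_{\beta}(\sigma,\zeta)[f(\sigma)-f(\zeta)]^{2}$, and on the $\mathcal{E}^{A},\mathcal{E}^{B}$ parts it reduces (using the same manipulation as in \eqref{e_tfl1.3}--\eqref{e_tfl1.4}) to $\sum_{\{\sigma,\zeta\}\subseteq\mathcal{E}^{A}}\mu_{\beta}(\sigma)c_{\beta}(\sigma,\zeta)[f(\sigma)-f(\zeta)]^{2}$ plus the $\mathcal{E}^{B}$ analogue. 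Summing these is precisely the left-hand side of \eqref{e_ltf1}, hence equals $\frac{1+o_{L}(1)}{q\mathfrak{c}}e^{-\Gamma\beta}$ by Lemma \ref{l_tf1}.

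The error terms from stage one must be shown to be $o_{L}(e^{-\Gamma\beta})$. Write $\sum_{\sigma}h(\sigma)(\mathrm{div}\,\psi)(\sigma)-\sum_{\sigma}f(\sigma)(\mathrm{div}\,\psi)(\sigma)=\frac{1}{2}\sum_{\sigma,\zeta}\psi(\sigma,\zeta)\big([h(\sigma)-f(\sigma)]-[h(\zeta)-f(\zeta)]\big)$ and bound this by $\|\psi\|_{\beta}\cdot\|h-f\|_{\beta}$ via Cauchy--Schwarz with respect to the weights $\mu_{\beta}(\sigma)c_{\beta}(\sigma,\zeta)$; here $\|\psi\|_{\beta}^{2}=\frac{1+o_L(1)}{q\mathfrak c}e^{-\Gamma\beta}$ by Proposition \ref{p_tfl1}, so it suffices to show $D_{\beta}(h-f)=\|h-f\|_\beta^2=o_{L}(e^{-\Gamma\beta})$ on the support of $\psi$. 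By the Dirichlet principle $D_\beta(h)=\mathrm{Cap}_\beta(\mathcal{S}(A),\mathcal{S}(B))$, which is $O(e^{-\Gamma\beta})$ by the upper bound of Section \ref{sec9_UB}; combined with $D_\beta(f)=\frac{1+o_L(1)}{q\mathfrak c}e^{-\Gamma\beta}$ and a parallelogram-type inequality, one gets $D_\beta(h-f)=2D_\beta(h)+2D_\beta(f)-D_\beta(h+f)$, and the delicate point is that $h+f$ is nearly $2h$ on the support of $\psi$; alternatively, and more robustly, I would directly estimate $\sum_{\sigma}f(\sigma)(\mathrm{div}\,\psi)(\sigma)$ by the computation sketched above and separately bound $\sum_{\sigma}(h-f)(\sigma)(\mathrm{div}\,\psi)(\sigma)$ using that $\mathrm{div}\,\psi$ is supported on $\mathcal{S}(A)\cup\mathcal{S}(B)\cup(\text{a thin interface near }\widehat{\mathcal{N}}(\mathcal{S})^c)$ of size $O(L^{\text{poly}})$, each contributing at most $O(e^{-\Gamma\beta})\cdot o_L(1)$. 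The main obstacle is controlling $h-f$ near the boundary $\partial\widehat{\mathcal{N}}(\mathcal{S})$: one must show that $h$ does not deviate appreciably from $f$ there despite $f$ being discontinuous across that boundary — this is where Proposition \ref{p_count} and the entropy bound forcing $L^{10}\ll e^{\beta}$ enter, in exact analogy with Lemmas \ref{l_tf2} and \ref{l_tf3}, and it is the technically heaviest part of the argument.
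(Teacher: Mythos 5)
Your overall plan — reduce $\sum_{\sigma}h(\mathrm{div}\,\psi)(\sigma)$ to the already-computed quantity $\sum_{\{\sigma,\zeta\}\subseteq\widehat{\mathcal{N}}(\mathcal{S})}\mu_{\beta}c_{\beta}[f(\sigma)-f(\zeta)]^{2}$ by replacing $h$ with $f$ and controlling the error — has an essential circularity in the error estimate. Your Cauchy--Schwarz bound requires $D_{\beta}(h-f)=o_{L}(e^{-\Gamma\beta})$, but since $h$ is harmonic away from $\mathcal{S}(A)\cup\mathcal{S}(B)$ one computes $D_{\beta}(h-f)=D_{\beta}(f)-\mathrm{Cap}_{\beta}(\mathcal{S}(A),\mathcal{S}(B))$; concluding that this is $o_{L}(e^{-\Gamma\beta})$ is exactly the capacity lower bound that Proposition \ref{p_tfl2} is meant to furnish, so you cannot assume it. Your suggested ``more robust'' fallback — bound $\sum_{\sigma}(h-f)(\sigma)(\mathrm{div}\,\psi)(\sigma)$ directly — rests on a misdiagnosis of where $\mathrm{div}\,\psi$ is supported. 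It is \emph{not} supported on $\mathcal{S}(A)\cup\mathcal{S}(B)$ plus a thin interface near $\partial\widehat{\mathcal{N}}(\mathcal{S})$: since $\psi\equiv 0$ outside $\widehat{\mathcal{N}}(\mathcal{S})$, the divergence vanishes identically on $\widehat{\mathcal{N}}(\mathcal{S})^{c}$, while it is \emph{nonzero} throughout $\bigcup_{n=2}^{L-2}\mathcal{N}(\mathcal{R}_{n}^{A,B})$ (see \eqref{e_divps}), i.e. along the whole canonical path, not merely near ground states or near the boundary. Bounding the error there would require a pointwise estimate $|h-f|=o_{L}(L^{-1})$ on these neighborhoods, which is not established by anything in the paper, and is considerably stronger than what is actually needed.

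The paper's proof instead sidesteps any comparison of $h$ with $f$: it uses the exact cancellation $\sum_{\sigma\in\mathcal{N}(\zeta)}(\mathrm{div}\,\psi)(\sigma)=0$ for each regular $\zeta\in\mathcal{R}_{n}^{A,B}$ (Lemma \ref{l_div2}) and the explicit total flux out of $\mathcal{N}(\mathcal{S}(A))$ (Lemma \ref{l_div6}). One then expands $h(\sigma)=h(\zeta)+[h(\sigma)-h(\zeta)]$, so only the \emph{local oscillation} of $h$ over an $\mathcal{N}$-neighborhood matters, controlled by the flatness estimate (Lemma \ref{l_eqp}) and the path-length bound $N_{\sigma}<4L$ (Lemma \ref{l_eqp2}) against the size of $\sum|\mathrm{div}\,\psi|$ (Lemma \ref{l_divsum}). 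This also means the condition $L^{10}\ll e^{\beta}$ enters here through the $O(L^{9}e^{-\Gamma\beta})$ bound of Lemma \ref{l_divsum} times the $O(Le^{-\beta})$ oscillation of $h$, not through an analogue of Lemma \ref{l_tf3} as you assert. Your stage-two computation (that $\sum_{\sigma}f(\sigma)(\mathrm{div}\,\psi)(\sigma)=\|\psi\|_{\beta}^{2}=\frac{1+o_{L}(1)}{q\mathfrak{c}}e^{-\Gamma\beta}$, using that on $\mathcal{E}^{A}$, $\psi\neq 0$ forces $\max\{H(\sigma),H(\zeta)\}=\Gamma$ and hence $\psi=\mu_{\beta}c_{\beta}[f(\sigma)-f(\zeta)]$) is correct and clean, but without a non-circular handle on the error term the proposal is incomplete.
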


The proof is divided into several lemmas. We first look at the divergence
term $(\mathrm{div}\,\psi)(\sigma)$. We deduce that this divergence
is zero at most of the bulk configurations. 
\begin{lem}
\label{l_div1}We have $(\mathrm{div}\,\psi)(\sigma)=0$ if
\begin{enumerate}
\item $\sigma\in\mathcal{D}^{A,\,B}$,
\item $\sigma\in\mathcal{C}_{n,\,\scal{e}}^{a,\,b}$ for some $a\in A$,
$b\in B$ and $n\in\llbracket2,\,L-3\rrbracket$, and 
\item $\sigma\in\mathcal{C}_{n,\,\scal{o}}^{a,\,b}$ with $|\mathfrak{p}^{a,\,b}(\sigma)|\in\llbracket3,\,2L-3\rrbracket$
for some $a\in A$, $b\in B$ and $n\in\llbracket2,\,L-3\rrbracket$.
\end{enumerate}
\end{lem}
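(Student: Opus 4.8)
The plan is to verify the vanishing of $(\mathrm{div}\,\psi)(\sigma)$ directly from the definition of the test flow $\psi$ in Definition \ref{d_tfl} together with the local structure of the energy landscape near canonical configurations established in Lemmas \ref{l_reg}, \ref{l_odd}, \ref{l_even}, \ref{l_PP}, and \ref{l_QQ}. The key point is that for a bulk configuration $\sigma \in \mathcal{B}^{A,\,B}$, the flow value $\psi(\sigma,\,\zeta)$ is nonzero only for $\zeta \in \mathcal{B}^{A,\,B}$ with $\sigma \sim \zeta$, in which case it equals $\mu_\beta(\sigma) c_\beta(\sigma,\,\zeta)[f(\sigma) - f(\zeta)]$; hence $(\mathrm{div}\,\psi)(\sigma) = \sum_{\zeta \sim \sigma} \mu_\beta(\sigma) c_\beta(\sigma,\,\zeta)[f(\sigma) - f(\zeta)]$. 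In all three cases (1)--(3), I will show that every neighbor $\zeta$ of $\sigma$ which is a \emph{good} neighbor (i.e.\ $H(\zeta) \le \Gamma$, so that $c_\beta$ contributes at the leading order through the detailed balance relation \eqref{e_detbal}) satisfies $f(\zeta) = f(\sigma)$, while the remaining neighbors $\zeta$ with $H(\zeta) > \Gamma = 2L+2$ satisfy $H(\zeta) = H(\sigma) + 1$ since $H(\sigma) = \Gamma$, so that $\mu_\beta(\sigma) c_\beta(\sigma,\,\zeta) = \mu_\beta(\zeta)$ and these terms cancel in pairs or are handled by a symmetry argument. Actually, the cleanest route: since $H(\sigma) = \Gamma$ in all three cases, a neighbor $\zeta$ with $H(\zeta) > \Gamma$ contributes $\mu_\beta(\zeta)[f(\sigma) - f(\zeta)]$ to the divergence, but by construction $\psi(\sigma,\zeta) = 0$ for such $\zeta$ (it is not in $\mathcal{B}^{A,B}$ adjacent case in the relevant sense) — so I only need to consider good neighbors $\zeta \approx \sigma$.

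First, for case (1), $\sigma \in \mathcal{D}^{a',\,b'}$: by Proposition \ref{p_dead} (or directly Lemmas \ref{l_PP} and \ref{l_QQ}), every good neighbor $\zeta$ of $\sigma$ is either another dead-end in $\mathcal{D}^{a',\,b'}$ or a canonical configuration, and in every such case the definition of $f$ on $\mathcal{D}^{A,\,B}$ (Definition \ref{d_tf}-(2), third bullet) together with the agreement $f(\zeta_1) = f(\zeta_2)$ when $\sigma$ is adjacent to two canonical configurations $\zeta_1,\zeta_2$ forces $f(\zeta) = f(\sigma)$. Hence every summand vanishes and $(\mathrm{div}\,\psi)(\sigma) = 0$. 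Second, for case (2), $\sigma \in \mathcal{C}_{n,\,\scal{e}}^{a,\,b}$: by Lemma \ref{l_even} every good neighbor $\zeta$ satisfies $\zeta \in \mathcal{C}_{n,\,\scal{o}}^{a,\,b}$, and one checks from the explicit formula for $f$ on $\mathcal{C}_n^{a,b}$ (Definition \ref{d_tf}-(2), second bullet) that $f$ is constructed precisely so that the even-protuberance configuration $\sigma$ has $f$-value equal to that of each adjacent odd-protuberance configuration obtained by removing or adding one triangle — this is where the coefficients $3 - \mathfrak{d}(\sigma)$ and $\frac{5m-3}{2}$ are tuned. I will verify this case by case according to whether $|\mathfrak{p}^{a,b}(\sigma)|$ equals $2$, $2L-2$, or lies in $[\![4, 2L-4]\!]$, checking that the $f$-values of the two adjacent odd configurations (with protuberance sizes differing by one from $|\mathfrak{p}^{a,b}(\sigma)|$) coincide with $f(\sigma)$.

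Third, for case (3), $\sigma \in \mathcal{C}_{n,\,\scal{o}}^{a,\,b}$ with $|\mathfrak{p}^{a,\,b}(\sigma)| = m \in [\![3, 2L-3]\!]$: by Lemma \ref{l_odd} (the "In particular" clause), every good neighbor $\zeta$ lies in $\mathcal{C}_{n,\,\scal{e}}^{a,\,b}$ or in $\mathcal{Q}_n^{a,\,b} \subseteq \mathcal{D}^{a,\,b}$. For $\zeta \in \mathcal{Q}_n^{a,\,b}$, the definition of $f$ on $\mathcal{D}^{A,B}$ and Lemma \ref{l_QQ}-(1) give $f(\zeta) = f(\sigma)$ directly (the unique canonical configuration adjacent to $\zeta$ is $\sigma$ itself). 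For $\zeta \in \mathcal{C}_{n,\,\scal{e}}^{a,\,b}$, there are two such neighbors: one with even protuberance size $m-1$ and one with $m+1$ (or the disconnected protuberance of size $m\mp1$), and the formula for $f$ gives $f(\sigma) - f(\zeta_{m-1})$ and $f(\sigma) - f(\zeta_{m+1})$ with opposite signs and equal magnitude, so that the two contributions to the divergence exactly cancel. Concretely, $f(\sigma) = \frac{1}{\mathfrak{c}}[\frac{(5L-3)(L-2-n) - \frac{5m-3}{2}}{(5L-3)(L-4)}\mathfrak{b} + \mathfrak{e}_B]$, and the even-size neighbors have $f$-values differing from this by $\pm\frac{1}{\mathfrak{c}}\cdot\frac{c_0}{(5L-3)(L-4)}\mathfrak{b}$ for matching constants $c_0$; multiplying by the equal weights $\mu_\beta(\sigma)c_\beta(\sigma,\zeta) = \frac{1}{q}(1+o_L(1))e^{-\Gamma\beta}$ and summing yields zero. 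The main obstacle is purely bookkeeping: carefully matching the combinatorial coefficients in the piecewise definition of $f$ so that adjacent configurations at the interface between protuberance sizes $2$ and $3$, and between $2L-3$ and $2L-2$, and at the transition to regular configurations, all produce exact cancellations — this requires enumerating the good flips in Figure \ref{fig67} and confirming that the affine interpolation of $f$ along canonical paths was designed to make the divergence telescope to zero away from the endpoints. No genuinely hard estimate is involved; the only subtlety is ensuring no good neighbor of the listed types lies in $\widehat{\mathcal{N}}(\mathcal{S})^c$, which is automatic since all such $\zeta$ have $H(\zeta) \le \Gamma$ and are explicitly canonical or dead-end configurations.
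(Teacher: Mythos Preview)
Your overall strategy matches the paper's: restrict to good neighbors via Definition~\ref{d_tfl} and the structure Lemmas~\ref{l_even}, \ref{l_odd}, \ref{l_PP}, \ref{l_QQ}, then verify the divergence vanishes. Case~(1) is correct, and case~(3) has the right idea. However, your plan for case~(2) contains a genuine error that would make the verification fail. You claim that for $\sigma\in\mathcal{C}_{n,\scal{e}}^{a,b}$ the value $f(\sigma)$ coincides with $f(\zeta)$ for \emph{each} adjacent odd configuration $\zeta$. This is false: for $|\mathfrak{p}^{a,b}(\sigma)|=m\in\llbracket4,2L-4\rrbracket$, the formula in Definition~\ref{d_tf}-(2) gives
\[
f(\sigma)-f(\zeta_{m\pm1})=\mp\frac{\mathfrak{b}}{\mathfrak{c}}\cdot\frac{5/2}{(5L-3)(L-4)}\neq 0\;.
\]
The correct argument is precisely the cancellation you already invoke in case~(3): the \emph{sum} $\sum_{\zeta}[f(\sigma)-f(\zeta)]$ over good neighbors vanishes because the contributions come in opposite-sign pairs, not because each term is zero. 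The boundary subcases $|\mathfrak{p}^{a,b}(\sigma)|\in\{2,2L-2\}$ likewise require this cancellation (e.g.\ for the disconnected $|\mathfrak{p}|=2$ case the paper finds three nonzero summands combining as $\tfrac{4-2-2}{(5L-3)(L-4)}=0$).

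Two smaller points: in case~(3) there are more than two good neighbors in $\mathcal{C}_{n,\scal{e}}^{a,b}$ (the paper's computation $\tfrac{5}{2}\times4-\tfrac{5}{2}\times4$ reflects four of each sign), and your parenthetical ``$H(\sigma)=\Gamma$ in all three cases'' is wrong for case~(3), where $H(\sigma)=2L+1$. Neither affects the conclusion once the cancellation argument from case~(3) is applied uniformly to case~(2) as well.
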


\begin{proof}
(1) By the definition of $f$ on $\mathcal{D}^{a,\,b}$ in Definition
\ref{d_tf}, we have $f(\sigma)=f(\zeta)$ for all $\zeta\sim\sigma$
with $\zeta\in\widehat{\mathcal{N}}(\mathcal{S})$. Recalling the
definition of $\psi$ in Definition \ref{d_tfl}, we have $\psi(\sigma,\,\zeta)=0$
for all $\sigma\in\mathcal{D}^{a,\,b}$ and $\zeta\sim\sigma$, and
we are done.\medskip{}

\noindent (2) For $\sigma\in\mathcal{C}_{n,\,\scal{e}}^{a,\,b}$ with
$n\in\llbracket2,\,L-3\rrbracket$, by Lemma \ref{l_even} and \eqref{e_detbal},
we can write 
\[
(\mathrm{div}\,\psi)(\sigma)=\frac{1}{Z_{\beta}}e^{-\Gamma\beta}\times\sum_{\zeta\in\mathcal{C}_{n,\scal{o}}^{a,b}:\,\zeta\sim\sigma}[f(\sigma)-f(\zeta)]\;.
\]
The last summation can be computed as 
\[
\begin{cases}
\frac{\mathfrak{b}}{\mathfrak{c}}\times\big[\,\frac{\frac{5}{2}-\frac{5}{2}}{(5L-3)(L-4)}\,\big]=0 & \text{if }|\mathfrak{p}^{a,\,b}(\sigma)|\in\llbracket4,\,2L-4\rrbracket\;,\\
\frac{\mathfrak{b}}{\mathfrak{c}}\times\big[\,\frac{3-3}{(5L-3)(L-4)}\,\big]=0 & \text{if }|\mathfrak{p}^{a,\,b}(\sigma)|=2\text{ and }\mathfrak{p}^{a,\,b}(\sigma)\text{ is connected},\\
\frac{\mathfrak{b}}{\mathfrak{c}}\times\big[\,\frac{4-2-2}{(5L-3)(L-4)}\,\big]=0 & \text{if }|\mathfrak{p}^{a,\,b}(\sigma)|=2\text{ and }\mathfrak{p}^{a,\,b}(\sigma)\text{ is disconnected},
\end{cases}
\]
and we can similarly handle the case $|\mathfrak{p}^{a,\,b}(\sigma)|=2L-2$.
This proves part (2). \medskip{}

\noindent (3) For $\sigma\in\mathcal{C}_{n,\,\scal{o}}^{a,\,b}$ with
$|\mathfrak{p}^{a,\,b}(\sigma)|\in\llbracket3,\,2L-3\rrbracket$ and
$n\in\llbracket2,\,L-3\rrbracket$, by Lemma \ref{l_odd} and \eqref{e_detbal},
we can write 
\[
(\mathrm{div}\,\psi)(\sigma)=\frac{1}{Z_{\beta}}e^{-\Gamma\beta}\times\sum_{\zeta\in\mathcal{C}_{n,\scal{e}}^{a,b}\cup\mathcal{Q}_{n}^{a,b}:\,\zeta\sim\sigma}[f(\sigma)-f(\zeta)]\;.
\]
For $\zeta\in\mathcal{Q}_{n}^{a,\,b}$, the summation vanishes by
Lemma \ref{l_QQ} and Definition \ref{d_tf}-(2). For $\zeta\in\mathcal{C}_{n,\,\scal{e}}^{a,\,b}$,
the last summation is calculated as
\begin{equation}
\frac{\mathfrak{b}}{\mathfrak{c}}\times\Big[\,\frac{\frac{5}{2}\times4-\frac{5}{2}\times4}{(5L-3)(L-4)}\,\Big]=0\;.\label{e_div1}
\end{equation}
This concludes the proof.
\end{proof}
In the previous lemma, it has been shown that the divergence of $\psi$
is zero on all bulk configurations except in $\mathcal{N}(\mathcal{R}_{n}^{A,\,B})$
with $n\in\llbracket2,\,L-2\rrbracket$. We next show that the divergences
on these sets are canceled out with each other.
\begin{lem}
\label{l_div2}Let $\zeta\in\mathcal{R}_{n}^{A,\,B}$ for some $n\in\llbracket2,\,L-2\rrbracket$.
Then, we have 
\[
\sum_{\sigma\in\mathcal{N}(\zeta)}(\mathrm{div}\,\psi)(\sigma)=0\;.
\]
\end{lem}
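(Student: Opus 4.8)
The plan is to exploit the conservation-of-flow structure: the total divergence of $\psi$ summed over a ``lumped'' region $\mathcal{N}(\zeta)$ equals the net flow leaving that region, and by the explicit construction of $\psi$ on $\mathcal{B}^{A,B}$ in terms of $f$, the only edges carrying nonzero flow out of $\mathcal{N}(\zeta)$ go to the neighboring canonical slices $\mathcal{C}_{n-1}^{A,B}$ and $\mathcal{C}_{n}^{A,B}$ (with protuberance sizes $2L-1$ and $1$ respectively), plus possibly to dead-ends in $\mathcal{D}^{A,B}$ and to configurations of type \textbf{(PP)}. First I would use Lemma~\ref{l_reg} (and the identification $\widehat{\mathcal{R}}_n^{a,b}=\mathcal{N}(\mathcal{R}_n^{a,b})\setminus\mathcal{R}_n^{a,b}$) to enumerate exactly which configurations $\xi\notin\mathcal{N}(\zeta)$ can satisfy $\xi\sim\sigma$ for some $\sigma\in\mathcal{N}(\zeta)$; by Lemmas~\ref{l_reg}, \ref{l_odd}, \ref{l_PP}, \ref{l_QQ} these are canonical configurations in $\mathcal{C}_{n-1,\,\scal{o}}^{a,b}$ with $|\mathfrak{p}^{a,b}|=2L-1$, in $\mathcal{C}_{n,\,\scal{o}}^{a,b}$ with $|\mathfrak{p}^{a,b}|=1$, or dead-ends/type-\textbf{(PP)} configurations adjacent to $\mathcal{N}(\mathcal{R}_n^{A,B})$.

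Next I would observe that for any dead-end $\eta\in\mathcal{D}^{A,B}$ adjacent to $\mathcal{N}(\mathcal{R}_n^{A,B})$, the definition of $f$ in Definition~\ref{d_tf}-(2) assigns $f(\eta)=f(\mathcal{N}(\mathcal{R}_n^{A,B}))$, so $\psi(\sigma,\eta)=0$ for all $\sigma\in\mathcal{N}(\zeta)$; likewise for configurations of type \textbf{(PP)} (which by Definition~\ref{d_tf} inherit the value of the adjacent regular configuration). Hence the net outflow from $\mathcal{N}(\zeta)$ reduces to the flow into the two adjacent canonical slices. Because $f$ is constant on all of $\mathcal{N}(\zeta)$ — equal to $\frac{1}{\mathfrak{c}}\big[\frac{L-2-n}{L-4}\mathfrak{b}+\mathfrak{e}_B\big]$ — and $f$ is constant on each of the sets $\{\xi\in\mathcal{C}_{n-1,\,\scal{o}}^{A,B}:|\mathfrak{p}|=2L-1\}\subseteq\mathcal{N}(\mathcal{R}_{n-1}^{A,B})$ and $\{\xi\in\mathcal{C}_{n,\,\scal{o}}^{A,B}:|\mathfrak{p}|=1\}\subseteq\mathcal{N}(\mathcal{R}_n^{A,B})$ — wait, that is not quite right, so the more careful route is this: by Definition~\ref{d_tfl}, $\psi(\sigma,\xi)=\mu_\beta(\sigma)c_\beta(\sigma,\xi)[f(\sigma)-f(\xi)]$ on $\mathcal{B}^{A,B}$, and since $f\equiv f(\zeta)$ on $\mathcal{N}(\zeta)$, the sum $\sum_{\sigma\in\mathcal{N}(\zeta)}(\mathrm{div}\,\psi)(\sigma)$ telescopes: contributions between two configurations both in $\mathcal{N}(\zeta)$ cancel by antisymmetry, and what remains is $\sum_{\sigma\in\mathcal{N}(\zeta)}\sum_{\xi\notin\mathcal{N}(\zeta)}\mu_\beta(\sigma)c_\beta(\sigma,\xi)[f(\zeta)-f(\xi)]$. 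The key point is that this equals $f(\zeta)\cdot(\text{net capacity leaving }\mathcal{N}(\zeta)) - \sum_\xi f(\xi)\cdot(\text{capacity into }\xi)$, and I must show it vanishes.

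The cleanest argument: let $\xi^{(1)}$ denote a generic configuration in $\mathcal{N}(\mathcal{R}_{n-1}^{A,B})$ adjacent to $\mathcal{N}(\zeta)$ with energy $2L+1$ (protuberance size $2L-1$), and $\xi^{(2)}$ a generic one in $\mathcal{N}(\mathcal{R}_n^{A,B})$ of energy $2L+1$ (protuberance size $1$). Then $f(\xi^{(1)})=f(\mathcal{N}(\mathcal{R}_{n-1}^{A,B}))$ and $f(\xi^{(2)})=f(\mathcal{N}(\mathcal{R}_n^{A,B}))$. Since all these transitions are between energy $2L$ and $2L+1$, the detailed-balance weight $\mu_\beta(\sigma)c_\beta(\sigma,\xi)=\frac{1}{Z_\beta}e^{-(2L)\beta}$ is the same constant for each such edge, so the divergence sum becomes that constant times $\sum_{\sigma,\xi}[f(\zeta)-f(\xi)]$ over all outgoing edges; grouping by whether $\xi$ lies above (in $\mathcal{R}_{n+1}$-direction, protuberance size $2L-1$) or below (protuberance size $1$), and using the arithmetic $f(\mathcal{N}(\mathcal{R}_{n+1}^{A,B}))-f(\zeta) = -\frac{\mathfrak{b}}{\mathfrak{c}(L-4)}$ and $f(\zeta)-f(\mathcal{N}(\mathcal{R}_{n-1}^{A,B})) = -\frac{\mathfrak{b}}{\mathfrak{c}(L-4)}$ together with the fact that $\zeta$ has exactly equally many outgoing edges in each of the two directions (by the symmetry of good flips of a regular configuration: $|\mathcal{N}(\zeta)|$ configurations each flip to a $2L-1$-protuberance on one side and a $1$-protuberance on the other, in balanced count — this is where I invoke the precise counting in Lemma~\ref{l_reg}), the two contributions cancel exactly. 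I expect the main obstacle to be bookkeeping the edge multiplicities: verifying that the number of (ordered) edges from $\mathcal{N}(\zeta)$ into the ``up'' slice equals the number into the ``down'' slice, accounting correctly for the three strip directions $\scal{h},\scal{v},\scal{d}$ and for the boundary indices $n=2$ and $n=L-2$ where one of the adjacent slices may be an edge set $\mathcal{E}^A$ or $\mathcal{E}^B$ rather than a bulk slice. For those boundary values I would note that the flow into $\mathcal{E}^A$ (resp. $\mathcal{E}^B$) is handled by the $\mathfrak{e}_A$ (resp. $\mathfrak{e}_B$) term in $f$, and the same telescoping/cancellation identity — $f(\zeta)-f(\mathcal{N}(\mathcal{R}_2^{A,B}))$ on one side balanced against the next slice on the other — goes through verbatim, so the sum is still $0$.
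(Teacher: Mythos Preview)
Your conservation-of-flow framing is sound in principle: internal contributions cancel by antisymmetry, so $\sum_{\sigma\in\mathcal{N}(\zeta)}(\mathrm{div}\,\psi)(\sigma)$ equals the net $\psi$-flow across the boundary of $\mathcal{N}(\zeta)$. However, the execution misidentifies the geometry and, more seriously, the boundary cases $n\in\{2,L-2\}$ do \emph{not} go through verbatim.

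On the geometry: the configurations $\xi\notin\mathcal{N}(\zeta)$ carrying nonzero $\psi(\sigma,\xi)$ for some $\sigma\in\mathcal{N}(\zeta)$ are not energy-$(2L+1)$ configurations in $\mathcal{N}(\mathcal{R}_{n\pm 1}^{A,B})$; by Lemma~\ref{l_odd} they are even-protuberance configurations in $\mathcal{C}_{n,\scal{e}}^{a,b}$ or $\mathcal{C}_{n-1,\scal{e}}^{a,b}$ with $|\mathfrak{p}|\in\{2,2L-2\}$ and energy $\Gamma$. Your configurations $\xi^{(1)},\xi^{(2)}$ with $|\mathfrak{p}|\in\{1,2L-1\}$ are \emph{inside} $\mathcal{N}(\zeta)$, where $f\equiv f(\zeta)$, so the claimed equality $f(\xi^{(1)})=f(\mathcal{N}(\mathcal{R}_{n-1}^{A,B}))$ is false. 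For $n\in\llbracket 3,L-3\rrbracket$ a correct version of the symmetry argument does work once the right neighbors are used; the paper does this directly by computing $(\mathrm{div}\,\psi)(\sigma)=\pm\frac{10\mathfrak{b}}{Z_\beta\mathfrak{c}(5L-3)(L-4)}e^{-\Gamma\beta}$ for the two types in $\mathcal{N}(\zeta)\setminus\{\zeta\}$ and noting there are $2L$ of each.

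The genuine gap is at $n=2$ (and symmetrically $n=L-2$). There the configurations $\sigma\in\mathcal{C}_{1,\scal{o}}^{a,b}\cap\mathcal{N}(\zeta)$ with $|\mathfrak{p}^{a,b}(\sigma)|=2L-1$ lie in $\mathcal{E}^A$, and on $\mathcal{E}^A$ the flow $\psi$ is \emph{not} built from $f$ but from the equilibrium potential $\mathfrak{h}^A$ of the auxiliary chain $Z^A$. The outgoing edges from these $\sigma$ go to $\mathcal{O}^A$, and the paper shows their total contribution is $-\frac{\mathfrak{e}_A}{Z_\beta\mathfrak{c}}e^{-\Gamma\beta}\,(L^A\mathfrak{h}^A)(\zeta)$. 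Evaluating this requires the capacity identity $\sum_{\zeta'\in\mathcal{R}_2^{A,B}}(L^A\mathfrak{h}^A)(\zeta')=1/\mathfrak{e}_A$ together with symmetry over $\mathcal{R}_2^{A,B}$, giving $(L^A\mathfrak{h}^A)(\zeta)=\frac{1}{3L|A|(q-|A|)\mathfrak{e}_A}$. The resulting value $-\frac{1}{3L|A|(q-|A|)Z_\beta\mathfrak{c}}e^{-\Gamma\beta}$ cancels the bulk-side contribution $2L\cdot\frac{10\mathfrak{b}}{Z_\beta\mathfrak{c}(5L-3)(L-4)}e^{-\Gamma\beta}$ \emph{precisely because} $\mathfrak{b}$ was defined as $\frac{(5L-3)(L-4)}{60L^2|A|(q-|A|)}$. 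This is an engineered matching through the capacity of $Z^A$, not a symmetry; your remark that ``the same telescoping identity goes through verbatim'' overlooks the entire mechanism.
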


\begin{proof}
Let $a\in A$, $b\in B$ and $n\in\llbracket2,\,L-2\rrbracket$ and
then fix $\zeta\in\mathcal{R}_{n}^{a,\,b}$. 

First, by definitions of $f$ and $\psi$, we can readily deduce that
$\psi(\zeta,\,\xi)=0$ for all $\xi\in\mathcal{X}$ and therefore
we immediately have $(\mathrm{div}\,\psi)(\zeta)=0$. Next let $\sigma\in\mathcal{N}(\zeta)\setminus\{\zeta\}$
so that, by Lemma \ref{l_reg}, 
\begin{equation}
\begin{cases}
\sigma\in\mathcal{C}_{n,\,\scal{o}}^{a,\,b}\text{ with }|\mathfrak{p}^{a,\,b}(\sigma)|=1\text{ or}\\
\sigma\in\mathcal{C}_{n-1,\,\scal{o}}^{a,\,b}\text{ with }|\mathfrak{p}^{a,\,b}(\sigma)|=2L-1\;.
\end{cases}\label{epr2L-1}
\end{equation}

\noindent \textbf{(Case 1: $n\in\llbracket3,\,L-3\rrbracket$)} By
\eqref{e_detbal} and explicit definitions of $f$ and $\psi$, we
can check through elementary computations that 
\begin{equation}
(\mathrm{div}\,\psi)(\sigma)=\begin{cases}
\frac{1}{Z_{\beta}}\frac{10\mathfrak{b}e^{-\Gamma\beta}}{\mathfrak{c}(5L-3)(L-4)} & \text{if }\sigma\in\mathcal{C}_{n,\,\scal{o}}^{a,\,b}\text{ with }|\mathfrak{p}^{a,\,b}(\sigma)|=1\;,\\
-\frac{1}{Z_{\beta}}\frac{10\mathfrak{b}e^{-\Gamma\beta}}{\mathfrak{c}(5L-3)(L-4)} & \text{if }\sigma\in\mathcal{C}_{n-1,\,\scal{o}}^{a,\,b}\text{ with }|\mathfrak{p}^{a,\,b}(\sigma)|=2L-1\;.
\end{cases}\label{e_divps}
\end{equation}
Hence, we have 
\[
\sum_{\sigma\in\mathcal{N}(\zeta)}(\mathrm{div}\,\psi)(\sigma)=0+\Big[\,\frac{1}{Z_{\beta}}\frac{10\mathfrak{b}e^{-\Gamma\beta}}{\mathfrak{c}(5L-3)(L-4)}-\frac{1}{Z_{\beta}}\frac{10\mathfrak{b}e^{-\Gamma\beta}}{\mathfrak{c}(5L-3)(L-4)}\,\Big]\times2L=0\;.
\]

\noindent \textbf{(Case 2: $n=2$ or $L-2$)} First, we let $n=2$.
By the same computation above, we can check 
\begin{equation}
(\mathrm{div}\,\psi)(\sigma)=\frac{1}{Z_{\beta}}\frac{10\mathfrak{b}e^{-\Gamma\beta}}{\mathfrak{c}(5L-3)(L-4)}\;\;\;\;\text{if }\sigma\in\mathcal{C}_{2,\,\scal{o}}^{a,\,b}\text{ with }|\mathfrak{p}^{a,\,b}(\sigma)|=1\;.\label{ediv1}
\end{equation}
On the other hand, by the definition of $\psi$ on $\mathcal{E}^{A}$,
we can write
\[
\sum_{\sigma\in\mathcal{C}_{1,\scal{o}}^{a,b}\cap\mathcal{N}(\zeta)}(\mathrm{div}\,\psi)(\sigma)=\sum_{\sigma\in\mathcal{C}_{1,\scal{o}}^{a,b}\cap\mathcal{N}(\zeta)}\sum_{\xi\in\mathcal{O}^{A}:\,\xi\sim\sigma}\psi(\sigma,\,\xi)=\sum_{\sigma\in\mathcal{N}(\zeta)}\sum_{\xi\in\mathcal{O}^{A}:\,\xi\sim\sigma}\psi(\sigma,\,\xi)\;,
\]
where the first equality holds since, for $\sigma\in\mathcal{C}_{1,\,\scal{o}}^{a,\,b}\cap\mathcal{N}(\zeta)$,
we have $\psi(\sigma,\,\xi)=0$ unless $\xi\in\mathcal{O}^{A}$, and
the second equality holds since the configurations in $\mathcal{C}_{2,\,\scal{o}}^{a,\,b}\cup\{\zeta\}$
is not connected with $\mathcal{O}^{A}$. By \eqref{e_detbal}, we
can write 
\begin{align*}
\sum_{\sigma\in\mathcal{N}(\zeta)}\sum_{\xi\in\mathcal{O}^{A}:\,\xi\sim\sigma}\psi(\sigma,\,\xi) & =\sum_{\sigma\in\mathcal{N}(\zeta)}\sum_{\xi\in\mathcal{O}^{A}:\,\xi\sim\sigma}\frac{\mathfrak{e}_{A}}{Z_{\beta}\mathfrak{c}}e^{-\Gamma\beta}[\mathfrak{h}^{A}(\sigma)-\mathfrak{h}^{A}(\xi)]\\
 & =\frac{\mathfrak{e}_{A}}{Z_{\beta}\mathfrak{c}}e^{-\Gamma\beta}\sum_{\xi\in\mathcal{O}^{A}}r^{A}(\zeta,\,\xi)[\mathfrak{h}^{A}(\zeta)-\mathfrak{h}^{A}(\xi)]=-\frac{\mathfrak{e}_{A}}{Z_{\beta}\mathfrak{c}}e^{-\Gamma\beta}\times(L^{A}\mathfrak{h}^{A})(\zeta)\;,
\end{align*}
where the second equality follows from Notation \eqref{n_92} and
the definition of $r^{A}$ (cf. \eqref{e_rA}). By the property of
capacities (e.g. \cite[Lemmas 7.7 and 7.12]{BdH}) and the definition
of $\mathfrak{e}_{A}$ (cf. \eqref{e_eA}), we get 
\begin{equation}
\sum_{\zeta\in\mathcal{R}_{2}^{A,B}}(L^{A}\mathfrak{h}^{A})(\zeta)=|\mathscr{V}^{A}|\mathrm{cap}^{A}\big(\,\mathcal{S}(A),\,\mathcal{R}_{2}^{A,\,B}\,\big)=\frac{1}{\mathfrak{e}_{A}}\;,\label{eLh}
\end{equation}
and therefore by symmetry, we get 
\[
(L^{A}\mathfrak{h}^{A})(\zeta)=\frac{1}{|\mathcal{R}_{2}^{A,\,B}|\mathfrak{e}_{A}}=\frac{1}{3L|A|(q-|A|)\mathfrak{e}_{A}}\;,
\]
where the factor $3$ comes from three possible directions. By gathering
the computations above, we can conclude that 
\begin{equation}
\sum_{\sigma\in\mathcal{C}_{1,\scal{o}}^{a,b}\cap\mathcal{N}(\zeta)}(\mathrm{div}\,\psi)(\sigma)=\frac{1}{Z_{\beta}\mathfrak{c}}e^{-\Gamma\beta}\times\frac{1}{3L|A|(q-|A|)}\;.\label{ediv2}
\end{equation}
By \eqref{ediv1} and \eqref{ediv2}, Theorem \ref{t_Gibbs1}-(1),
and by recalling the definitions \eqref{e_b} of $\mathfrak{b}$ and
$\mathfrak{c}$, we finally get 
\[
\sum_{\sigma\in\mathcal{N}(\zeta)}(\mathrm{div}\,\psi)(\sigma)=\frac{1}{Z_{\beta}}\frac{10\mathfrak{b}e^{-\Gamma\beta}}{\mathfrak{c}(5L-3)(L-4)}\times2L-\frac{1}{Z_{\beta}\mathfrak{c}}e^{-\Gamma\beta}\times\frac{1}{3L|A|(q-|A|)}=0\;.
\]
Since the proof for the case $n=L-2$ is identical, the proof is completed.
\end{proof}
Next, we turn to the divergences of $\psi$ on the edge typical configurations.
\begin{lem}
\label{l_div3}For all $\sigma\in\mathcal{O}^{A}\cup\mathcal{O}^{B}$,
we have $(\mathrm{div}\,\psi)(\sigma)=0$.
\end{lem}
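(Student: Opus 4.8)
The plan is to show that the test flow $\psi$ is divergence-free at every configuration $\sigma$ in $\mathcal{O}^A$ (and by symmetry in $\mathcal{O}^B$). Recall that $\mathcal{O}^A = \{\sigma \in \mathcal{E}^A : H(\sigma) = \Gamma\}$, so these are the maximal-energy edge typical configurations in $\mathcal{E}^A$. The starting point is the definition of $\psi$ on $\mathcal{E}^A$ from Definition \ref{d_tfl}: for $\sigma, \zeta \in \mathcal{E}^A$ with $\sigma \sim \zeta$ one has $\psi(\sigma,\zeta) = \frac{\mathfrak{e}_A}{Z_\beta \mathfrak{c}} e^{-\Gamma\beta}[\mathfrak{h}^A(\sigma) - \mathfrak{h}^A(\zeta)]$, and $\psi(\sigma,\zeta) = 0$ when $\{\sigma,\zeta\}$ is not a subset of $\mathcal{B}^{A,B}$, $\mathcal{E}^A$, or $\mathcal{E}^B$. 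So the first step is to check that for $\sigma \in \mathcal{O}^A$, every neighbor $\zeta$ of $\sigma$ with $\psi(\sigma,\zeta) \ne 0$ must itself lie in $\mathcal{E}^A$. This follows because $\mathcal{O}^A \subseteq \mathcal{E}^A$ and $\mathcal{O}^A \cap \mathcal{B}^{A,B}$ is empty for configurations with a cross, while any cross-free $\sigma \in \mathcal{O}^A$ adjacent to a bulk configuration would need that bulk configuration to have energy $\le \Gamma$ — one uses Proposition \ref{p_EAEB} to rule out $\mathcal{E}^B$ and the structure of $\mathcal{B}^{A,B}$ together with the fact that $f$ is designed so that such flows fall under the $\mathcal{E}^A$ branch; more cleanly, note that $\mathcal{E}^A \cap \mathcal{B}^{A,B} = \mathcal{N}(\mathcal{R}_2^{A,B})$ by Proposition \ref{p_typ}-(1) and on that overlap $\psi$ was checked to be consistently defined and to vanish appropriately, so the only surviving contributions to $(\mathrm{div}\,\psi)(\sigma)$ come from $\zeta \in \mathcal{E}^A$.

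Once this reduction is in place, I would write
\[
(\mathrm{div}\,\psi)(\sigma) = \sum_{\zeta \in \mathcal{X}} \psi(\sigma,\zeta) = \sum_{\zeta \in \mathcal{E}^A:\,\zeta \sim \sigma} \frac{\mathfrak{e}_A}{Z_\beta \mathfrak{c}} e^{-\Gamma\beta}\big[\mathfrak{h}^A(\sigma) - \mathfrak{h}^A(\zeta)\big]\;.
\]
The key observation is that this is precisely (up to the constant prefactor $\frac{\mathfrak{e}_A}{Z_\beta \mathfrak{c}} e^{-\Gamma\beta}$ and the detailed balance bookkeeping) the action of the generator $L^A$ of the Markov chain $Z^A(\cdot)$ applied to $\mathfrak{h}^A$ at the point $\Pi^A(\sigma) = \sigma$ — since $\sigma \in \mathcal{O}^A \subseteq \mathscr{V}^A$. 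Indeed, the rate $r^A(\sigma, \Pi^A(\zeta))$ counts exactly the number of neighbors in the relevant $\mathcal{N}$-class, and $\mathfrak{h}^A$ is constant on each such class, so
\[
\sum_{\zeta \in \mathcal{E}^A:\,\zeta \sim \sigma} \big[\mathfrak{h}^A(\sigma) - \mathfrak{h}^A(\zeta)\big] = \sum_{\sigma' \in \mathscr{V}^A} r^A(\sigma, \sigma')\big[\mathfrak{h}^A(\sigma) - \mathfrak{h}^A(\sigma')\big] = -(L^A \mathfrak{h}^A)(\sigma)\;.
\]
Now $\mathfrak{h}^A = h^A_{\mathcal{S}(A), \mathcal{R}_2^{A,B}}$ is the equilibrium potential of $Z^A(\cdot)$, hence harmonic on the complement of $\mathcal{S}(A) \cup \mathcal{R}_2^{A,B}$, i.e. $(L^A \mathfrak{h}^A)(\sigma) = 0$ for every $\sigma \in \mathscr{V}^A \setminus (\mathcal{S}(A) \cup \mathcal{R}_2^{A,B})$. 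Since $\sigma \in \mathcal{O}^A$ has energy $\Gamma > 0$, it is neither a ground state nor a regular configuration in $\mathcal{R}_2^{A,B}$ (those have energy $0$ and $2L$ respectively), so $\sigma \notin \mathcal{S}(A) \cup \mathcal{R}_2^{A,B}$ and therefore $(L^A \mathfrak{h}^A)(\sigma) = 0$, giving $(\mathrm{div}\,\psi)(\sigma) = 0$.

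The main obstacle — and the step deserving the most care — is the first reduction: verifying that no nonzero flow from $\sigma \in \mathcal{O}^A$ leaks into $\mathcal{B}^{A,B} \setminus \mathcal{E}^A$ or into $\mathcal{E}^B$. Ruling out $\mathcal{E}^B$ is immediate from Proposition \ref{p_EAEB} (the sets $\mathcal{E}^A$ and $\mathcal{E}^B$ are disjoint, and $\psi$ is supported on edges within a single one of the three pieces). For the bulk, one invokes that $\mathcal{E}^A \cap \mathcal{B}^{A,B} = \mathcal{N}(\mathcal{R}_2^{A,B})$: if $\sigma \in \mathcal{O}^A$ and $\zeta \in \mathcal{B}^{A,B}$ with $\sigma \sim \zeta$ and $\psi(\sigma,\zeta) \ne 0$, then the $\mathcal{B}^{A,B}$-branch of the definition of $\psi$ forces $f(\sigma) \ne f(\zeta)$, but then $\sigma$ must lie in $\overline{\mathcal{B}^{A,B}}$ in a way that, by the consistency check already recorded in Definitions \ref{d_tf} and \ref{d_tfl} on $\mathcal{N}(\mathcal{R}_2^{A,B})$, reduces to the $\mathcal{E}^A$ case. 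I would simply spell out that for $\sigma \in \mathcal{O}^A \setminus \mathcal{N}(\mathcal{R}_2^{A,B})$ — i.e. genuinely outside the bulk — every neighbor contributing to the divergence is in $\mathcal{E}^A$ and the generator argument applies verbatim, while for $\sigma \in \mathcal{O}^A \cap \mathcal{N}(\mathcal{R}_2^{A,B})$ both definitions of $\psi$ agree on the overlap and the same identity $-(L^A\mathfrak{h}^A)(\sigma)$ is obtained. The conclusion for $\mathcal{O}^B$ follows by the symmetry remarked at the end of Section \ref{sec72}.
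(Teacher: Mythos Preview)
Your proof is correct and follows essentially the same approach as the paper: reduce $(\mathrm{div}\,\psi)(\sigma)$ to $-\frac{\mathfrak{e}_A}{Z_\beta\mathfrak{c}}e^{-\Gamma\beta}(L^A\mathfrak{h}^A)(\sigma)$ and invoke the harmonicity of the equilibrium potential off $\mathcal{S}(A)\cup\mathcal{R}_2^{A,B}$. Your treatment of the ``leakage'' reduction is more careful than necessary: since every configuration in $\mathcal{N}(\mathcal{R}_2^{A,B})$ has energy at most $2L+1<\Gamma$, the intersection $\mathcal{O}^A\cap\mathcal{B}^{A,B}$ is empty, so the $\mathcal{B}^{A,B}$ branch of $\psi$ can never fire for $\sigma\in\mathcal{O}^A$ and your case analysis on the overlap is vacuous --- the paper simply writes ``by definition of $\psi$'' here.
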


\begin{proof}
We only consider the case $\sigma\in\mathcal{O}^{A}$ since the proof
for $\mathcal{O}^{B}$ is identical. By definition of $\psi$, we
can write 
\begin{align*}
(\mathrm{div}\,\psi)(\sigma)=\sum_{\zeta\in\mathcal{E}^{A}:\,\zeta\sim\sigma}\psi(\sigma,\,\zeta) & =\sum_{\zeta\in\mathcal{E}^{A}:\,\zeta\sim\sigma}\frac{\mathfrak{e}_{A}}{Z_{\beta}\mathfrak{c}}e^{-\Gamma\beta}\times\big[\,\mathfrak{h}^{A}(\sigma)-\mathfrak{h}^{A}(\zeta)\,\big]\\
 & =-\frac{\mathfrak{e}_{A}}{Z_{\beta}\mathfrak{c}}e^{-\Gamma\beta}\times(L^{A}\mathfrak{h}^{A})(\sigma)\;.
\end{align*}
Since $\mathcal{O}^{A}\subseteq\mathcal{E}^{A}\setminus(\mathcal{S}(A)\cup\mathcal{R}_{2}^{A,\,B})$,
we have $(L^{A}\mathfrak{h}^{A})(\sigma)=(L^{A}h_{\mathcal{S}(A),\,\mathcal{R}_{2}^{A,B}}^{A})(\sigma)=0$
by the elementary property of equilibrium potentials. This completes
the proof.
\end{proof}
\begin{lem}
\label{l_div4}For $a\in A$, $b\in B$, and $\sigma\in\mathcal{C}_{1,\,\scal{o}}^{a,\,b}\cup\mathcal{C}_{L-2,\,\scal{o}}^{a,\,b}$
with $|\mathfrak{p}^{a,\,b}(\sigma)|\in\llbracket3,\,2L-3\rrbracket$,
we have $(\mathrm{div}\,\psi)(\sigma)=0$.
\end{lem}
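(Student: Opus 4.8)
The plan is to mimic the divergence computations carried out in Lemma \ref{l_div1}-(3) and Lemma \ref{l_div2}, now specialized to the ``boundary'' regular levels $n=1$ and $n=L-2$. The key point is that the configuration $\sigma\in\mathcal{C}_{1,\,\scal{o}}^{a,\,b}$ (or $\mathcal{C}_{L-2,\,\scal{o}}^{a,\,b}$) with $|\mathfrak{p}^{a,\,b}(\sigma)|\in\llbracket3,\,2L-3\rrbracket$ is an edge typical configuration lying in $\mathcal{E}^{A}$ (resp.\ $\mathcal{E}^{B}$) by Remark \ref{r_typ}, but simultaneously its energy is $2L+1<\Gamma$, so it belongs to $\mathcal{I}^{A}$; its $\mathcal{N}$-neighbors all carry the same value of $\mathfrak{h}^{A}$. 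Hence the first thing I would record is that $\psi$ restricted to the pairs inside $\mathcal{E}^{A}$ is built from differences $\mathfrak{h}^{A}(\sigma)-\mathfrak{h}^{A}(\zeta)$, and since $\sigma$ and every neighbor $\zeta$ with $\zeta\in\mathcal{E}^{A}$ lie in the same $\mathcal{N}$-class (indeed $\sigma$ and its good-flip neighbors that stay in $\widehat{\mathcal{N}}(\mathcal{S})$ remain within a fixed $\mathcal{C}_{1,\scal{o}}^{a,b}$-stratum or move to a canonical configuration in the same class), those contributions vanish.

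Concretely, I would first invoke Lemma \ref{l_odd} to enumerate the good-flip neighbors $\zeta$ of $\sigma$: they satisfy $\zeta\in\mathcal{C}_{1,\,\scal{e}}^{a,\,b}$ or $\zeta\in\mathcal{Q}_{1}^{a,\,b}$ (the cases landing in $\mathcal{R}_{1}^{a,b}$, $\mathcal{R}_{2}^{a,b}$, or $\mathcal{P}^{a,b}$ are excluded once $|\mathfrak{p}^{a,b}(\sigma)|\in\llbracket3,\,2L-3\rrbracket$, exactly as in Lemma \ref{l_odd}'s ``in particular'' clause). For $\zeta\in\mathcal{Q}_{1}^{a,\,b}$, Lemma \ref{l_QQ}-(1) gives that the good-flip neighbor sends $\zeta$ back to $\sigma$, so by Definition \ref{d_tf}-(2) applied to the dead-end $\zeta$ one has $f(\zeta)=f(\sigma)$ and the flow $\psi(\sigma,\zeta)=0$. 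For $\zeta\in\mathcal{C}_{1,\,\scal{e}}^{a,\,b}$, I would plug in the explicit formula from Definition \ref{d_tf}-(2): both $\sigma$ (with $|\mathfrak{p}^{a,b}(\sigma)|=m\in\llbracket3,\,2L-3\rrbracket$, using the $\frac{5m-3}{2}$-formula) and its even-protuberance neighbor $\zeta$ (with $|\mathfrak{p}^{a,b}(\zeta)|=m\pm1$, using the $\frac{5(m\pm1)-3}{2}$-formula) lie at the \emph{same} level $n=1$, and the four neighbors contribute $f(\sigma)-f(\zeta)$ summing to $\frac{\mathfrak{b}}{\mathfrak{c}}\cdot\frac{\frac{5}{2}\times4-\frac{5}{2}\times4}{(5L-3)(L-4)}=0$, precisely the cancellation \eqref{e_div1} already used in Lemma \ref{l_div1}-(3). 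The only thing to check beyond Lemma \ref{l_div1}-(3) is that no neighbor lies in $\mathcal{E}^{A}$ via a cross: if $\zeta\in\mathcal{C}_{1,\,\scal{e}}^{a,\,b}$ is a good flip then $\zeta\sim\sigma$ and $\zeta$ is cross-free (it is a canonical configuration by Lemma \ref{l_even}'s reverse reading), so the $\psi$-contribution really is governed by the $f$-difference, not by $\mathfrak{h}^{A}$; and any $\xi\in\mathcal{O}^{A}$ adjacent to $\sigma$ would have $H(\xi)=\Gamma$ but $\sigma$ has $|\mathfrak{p}^{a,b}(\sigma)|\in\llbracket3,\,2L-3\rrbracket$ so such $\xi$ is in $\mathcal{C}_{1,\scal{e}}^{a,b}\cup\mathcal{Q}_1^{a,b}\subseteq\mathcal{B}^{A,B}$, not $\mathcal{O}^A$ in the sense requiring the $\mathfrak{h}^A$-part of $\psi$. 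The case $\sigma\in\mathcal{C}_{L-2,\,\scal{o}}^{a,\,b}$ is handled identically, using $\mathcal{E}^{B}$ in place of $\mathcal{E}^{A}$ and the symmetric formula in Definition \ref{d_tf}-(2).

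The main obstacle I anticipate is bookkeeping rather than substance: one must be careful that when $|\mathfrak{p}^{a,b}(\sigma)|=3$ or $2L-3$, the even-protuberance neighbor may have protuberance size $2$ or $2L-2$, for which Definition \ref{d_tf}-(2) uses the separate $\mathfrak{d}(\sigma)$-dependent formulas; one has to verify that the ``$\frac{5m-3}{2}$ at $m=3$'' value matches the ``$|\mathfrak{p}|=2$, connected'' value (both give the $f$-value corresponding to a shift of $3$ from the base), so that the telescoping still cancels. This is exactly the compatibility that was designed into the piecewise definition of $f$, so I would state it as an elementary check: the function $m\mapsto \frac{5m-3}{2}$ interpolates correctly with the special endpoint values $3$ (at $m=2$, connected) and the disconnected variant, ensuring $f$ is ``locally affine'' in the obvious sense along each stratum, whence $(\mathrm{div}\,\psi)(\sigma)=0$. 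Thus the lemma follows from Lemmas \ref{l_odd}, \ref{l_even}, \ref{l_QQ}, the explicit formulas of Definition \ref{d_tf}-(2), and the detailed-balance identity \eqref{e_detbal}, with no genuinely new estimate required.
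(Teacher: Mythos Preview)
Your approach has a genuine gap: you are applying the bulk formulas where they do not hold. The set $\mathcal{B}^{A,B}$ is $\bigcup_{n=2}^{L-3}\mathcal{C}_{n}^{a,b}\cup\mathcal{D}^{a,b}$, and $\mathcal{D}^{a,b}$ only contains $\mathcal{Q}_{n}^{a,b}$ for $n\in\llbracket2,L-3\rrbracket$. Thus neither $\sigma\in\mathcal{C}_{1,\scal{o}}^{a,b}$ nor its good-flip neighbors in $\mathcal{C}_{1,\scal{e}}^{a,b}\cup\mathcal{Q}_{1}^{a,b}$ belong to $\mathcal{B}^{A,B}$; by Remark~\ref{r_typ} they all lie in $\mathcal{E}^{A}$. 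Consequently the explicit ``$\frac{5m-3}{2}$''-formula of Definition~\ref{d_tf}-(2) does not define $f$ on these configurations, $\psi$ is not given by $\mu_{\beta}c_{\beta}[f(\sigma)-f(\zeta)]$ with that formula, and your claimed inclusion $\mathcal{C}_{1,\scal{e}}^{a,b}\cup\mathcal{Q}_{1}^{a,b}\subseteq\mathcal{B}^{A,B}$ is false. The telescoping cancellation \eqref{e_div1} from Lemma~\ref{l_div1}-(3) simply does not transfer.

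Your first paragraph is closer in spirit but also off: you assert that every neighbor $\zeta\in\mathcal{E}^{A}$ of $\sigma$ lies in the same $\mathcal{N}$-class as $\sigma$. This is false too, since those neighbors have energy $\Gamma$ and hence belong to $\mathcal{O}^{A}$, not to $\mathcal{N}(\sigma)=\{\sigma\}$; in general $\mathfrak{h}^{A}(\zeta)\ne\mathfrak{h}^{A}(\sigma)$. What the paper actually uses is that $\sigma$ is its own representative in $\mathcal{I}_{\textup{rep}}^{A}$, so by the definition of $\psi$ on $\mathcal{E}^{A}$ and of $r^{A}$,
\[
(\mathrm{div}\,\psi)(\sigma)=\sum_{\zeta\in\mathcal{O}^{A}:\,\zeta\sim\sigma}\frac{\mathfrak{e}_{A}}{Z_{\beta}\mathfrak{c}}e^{-\Gamma\beta}\big[\mathfrak{h}^{A}(\sigma)-\mathfrak{h}^{A}(\zeta)\big]=-\frac{\mathfrak{e}_{A}}{Z_{\beta}\mathfrak{c}}e^{-\Gamma\beta}(L^{A}\mathfrak{h}^{A})(\sigma)\,,
\]
and the right-hand side vanishes because $\mathfrak{h}^{A}=h_{\mathcal{S}(A),\mathcal{R}_{2}^{A,B}}^{A}$ is harmonic at $\sigma\notin\mathcal{S}(A)\cup\mathcal{R}_{2}^{A,B}$. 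The argument is thus the same as in Lemma~\ref{l_div3}, not Lemma~\ref{l_div1}-(3).
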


\begin{proof}
By symmetry, we may assume $\sigma\in\mathcal{C}_{1,\,\scal{o}}^{a,\,b}$
and $|\mathfrak{p}^{a,\,b}(\sigma)|\in\llbracket3,\,2L-3\rrbracket$.
Then as $\mathcal{N}(\sigma)=\{\sigma\}$, we may write
\begin{align*}
(\mathrm{div}\,\psi)(\sigma)=\sum_{\zeta\in\mathcal{E}^{A}:\,\zeta\sim\sigma}\psi(\sigma,\,\zeta) & =\sum_{\zeta\in\mathcal{O}^{A}:\,\zeta\sim\sigma}\frac{\mathfrak{e}_{A}}{Z_{\beta}\mathfrak{c}}e^{-\Gamma\beta}\times\big[\,\mathfrak{h}^{A}(\sigma)-\mathfrak{h}^{A}(\zeta)\,\big]\\
 & =-\frac{\mathfrak{e}_{A}}{Z_{\beta}\mathfrak{c}}e^{-\Gamma\beta}\times(L^{A}\mathfrak{h}^{A})(\sigma)\;.
\end{align*}
Since $\sigma\notin\mathcal{S}(A)\cup\mathcal{R}_{2}^{A,\,B}$, we
again have $(L^{A}\mathfrak{h}^{A})(\sigma)=(L^{A}h_{\mathcal{S}(A),\,\mathcal{R}_{2}^{A,B}}^{A})(\sigma)=0$
and the proof is completed.
\end{proof}
\begin{lem}
\label{l_div5}We have $(\mathrm{div}\,\psi)(\sigma)=0$ for all $\sigma\in\mathcal{N}(\zeta)$
with $\zeta\in\mathcal{I}_{\textup{rep}}^{A}\setminus(\mathcal{S}(A)\cup\mathcal{R}_{2}^{A,\,B}\cup\mathcal{C}_{1,\,\scal{o}}^{A,\,B})$.
\end{lem}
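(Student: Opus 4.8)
The plan is to show that every individual flow value $\psi(\sigma,\xi)$ with $\xi\sim\sigma$ already vanishes, so that the divergence is trivially zero. The starting point is Lemma \ref{l_edge3}: since $\sigma\in\mathcal{N}(\zeta)$ with $\zeta\in\mathcal{I}_{\textup{rep}}^{A}\setminus(\mathcal{S}(A)\cup\mathcal{R}_{2}^{A,B}\cup\mathcal{C}_{1,\scal{o}}^{A,B})$, the configuration $\sigma$ has an $a$-cross for some $a\in A$. First I would use this to identify which branch of the piecewise definition of $\psi$ (Definition \ref{d_tfl}) is active at $\sigma$: certainly $\sigma\in\mathcal{E}^{A}$, and by Proposition \ref{p_EAEB} we get $\sigma\notin\mathcal{E}^{B}$; moreover $\sigma\notin\mathcal{B}^{A,B}$, because $\sigma\in\mathcal{B}^{A,B}\cap\mathcal{E}^{A}=\mathcal{N}(\mathcal{R}_{2}^{A,B})$ (Proposition \ref{p_typ}-(1)) would force $\sigma\in\mathcal{N}(\xi')$ for some $\xi'\in\mathcal{R}_{2}^{A,B}\subseteq\mathcal{I}_{\textup{rep}}^{A}$, contradicting uniqueness of the representative $\zeta\notin\mathcal{R}_{2}^{A,B}$ of the $\mathcal{N}$-class of $\sigma$. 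Consequently, reading off Definition \ref{d_tfl}, for any $\xi\sim\sigma$ we have $\psi(\sigma,\xi)=\frac{\mathfrak{e}_{A}}{Z_{\beta}\mathfrak{c}}e^{-\Gamma\beta}[\mathfrak{h}^{A}(\sigma)-\mathfrak{h}^{A}(\xi)]$ whenever $\xi\in\mathcal{E}^{A}$, and $\psi(\sigma,\xi)=0$ otherwise.

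Next I would evaluate $\mathfrak{h}^{A}$ at $\sigma$ and at each of its neighbours lying in $\mathcal{E}^{A}$. Applying Lemma \ref{l_edge3} with $\sigma$ replaced by $\zeta\in\mathcal{N}(\zeta)$, the configuration $\zeta$ itself has an $a$-cross, so Lemma \ref{l_edge1} gives $\mathfrak{h}^{A}(\zeta)=h_{\mathcal{S}(A),\mathcal{R}_{2}^{A,B}}^{A}(\zeta)=1$, whence $\mathfrak{h}^{A}(\sigma)=1$ by the definition of the extension of $\mathfrak{h}^{A}$ to $\mathcal{E}^{A}$ in Notation \ref{n_92}-(1). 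Now fix $\xi\in\mathcal{E}^{A}$ with $\xi\sim\sigma$. If $H(\xi)\le2L+1$, then concatenating a $(2L+1)$-path from $\zeta$ to $\sigma$ with the single step $\sigma\sim\xi$ produces a $(2L+1)$-path from $\zeta$ to $\xi$, so $\xi\in\mathcal{N}(\zeta)$ and hence $\mathfrak{h}^{A}(\xi)=\mathfrak{h}^{A}(\zeta)=1$. If instead $H(\xi)=\Gamma=2L+2$, then $\xi\in\mathcal{O}^{A}$ by definition of $\mathcal{O}^{A}$, and the second assertion of Lemma \ref{l_edge3} shows that $\xi$ also has an $a$-cross, so again $\mathfrak{h}^{A}(\xi)=1$ by Lemma \ref{l_edge1}. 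In all cases $\mathfrak{h}^{A}(\sigma)-\mathfrak{h}^{A}(\xi)=0$, so $\psi(\sigma,\xi)=0$; summing over all $\xi$ yields $(\mathrm{div}\,\psi)(\sigma)=0$.

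I do not expect a serious obstacle: once the structural facts of Lemmas \ref{l_edge1} and \ref{l_edge3} are available, the computation is essentially a one-line argument (everything relevant sits at potential value $1$). The only point needing a bit of care is the bookkeeping carried out in the first paragraph — verifying, from the $a$-cross with $a\in A$, that $\sigma$ falls in the $\mathcal{E}^{A}$-branch of $\psi$ and in neither the $\mathcal{E}^{B}$- nor the $\mathcal{B}^{A,B}$-branch — together with the elementary observation that a neighbour of $\sigma$ with energy below $\Gamma$ automatically remains in the same $\mathcal{N}$-neighbourhood $\mathcal{N}(\zeta)$. Both are immediate from the definitions once spelled out.
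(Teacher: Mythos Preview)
Your proposal is correct and follows essentially the same approach as the paper: use Lemma~\ref{l_edge3} to obtain an $a$-cross for $\sigma$ and its relevant neighbours, then Lemma~\ref{l_edge1} to conclude $\mathfrak{h}^{A}\equiv 1$ on all of them, whence every term $\psi(\sigma,\xi)$ vanishes. Your write-up is in fact more careful than the paper's one-line version --- you explicitly verify that only the $\mathcal{E}^{A}$-branch of Definition~\ref{d_tfl} can be active at $\sigma$ (ruling out $\mathcal{B}^{A,B}$ and $\mathcal{E}^{B}$ via Propositions~\ref{p_typ} and~\ref{p_EAEB}), and you separately treat the low-energy neighbours $\xi$ by showing they stay in $\mathcal{N}(\zeta)$ rather than invoking Lemma~\ref{l_edge3} for them; both refinements are sound and make the argument cleaner.
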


\begin{proof}
For all $\xi\in\widehat{\mathcal{N}}(\mathcal{S})$ with $\sigma\sim\xi$,
by Lemma \ref{l_edge3}, both $\sigma$ and $\xi$ have an $a$-cross
for some $a\in A$. Therefore, we have by Lemma \ref{l_edge1} that
$\mathfrak{h}^{A}(\sigma)=\mathfrak{h}^{A}(\xi)=1$ and therefore
we have $\psi(\sigma,\,\xi)=0$. This concludes the proof.
\end{proof}
\begin{lem}
\label{l_div6}We have
\[
\sum_{\sigma\in\mathcal{N}(\mathcal{S}(A))}(\mathrm{div}\,\psi)(\sigma)=\frac{1}{Z_{\beta}\mathfrak{c}}e^{-\Gamma\beta}\;\;\;\;\text{and}\;\;\;\;\sum_{\sigma\in\mathcal{N}(\mathcal{S}(B))}(\mathrm{div}\,\psi)(\sigma)=-\frac{1}{Z_{\beta}\mathfrak{c}}e^{-\Gamma\beta}\;.
\]
\end{lem}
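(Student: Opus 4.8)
The statement asserts a global conservation-type identity: the total divergence of the test flow $\psi$ over $\mathcal{N}(\mathcal{S}(A))$ equals $\frac{1}{Z_\beta\mathfrak{c}}e^{-\Gamma\beta}$, and symmetrically with a minus sign over $\mathcal{N}(\mathcal{S}(B))$. The plan is to exploit the fact that $\psi$ is a flow whose only nonzero divergence, among \emph{all} configurations, is concentrated on $\mathcal{N}(\mathcal{S}(A))\cup\mathcal{N}(\mathcal{S}(B))$. Indeed, Lemmas \ref{l_div1}, \ref{l_div2}, \ref{l_div3}, \ref{l_div4}, and \ref{l_div5} together show that $(\mathrm{div}\,\psi)(\sigma)=0$ for every bulk configuration outside $\bigcup_{n}\mathcal{N}(\mathcal{R}_n^{A,B})$, that $\sum_{\sigma\in\mathcal{N}(\zeta)}(\mathrm{div}\,\psi)(\sigma)=0$ for every $\zeta\in\mathcal{R}_n^{A,B}$ with $n\in\llbracket 2,L-2\rrbracket$, and that $(\mathrm{div}\,\psi)(\sigma)=0$ on all edge-typical configurations except those in $\mathcal{N}(\mathcal{S}(A))\cup\mathcal{N}(\mathcal{S}(B))$. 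Finally, on $\widehat{\mathcal{N}}(\mathcal{S})^c$ the flow is identically zero by Definition \ref{d_tfl}, so the divergence vanishes there too.

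\textbf{Key steps.} First I would invoke the global identity $\sum_{\sigma\in\mathcal{X}}(\mathrm{div}\,\psi)(\sigma)=0$, which holds for any flow by antisymmetry: each pair $\{\sigma,\zeta\}$ contributes $\psi(\sigma,\zeta)+\psi(\zeta,\sigma)=0$ to the double sum. Second, using Proposition \ref{p_typ}-(2) to write $\mathcal{X}=\widehat{\mathcal{N}}(\mathcal{S})\cup\widehat{\mathcal{N}}(\mathcal{S})^c$ and then decomposing $\widehat{\mathcal{N}}(\mathcal{S})=\mathcal{E}^{A,B}\cup\mathcal{B}^{A,B}$ according to Definition \ref{d_typ}, I would collect all the vanishing-divergence results cited above to reduce $\sum_{\sigma\in\mathcal{X}}(\mathrm{div}\,\psi)(\sigma)$ to $\sum_{\sigma\in\mathcal{N}(\mathcal{S}(A))}(\mathrm{div}\,\psi)(\sigma)+\sum_{\sigma\in\mathcal{N}(\mathcal{S}(B))}(\mathrm{div}\,\psi)(\sigma)=0$. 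This already gives the second identity once the first is established. Third, to pin down the first sum I would compute $\sum_{\sigma\in\mathcal{N}(\mathbf{a})}(\mathrm{div}\,\psi)(\sigma)$ for a fixed $\mathbf{a}\in\mathcal{S}(A)$ directly: by definition of $\psi$ on $\mathcal{E}^A$ this sum telescopes through the Markov chain $Z^A(\cdot)$, giving $-\frac{\mathfrak{e}_A}{Z_\beta\mathfrak{c}}e^{-\Gamma\beta}\sum_{\xi}r^A(\mathbf{a},\xi)[\mathfrak{h}^A(\mathbf{a})-\mathfrak{h}^A(\xi)]$, which by the definition of the generator $L^A$ equals $-\frac{\mathfrak{e}_A}{Z_\beta\mathfrak{c}}e^{-\Gamma\beta}(L^A\mathfrak{h}^A)(\mathbf{a})$. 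Since $\mathfrak{h}^A=h^A_{\mathcal{S}(A),\mathcal{R}_2^{A,B}}$ is the equilibrium potential, $\sum_{\mathbf{a}\in\mathcal{S}(A)}(L^A\mathfrak{h}^A)(\mathbf{a})=|\mathscr{V}^A|\,\mathrm{cap}^A(\mathcal{S}(A),\mathcal{R}_2^{A,B})=\frac{1}{\mathfrak{e}_A}$ by the standard capacity identity (as already used in \eqref{eLh}), so that $\sum_{\sigma\in\mathcal{N}(\mathcal{S}(A))}(\mathrm{div}\,\psi)(\sigma)=-\frac{\mathfrak{e}_A}{Z_\beta\mathfrak{c}}e^{-\Gamma\beta}\cdot(-\frac{1}{\mathfrak{e}_A})=\frac{1}{Z_\beta\mathfrak{c}}e^{-\Gamma\beta}$; here one must be careful with the sign convention of $L^A$ and whether $\mathfrak{h}^A=1$ or $0$ on $\mathcal{S}(A)$, which determines the sign of the outgoing flux.

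\textbf{Main obstacle.} The delicate point is the bookkeeping of which configurations in $\mathcal{N}(\mathbf{a})$ actually carry nonzero flux and verifying that the telescoping is legitimate. Specifically, a configuration $\sigma\in\mathcal{N}(\mathbf{a})$ may be adjacent both to other configurations in $\mathcal{E}^A$ (contributing through the $\mathcal{E}^A$-branch of $\psi$) and — if $\sigma\in\mathcal{N}(\mathcal{R}_2^{A,B})$ via the overlap $\mathcal{E}^A\cap\mathcal{B}^{A,B}=\mathcal{N}(\mathcal{R}_2^{A,B})$ — to configurations in $\mathcal{B}^{A,B}\setminus\mathcal{E}^A$ through the $\mathcal{B}^{A,B}$-branch; but by the well-definedness discussion in Definition \ref{d_tfl} these $\mathcal{B}^{A,B}$-branch contributions vanish on $\mathcal{N}(\mathcal{R}_2^{A,B})$, and moreover $\mathcal{R}_2^{A,B}\notin\mathcal{S}(A)$ so $\mathcal{N}(\mathcal{R}_2^{A,B})$ is a distinct term already handled by Lemma \ref{l_div2}. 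So the only subtlety is to confirm that $\mathcal{N}(\mathbf{a})$ for $\mathbf{a}\in\mathcal{S}(A)$ is represented as a single vertex $\mathbf{a}\in\mathcal{I}_{\mathrm{rep}}^A$ (which was arranged in Section \ref{sec72}) and that the flux out of $\mathcal{N}(\mathbf{a})$ through $\psi$ is exactly $-\frac{\mathfrak{e}_A}{Z_\beta\mathfrak{c}}e^{-\Gamma\beta}(L^A\mathfrak{h}^A)(\mathbf{a})$ with no stray contributions from edges into $\mathcal{B}^{A,B}\setminus\mathcal{E}^A$ or into $\widehat{\mathcal{N}}(\mathcal{S})^c$ — the latter being immediate since $\psi\equiv 0$ there. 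Once this is checked the computation is routine, and the identity for $\mathcal{N}(\mathcal{S}(B))$ follows either by the same computation (with the opposite sign built into the definition of $\psi$ on $\mathcal{E}^B$) or, more cleanly, from the global vanishing of $\sum_{\sigma\in\mathcal{X}}(\mathrm{div}\,\psi)(\sigma)$.
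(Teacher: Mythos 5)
Your approach is essentially the paper's: compute $\sum_{\sigma\in\mathcal{N}(\mathcal{S}(A))}(\mathrm{div}\,\psi)(\sigma)$ by rewriting the boundary flux as $-\frac{\mathfrak{e}_A}{Z_\beta\mathfrak{c}}e^{-\Gamma\beta}\sum_{\mathbf{a}\in\mathcal{S}(A)}(L^A\mathfrak{h}^A)(\mathbf{a})$ and then invoking the standard capacity identity together with \eqref{e_eA}. The one difference is that you propose to deduce the $\mathcal{N}(\mathcal{S}(B))$ identity from global conservation $\sum_{\sigma\in\mathcal{X}}(\mathrm{div}\,\psi)(\sigma)=0$ plus Lemmas \ref{l_div1}--\ref{l_div5}, whereas the paper simply runs the symmetric computation with $L^B$; both are fine, and your route is arguably cleaner since it reuses work already done. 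One caution: your sign bookkeeping is internally inconsistent. You assert both $\sum_{\mathbf{a}\in\mathcal{S}(A)}(L^A\mathfrak{h}^A)(\mathbf{a})=+1/\mathfrak{e}_A$ and, two lines later, substitute $-1/\mathfrak{e}_A$; similarly the prefactor you attach to $\sum_\xi r^A(\mathbf{a},\xi)[\mathfrak{h}^A(\mathbf{a})-\mathfrak{h}^A(\xi)]$ carries a spurious minus. With the standard convention $L^A g(\sigma)=\sum_\zeta r^A(\sigma,\zeta)[g(\zeta)-g(\sigma)]$ and $\mathfrak{h}^A\equiv 1$ on $\mathcal{S}(A)$, the flux out of $\mathcal{N}(\mathcal{S}(A))$ is $+\frac{\mathfrak{e}_A}{Z_\beta\mathfrak{c}}e^{-\Gamma\beta}\sum_\zeta r^A(\mathbf{a},\zeta)[\mathfrak{h}^A(\mathbf{a})-\mathfrak{h}^A(\zeta)]=-\frac{\mathfrak{e}_A}{Z_\beta\mathfrak{c}}e^{-\Gamma\beta}(L^A\mathfrak{h}^A)(\mathbf{a})$, and $\sum_{\mathbf{a}\in\mathcal{S}(A)}(L^A\mathfrak{h}^A)(\mathbf{a})=-|\mathscr{V}^A|\mathrm{cap}^A(\mathcal{S}(A),\mathcal{R}_2^{A,B})=-1/\mathfrak{e}_A$, which is consistent with \eqref{eLh} (there the sum is over $\mathcal{R}_2^{A,B}$ where $\mathfrak{h}^A\equiv 0$, hence the opposite sign). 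Your two sign errors cancel, so the final answer is right, but the intermediate steps need to be fixed before this is a proof.
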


\begin{proof}
We focus only on the first one since the proof for the second one
is identical. As in the previous proof, we can write 
\begin{align}
\sum_{\sigma\in\mathcal{N}(\mathcal{S}(A))}(\mathrm{div}\,\psi)(\sigma) & =\sum_{\sigma\in\mathcal{S}(A)}\sum_{\zeta\in\mathcal{E}^{A}:\,\zeta\sim\sigma}\frac{\mathfrak{e}_{A}}{Z_{\beta}\mathfrak{c}}e^{-\Gamma\beta}\times\big[\,\mathfrak{h}^{A}(\sigma)-\mathfrak{h}^{A}(\zeta)\,\big]\nonumber \\
 & =-\frac{\mathfrak{e}_{A}}{Z_{\beta}\mathfrak{c}}e^{-\Gamma\beta}\times\sum_{\sigma\in\mathcal{S}(A)}(L^{A}\mathfrak{h}^{A})(\sigma)\;.\label{eha2}
\end{align}
By the same reasoning with \eqref{eLh}, we have that 
\[
\sum_{\sigma\in\mathcal{S}(A)}(L^{A}\mathfrak{h}^{A})(\sigma)=-|\mathscr{V}^{A}|\mathrm{cap}^{A}\big(\,\mathcal{S}(A),\,\mathcal{R}_{2}^{A,\,B}\,\big)=-\frac{1}{\mathfrak{e}_{A}}\;,
\]
and injecting this to \eqref{eha2} completes the proof.
\end{proof}
Since we only have the control on the summation of divergences in
the $\mathcal{N}$-neighborhoods of ground states or regular configurations,
we need the following flatness result on the equilibrium potential
$h_{\mathcal{S}(A),\,\mathcal{S}(B)}^{\beta}$ on these $\mathcal{N}$-neighborhoods
to control the summation at the left-hand side of \eqref{e_tfl2}. 
\begin{lem}
\label{l_eqp}There exists $C>0$ such that the following results
hold. 
\begin{enumerate}
\item For $\mathbf{s}\in\mathcal{S}$ and $\sigma\in\mathcal{N}(\mathbf{s})$,
denote by $N_{\sigma}$ the shortest length of $(\Gamma-1)$-paths
connecting $\mathbf{s}$ and $\sigma$. Then, it holds that 
\begin{equation}
\big|\,h_{\mathcal{S}(A),\,\mathcal{S}(B)}^{\beta}(\sigma)-h_{\mathcal{S}(A),\,\mathcal{S}(B)}^{\beta}(\mathbf{s})\,\big|\le CN_{\sigma}e^{-\beta}\;.\label{e_eqp}
\end{equation}
\item For all $n\in\llbracket2,\,L-2\rrbracket$ and $\zeta\in\mathcal{R}_{n}^{A,\,B}$,
it holds that 
\[
\max_{\sigma\in\mathcal{N}(\zeta)}\big|\,h_{\mathcal{S}(A),\,\mathcal{S}(B)}^{\beta}(\sigma)-h_{\mathcal{S}(A),\,\mathcal{S}(B)}^{\beta}(\zeta)\,\big|\le CL^{2}e^{-\beta}\;.
\]
\end{enumerate}
\end{lem}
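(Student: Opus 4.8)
The plan is to exploit the reversibility of the Metropolis dynamics together with the fact that, inside $\mathcal{N}(\mathbf{s})$ and $\mathcal{N}(\zeta)$, the equilibrium potential $h = h_{\mathcal{S}(A),\,\mathcal{S}(B)}^{\beta}$ is nearly harmonic with only a tiny ``leakage'' rate out of these neighborhoods, so its oscillation is controlled by the number of steps times the escape probability per step, which is $O(e^{-\beta})$. Concretely, for part (1), fix $\mathbf{s}\in\mathcal{S}(A)$ (the case $\mathbf{s}\in\mathcal{S}(B)$ being symmetric by replacing $h$ with $1-h$) and fix $\sigma\in\mathcal{N}(\mathbf{s})$. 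Let $(\omega_{n})_{n=0}^{N_{\sigma}}$ be a shortest $(\Gamma-1)$-path from $\mathbf{s}$ to $\sigma$; all the $\omega_n$ lie in $\mathcal{N}(\mathbf{s})$ and have energy $\le \Gamma-1$. I would write $h(\sigma)-h(\mathbf{s}) = \sum_{n=0}^{N_\sigma-1}[h(\omega_{n+1})-h(\omega_n)]$ and bound each increment. For this, the key observation is that if $\omega_n$ has energy $\le \Gamma - 1 = 2L+1$, then any configuration $\eta$ with $\omega_n\sim\eta$ and $H(\eta) > H(\omega_n)$ satisfies $c_\beta(\omega_n,\eta) \le e^{-\beta}$, while the neighbors with $H(\eta)\le H(\omega_n)$ have rate $1$. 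Writing the harmonicity identity $\sum_{\eta\sim\omega_n} c_\beta(\omega_n,\eta)[h(\eta)-h(\omega_n)] = 0$ (valid since $\omega_n\notin\mathcal{S}$), splitting the sum into $\eta=\omega_{n+1}$ versus the rest, and using $|h(\eta)-h(\omega_n)|\le 1$ together with the fact that $\omega_n$ has $O(L^2)$ neighbors --- but in fact only $O(1)$ of them raise the energy beyond $\Gamma-1$ (good flips keep energy $\le 2L+2$; one must be slightly careful, but the dangerous direction is always a flip that raises energy by at least $1$ above $\Gamma-1$, hence has rate $\le e^{-\beta}$, and there are $O(L^2)$ such flips, which is harmless since $L^2 e^{-\beta} = o(1)$ is not needed --- only $L^2 e^{-\beta}$ accumulated over a path of length $O(L)$, giving $O(L^3 e^{-\beta})$, which is $o(1)$ under $L^{10}\ll e^\beta$). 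A cleaner route: bound $|h(\omega_{n+1})-h(\omega_n)| \le \frac{1}{c_\beta(\omega_n,\omega_{n+1})}\sum_{\eta\ne\omega_{n+1}} c_\beta(\omega_n,\eta)|h(\eta)-h(\omega_n)|$; since the path is decreasing-or-equal-energy in at least one traversal direction we may assume $c_\beta(\omega_n,\omega_{n+1})\ge e^{-\beta}$... actually the simplest is to just note both endpoints of the edge have energy $\le 2L+1$, so the flip changes energy by at most... hmm. I would instead use the standard estimate: for a reversible chain, $|h(x)-h(y)|$ along an edge is at most the probability that the chain started at $x$ exits the ``safe region'' before returning, which per edge-step is $O(e^{-\beta})$ because escaping requires crossing an energy barrier of height $\ge 1$ above the current level. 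Summing over the $N_\sigma$ edges gives $|h(\sigma)-h(\mathbf{s})|\le C N_\sigma e^{-\beta}$.

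For part (2), I would combine part (1) with the structural results of Section \ref{Sec6_EB}. Fix $\zeta=\xi_{\scal{s}(P)}^{a,b}\in\mathcal{R}_n^{a,b}$ with $a\in A$, $b\in B$, $n\in\llbracket 2,L-2\rrbracket$, and $\sigma\in\mathcal{N}(\zeta)$. By Lemma \ref{l_reg}, $\mathcal{N}(\zeta)\setminus\{\zeta\}$ consists of canonical configurations: either $\sigma\in\mathcal{C}_{n,\scal{o}}^{a,b}$ with $|\mathfrak{p}^{a,b}(\sigma)|=1$ or $\sigma\in\mathcal{C}_{n-1,\scal{o}}^{a,b}$ with $|\mathfrak{p}^{a,b}(\sigma)|=2L-1$; in either case there is an explicit $(\Gamma-1)$-path of length $O(1)$ (a single flip, or at most two flips) from $\sigma$ to $\zeta$ staying in $\mathcal{N}(\zeta)$ and with energy $\le 2L+1$ throughout. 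Wait --- actually the neighborhood $\mathcal{N}(\zeta)$ is the $(2L+1)$-path-connected component, so $\sigma$ is connected to $\zeta$ by \emph{some} $(2L+1)$-path but it need not be short a priori. However, such $\sigma$ differs from $\zeta$ by a single spin flip in a bridge-to-semibridge direction, so $\mathcal{N}(\zeta)$ is small: it is exactly $\{\zeta\}\cup\{$configurations obtained from $\zeta$ by one good flip and their mutual $(2L+1)$-connections$\}$, and by Lemma \ref{l_reg} and \ref{l_odd} this set has $O(L)$ elements, all within $(2L+1)$-path-distance $O(L)$ of $\zeta$. So applying part (1) with $\mathbf{s}$ replaced by... no, $\zeta$ is not a ground state. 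The right move is to prove a version of part (1) with $\mathbf{s}$ replaced by any $\zeta\in\mathcal{R}_n^{A,B}$: the same harmonicity argument works since every configuration in $\mathcal{N}(\zeta)$ has energy $\le 2L+1$, and the path length inside $\mathcal{N}(\zeta)$ is $O(L)$ (from the explicit description of $\mathcal{N}(\zeta)$ via Lemmas \ref{l_reg}, \ref{l_odd}: to reach any $\sigma\in\mathcal{N}(\zeta)$ from $\zeta$ one grows/shrinks a single protuberance within one strip, taking $\le 2L$ steps). Hence $\max_{\sigma\in\mathcal{N}(\zeta)}|h(\sigma)-h(\zeta)|\le C\cdot O(L)\cdot e^{-\beta} \le CL^2 e^{-\beta}$ (the $L^2$ is a generous bound).

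\textbf{Main obstacle.} The delicate point is the per-edge estimate $|h(\omega_{n+1})-h(\omega_n)| = O(e^{-\beta})$ and, relatedly, making rigorous that along a $(\Gamma-1)$-path the dynamics can only ``leak'' into $\widehat{\mathcal{N}}(\mathcal{S})^c$ (or, more precisely, into the half of the state space controlled by $1-h$) at a cost $e^{-\beta}$ per step. The subtlety is that from a configuration of energy exactly $2L+1$ there can be $O(L^2)$ neighbors of energy $2L+2$ or higher, so naively $\sum_{\eta\ne\omega_{n+1}}c_\beta(\omega_n,\eta)|h(\eta)-h(\omega_n)|$ could be as large as $O(L^2 e^{-\beta})$ per edge, giving $O(L^3 e^{-\beta})$ over the whole path --- which is still $o(1)$ under $L^{10}\ll e^\beta$ but is worse than the claimed $O(N_\sigma e^{-\beta})$. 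To recover the sharp $O(N_\sigma e^{-\beta})$ bound stated in part (1), I would argue more carefully: $h$ is \emph{constant equal to $h(\mathbf{s})$ up to $O(e^{-\beta})$} on the whole of $\widehat{\mathcal{N}}(\mathbf{s})$-type region because the exit to high-energy configurations, while having many available flips, returns to low energy before reaching a competing ground state with overwhelming probability; quantitatively, from $\omega_n$ the chain's probability of reaching $\mathcal{S}(B)$ before $\mathcal{S}(A)$ differs from that of $\omega_{n+1}$ by at most the probability of a single ``uphill'' excursion succeeding, which is $O(e^{-\beta})$ uniformly. If the sharp constant proves troublesome, note that for the application in Proposition \ref{p_tfl2} the weaker bound $|h(\sigma)-h(\mathbf{s})|\le CL^3 e^{-\beta}=o_L(1)$ would also suffice (this is why the statement is flexible), but the cleanest writeup establishes $|h(\sigma)-h(\mathbf{s})|\le CN_\sigma e^{-\beta}$ directly via the excursion bound. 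I expect the harmonicity-plus-reversibility excursion estimate to be the one genuinely non-routine step; the rest is bookkeeping using Lemmas \ref{l_reg}--\ref{l_even}.
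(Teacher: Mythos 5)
Your approach is genuinely different from the paper's and, as you yourself acknowledge, it has a gap that you do not close.

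The paper obtains Lemma \ref{l_eqp} through the standard renewal estimate (cf.\ \cite[Lemma 8.4]{BdH}): one writes, for $\mathbf{s}=\mathbf{a}\in\mathcal{S}(A)$ and $\sigma\in\mathcal{N}(\mathbf{a})$,
\[
\bigl|\,h_{\mathcal{S}(A),\,\mathcal{S}(B)}^{\beta}(\sigma)-1\,\bigr|=\mathbb{P}_{\sigma}^{\beta}\bigl[\tau_{\mathcal{S}(B)}<\tau_{\mathcal{S}(A)}\bigr]\le\mathbb{P}_{\sigma}^{\beta}\bigl[\tau_{\mathcal{S}(B)}<\tau_{\mathbf{a}}\bigr]\le\frac{\mathrm{Cap}_{\beta}(\sigma,\,\mathcal{S}(B))}{\mathrm{Cap}_{\beta}(\sigma,\,\mathbf{a})}\;,
\]
and then bounds the numerator from above by a Dirichlet test function and the denominator from below by a Thomson test flow supported on the $(\Gamma-1)$-path of length $N_{\sigma}$; it is this Thomson lower bound $\mathrm{Cap}_{\beta}(\sigma,\,\mathbf{a})\gtrsim e^{-(\Gamma-1)\beta}/N_{\sigma}$ that produces the factor $N_{\sigma}$ in \eqref{e_eqp}. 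Your proposal instead telescopes $h$ along the $(\Gamma-1)$-path and tries to prove a per-edge bound $|h(\omega_{n+1})-h(\omega_{n})|=O(e^{-\beta})$ via the discrete harmonicity of $h$ at $\omega_n$. That per-edge estimate is exactly the hard point, and you never establish it; your several ``cleaner route'' sketches all circle back to the same missing step, and the heuristic you appeal to (that escaping to higher energy and reaching a competing ground state costs $O(e^{-\beta})$ uniformly per edge) is essentially a restatement of the claim.

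Moreover, the fallback you offer is incorrect. You claim that $|h(\sigma)-h(\mathbf{s})|\le CL^{3}e^{-\beta}=o_{L}(1)$ would suffice for the application in Proposition \ref{p_tfl2}. It would not: that proposition multiplies the estimate from part (1), combined with Lemma \ref{l_eqp2} ($N_{\sigma}<4L$), against the divergence sum $\sum_{\sigma\in\mathcal{N}(\mathbf{s})}|(\mathrm{div}\,\psi)(\sigma)|=O(L^{9}e^{-\Gamma\beta})$ from Lemma \ref{l_divsum}, and needs the product to be $o_{L}(e^{-\Gamma\beta})$. With the stated bound one gets $O(Le^{-\beta})\cdot O(L^{9}e^{-\Gamma\beta})=O(L^{10}e^{-\beta})\,e^{-\Gamma\beta}$, which vanishes precisely because $L^{10}\ll e^{\beta}$; with your weaker $O(L^{3}e^{-\beta})$ one would get $O(L^{12}e^{-\beta})\,e^{-\Gamma\beta}$, which is \emph{not} $o_{L}(e^{-\Gamma\beta})$ under the hypothesis $L^{10}\ll e^{\beta}$. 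So the constant matters, and you need to prove the lemma as stated. I recommend switching to the renewal-plus-capacity route the paper cites; the path you constructed then reappears, but as the support of a Thomson test flow for the lower bound $\mathrm{Cap}_{\beta}(\sigma,\mathbf{s})\gtrsim e^{-(\Gamma-1)\beta}/N_{\sigma}$, which is where $N_{\sigma}$ belongs.
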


The proof of this lemma follows from a well-known standard renewal
argument (cf. \cite[Lemma 8.4]{BdH}) along with rough estimate of
capacities based on the Dirichlet--Thomson principles. Moreover,
the proof is identical to \cite[Lemmas 10.4 and 16.5]{KS2}. Thus,
we omit the detail of the proof. The reason why we have the $L^{2}$-term
in the right-hand side of part (2) comes from explicit computation,
i.e., the number of pairs of configurations $(\xi_{1},\,\xi_{2})$
with $\xi_{1}\in\mathcal{N}(\zeta)$, $\xi_{2}\notin\mathcal{N}(\zeta)$,
and $\xi_{1}\sim\xi_{2}$. 

We next control the factor $N_{\sigma}$ appeared in \eqref{e_eqp}.
Note that this quantitative result was not needed in small volume
regime. 
\begin{lem}
\label{l_eqp2}In the notation of Lemma \ref{l_eqp} with $\mathbf{s}=\mathbf{a}$
for some $a\in A$, we have $N_{\sigma}<4L$ if $(\mathrm{div}\,\psi)(\sigma)\ne0$.
\end{lem}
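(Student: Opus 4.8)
The plan is to trace through all the cases identified in the divergence computation (Lemmas \ref{l_div1}--\ref{l_div6}) and check which configurations $\sigma \in \mathcal{N}(\mathbf{a})$ actually carry nonzero divergence, then bound the length $N_\sigma$ of a shortest $(\Gamma-1)$-path from $\mathbf{a}$ to $\sigma$ in each such case. From the preceding lemmas, $(\mathrm{div}\,\psi)(\sigma) = 0$ unless $\sigma$ lies in $\mathcal{N}(\mathcal{R}_n^{A,B})$ for some $n \in \llbracket 2, L-2 \rrbracket$, or $\sigma \in \mathcal{N}(\mathcal{S}(A)) \cup \mathcal{N}(\mathcal{S}(B))$, or $\sigma$ is of the "edge" type close to a ground state. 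Restricting to $\sigma \in \mathcal{N}(\mathbf{a})$, the relevant cases are: $\sigma = \mathbf{a}$ itself (where $N_\sigma = 0$); $\sigma \in \mathcal{C}_{1,\scal{o}}^{a,b} \cap \mathcal{N}(\mathbf{a})$ with $|\mathfrak{p}^{a,b}(\sigma)| = 1$, arising in Case 2 of the proof of Lemma \ref{l_div2} (these are the configurations contributing via \eqref{ediv1}); and configurations in $\mathcal{N}(\mathbf{a})$ adjacent to some $\zeta \in \mathcal{O}^A$ with $\mathfrak{h}^A(\zeta) \neq 1$, which is exactly the setting of Lemma \ref{l_edge2}.

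First I would dispense with the trivial cases: if $\sigma \in \mathcal{R}_1^{a,b}$ or $\sigma \in \mathcal{C}_{1,\scal{o}}^{a,b}$ with $|\mathfrak{p}^{a,b}(\sigma)| = 1$, then by Remarks \ref{r_can} and \ref{r_canpath} the canonical path from $\mathbf{a}$ to $\sigma$ is a $(\Gamma-1)$-path (indeed energy at most $2L+1 = \Gamma - 1$) of length $2L$ or $2L+1$, so $N_\sigma \le 2L+1 < 4L$. This already covers the contributions appearing in \eqref{ediv1} and in the $\mathcal{N}(\mathcal{S}(A))$ sum of Lemma \ref{l_div6} that come from $\mathcal{C}_{1,\scal{o}}^{a,b}$. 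The remaining case is when $\sigma \in \mathcal{N}(\mathbf{a})$ is adjacent to some $\zeta \in \mathcal{O}^A$ with $h^A_{\mathcal{S}(A), \mathcal{R}_2^{A,B}}(\zeta) \neq 1$, which is the only way a configuration in $\mathcal{N}(\mathbf{a})$ can have $\psi(\sigma, \cdot) \neq 0$ flowing toward $\mathcal{O}^A$. But this is precisely the hypothesis of Lemma \ref{l_edge2}(1), which directly asserts the existence of a $(\Gamma-1)$-path from $\sigma$ to $\mathbf{a}$ of length less than $4L$; hence $N_\sigma < 4L$ in this case as well.

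The main work, then, is just to verify that the list of cases is exhaustive — i.e., that every $\sigma \in \mathcal{N}(\mathbf{a})$ with $(\mathrm{div}\,\psi)(\sigma) \neq 0$ falls into one of the two families above. This is a bookkeeping exercise: run through Lemmas \ref{l_div1}, \ref{l_div3}, \ref{l_div4}, \ref{l_div5} to confirm they contribute zero on $\mathcal{N}(\mathbf{a})$, and observe that the only surviving contributions inside $\mathcal{N}(\mathbf{a})$ are the two handled above. I expect the only mild subtlety to be making sure that a configuration $\sigma \in \mathcal{N}(\mathbf{a})$ with nonzero divergence cannot be, say, a deeper regular or canonical configuration $\mathcal{R}_n^{A,B}$ with $n \ge 2$ while still lying in $\mathcal{N}(\mathbf{a})$; by Lemma \ref{l_reg} (specifically the characterization $\mathcal{N}(\mathcal{R}_n^{a,b}) \setminus \mathcal{R}_n^{a,b}$) and the fact that $\mathbf{a} = \xi_{\scal{h}(\emptyset)}^{a,b}$ forces $n \le 1$, this does not happen. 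So the proof amounts to: reduce to the two cases, invoke the canonical-path bound in the first and Lemma \ref{l_edge2}(1) in the second, and conclude $N_\sigma < 4L$ throughout.
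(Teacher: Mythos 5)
Your proposal arrives at the right place — Lemma \ref{l_edge2}(1) is indeed the key ingredient, and your reduction of the substantive case to its hypothesis is exactly right — but the route is more roundabout than the paper's, and a couple of intermediate references are misplaced. The paper's proof is a one-step computation: for $\sigma\in\mathcal{N}(\mathbf{a})$ we have $\mathfrak{h}^A(\sigma)=1$, and by the definition of $\psi$ the only neighbors $\zeta$ of $\sigma$ with $\psi(\sigma,\zeta)\ne 0$ lie in $\mathcal{O}^A$ (since $\mathcal{N}(\mathbf{a})\subseteq\mathcal{I}^A\subseteq\mathcal{E}^A$, a neighbor with energy below $\Gamma$ would share the same representative $\mathbf{a}$, and $\mathcal{N}(\mathbf{a})$ is disjoint from $\mathcal{B}^{A,B}$ and $\mathcal{E}^B$). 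This yields
\[
(\mathrm{div}\,\psi)(\sigma)=\sum_{\zeta\in\mathcal{O}^A:\,\zeta\sim\sigma}\frac{\mathfrak{e}_A}{Z_\beta\mathfrak{c}}e^{-\Gamma\beta}\big[1-\mathfrak{h}^A(\zeta)\big]\;,
\]
which is nonzero precisely when some $\mathcal{O}^A$-neighbor $\zeta$ has $\mathfrak{h}^A(\zeta)\ne 1$. That is exactly the hypothesis of Lemma \ref{l_edge2}(1), and the bound $N_\sigma<4L$ follows immediately; there is no other case to handle. By contrast, your proposal tries to enumerate cases by revisiting Lemmas \ref{l_div1}--\ref{l_div6}, but that enumeration is not needed (and is slightly off: your reference to \eqref{ediv1} and to ``Case~2 of Lemma \ref{l_div2}'' concerns configurations in $\mathcal{C}_{2,\scal{o}}^{a,b}\subseteq\mathcal{N}(\mathcal{R}_2^{A,B})$, which is disjoint from $\mathcal{N}(\mathbf{a})$ by the argument in Proposition \ref{p_EBlb}, so those never enter this lemma). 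Your ``trivial case'' for $\sigma\in\mathcal{R}_1^{a,b}\cup\mathcal{C}_{1,\scal{o}}^{a,b}$ with $|\mathfrak{p}^{a,b}(\sigma)|=1$ is harmless but subsumed: any such $\sigma$ that actually has nonzero divergence already satisfies the hypothesis of Lemma \ref{l_edge2}(1). The upshot is that the single divergence identity above replaces all the bookkeeping you anticipate, and is the cleaner way to establish exhaustiveness.
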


\begin{proof}
By the definition of $\psi$ that for $\sigma\in\mathcal{N}(\mathbf{s})$,
\begin{equation}
(\mathrm{div}\,\psi)(\sigma)=\sum_{\zeta\in\mathcal{O}^{A}:\,\zeta\sim\sigma}\psi(\sigma,\,\zeta)=\sum_{\zeta\in\mathcal{O}^{A}}\frac{\mathfrak{e}_{A}}{Z_{\beta}\mathfrak{c}}e^{-\Gamma\beta}\times[1-\mathfrak{h}^{A}(\zeta)]\;.\label{e_eqp2}
\end{equation}
Therefore, $(\mathrm{div}\,\psi)(\sigma)\ne0$ if and only if there
exists $\zeta\in\mathcal{O}^{A}$ with $\mathfrak{h}^{A}(\zeta)\ne1$.
Therefore, the statement of lemma is a direct consequence of Lemma
\ref{l_edge2}.
\end{proof}
\begin{lem}
\label{l_divsum}There exists $C>0$ such that
\begin{align}
 & \sum_{\sigma\in\mathcal{N}(\zeta)}|(\mathrm{div}\,\psi)(\sigma)|\le CL^{2}e^{-\Gamma\beta}\;\text{\;\;\;for all }\zeta\in\mathcal{R}_{n}^{A,\,B}\text{ with }n\in\llbracket2,\,L-2\rrbracket\;\text{and}\label{e_divsum}\\
 & \sum_{\sigma\in\mathcal{N}(\mathbf{s})}|(\mathrm{div}\,\psi)(\sigma)|\le CL^{9}e^{-\Gamma\beta}\;\;\;\;\text{for all }\text{\ensuremath{\mathbf{s}\in\mathcal{S}}\;.}\label{e_divsum2}
\end{align}
\end{lem}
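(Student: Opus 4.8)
The plan is to exploit the structural results of Lemmas \ref{l_div1}--\ref{l_div6}, which already show that $(\mathrm{div}\,\psi)(\sigma)$ vanishes on almost all configurations, so that only a controlled set of configurations can contribute to the two sums. Concretely, for \eqref{e_divsum} I would first recall that by Lemma \ref{l_reg} every $\sigma\in\mathcal{N}(\zeta)\setminus\{\zeta\}$ with $\zeta\in\mathcal{R}_n^{A,\,B}$ is either a canonical configuration in $\mathcal{C}_{n,\,\scal{o}}^{a,\,b}$ with $|\mathfrak{p}^{a,\,b}(\sigma)|=1$ or in $\mathcal{C}_{n-1,\,\scal{o}}^{a,\,b}$ with $|\mathfrak{p}^{a,\,b}(\sigma)|=2L-1$, or else a dead-end in $\mathcal{P}_n^{a,\,b}$ or $\mathcal{Q}_n^{a,\,b}$. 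On dead-ends $(\mathrm{div}\,\psi)$ vanishes by Lemma \ref{l_div1}-(1), and on $\zeta$ itself it vanishes. On the remaining canonical configurations the explicit computation in the proof of Lemma \ref{l_div2} (cf. \eqref{e_divps}, \eqref{ediv1}) shows $|(\mathrm{div}\,\psi)(\sigma)|\le \frac{C}{Z_\beta}\,e^{-\Gamma\beta}$ for a universal constant $C$ (absorbing the $\frac{10\mathfrak{b}}{\mathfrak{c}(5L-3)(L-4)}$ factor, which is $O(L^{-2})$ by \eqref{e_b}--\eqref{e_c}, and the $\mathfrak{e}_A$-term contribution from $\mathcal{C}_{1,\,\scal{o}}^{a,\,b}\cap\mathcal{N}(\zeta)$, which is $O(L^{-1})$ by Proposition \ref{p_eAest}). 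Since each strip yields $O(L)$ such configurations and there are $O(1)$ spin pairs $(a,b)$ relevant to a fixed regular configuration, $|\mathcal{N}(\zeta)|=O(L)$; using $Z_\beta=q+o_L(1)$ from Theorem \ref{t_Gibbs1}-(1), summing gives $O(L)\cdot O(1)\cdot e^{-\Gamma\beta}=O(L\,e^{-\Gamma\beta})$, which is certainly $\le CL^2 e^{-\Gamma\beta}$. (The stated $L^2$ is a safe over-estimate; the extra room is harmless.)

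For \eqref{e_divsum2} the argument is similar but one must account for the larger neighborhood $\mathcal{N}(\mathbf{s})$, which includes edge configurations near the boundary part of the saddle. By Lemma \ref{l_div5}, $(\mathrm{div}\,\psi)(\sigma)=0$ for $\sigma\in\mathcal{N}(\zeta)$ with $\zeta\in\mathcal{I}_{\textup{rep}}^A\setminus(\mathcal{S}(A)\cup\mathcal{R}_2^{A,\,B}\cup\mathcal{C}_{1,\,\scal{o}}^{A,\,B})$, and by Lemma \ref{l_div4} it also vanishes on $\mathcal{C}_{1,\,\scal{o}}^{a,\,b}$ with $|\mathfrak{p}^{a,\,b}(\sigma)|\in\llbracket3,\,2L-3\rrbracket$. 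Thus the only $\sigma\in\mathcal{N}(\mathbf{s})$ with nonzero divergence are (i) $\sigma=\mathbf{s}$ itself, where $(\mathrm{div}\,\psi)(\mathbf{s})$ is given by \eqref{e_eqp2} and bounded by $\frac{\mathfrak{e}_A}{Z_\beta\mathfrak{c}}e^{-\Gamma\beta}\cdot|\mathcal{O}^A|$, and (ii) the canonical configurations in $\mathcal{R}_1^{a,\,c}\cup(\mathcal{C}_{1,\,\scal{o}}^{a,\,c}$ with $|\mathfrak{p}|\in\{1,2L-1\})$ adjacent to $\mathcal{O}^A$, the ones identified in Lemma \ref{l_edge2}-(1). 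For (i), $|\mathcal{O}^A|$ is the number of cross-free energy-$\Gamma$ configurations in $\mathcal{E}^A$, which is $O(L^6)$ by Lemma \ref{l_cfcount}, and $\frac{\mathfrak{e}_A}{\mathfrak{c}}=O(L^{-1})$ by Proposition \ref{p_eAest} and \eqref{e_c}; so this single term is $O(L^5 e^{-\Gamma\beta})$. For (ii), each such $\sigma$ has $|(\mathrm{div}\,\psi)(\sigma)|\le \frac{\mathfrak{e}_A}{Z_\beta\mathfrak{c}}e^{-\Gamma\beta}\cdot\#\{\xi\in\mathcal{O}^A:\xi\sim\sigma\}\le O(L^{-1})\cdot O(L^2)\cdot e^{-\Gamma\beta}=O(L\,e^{-\Gamma\beta})$ (at most $O(L^2)$ spin flips from any fixed configuration), and by Lemma \ref{l_edge2}-(2) the number of such $\sigma$ is $O(L^8)$. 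Hence (ii) contributes $O(L^9 e^{-\Gamma\beta})$, which dominates and gives the claimed bound $CL^9 e^{-\Gamma\beta}$.

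I expect the main obstacle to be bookkeeping rather than a genuine difficulty: one must be careful that the ``neighborhood'' $\mathcal{N}(\mathbf{s})$ can reach configurations of energy up to $\Gamma-1=2L+1$ via $(2L+1)$-paths, so one has to be certain that Lemmas \ref{l_div4}, \ref{l_div5}, and \ref{l_edge2} between them exhaust all of $\mathcal{N}(\mathbf{s})$ — that is, that every $\sigma\in\mathcal{N}(\mathbf{s})$ either has an $a$-cross (handled by Lemma \ref{l_div5}/\ref{l_edge1}, giving $\mathfrak{h}^A\equiv 1$ and zero divergence) or is one of the finitely-many canonical types near $\mathbf{s}$. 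This follows by combining Lemma \ref{l_edge3} (any $\sigma\in\mathcal{N}(\zeta)$ for $\zeta$ not of the excluded types has a cross) with Lemma \ref{l_reg}, Lemma \ref{l_odd}, and the definition of $\mathcal{I}_{\textup{rep}}^A$; the argument should be spelled out carefully but uses only results already established. The capacity-free nature of the bound — we only need upper bounds, never matching lower bounds — means the crude estimates $O(L^6)$ from Lemma \ref{l_cfcount}, $O(L^8)$ from Lemma \ref{l_edge2}, and $O(L^2)$ for the number of single-flip neighbors suffice throughout, so no delicate cancellation or sharp combinatorics is required here.
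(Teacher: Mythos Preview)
Your overall strategy matches the paper's proof, but there is one arithmetic slip in your treatment of \eqref{e_divsum} at the border cases $\zeta\in\mathcal{R}_2^{A,B}$ (and symmetrically $\mathcal{R}_{L-2}^{A,B}$). For $\sigma\in\mathcal{C}_{1,\scal{o}}^{a,b}\cap\mathcal{N}(\zeta)$ (that is, $|\mathfrak{p}^{a,b}(\sigma)|=2L-1$), the divergence is not $O(1)\cdot e^{-\Gamma\beta}$ but $O(L)\cdot e^{-\Gamma\beta}$: one has
\[
|(\mathrm{div}\,\psi)(\sigma)|\le\sum_{\xi\in\mathcal{O}^A:\,\xi\sim\sigma}\frac{\mathfrak{e}_A}{Z_\beta\mathfrak{c}}e^{-\Gamma\beta}\le 2qL^2\cdot O(L^{-1})\cdot e^{-\Gamma\beta}=O(L)\,e^{-\Gamma\beta}\;,
\]
exactly the computation you carry out correctly in your case (ii) for \eqref{e_divsum2}. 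Summing over the $O(L)$ such $\sigma$ then gives $O(L^2)e^{-\Gamma\beta}$, so the $L^2$ in the statement is \emph{not} a safe over-estimate but is actually needed for this border case. (For $n\in\llbracket 3,L-3\rrbracket$ your $O(L)$ bound is fine; the paper in fact gets $O(L^{-1})$ there.)

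A few of your side remarks are also slightly off but harmless once the above is fixed. First, $\mathcal{N}(\zeta)$ contains no dead-ends, since those have energy $\Gamma$ while $\mathcal{N}$-neighborhoods use $(\Gamma-1)$-paths; Lemma \ref{l_reg} already gives $\mathcal{N}(\zeta)\setminus\{\zeta\}=\widehat{\mathcal{R}}_n^{a,b}$ exactly. Second, $(\mathrm{div}\,\psi)(\mathbf{s})=0$ because every $\xi\sim\mathbf{s}$ has energy $3$ and hence lies in $\mathcal{N}(\mathbf{s})\subseteq\mathcal{I}^A$, so $\mathfrak{h}^A(\xi)=\mathfrak{h}^A(\mathbf{s})=1$; thus your case (i) is vacuous. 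Third, $|\mathcal{O}^A|$ is not controlled by Lemma \ref{l_cfcount}, since $\mathcal{O}^A$ may contain configurations with an $a$-cross for $a\in A$; fortunately this bound was only used in your vacuous case (i). Finally, the $\sigma$'s in your case (ii) need not be canonical (they can arise from type \textbf{(MB)} neighbors via Lemma \ref{l_MB2}), but you correctly invoke the $O(L^8)$ count from Lemma \ref{l_edge2}-(2), which covers all of them.
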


\begin{proof}
First, suppose that $\zeta\in\mathcal{R}_{n}^{A,\,B}$ with $n\in\llbracket3,\,L-3\rrbracket$.
Then, we have by \eqref{e_divps} that
\[
\sum_{\sigma\in\mathcal{N}(\zeta)}|(\mathrm{div}\,\psi)(\sigma)|=8L\times\frac{1}{Z_{\beta}}\frac{10\mathfrak{b}e^{-\Gamma\beta}}{\mathfrak{c}(5L-3)(L-4)}\le CL^{-1}e^{-\Gamma\beta}\;,
\]
where the factor $8L$ denotes the number of $\sigma\in\mathcal{N}(\zeta)\setminus\{\zeta\}$.
This proves \eqref{e_divsum} in this case.

Next, suppose that $\zeta\in\mathcal{R}_{2}^{A,\,B}$, say $\zeta\in\mathcal{R}_{2}^{a,\,b}$
for some $(a,\,b)\in A\times B$. In this case, as above, we again
have $(\mathrm{div}\,\psi)(\zeta)=0$ and for $\sigma\in\mathcal{N}(\zeta)$
with $\sigma\in\mathcal{C}_{2,\,\scal{o}}^{a,\,b}$ and $|\mathfrak{p}^{a,\,b}(\sigma)|=1$,
\begin{equation}
|(\mathrm{div}\,\psi)(\sigma)|=\frac{1}{Z_{\beta}}\frac{10\mathfrak{b}e^{-\Gamma\beta}}{\mathfrak{c}(5L-3)(L-4)}\le CL^{-2}e^{-\Gamma\beta}\;.\label{eq:pf1}
\end{equation}
Moreover, if $\sigma\in\mathcal{N}(\zeta)$ with $\sigma\in\mathcal{C}_{1,\,\scal{o}}^{a,\,b}$
and $|\mathfrak{p}^{a,\,b}(\sigma)|=2L-1$, then by the definition
of $\psi$, we have
\[
|(\mathrm{div}\,\psi)(\sigma)|\le\sum_{\xi:\,\xi\sim\sigma}|\psi(\sigma,\,\xi)|=\sum_{\xi:\,\xi\sim\sigma}\frac{\mathfrak{e}_{A}}{Z_{\beta}\mathfrak{c}}e^{-\Gamma\beta}\times|\mathfrak{h}^{A}(\sigma)-\mathfrak{h}^{A}(\xi)|\;.
\]
Since number of such $\xi$ is trivially bounded by $2qL^{2}$, we
can bound the right-hand side using Proposition \ref{p_eAest} by
\begin{equation}
2qL^{2}\times CL^{-1}e^{-\Gamma\beta}=2qCLe^{-\Gamma\beta}\;,\label{eq:pf2}
\end{equation}
where we used $|\mathfrak{h}^{A}(\sigma)-\mathfrak{h}^{A}(\xi)|\le1$.
Therefore by \eqref{eq:pf1} and \eqref{eq:pf2}, we have
\[
\sum_{\sigma\in\mathcal{N}(\zeta)}|(\mathrm{div}\,\psi)(\sigma)|\le4L\times CL^{-2}e^{-\Gamma\beta}+4L\times2qCLe^{-\Gamma\beta}=O(L^{2}e^{-\Gamma\beta})\;,
\]
where the two factors $4L$ denote the number of such possible $\sigma$.
This concludes \eqref{e_divsum} in the case $\zeta\in\mathcal{R}_{2}^{A,\,B}$.
The case $\mathcal{R}_{L-2}^{A,\,B}$ can be proved in the same manner.
Thus, we conclude the proof of \eqref{e_divsum}.

Finally, we prove \eqref{e_divsum2}. We may assume $\mathbf{s}=\mathbf{a}$
for some $a\in A$. By the definition of $\psi$, we have
\[
\sum_{\sigma\in\mathcal{N}(\mathbf{a})}|(\mathrm{div}\,\psi)(\sigma)|=\sum_{\sigma\in\mathcal{N}(\mathbf{a})}\sum_{\zeta\in\mathcal{O}^{A}:\,\sigma\sim\zeta}\frac{\mathfrak{e}_{A}}{Z_{\beta}\mathfrak{c}}e^{-\Gamma\beta}\times|\mathfrak{h}^{A}(\sigma)-\mathfrak{h}^{A}(\zeta)|\;.
\]
Since the summand vanishes if $\mathfrak{h}^{A}(\zeta)=\mathfrak{h}^{A}(\sigma)$,
we can bound the right-hand side by
\[
\sum_{\sigma\in\mathcal{N}(\mathbf{a})}\sum_{\zeta\in\mathcal{O}^{A}:\,\sigma\sim\zeta,\,\mathfrak{h}^{A}(\zeta)\ne1}\frac{\mathfrak{e}_{A}}{Z_{\beta}\mathfrak{c}}e^{-\Gamma\beta}\le\sum_{\sigma\in\mathcal{N}(\mathbf{a})}\sum_{\zeta\in\mathcal{O}^{A}:\,\sigma\sim\zeta,\,\mathfrak{h}^{A}(\zeta)\ne1}CL^{-1}e^{-\Gamma\beta}\;,
\]
where the inequality is induced by Proposition \ref{p_eAest} and
Theorem \ref{t_Gibbs1}-(1). By Lemma \ref{l_edge2}, the number of
such $\sigma$ so that the summand does not vanish is $O(L^{8})$,
and for each such $\sigma$, the corresponding $\zeta$ has at most
$2qL^{2}$ choices. Thus, we conclude
\[
\sum_{\sigma\in\mathcal{N}(\mathbf{a})}|(\mathrm{div}\,\psi)(\sigma)|\le O(L^{8})\times2qL^{2}\times CL^{-1}e^{-\Gamma\beta}=O(L^{9}e^{-\Gamma\beta})\;.
\]
This concludes the proof of Lemma \ref{l_divsum}.
\end{proof}
Now, we are ready to prove Proposition \ref{p_tfl2}.
\begin{proof}[Proof of Proposition \ref{p_tfl2}]
 It is clear from the definition of $\psi$ that $\mathrm{div}\,\psi=0$
on $\widehat{\mathcal{N}}(\mathcal{S})^{c}$. Hence, by Lemmas \ref{l_div1},
\ref{l_div3}, \ref{l_div4}, and \ref{l_div5}, we can write the
left-hand side of \eqref{e_tfl2} as
\begin{equation}
\Big[\,\sum_{n=2}^{L-2}\sum_{\zeta\in\mathcal{R}_{n}^{A,B}}\sum_{\sigma\in\mathcal{N}(\zeta)}+\sum_{\zeta\in\mathcal{S}}\sum_{\sigma\in\mathcal{N}(\zeta)}\,\Big]\,h_{\mathcal{S}(A),\,\mathcal{S}(B)}^{\beta}(\sigma)(\mathrm{div}\,\psi)(\sigma)\;.\label{e_tfl2pf}
\end{equation}
By Lemmas \ref{l_div2}, \ref{l_div6}, \ref{l_eqp}, \ref{l_eqp2},
and \ref{l_divsum}, this equals
\[
\frac{1}{Z_{\beta}\mathfrak{c}}e^{-\Gamma\beta}+L^{2}\times O(L^{2}e^{-\beta})\times O(L^{2}e^{-\Gamma\beta})+O(Le^{-\beta})\times O(L^{9}e^{-\Gamma\beta})=\frac{1+o_{L}(1)}{Z_{\beta}\mathfrak{c}}e^{-\Gamma\beta}\;,
\]
since $L^{10}\ll e^{\beta}$, where the factor $L^{2}$ takes the
possibility of selecting a regular configuration in $\mathcal{R}_{n}^{A,\,B}$,
$n\in\llbracket2,\,L-2\rrbracket$ into account. By Theorem \ref{t_Gibbs1}-(1),
the proof is completed. 
\end{proof}

\subsection{\label{sec104}Proof of Theorem \ref{t_cap}}

First, by gathering the previous proposition with Proposition \ref{p_tfl1},
we can conclude the proof of Proposition \ref{p_tfl}.
\begin{proof}[Proof of Proposition \ref{p_tfl}]
By Propositions \ref{p_tfl1} and \ref{p_tfl2}, we have
\[
\|\psi^{A,\,B}\|_{\beta}^{2}=\frac{1+o_{L}(1)}{q\mathfrak{c}}e^{-\Gamma\beta}\;\;\;\;\text{and}\;\;\;\;\sum_{\sigma\in\mathcal{X}}h_{\mathcal{S}(A),\,\mathcal{S}(B)}^{\beta}(\sigma)(\mathrm{div}\,\psi^{A,\,B})(\sigma)=\frac{1+o_{L}(1)}{q\mathfrak{c}}e^{-\Gamma\beta}\;.
\]
Inserting these to the left-hand side of \eqref{e_tfl} completes
the proof.
\end{proof}
Then, we can now complete the proof of the capacity estimate.
\begin{proof}[Proof of Theorem \ref{t_cap}]
By Theorem \ref{t_DP} and Proposition \ref{p_tf}, we get the upper
bound as
\[
\mathrm{Cap}_{\beta}\big(\,\mathcal{S}(A),\,\mathcal{S}(B)\,\big)\le D_{\beta}(f^{A,\,B})=\frac{1+o_{L}(1)}{q\mathfrak{c}}e^{-\Gamma\beta}\;.
\]
On the other hand, by Theorem \ref{t_gTP} and Proposition \ref{p_tfl},
we get the matching lower bound as 
\[
\mathrm{Cap}_{\beta}\big(\,\mathcal{S}(A),\,\mathcal{S}(B)\,\big)\ge\frac{1}{\|\psi^{A,\,B}\|_{\beta}^{2}}\Big[\,\sum_{\sigma\in\mathcal{X}}h_{\mathcal{S}(A),\,\mathcal{S}(B)}^{\beta}(\sigma)(\mathrm{div}\,\psi^{A,\,B})(\sigma)\,\Big]^{2}=\frac{1+o_{L}(1)}{q\mathfrak{c}}e^{-\Gamma\beta}\;.
\]
The proof is completed by combining these upper and lower bounds.
\end{proof}
\begin{acknowledgement*}
SK was supported by NRF-2019-Fostering Core Leaders of the Future
Basic Science Program/Global Ph.D. Fellowship Program and the National
Research Foundation of Korea (NRF) grant funded by the Korean government
(MSIT) (No. 2018R1C1B6006896). IS was supported by the National Research
Foundation of Korea (NRF) grant funded by the Korean government (MSIT)
(No. 2018R1C1B6006896 and No. 2017R1A5A1015626).
\end{acknowledgement*}

\end{document}